\newcommand{\Out}[0]{\mathrm{Out}}
\newcommand{\PP}[0]{\mathcal{P}}
\newcommand{\cone}{\mathrm{cone}}
\theoremstyle{definition}
\theoremstyle{remark}
\theoremstyle{theorem} 
\newtheorem{theorem}{Theorem}[section]
\newtheorem{lemma}[theorem]{Lemma}
\newtheorem{proposition}[theorem]{Proposition}
\newtheorem{corollary}[theorem]{Corollary}
\newtheorem{conjecture}[theorem]{Conjecture}
\theoremstyle{definition}
\newtheorem{definition}[theorem]{Definition}
\newtheorem{remark}[theorem]{Remark}
\newcommand{\C}{\mathcal{C}}
\newcommand{\Mod}{\mathrm{Mod}}
\renewcommand{\P}{\mathcal{P}}
\renewcommand{\PP}{\mathbb{P}}
\newcommand{\ev}{\mathrm{ev}}
\newcommand{\GammaT}{\widetilde{\Gamma}}
\newcommand{\cay}{\mathrm{C}}
\begin{document}

\title{Counting loxodromics for hyperbolic actions}

\author[I. Gekhtman]{Ilya Gekhtman}
\address{Department of Mathematics\\ 
Yale University\\ 
10 Hillhouse Ave\\ 
New Haven, CT 06520, U.S.A\\}
\email{\href{mailto:ilya.gekhtman@yale.edu}{ilya.gekhtman@yale.edu}}

\author[S.J. Taylor]{Samuel J. Taylor}
\address{Department of Mathematics\\ 
Yale University\\ 
10 Hillhouse Ave\\ 
New Haven, CT 06520, U.S.A\\}
\email{\href{mailto:s.taylor@yale.edu}{s.taylor@yale.edu}}

\author[G. Tiozzo]{Giulio Tiozzo}
\address{Department of Mathematics\\ 
Yale University\\ 
10 Hillhouse Ave\\ 
New Haven, CT 06520, U.S.A\\}
\email{\href{mailto:giulio.tiozzo@yale.edu}{giulio.tiozzo@yale.edu}}

\date{\today}

\begin{abstract}
Let $G \curvearrowright X$ be a nonelementary action by isometries of a hyperbolic group $G$ on a hyperbolic metric space $X$. We show that the set of elements of $G$ which act as loxodromic isometries of $X$ is generic. That is, for any finite generating set of $G$, the proportion of $X$--loxodromics in the ball of radius $n$ about the identity in $G$ approaches $1$ as $n \to \infty$. We also establish several results about the behavior in $X$ of the images of typical geodesic rays in $G$; for example, we prove that they make linear progress in $X$ and converge to the Gromov boundary $\partial X$. Our techniques make use of the automatic structure of $G$, Patterson--Sullivan measure on $\partial G$, and the ergodic theory of random walks for groups acting on hyperbolic spaces. 
We discuss various applications, in particular to $\Mod(S)$, $\Out(F_N)$, and right--angled Artin groups.
\end{abstract}

\maketitle

\section{Introduction}

Let $G$ be a hyperbolic group with a fixed finite generating set $S$. Then $G$ acts by isometries on its associated Cayley graph $\cay_S(G)$, which itself is a geodesic hyperbolic metric space. This cocompact, proper action has the property that each infinite order $g\in G$ acts as a \emph{loxodromic isometry}, i.e. with sink--source dynamics on the Gromov boundary $\partial G$. Much geometric and algebraic information about the group $G$ has been learned by studying the dynamics 
of the action $G \curvearrowright \cay_S(G)$, beginning with the seminal work of Gromov \cite{Gromov}. However, deeper facts about the group $G$ can often be detected by investigating actions $G \curvearrowright X$ which are specifically constructed to extract particular information about $G$. Important examples include the theory of JSJ decompositions of $G$ \cite{rips_sela,bowditch1998cut}, or more generally actions on trees arising from splittings of $G$ \cite{Serre}, $G$ acting on the quasi--trees of Bestvina--Bromberg--Fujiwara \cite{BBF}, or $G$ acting on coned--off versions of $\cay_S(G)$ as appearing in the theory of relatively hyperbolic groups \cite{farb1998relatively, osin2006relatively}, hyperbolically embedded subgroups \cite{DGO}, and hyperbolic Dehn surgery \cite{osin2007peripheral, groves2008dehn}. Several other examples appear in Section \ref{sec:apps}. 

In this paper, we are interested in the typical behavior of elements of $G$ with respect to an arbitrary action $G \curvearrowright X$. Working in this level of generality has applications to the natural actions of $G$ that one regularly encounters, regardless of whether the action is nonproper or distorted.

Let $G$ be a hyperbolic group which acts by isometries on a hyperbolic metric space $X$. A choice of a finite generating set $S$
determines a word metric on $G$, and we denote as $B_n$ the ball of radius $n$ about $1 \in G$.

We say that a subset $A \subset G$ is \emph{generic} if the proportion of elements of word length at most $n$ which belong to $A$ 
tends to $1$ as $n \to \infty$, i.e.
\[
\lim_{n\to \infty} \frac{\#(A \cap B_n)}{\#B_n} = 1.
\]

An element $g \in G$ is said to be \emph{loxodromic} if it has exactly two fixed points on the boundary of $X$, one attracting and the other repelling. This is equivalent to the condition that there is a positive constant $\epsilon = \epsilon(g)$ such that $ d_X(x,g^nx) \ge \epsilon \cdot |n|$ for all $n \in \mathbb{Z}$, where $|g|$ denotes the word length of $G$ with respect to $S$. 


Our first main theorem is the following:

\begin{theorem}[Genericity of loxodromics] \label{cor:intro_gen_lox}
Let $G$ be a hyperbolic group with a nonelementary action by isometries on a separable, hyperbolic metric space $X$. Then $X$--loxodromics are generic on $G$, i.e.
\[
\frac{\#\{g \in B_n : g \; \mathrm{is} \; X - \mathrm{loxodromic} \}}{\#B_n} \to 1,
\]
as $n\to \infty$.
\end{theorem}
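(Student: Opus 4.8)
The plan is to transfer the problem to the boundary $\partial G$ equipped with a Patterson–Sullivan measure $\nu$, and there use the ergodic theory of the $G$-action on $X$. Concretely, I would first recall that a hyperbolic group $G$ with generating set $S$ admits a geodesic automatic structure, so that the uniform measures on the spheres $S_n = B_n \setminus B_{n-1}$ converge (after the usual averaging) to the Patterson–Sullivan measure $\nu$ on $\partial G$, which is Ahlfors-regular of dimension equal to the growth rate; this is where the automatic/combinatorial structure enters. The key reduction is then: it suffices to show that for $\nu$-almost every geodesic ray $\gamma$ in $G$ (equivalently, $\nu$-a.e.\ boundary point $\xi \in \partial G$), the image sequence $\{\gamma(n)\cdot o\}$ in $X$ makes linear progress, i.e.\ $\liminf_n \frac{1}{n} d_X(o, \gamma(n)\cdot o) > 0$, and converges to a point of $\partial X$. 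Given such a statement, a counting argument — comparing the measure of the "bad" set of boundary points whose rays fail to progress linearly with the exponential growth of $B_n$, and using that elements $g \in S_n$ whose axis tracks such a ray are the only possible non-loxodromics — upgrades the almost-everywhere statement to genericity of loxodromics in the balls $B_n$. One must be slightly careful that $d_X(o,g\cdot o)$ growing linearly along $\gamma$ forces $g$ itself (not just the ray) to be loxodromic; this uses that an element translating the basepoint by a definite linear amount along a path fellow-traveling a geodesic has positive stable translation length, hence is loxodromic by the equivalent characterization given in the introduction.

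The heart of the matter — the linear-progress statement — I would prove by combining two ingredients. The first is a random-walk input: choose a probability measure $\mu$ on $G$ whose support generates a semigroup acting nonelementarily on $X$ (e.g.\ finitely supported, symmetric, with support generating $G$); since the $G$-action on $X$ is nonelementary, the results on random walks for groups acting on hyperbolic spaces (Maher–Tiozzo and related work) give that the $\mu$-random walk has positive drift in $X$ and converges almost surely to $\partial X$, with a nondegenerate hitting (harmonic) measure $\nu_X$ on $\partial X$. The second ingredient is a comparison between the harmonic measure class on $\partial G$ coming from such a walk and the Patterson–Sullivan class: one wants to run the walk conditioned to track a given geodesic ray and show the drift estimate survives. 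A cleaner route, which I expect is the one taken here, is to avoid matching measure classes directly and instead use a deviation/shadow argument: the Patterson–Sullivan measure of a shadow in $G$ decays like $e^{-hr}$, while the relevant "bad" shadows — rays along which progress in $X$ is sublinear — can be shown, via the ergodic theorem along the walk and a Borel–Cantelli argument, to have summably small measure. Converting between the "from the walk" picture and the "Patterson–Sullivan on $\partial G$" picture is the technical crux.

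The main obstacle, as I see it, is precisely this transfer between two a priori unrelated measures on $\partial G$: the stationary measure $\nu_X$-type object produced by random-walk theory naturally lives on $\partial X$ and pushes forward from a $\mu$-stationary measure on $\partial G$, whereas genericity in balls $B_n$ is governed by Patterson–Sullivan measure, and these measure classes need not coincide (indeed for distorted or nonproper actions the action need not even have finite first moment in the $X$-metric, so one cannot naively invoke a subadditive ergodic theorem for $d_X(o,\gamma(n)o)$ with respect to the geodesic flow). I would handle this by establishing a one-sided estimate that is robust to the choice of measure: namely that for the Patterson–Sullivan measure, the displacement $d_X(o,\gamma(n)\cdot o)$ along a $\nu$-typical ray is eventually at least $cn$ for some $c>0$, proven by showing the set of length-$n$ prefixes that are "$X$-contracted" has subexponential size relative to $\#S_n$. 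This in turn should follow from a counting estimate showing that prefixes failing linear progress in $X$ lie in a subshift of strictly smaller growth, which is where the nonelementarity of the action (giving two independent loxodromics in $G$, in the sense of their $X$-images) and a ping-pong/entropy argument must be invoked. Once linear progress is established $\nu$-a.e., convergence to $\partial X$ and the loxodromic conclusion for the group elements themselves follow by standard hyperbolic-space arguments, and the counting statement of the theorem is then immediate.
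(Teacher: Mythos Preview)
Your proposal identifies the right ingredients --- the automatic structure, Patterson--Sullivan measure, and Maher--Tiozzo random walk theory --- but contains a genuine gap at the final step and leaves the central technical transfer unresolved.

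The more serious gap is your claim that ``an element translating the basepoint by a definite linear amount along a path fellow-traveling a geodesic has positive stable translation length.'' This is false: having $d_X(x, gx) \geq cn$ says nothing about $d_X(x, g^k x)$ for $k \geq 2$, so it gives no lower bound on $\tau_X(g) = \lim_k d_X(x, g^k x)/k$. An isometry can move the basepoint far while being elliptic or parabolic. The paper handles this via the estimate $\tau_X(g) = d_X(x,gx) - 2(gx, g^{-1}x)_x + O(\delta)$ (Lemma~\ref{l:tau-formula}), which requires a \emph{separate} argument that $(gx, g^{-1}x)_x$ is generically $o(n)$. This is done by writing $g = ab$ with $|a| \approx |b| \approx n/2$, showing $a$ and $b^{-1}$ are nearly independent for counting (Lemma~\ref{l:indep-count}), and then using decay of shadows to bound $(ax, b^{-1}x)_x$ (Lemma~\ref{l:prod0} and Proposition~\ref{p:g-prod}). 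Nothing about linear progress along the ray gives this.

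On the measure transfer you correctly flag as the ``main obstacle'': you propose either conditioning a symmetric walk to track a ray, or a Borel--Cantelli/subshift-of-smaller-growth argument, but neither is carried out, and it is not clear either can be made to work here (in particular, harmonic and Patterson--Sullivan classes need not agree). The paper's solution is structurally different: the Markov chain on the combing graph admits \emph{first return measures} $\mu_v$ at each recurrent vertex, and Patterson--Sullivan measure decomposes exactly as a countable combination of translates of the harmonic measures for these $\mu_v$ (Proposition~\ref{prop:PS_comb}). One then shows each loop semigroup $\Gamma_v$ acts nonelementarily on $X$ (Proposition~\ref{P:general}, Corollary~\ref{c:general}), so Maher--Tiozzo applies to each $\mu_v$ directly. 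This bypasses any comparison of measure classes.
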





Recall that two loxodromic isometries $f$ and $g$ of $X$ are \emph{independent} 
if their fixed point sets in $\partial X$ are disjoint, and
the action $G\curvearrowright X$ is \emph{nonelementary} if $G$ contains two independent loxodromic isometries of $X$\footnote{Actions satisfying this condition are sometimes called \emph{of general type} \cite{caprace2015amenable}.}.

We actually prove the stronger result that $X$--loxodromics are generic with respect to counting in spheres $S_n$ in the Cayley graph of $G$. 



Our results demonstrate that much of the typical geometry of $G$ is preserved under the action $G \curvearrowright X$. 
Let us recall that the choice of a generating set $S$ determines a natural boundary measure $\nu$
on $\partial G$ which is called the  \emph{Patterson--Sullivan measure}  \cite{coornaert1993mesures}.
Intuitively, this measure is defined by taking weak limits of the uniform distribution on balls $B_n$ as $n \to \infty$ (with the previous definition, it is only defined up to a multiplicative constant, but we actually fix a normalization: see Section \ref{sec:hyperbolic}).

\subsection{Typical geodesic rays in $G$}

Our next collection of results concerns the behavior of $\nu$--typical geodesic rays of $G$ and the paths they determine in the space $X$ under the orbit map $G \to X$. That the images of geodesics in $G$ have any controlled behavior when projected to $X$ is particularly surprising at this level of generality. A few examples to bear in mind are $G$ acting on a locally infinite hyperbolic graph, $G$ acting on one of its nonelementary hyperbolic quotient groups, or $G$ acting on $\mathbb{H}^n$ with dense orbits.

We first show that typical geodesic rays of $G$ converge to the boundary in $X \cup \partial X$:


\begin{theorem} [Convergence to the boundary of $X$] \label{th:intro_1}
Let $G$ be a hyperbolic group with a nonelementary action by isometries on a separable, hyperbolic metric space $X$.
For every $x \in X$ and $\nu$--almost every $\eta \in \partial G$, if $(g_n)_{n\ge0}$ is a geodesic ray in $G$ converging to $\eta$, then the sequence $g_nx$ in $X$ converges to a point in the boundary $\partial X$.
\end{theorem}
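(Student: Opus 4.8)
The plan is to realize the image under the orbit map of a $\nu$--typical geodesic ray as a bounded perturbation of a sample path of an i.i.d.\ random walk on $G$, and then to feed this into the ergodic theory of random walks acting on (weakly) hyperbolic spaces. Recall first that to say $g_n x$ converges to a point of $\partial X$ it suffices to show that $g_n x \to \infty$ in $X$ and that $(g_n x \mid g_m x)_x \to \infty$ as $n,m\to\infty$, i.e.\ that $(g_n x)$ is a Cauchy sequence in the Gromov bordification. Since $G$ is hyperbolic, by Cannon's theorem it admits a geodesic automatic structure: a finite directed graph $\Gamma$ with a base vertex whose length--$n$ paths encode a set of geodesic normal forms for the elements of $B_n$, and whose space of infinite paths parametrizes $\partial G$ via a finite--to--one boundary map. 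After restricting to a maximal (Perron--Frobenius) recurrent component $\Gamma_0$, which $\nu$--a.e.\ ray eventually enters, the uniform measures on the spheres $S_n$ converge to the Parry measure of maximal entropy on infinite paths in $\Gamma_0$, and this push--forward agrees with the Patterson--Sullivan measure $\nu$ on $\partial G$ by the characterization of $\nu$ in \cite{coornaert1993mesures}. Finally, any two geodesic rays in $G$ with the same endpoint uniformly fellow--travel, so their orbit maps lie a bounded Hausdorff distance apart in $X$; hence it suffices to prove the statement for the normal--form ray $(g_n)_{n \ge 0}$ associated with a $\nu$--typical path.

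Next I would extract a renewal structure from the Markov dynamics. Fix a vertex $v$ of the recurrent component $\Gamma_0$; then $\nu$--a.e.\ infinite path visits $v$ infinitely often, at times $0 \le \tau_0 < \tau_1 < \cdots$. By the strong Markov property of the Parry measure the excursions of the path between consecutive visits to $v$ are i.i.d., and therefore so are the group elements $h_k \in G$ spelled by these excursions. Moreover $g_{\tau_k}$ agrees with the product $h_1 h_2 \cdots h_k$ up to a uniformly bounded error (the initial segment up to $\tau_0$ and normal--form conventions). Thus, along the subsequence $(\tau_k)$, the orbit $g_{\tau_k}x$ equals, up to bounded error, the sample path $w_k x$ of the i.i.d.\ random walk on $G$ driven by the law $\mu$ of $h_1$. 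Since return times to a state of a finite irreducible Markov chain have exponential tails, $\mathbb{E}_\mu[\,|h_1|_S\,] < \infty$, and hence $\mathbb{E}_\mu[\, d_X(x, h_1 x)\,] < \infty$.

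I would then invoke the theory of random walks on weakly hyperbolic groups (Maher--Tiozzo): for a measure with nonelementary action and finite first moment with respect to $d_X$, the random walk converges almost surely to a point of $\partial X$ and makes positive linear progress. Granting that $\mu$ has nonelementary action on $X$ (see below), we obtain $g_{\tau_k} x \to \xi = \xi(\eta) \in \partial X$ and $d_X(x, g_{\tau_k} x)/k \to \ell > 0$. It remains to fill in the gaps $\tau_k \le n < \tau_{k+1}$: the excursion lengths $\tau_{k+1} - \tau_k$ are i.i.d.\ with finite mean, so $\tau_k / k$ stays bounded, while the displacement $d_X(g_{\tau_k} x, g_n x)$ within a single excursion is bounded by a multiple of its (word) length. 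Combining this with linear progress of the subsequence forces $(g_n x \mid g_{\tau_k}x)_x \to \infty$, so $g_n x$ converges to the same $\xi \in \partial X$.

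The main obstacle I anticipate is the claim that the induced random walk acts nonelementarily on $X$, i.e.\ that the subsemigroup $\Sigma \le G$ generated by the support of $\mu$ (the normal--form loops at $v$) contains two independent $X$--loxodromics. The natural route is to show that these loops generate a finite--index subgroup of $G$ — a combinatorial fact about the recurrent component of the automatic structure — and then to observe that a finite--index subgroup of $G$ inherits nonelementarity of the $X$--action, since suitable powers of two independent $X$--loxodromics of $G$ lie in the subgroup and remain independent loxodromics. A secondary, more routine point is the precise identification of the Parry push--forward with the Patterson--Sullivan measure together with the handling of the finite--to--one boundary map; this is supplied by Coornaert's work and the fellow--traveller property.
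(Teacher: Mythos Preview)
Your overall architecture is the same as the paper's: pass to the geodesic combing, use return times to a recurrent vertex to turn the Markov chain into an i.i.d.\ random walk on $G$, and invoke Maher--Tiozzo. Two points, however, are genuine gaps.

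\textbf{Multiple maximal components.} You write that $\nu$--a.e.\ ray eventually enters ``a maximal (Perron--Frobenius) recurrent component $\Gamma_0$''. In general the automaton $\Gamma$ is only almost semisimple and can have \emph{several} maximal irreducible components; a typical path enters exactly one of them, but which one is not determined, and each carries only a positive (not full) $\nu$--mass. The paper does not try to prove the result component by component. Instead it shows (Proposition~\ref{P:general}) that \emph{some} recurrent vertex $v$ has a nonelementary loop semigroup $\Gamma_v$; this yields convergence to $\partial X$ on a set of positive $\nu$--measure, and then one upgrades to full measure using that $\partial^X G$ is $G$--invariant and that the $G$--action on $(\partial G,\nu)$ is ergodic (Lemma~\ref{L:ergodic}). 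Only \emph{after} convergence is established does the paper go back and show that in fact every recurrent loop semigroup is nonelementary (Corollary~\ref{c:general}). Your argument, as written, would need either this ergodicity step or an independent proof that every maximal component gives a nonelementary $\mu_v$.

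\textbf{Nonelementarity of the semigroup.} Your proposed route is to show that the loops at $v$ generate a finite--index subgroup $H\le G$ and deduce nonelementarity. Finite index of $H$ is correct (the paper obtains it from maximal growth of $\Gamma_v$ together with \cite[Theorem~4.3]{gouezel2015entropy}), but the hypothesis in Maher--Tiozzo concerns the \emph{semigroup} generated by $\mathrm{supp}(\mu_v)$, not the group $H$. Knowing that $H$ contains two independent $X$--loxodromics does not immediately place them (or their powers) in the semigroup $\Gamma_v$. The paper closes this gap by appealing to the semigroup criterion of Das--Simmons--Urba\'nski (Proposition~\ref{th:Tushar}): one checks that $\Gamma_v$ has no finite orbit on $\partial X$ (inherited from $H$ having finite index) and that its limit set in $\partial X$ is nonempty, the latter requiring a short argument pushing boundary points of $G$ to $\partial X$ via loxodromics whose attracting fixed points lie in an open set contained in $\Lambda_{\Gamma_v}\subset\partial G$.

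A minor difference, not a gap: to pass from the subsequence $(g_{\tau_k}x)$ to the full sequence you use positive drift and $(\tau_{k+1}-\tau_k)/k\to 0$. The paper instead runs the renewal argument at \emph{every} vertex of the recurrent component simultaneously and uses a short combinatorial lemma (Lemma~\ref{L:equiv}) to show all the resulting subsequential limits in $\partial X$ coincide; this avoids invoking drift at this stage. Your approach is fine once you have positive drift, but note that the paper's order of dependence is the reverse: convergence to $\partial X$ is proved first, and drift (Theorem~\ref{th:drift}) is established afterwards.
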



For $x\in X$, let $\Phi = \Phi_x \colon G \to X$ given by $\Phi(g) = gx$ denote the orbit map of the action $G \curvearrowright X$.
Theorem \ref{th:intro_1} implies the existence of a $\nu$-measurable, $G$-equivariant map 
 $\partial \Phi \colon \partial G \to \partial X$ from the Gromov boundary of $G$ to the Gromov boundary of $X$.

In fact, if we define
\[
\partial^X G = \{\eta \in \partial G : \Phi([1,\eta)) \subset X \text{ converges to a point in } \partial X\},
\]
then $\Phi$ extends to a map $\partial \Phi \colon \partial ^X G \to \partial X$ and the set $\partial^X G$ has full $\nu$-measure by Theorem \ref{th:intro_1}.

In addition, we show that for almost every $\eta \in \partial G$, the path $\Phi([1,\eta))$ makes linear progress in the space $X$:

\begin{theorem}[Positive drift] \label{th:intro_2}
Let $G$ be a hyperbolic group with a nonelementary action by isometries on a separable, hyperbolic metric space $X$.
Then there exists $L > 0$ such that for every $x \in X$ and $\nu$--almost every $\eta \in \partial G$, if $(g_n)_{n\ge0}$ is a geodesic in $G$ converging to $\eta$, then 
\[
\lim_{n \to \infty} \frac{d_X(x, g_n x)}{n} = L >0.
\]
\end{theorem}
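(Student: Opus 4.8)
The plan is to derive linear progress along $\nu$-typical geodesic rays from the ergodic theory of random walks together with the automatic structure of $G$. The key point is that the Patterson--Sullivan measure $\nu$ on $\partial G$ is, in a precise sense, comparable to the harmonic (hitting) measure of a suitable random walk, so that almost-sure properties of sample paths of the walk translate into $\nu$-almost-everywhere properties of geodesic rays. Concretely, I would first recall (or establish, using the geodesic automatic structure of $G$ and Calegari--Fujiwara / Gou\"ezel-type results) that counting in spheres $S_n$ is governed by a Markov chain on the states of the automaton, and that the corresponding stationary measure on $\partial G$ is in the measure class of $\nu$ with controlled Radon--Nikodym derivative. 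This lets me model a $\nu$-typical geodesic ray $[1,\eta)$ as a typical trajectory of this Markov chain.

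Second, I would push this trajectory forward under the orbit map $\Phi = \Phi_x \colon G \to X$ and invoke the subadditive ergodic theorem. The sequence $n \mapsto d_X(x, g_n x)$ is subadditive along the trajectory (up to the additive defect coming from the orbit map of a non-isometric action, which is bounded because $\Phi$ is Lipschitz in the word metric, hence $d_X(g_m x, g_n x) \le C|m-n|$), and it is stationary and integrable with respect to the shift-invariant measure associated to the Markov chain on $\partial G$. Kingman's subadditive ergodic theorem then yields that the limit $L := \lim_{n\to\infty} d_X(x,g_n x)/n$ exists $\nu$-almost surely and is a nonnegative constant (using ergodicity of the relevant system, which follows from nonelementarity of the action and mixing of the automaton). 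The independence of $L$ from the basepoint $x$ is immediate since changing basepoint changes $d_X(x,g_n x)$ by a bounded amount.

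Third — and this is the substantive step — I must show $L > 0$, i.e. that the drift is strictly positive rather than sublinear. Here I would use the nonelementarity hypothesis: $G$ contains two independent loxodromic isometries of $X$. The idea is that a $\nu$-typical geodesic ray in $G$ spends a definite proportion of its length ``pushing in a loxodromic direction'' in $X$. More precisely, I would combine the convergence statement of Theorem \ref{th:intro_1} with a positivity argument: since $g_n x \to \partial X$, the Gromov products $(g_m x \mid g_n x)_x$ stay bounded for $m,n$ far apart along the ray only if $d_X(x,g_n x) \to \infty$, but to upgrade ``$\to \infty$'' to ``linearly'' one uses that the harmonic measure of the associated random walk gives full measure to the conical limit set and that such walks have positive speed in $X$ (a theorem of Maher--Tiozzo for groups acting on separable hyperbolic spaces, or Gou\"ezel's work), then transfers this via the comparison of $\nu$ with harmonic measure established in the first step. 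Alternatively, one shows directly that the probability (under the automaton's stationary measure) that a length-$k$ subword acts with translation length $\ge \epsilon k$ in $X$ is bounded below, using that a random word contains, with definite frequency, patterns that are loxodromic with uniform lower bound on translation length — this is where the ping-pong/north-south dynamics of the two independent loxodromics on $\partial X$ enters.

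The main obstacle I anticipate is exactly this last positivity: matching the $\nu$-random geometry (geodesics in $G$) with the random-walk geometry (for which positive drift in $X$ is known), i.e. proving that $\nu$ and the harmonic measure of a well-chosen finitely supported (or finite-first-moment) random walk lie in the same class with tame Radon--Nikodym derivative. This requires the full strength of the automatic/Markov description of $\nu$ and an identification of boundary measures, and it is the technical heart on which both the existence of $L$ and its positivity rest; the rest of the argument (Lipschitz orbit map, Kingman, basepoint independence) is routine.
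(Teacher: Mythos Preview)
Your outline has the right skeleton---use the automaton/Markov chain to model $\nu$-typical rays, then import positive drift from random-walk theory---but the bridge you propose between the Markov chain and the random-walk results is where the argument breaks. The Maher--Tiozzo drift theorem is stated for \emph{i.i.d.} random walks on $G$, not for Markov chains on the automaton; the increments along a combing geodesic are generators of $G$ chosen according to a Markov process, and there is no single probability measure $\mu$ on $G$ whose convolution powers model the $n$-step distribution of this chain. So you cannot simply say ``such walks have positive speed in $X$ by Maher--Tiozzo'' and then transfer via a measure-class comparison. Relatedly, your assertion that $\nu$ is in the measure class of a harmonic measure with tame Radon--Nikodym derivative is not established (and the paper does not claim it).

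The paper's actual mechanism is different and more delicate. It fixes a recurrent vertex $v$ of the automaton and considers the \emph{first-return} process at $v$: the successive excursions are i.i.d.\ with law $\mu_v$ supported on the loop semigroup $\Gamma_v \le G$ (Lemma~\ref{lem:getting_iids}), and $\mu_v$ has finite exponential moment. The substantive step you are missing is Proposition~\ref{P:general} and Corollary~\ref{c:general}: one must prove that $\Gamma_v$ acts \emph{nonelementarily} on $X$, which is not obvious (it uses a Baire-category argument on $\partial G$, maximal growth of shadows, and a result of Gou\"ezel--Math\'eus--Maucourant that a maximal-growth subsemigroup generates a finite-index subgroup). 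Only then does Maher--Tiozzo yield positive drift $\ell_v>0$ for the return walk, which combined with the finite mean return time $T_v$ gives drift $\ell_v/T_v$ along the subsequence of return times. The paper then patches together the subsequential limits at the various vertices of a component via the elementary Lemma~\ref{L:equiv}, and finally uses ergodicity of $G \curvearrowright (\partial G,\nu)$ (not ergodicity of the Markov shift) to show the drift is the same across all maximal components.

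Your Kingman suggestion is a legitimate alternative for the \emph{existence} of the limit (the cocycle $f_n(\omega)=d_X(x,\ev(e_1\cdots e_n)x)$ is subadditive over the Markov shift), but you would still face the issues of multiple recurrent components and, crucially, you would still need the nonelementarity of the loop semigroups to get $L>0$. Your ``alternatively'' sketch (definite frequency of loxodromic subwords) is in spirit correct but would require essentially the same work.
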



Theorem \ref{th:intro_1} and Theorem \ref{th:intro_2} are proven using a Markov chain (introduced by Calegari--Fujiwara \cite{calegari2010combable}) on the directed graph $\Gamma$ which parameterizes a geodesic combing of $G$. That is, directed paths in $\Gamma$ evaluate to geodesics in the group $G$. 
Let $\mathbb{P}$ be the corresponding Markov measure on the space of sample paths $(w_n)$ which begin at the ``identity'' vertex of $\Gamma$. (See Section \ref{sec:Markov_action} for details.) By combining Theorem \ref{th:intro_1} and Theorem \ref{th:intro_2} we show that along a $\mathbb{P}$--typical path in $\Gamma$ one encounters elements of $G$ which are loxodromic for the action on $X$ and whose translation lengths grow linearly. 
Recall that the \emph{(stable) translation length of $g$} with respect to its action on $X$ is
\[
\tau_X(g) = \liminf_{n \to \infty} \frac{d_X(x,g^nx)}{n},
\]
which is well-defined, independent of $x \in X$, and an element $g \in G$ is $X$--loxodromic if and only if $\tau_X(g) >0$.

\begin{theorem}[Linear growth of $X$--translation lengths] \label{th:intro_Mark}
Let $G$ be a hyperbolic group with a nonelementary action by isometries on a separable hyperbolic metric space $X$.
Then there is an $L_1 >0$ such that 
\[
\PP \big(\tau_X(w_n) \ge L_1 n \big) \to 1,
\]
as $n \to \infty$.
\end{theorem}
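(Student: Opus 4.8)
The plan is to combine the two statements already in hand---convergence to the boundary of $X$ (Theorem \ref{th:intro_1}) and positive drift (Theorem \ref{th:intro_2})---transported to the Markov chain $\Gamma$, together with a ``two--sided'' or ``alignment'' argument that lets one promote linear \emph{displacement} $d_X(x, w_n x)$ to linear \emph{translation length} $\tau_X(w_n)$. The key mechanism is that for an element $g$ acting on a hyperbolic space, if the geodesic $[x, gx]$ and its $g$--translate $[gx, g^2x]$ overlap in a definite-length segment near their common endpoint $gx$ in a way that ``points the same direction,'' then $g$ is loxodromic with $\tau_X(g)$ comparable to $d_X(x, gx)$. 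Concretely, a standard lemma (cf. the local-to-global arguments for quasi-geodesics, or an explicit Gromov-product estimate) gives constants $\delta, B$ depending only on the hyperbolicity constant of $X$ such that: if $d_X(x,gx) \geq B$ and the Gromov product $(x \mid g^2 x)_{gx}$ is at most $d_X(x,gx) - B$, then $\tau_X(g) \geq \tfrac{1}{2} d_X(x,gx)$ (say). So the task reduces to showing that along a $\PP$--typical sample path $(w_n)$, with probability tending to $1$, one has both $d_X(x, w_n x) \geq L n/2$ and the Gromov-product/alignment condition at the ``turn'' $w_n \mapsto w_n^2$.

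First I would record that Theorems \ref{th:intro_1} and \ref{th:intro_2} hold in the Markov-measure formulation: since the chain $\Gamma$ parameterizes a geodesic combing, a $\PP$--typical sample path $(w_n)$ projects to a geodesic ray in $G$ converging to a $\nu$--typical boundary point $\eta \in \partial G$ (this correspondence between the Markov measure and Patterson--Sullivan measure is exactly what Section \ref{sec:Markov_action} is set up to provide). Hence $\Phi([1,\eta))$ converges to a point $\partial\Phi(\eta) \in \partial X$, and $d_X(x, w_n x)/n \to L > 0$ almost surely; by Egorov (or simply convergence in probability) we get, for any $\epsilon > 0$, that $\PP(d_X(x, w_n x) \geq (L-\epsilon) n) \to 1$. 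The second ingredient is a statement about the \emph{reversed} or \emph{shifted} ray: because $\Gamma$ is finite and the Markov chain is (on its recurrent components) stationary in an appropriate sense, the behavior of the ray seen from $w_n$ looking forward is, up to bounded error and up to passing to the relevant recurrent class, a copy of the same process; more robustly, one argues that for a typical $\eta$ the two ``half-rays'' $\Phi([1,\eta))$ and the reverse both have well-defined distinct limit points in $\partial X$, so the bi-infinite geodesic they span is a genuine quasigeodesic in $X$ and the displacements accumulate without backtracking.

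The concrete way to get the alignment I would carry out as follows. Fix $x \in X$. For $m < n$ consider the three points $x$, $w_m x$, $w_n x$. Hyperbolicity plus positive drift along the subsegment $[w_m, w_n]$ of the combing geodesic (which is itself a geodesic in $G$, hence its image has the same linear-progress property, by $G$-equivariance applied to $w_m^{-1}$) give that the Gromov product $(x \mid w_n x)_{w_m x}$ is bounded by a constant independent of $n$ once $m$ is large --- this is the statement that the image path $\Phi([1,\eta))$ is a (parametrized) quasigeodesic in $X$, which follows from Theorems \ref{th:intro_1} and \ref{th:intro_2} by a now-standard argument (a path making linear progress and converging to the boundary is a quasigeodesic; the quasigeodesic constants are uniform over $\nu$--typical $\eta$ after restricting to a set of measure $>1-\epsilon$). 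Now apply this with the element $g = w_n$ and the orbit points $x$, $gx = w_n x$, $g^2 x = w_n^2 x$: equivariance translates the quasigeodesic estimate for $\Phi([1, w_n \eta))$ near $w_n x$ into exactly the bound $(x \mid w_n^2 x)_{w_n x} \leq d_X(x, w_n x) - B$ needed above, on an event of probability $\geq 1 - \epsilon$ for $n$ large. Combined with $d_X(x, w_n x) \geq (L-\epsilon)n \geq B$, the loxodromic criterion yields $\tau_X(w_n) \geq L_1 n$ with $L_1 = (L-\epsilon)/2$ on that event, proving $\PP(\tau_X(w_n) \geq L_1 n) \to 1$.

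The main obstacle I anticipate is making the ``alignment at the turn'' rigorous with \emph{uniform} constants: Theorems \ref{th:intro_1} and \ref{th:intro_2} are almost-sure statements about one ray, but to control $w_n^2$ one needs quasigeodesic constants for $\Phi([1, w_n\eta))$ that do not degenerate as $n \to \infty$, i.e. one needs the quasigeodesic behavior to hold on a single event of measure $> 1 - \epsilon$ simultaneously for the ray and all its relevant sub-rays/translates. This is where the finiteness of $\Gamma$ and a maximal-inequality or recurrence argument for the Markov chain does the real work --- one shows the ``backtracking'' random variables (e.g. $\sup_{m} \, (x \mid w_n x)_{w_m x}$) have exponential tails uniform in $n$, so that the bad event has small probability for all large $n$ at once. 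The rest is the deterministic hyperbolic-geometry lemma relating overlap of $[x,gx]$ with $[gx, g^2x]$ to positivity and size of $\tau_X(g)$, which is routine.
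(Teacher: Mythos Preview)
Your overall architecture is right and matches the paper: reduce to the translation-length formula $\tau_X(g) \approx d(x,gx) - 2(gx,g^{-1}x)_x$ (your criterion on $(x\mid g^2x)_{gx}$ is the same thing after translating by $g^{-1}$), use the drift result for the first term, and then control the Gromov product. The gap is entirely in how you propose to bound $(w_n x, w_n^{-1}x)_x$.

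You argue that the forward ray $\Phi([1,\eta))$ is a quasigeodesic and then claim equivariance transfers this to a bound on $(x\mid w_n^2 x)_{w_n x}$. It does not. The quasigeodesic property of $\Phi([1,\eta))$ controls Gromov products $(x\mid w_n x)_{w_m x}$ for $0<m<n$, i.e.\ backtracking \emph{along the single forward path}. But $(x\mid w_n^2 x)_{w_n x}=(w_n^{-1}x\mid w_n x)_x$ involves the point $w_n^{-1}x$, which is not on the forward ray at all: it is the image of the \emph{inverse word}. Equivalently, at the ``turn'' you are concatenating $\Phi([1,w_n])$ with its $w_n$--translate, and nothing about the quasigeodesic constants of each piece prevents the concatenation from folding back on itself. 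Your suggested patch (recurrence, maximal inequalities, exponential tails of backtracking) is aimed at the wrong random variable: it would again control fluctuations along the forward ray, not the relation between $w_n x$ and $w_n^{-1}x$.

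The paper supplies the missing idea. Write $w_n=w_m u_m$ with $m=\lfloor n/2\rfloor$; by the Markov property, conditioned on the vertex $[w_m]=v$, the increment $u_m$ is distributed like a fresh $\PP_v$--path of length $n-m$, independent of $w_m$. One then proves a \emph{decay of shadows} estimate: for every large-growth vertex $v$ and every shadow $S_x(hx,R)$ of distance parameter $r$, the probability $\PP_v(\exists k:\ w_k x\in S_x(hx,R))\le p(r)$ with $p(r)\to 0$. This comes from the non-atomicity of the hitting measures $\nu_v^X$ on $\partial X$, which in turn uses that each loop semigroup $\Gamma_v$ is nonelementary. Summing over $h$ and using the almost-independence gives $\PP((w_m x,u_m^{-1}x)_x\ge f(m))\to 0$ for any $f\to\infty$. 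Two easy drift estimates then show $(w_m x,w_n x)_x$ and $(u_m^{-1}x,w_n^{-1}x)_x$ are both $\gtrsim nL/2$ with high probability, and a four-point ``fellow-traveling is contagious'' lemma transfers the smallness of $(w_m x,u_m^{-1}x)_x$ to smallness of $(w_n x,w_n^{-1}x)_x$. That independence-plus-shadow-decay step is the genuine content your proposal is missing.
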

\noindent In fact, we show that the constant $L_1$ appearing in Theorem \ref{th:intro_Mark} can be taken to be $L - \epsilon$ for any $\epsilon > 0$, 
where $L$ is the drift constant of Theorem \ref{th:intro_2}.


\subsection{Genericity in $G$ and the action $G \curvearrowright X$}
Our third collection of results concerns the typical behavior of elements $g\in G$ with respect to counting in the balls $B_n \subset G$. We emphasize that our genericity results also hold for counting in spheres $S_n$ (i.e. elements of word length exactly $n$) which gives a priori stronger information. 

First, we show that for a fixed $x \in X$, the ratio of the displacement $d_X(x,gx)$ to the word length of $g$ is uniformly bounded from below for a generic subset of $G$.

\begin{theorem}[Genericity of positive drift] \label{th: intro_gen_drift}
Let $G$ be a hyperbolic group with a nonelementary action by isometries on a separable hyperbolic metric space $X$.
Then there exists $L_1 > 0$ such that 
$$\frac{\#\{ g \in B_n \ : \ d_X(gx,x) \geq L_1|g| \}}{\#B_n} \to 1 \qquad \textup{as }n \to \infty.$$
\end{theorem}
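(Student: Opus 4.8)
The plan is to transfer the statement about the Markov measure $\PP$ (Theorem \ref{th:intro_Mark}, together with the positive-drift Theorem \ref{th:intro_2}) into a statement about counting in balls, via the automatic/combing structure of $G$. Recall that the directed graph $\Gamma$ parametrizes a geodesic combing: each $g \in G$ is represented by a directed path from the start vertex, and the number of paths of length exactly $n$ is comparable to $\#S_n$, while paths of length $\le n$ correspond to $B_n$. So the key is a comparison between the uniform counting measure on length-$n$ paths in $\Gamma$ and the Markov measure $\PP$ restricted to the first $n$ steps. This is exactly the content of the Perron--Frobenius theory for the adjacency matrix of $\Gamma$: after passing to the component of maximal growth, the uniform measure on length-$n$ paths converges (in a suitable finite-dimensional sense on cylinder sets) to the stationary Markov measure, with errors that are exponentially small in the gap between the top eigenvalue and the rest of the spectrum. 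I would isolate this as a lemma, perhaps already established in the body of the paper in the course of proving Theorem \ref{th:intro_Mark}.

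The main steps: (1) State the comparison lemma: there are constants $C \ge 1$ and $\theta < 1$ such that for any event $E$ depending only on the first $k$ steps of a path, $\big| \#\{\text{length-}n\text{ paths whose prefix lies in }E\} / \#\{\text{length-}n\text{ paths}\} - \PP(E) \big| \le C\theta^{n-k}$, or, more robustly, that uniform measure on $S_n$ pushes forward to something absolutely continuous with bounded density with respect to $\PP$ on any fixed-length prefix. One subtlety is that $\Gamma$ may fail to be recurrent or may have several maximal components; the standard fix (used by Calegari--Fujiwara and in the cited automatic-structure literature) is to restrict attention to the "big" components realizing the exponential growth rate, and to observe that the non-big part is exponentially negligible, so it does not affect genericity. (2) Fix $\epsilon > 0$ and apply Theorem \ref{th:intro_2} (with the identification that $\PP$-almost every sample path evaluates to a geodesic ray converging to a $\nu$-typical boundary point, which is part of how those theorems are proved): the set of sample paths $(w_n)$ with $d_X(x,w_n x) \ge (L-\epsilon)n$ for all large $n$ has $\PP$-probability $1$. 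Hence for every $\delta > 0$ there is $k_0$ with $\PP\big(d_X(x, w_k x) \ge (L-\epsilon)k \text{ for all } k \ge k_0\big) \ge 1 - \delta$; in particular, for each fixed $n \ge k_0$, $\PP\big(d_X(x,w_n x) \ge (L-\epsilon)n\big) \ge 1-\delta$. But this is an event depending only on the first $n$ steps, i.e. on the vertex $w_n$ itself.

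(3) Now push through the counting comparison. Writing $A_n = \{g \in G : d_X(gx,x) \ge (L-\epsilon)|g|\}$, the elements of $S_n$ in $A_n$ correspond exactly to length-$n$ paths in $\Gamma$ ending at a vertex $v$ with $d_X(vx,x)\ge (L-\epsilon)n$; by step (2) the $\PP$-mass of such paths is at least $1-\delta$, and by step (1) the proportion among all length-$n$ paths differs from this by at most $C\theta^{\text{(small)}}$ — here one must be slightly careful since the event has the same "depth" $n$ as the path length, so rather than the naive cylinder estimate one uses that the density of uniform-on-$S_n$ with respect to $\PP$ stays within a bounded factor (Perron--Frobenius positivity of the stationary vector), giving $\#(A_n \cap S_n)/\#S_n \ge 1 - C'\delta$. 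Letting $n \to \infty$ then $\delta \to 0$ gives genericity in spheres; summing over $S_m$ for $m \le n$ and using $\#S_m / \#B_n \to 0$ geometrically transfers this to balls $B_n$, yielding the theorem with $L_1 = L - \epsilon$ for any $\epsilon>0$ (so in fact for $L_1$ any constant below $L$).

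The main obstacle is step (1)/(3): making the comparison between counting measure on $S_n$ and the Markov measure $\PP$ precise when the event in question is not a shallow cylinder but involves the terminal vertex $w_n$ at the full depth $n$. The clean way around this is to prove the statement with a window: apply Theorem \ref{th:intro_2} along the shifted sequence to control $d_X(x, w_{n-j}x)$ for some slowly growing $j = j(n) \to \infty$ with $j/n \to 0$, so the event becomes a cylinder of co-depth $j$, use hyperbolicity of $X$ and the fact that $|w_n - w_{n-j}| = j = o(n)$ to pass the displacement bound from $w_{n-j}$ back to $w_n$ up to an additive $O(j) = o(n)$ error (which is absorbed by shrinking $L-\epsilon$ slightly), and then the cylinder comparison from Perron--Frobenius applies with an $O(\theta^{j(n)})$ error that vanishes. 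Everything else — the reduction from balls to spheres, the handling of non-maximal components of $\Gamma$, and the fact that $\PP$-typical paths realize the conclusion of Theorem \ref{th:intro_2} — is routine given the tools already developed in the paper.
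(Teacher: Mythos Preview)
Your proposal is correct and is essentially the paper's own proof: the paper (Theorem~\ref{th:gen_drift}) implements exactly your windowing trick with $j(n)=\log n$, passing from $g\in S_n$ to its combing prefix $\widehat g\in S_{n-\log n}$, transferring the displacement bound via the Lipschitz orbit map, and then comparing counting to the Markov measure at time $n-\log n$ by combining the bounded-density inequality $P^m(A\cap LG)\asymp\mathbb{P}^m(A)$ (Lemma~\ref{lem:comparePn}) with the fact that the set of $g\in S_n$ whose $(n-\log n)$-prefix has small growth is negligible (Proposition~\ref{P:smallg}). The only minor discrepancies are cosmetic: hyperbolicity of $X$ plays no role in the transfer step (Lipschitz alone suffices), and the paper's bounded-density comparison replaces your exponential Perron--Frobenius cylinder estimate, which conveniently sidesteps any concern about periodicity or multiple maximal components of $\Gamma$; also, Theorem~\ref{th:intro_Mark} (translation length) is not actually needed here---only the convergence-in-measure form of the drift (Corollary~\ref{cor:drift_in_measure}) enters.
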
 


By combining Theorem \ref{th: intro_gen_drift} with estimates for the Gromov product between generic elements we show that, generically, the translation length of $g$ with respect to the action $G \curvearrowright X$ grows linearly with $|g|$.

\begin{theorem}[Genericity of linear growth] \label{th:intro_gen_trans}
Let $G$ be a hyperbolic group with a nonelementary action by isometries on a separable hyperbolic metric space $X$.
Then there is an $L_1 >0$ so that 
\[
\frac{\#\{g \in B_n : \tau_X(g) \ge L_1|g| \}}{\#B_n} \to 1,
\]
as $n\to \infty$.
\end{theorem}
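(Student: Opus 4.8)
The plan is to deduce this from Theorem \ref{th: intro_gen_drift} (genericity of positive drift) together with a uniform lower bound on the Gromov product $(g^{-1}x \mid gx)_x$ in $X$ for generic $g$; the point is that the stable translation length $\tau_X(g)$ differs from the displacement $d_X(x,gx)$ by at most (roughly) twice this Gromov product, so once we know $d_X(x,gx)$ is linear in $|g|$ while the Gromov product grows sublinearly, we are done. More precisely, I would first recall the standard inequality for an isometry $g$ of a $\delta$--hyperbolic space: there is a constant $C = C(\delta)$ such that
\[
\tau_X(g) \ \geq\ d_X(x, gx) - 2\,(g^{-1}x \mid gx)_x - C.
\]
(This is a consequence of the fact that if $g$ translates $x$ a long way compared to how much the geodesics $[x,gx]$ and $[x,g^{-1}x]$ fellow-travel near $x$, then $x$ lies close to the axis of $g$ and $g$ moves points on a quasi-axis by a definite amount; see e.g. the estimates on translation length in hyperbolic spaces used earlier in the paper.) Thus it suffices to show that the set
\[
\Big\{ g \in G \ :\ d_X(x,gx) \geq L_1'|g|\ \text{ and }\ (g^{-1}x \mid gx)_x \leq \tfrac{L_1'}{4}\,|g| \Big\}
\]
is generic; on this set $\tau_X(g) \geq \tfrac{L_1'}{2}|g| - C \geq L_1 |g|$ for $n$ large, with $L_1 = L_1'/3$, say.

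The first of the two conditions defining this set is generic by Theorem \ref{th: intro_gen_drift}, so by a finite-intersection argument it is enough to show that the second condition, $(g^{-1}x \mid gx)_x = o(|g|)$, holds generically — in fact it is enough that it holds with the bound $\tfrac{L_1'}{4}|g|$. The key input here is the interaction between the Patterson--Sullivan measure $\nu$ on $\partial G$ and the structure of generic elements: a generic $g \in B_n$ has the property that a geodesic $[1,g]$ in $G$, when extended, tracks two ``independent-looking'' directions at its two ends, so that the forward endpoint $g\eta^+$ and the ``backward'' endpoint $g^{-1}$-side behave like independent $\nu$--random points of $\partial G$. Applying the boundary map $\partial\Phi \colon \partial^X G \to \partial X$ from Theorem \ref{th:intro_1}, these map to points of $\partial X$ which, by nonelementarity and the fact that $(\partial\Phi)_*\nu$ is non-atomic (which one checks from nonelementarity, since a non-atomic boundary measure pushes to a non-atomic one under an equivariant map to a space with a non-elementary action), are distinct and in fact at positive ``visual distance,'' forcing the Gromov product in $X$ of the two orbit endpoints to be finite. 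Quantifying this: one shows that for every $\epsilon>0$ there is $R$ such that the proportion of $g \in B_n$ with $(g^{-1}x \mid gx)_x \geq R$ is at most $\epsilon$ (uniformly in $n$), which is more than enough since $R$ is a constant. This uniform-in-$n$ tail bound on the Gromov product in $X$ for generic elements is exactly the type of statement the paper phrases as ``estimates for the Gromov product between generic elements,'' and it is proven by combining the Patterson--Sullivan genericity of geodesic endpoints with the positive-drift and convergence results (Theorems \ref{th:intro_1} and \ref{th:intro_2}) applied at both ends of $[1,g]$.

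The main obstacle is this last uniform estimate on $(g^{-1}x \mid gx)_x$ for generic $g$: the difficulty is that, a priori, even though $d_X(x,gx)$ grows linearly, the forward and backward orbit rays could fellow-travel in $X$ for a long (even linear) time before diverging, and ruling this out requires genuinely using that the two boundary directions at the ends of a generic geodesic segment are ``$\nu\times\nu$--independent'' and that $\partial\Phi$ does not collapse a positive-measure set of such pairs to a diagonal in $\partial X \times \partial X$. Concretely I expect to need: (i) a statement that the map $g \mapsto (\text{backward direction}, \text{forward direction})$ from $S_n$ equidistributes (as $n\to\infty$) towards $\bar\nu \times \nu$ on $\partial G \times \partial G$ in a suitable sense (this is the sphere-counting refinement, available from the automatic/Markov structure and the Patterson--Sullivan theory as set up in the paper), and (ii) the ergodic-theoretic fact that, off a diagonal of $\bar\nu\times\nu$--measure zero, $\partial\Phi$ separates the two points, so the Gromov product in $X$ is finite a.e.; a Lusin/Egorov argument then upgrades ``finite a.e.'' to ``bounded by $R$ off an $\epsilon$--set,'' giving the uniform tail bound. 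Once these are in place, assembling the proof is the routine finite-intersection and inequality-chasing sketched above.
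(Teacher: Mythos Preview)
Your overall strategy is exactly the paper's: use the estimate $\tau_X(g) \geq d_X(x,gx) - 2(gx,g^{-1}x)_x - C$, invoke Theorem~\ref{th: intro_gen_drift} for the displacement term, and then show the Gromov product is $o(|g|)$ generically. The divergence is entirely in how you propose to control $(gx,g^{-1}x)_x$.

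The paper does \emph{not} pass through boundary points or an equidistribution statement on $\partial G \times \partial G$. Instead it splits each $g \in S_n$ at its combing midpoint as $g = ab$ with $|a| \approx |b| \approx n/2$, and proves three things: (i) $(ax,gx)_x \geq n(L-\eta)/2$ and $(b^{-1}x,g^{-1}x)_x \geq n(L-\eta)/2$ generically (an easy consequence of positive drift, since $a$ lies on the combing geodesic to $g$ and $b^{-1}$ on the one to $g^{-1}$); (ii) $(ax,b^{-1}x)_x \leq f(n)$ generically for any $f(n)\to\infty$, by a direct decay-of-shadows estimate for the counting measure (Proposition~\ref{P:counting-decay}), using that $a$ and $b$ are ``almost independent'' (Lemma~\ref{l:indep-count}); and (iii) a four-point hyperbolicity lemma (``fellow traveling is contagious'', Lemma~\ref{l:fellow_travel}) which transfers the bound on $(ax,b^{-1}x)_x$ to one on $(gx,g^{-1}x)_x$. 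The entire argument stays at the level of finite group elements and the Markov-chain shadow estimates.

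Your route through $\bar\nu\times\nu$ and Lusin/Egorov has a genuine gap. First, the equidistribution of $g\mapsto(\text{backward},\text{forward})$ to $\bar\nu\times\nu$ is not proved anywhere in the paper and is itself a substantial statement; the paper's replacement is the much softer almost-independence lemma $P^n(a\in A,\, b\in B)\leq c\,P^{n_1}(A)P^{n_2}(B)$. Second, and more seriously, even granting equidistribution, a Lusin/Egorov bound on the \emph{boundary} Gromov product $(\partial\Phi(\xi_-),\partial\Phi(\xi_+))_x$ does not by itself control the \emph{finite-time} product $(g^{-1}x,gx)_x$ uniformly in $n$: you still need to know that $gx$ is close (in the shadow sense) to $\partial\Phi(\xi_+)$ and $g^{-1}x$ to $\partial\Phi(\xi_-)$ with uniformly high probability, and quantifying that closeness is precisely the decay-of-shadows input you are trying to avoid. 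In other words, the step you flag as ``the main obstacle'' is real, and the paper resolves it not via boundary measure theory but via the midpoint splitting and the shadow decay coming from the harmonic/Markov machinery.
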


Just as above, the constant $L_1$ appearing in Theorem \ref{th: intro_gen_drift} and Theorem \ref{th:intro_gen_trans} can be taken to be $L - \epsilon$
for any $\epsilon > 0$, where $L$ is the drift constant of Theorem \ref{th:intro_2}. \\

Since $\tau_X(g) >0$ if and only if $g$ is loxodromic for the action $G \curvearrowright X$, Theorem \ref{th:intro_gen_trans} immediately implies that $X$--loxodromics are generic in $G$, proving Theorem \ref{cor:intro_gen_lox}.

\subsection{Methods and connections to previous results}

Counting problems for discrete groups have a long history; in particular, starting with Margulis' thesis \cite{margulis}, 
much attention has been devoted to counting orbit points of a lattice in a Lie group, 
with respect to the Riemannian metric in the Lie group. This is closely related to counting geodesics of a certain length in the quotient space. 
 

However, few works have addressed the counting with respect to the \emph{word metric} on $G$. 
For example, Pollicott and Sharp \cite{pollicott1998comparison} compare word length in a cocompact lattice of $\mathrm{Isom}(\mathbb{H}^n)$  to distance between orbit points in $\mathbb{H}^n$.
In a different vein, Calegari and Fujiwara \cite{calegari2010combable} study the generic behavior of a bicombable function on a hyperbolic group and, in particular, establish a Central Limit Theorem for such functions. Finally, Wiest \cite{wiest2014genericity} recently showed that if a group $G$ satisfies a weak automaticity condition and the action $G \curvearrowright X$ on a hyperbolic space $X$ satisfies a strong ``geodesic word hypothesis,'' then the loxodromics make up a \emph{definite proportion} of elements of the $n$ ball (when counting with respect to certain normal forms). This geodesic word hypothesis essentially requires geodesics in the group $G$, given by the normal forms, to project to unparameterized quasigeodesics in the space $X$ under the orbit map. 

Another way of counting is to run a random walk on the Cayley graph of $G$ and count with respect to the $n$-step distribution of such random walk.
Since sample paths are generally not geodesics, this counting is also different from counting in balls. However, 
in this case many more results are known: for instance, Rivin \cite{rivin2008walks} and Maher \cite{Maher} proved that pseudo-Anosov mapping classes 
are generic with respect to random walks in the mapping class group.
Moreover, if a random walk converges almost surely
to $\partial X$, then it defines a \emph{harmonic measure} on $\partial X$ which is the hitting measure at infinity of the walk. 
See, among others, \cite{kaimanovich1994poisson, calegari2015statistics, MaherTiozzo}. 

In general, it is hard to compare the hitting measure for a random walk with the Patterson--Sullivan measure, and that makes the 
two types of counting different. 
In our case though, the theory of geodesic combings of hyperbolic groups \cite{cannon1984combinatorial} tells us that counting with respect 
to balls is equivalent to counting paths in a certain finite graph. Further, there exists a Markov chain on the graph whose $n$-step distribution produces essentially 
the uniform distribution on balls \cite{calegari2010combable}.  \\




One consequence of our study is that (a particular normalization of) the Patterson--Sullivan measure $\nu$ on $\partial G$
decomposes as a countable sum of harmonic measures associated to random walks on $G$, in the following sense. 



\begin{proposition} \label{prop:intro_sum}
There exists a finite collection of measures $\nu_1, \dots, \nu_r$ on $\partial G$, which are harmonic measures for random walks on $G$ with finite 
exponential moment, and such that the Patterson--Sullivan measure $\nu$ can be written as 
\[
\nu = \sum_{g \in G} a_g \ g_* \nu_{i(g)},
\]
where each $a_g$ is a non-negative, real coefficient, and $i(g) \in \{1, \dots, r \}$.
\end{proposition}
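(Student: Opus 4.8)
The plan is to unwind the combing/Markov description of the Patterson--Sullivan measure $\nu$ and then to recognize the stationary measure on each ``maximal'' strongly connected component of the combing graph $\Gamma$ as a countable family of translates of the harmonic measure of an explicit random walk.

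First I would recall (see Section~\ref{sec:Markov_action}, following \cite{cannon1984combinatorial, calegari2010combable}) that $G$ carries a geodesic combing: a finite directed graph $\Gamma$ with a base vertex $*$, edges labelled by $S \cup S^{-1}$, so that the evaluation map $\ev$ sends directed paths based at $*$ bijectively onto $G$ with $|\ev(p)|$ equal to the length of $p$. Consequently infinite directed paths from $*$ evaluate to geodesic rays of $G$, and this induces a surjection onto $\partial G$; under this surjection the normalized measure $\nu$ is the image of the measure of maximal entropy of the subshift of finite type defined by $\Gamma$ --- equivalently, the boundary image of the stationary measure of the Calegari--Fujiwara chain, whose $n$-step law, evaluated in $G$, is comparable to the uniform measure on $B_n$ (compare \cite{coornaert1993mesures, calegari2010combable}). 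Decomposing $\Gamma$ into strongly connected components, let $C_1, \dots, C_r$ be the maximal ones, i.e.\ those whose transition matrix has Perron--Frobenius eigenvalue equal to the growth rate of $G$, and fix a vertex $v_i \in C_i$ for each $i$ (automatically distinct, as the $C_i$ are disjoint). By the structure of the measure of maximal entropy on a reducible subshift of finite type, $\nu$--almost every geodesic ray eventually remains in a single $C_i$, and the part $\nu^{(i)}$ of $\nu$ carried by those rays has the form
\[
\nu^{(i)} \;=\; \sum_{p \colon * \to v_i} \gamma_p \; \ev(p)_* \, \nu_i ,
\]
where the sum runs over finite directed paths $p$ from $*$ to $v_i$, the $\gamma_p$ are nonnegative, and $\nu_i$ is the boundary image of the Parry (maximal-entropy stationary) measure $\mathbb{P}_i$ of the irreducible component $C_i$, viewed as a measure on infinite directed paths issuing from $v_i$.

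Next I would identify $\nu_i$ as a harmonic measure. Under $\mathbb{P}_i$ the path returns to $v_i$ infinitely often (recurrence of a finite irreducible chain), so it decomposes into i.i.d.\ first-return excursions at $v_i$; let $\mu_i$ denote the law on $G$ of the evaluation of a single such excursion. Any excursion occurs inside an infinite directed path based at $*$ (prepend any path $* \to v_i$), so its evaluation $h$ satisfies $|h| = \ell$, where $\ell$ is the number of edges of the excursion; hence the successive partial products of i.i.d.\ $\mu_i$--increments trace a geodesic ray in the Cayley graph, the $\mu_i$--random walk converges almost surely to a point of $\partial G$, and the law of that point is precisely $\nu_i$. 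Thus $\nu_i$ is the harmonic measure of the random walk driven by $\mu_i$. Moreover $\mu_i$ has finite exponential moment: the first-return time $\tau$ of a finite irreducible Markov chain has geometric tail, and since $\sum_{|g| = \ell} \mu_i(g) = \mathbb{P}_i(\tau = \ell)$ we get $\sum_{g} \mu_i(g) e^{\epsilon |g|} < \infty$ for all small $\epsilon > 0$.

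Finally I would assemble the decomposition. Summing over components,
\[
\nu \;=\; \sum_{i=1}^{r} \nu^{(i)} \;=\; \sum_{i=1}^{r} \; \sum_{p \colon * \to v_i} \gamma_p \; \ev(p)_* \, \nu_i .
\]
Since $\ev$ is a bijection on paths based at $*$ and the $v_i$ are distinct, for a fixed $g \in G$ there is at most one index $i$ for which $g = \ev(p)$ for some path $p$ ending at $v_i$; taking $i(g)$ to be that index (and, say, $i(g) = 1$ and $a_g = 0$ otherwise) and $a_g := \sum_{\ev(p) = g} \gamma_p$ gives $\nu = \sum_{g \in G} a_g \, g_* \nu_{i(g)}$, a countable sum since $G$ is countable. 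The main obstacle is the first step: establishing that $\nu$ is carried by geodesic rays that stabilize in a maximal component, and extracting the above normal form for $\nu^{(i)}$ with the tail distributed according to the Parry measure of $C_i$. This rests on the structure of the measure of maximal entropy on a reducible subshift of finite type together with the properties of the Calegari--Fujiwara chain established in Section~\ref{sec:Markov_action}; granting this, the remaining steps --- the excursion decomposition, the geometric-tail bound for the exponential moment, and the identification of the limiting hitting law with $\nu_i$ --- are routine consequences of the finiteness of $\Gamma$ and the additivity of word length along directed paths.
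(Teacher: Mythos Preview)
Your proposal is correct and follows the same strategy as the paper's proof (which appears as Proposition~\ref{prop:PS_comb}): decompose the Calegari--Fujiwara Markov chain at successive visits to a chosen recurrent vertex, recognize the excursions as i.i.d.\ increments of a random walk, and identify the tail hitting measure with the walk's harmonic measure, with the exponential moment coming from the geometric tail of the return time (Lemma~\ref{lem:first_return}). The one noteworthy difference is the indexing: you fix a single vertex $v_i$ per maximal component $C_i$ and decompose at the first visit to $v_i$, whereas the paper takes one harmonic measure $\nu_v$ for \emph{every} recurrent vertex $v$ and decomposes at the first recurrent vertex encountered along the path (so the prefix set $N_v$ consists of paths avoiding all recurrent vertices in their interior). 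Your choice yields fewer harmonic measures (one per component rather than one per recurrent vertex) at the price of allowing the prefixes to wander inside $C_i$ before reaching $v_i$; the paper's choice keeps the prefixes outside the recurrent set. Both are strong-Markov decompositions of the same chain and lead to the same conclusion. A minor terminological point: what you call the ``Parry measure of $C_i$ viewed as a measure on infinite paths from $v_i$'' is really the one-sided Markov measure $\mathbb{P}_{v_i}$ with the Parry transition probabilities, not the shift-invariant Parry measure itself.
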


One should compare Proposition \ref{prop:intro_sum} with the main theorem of Connell--Muchnik \cite{connell2007harmonicity},
who show that for certain actions of hyperbolic groups, the Patterson--Sullivan measure is actually the harmonic measure for some random walk on $G$. We note that their result does not apply in our setting (the Gromov boundary of a hyperbolic group) since the Patterson--Sullivan measure $\nu$ 
does not necessarily have the property that the Radon--Nikodym derivative of $g_*\nu$ with respect to $\nu$ is continuous.
See also Remark 2.15 of \cite{gouezel2015entropy}. 

\subsection{Applications} \label{sec:apps}
We now collect some immediate applications of our main results. 

\subsubsection{Splittings and quotients of hyperbolic groups}
Applying Theorem \ref{th:intro_gen_trans} directly to the action $G \curvearrowright C_S(G)$ of a hyperbolic group on its Cayley graph, we see that generic elements of $G$ are infinite order and their translation lengths grow linearly in word length. 
The main point of our results is that they apply to a much more general setting.

For example, recall that a splitting of a group $G$ is an action of $G$ on a simplicial tree $T$ -- this is equivalent to realizing $G$ as the fundamental group of a graph of groups \cite{Serre}. The splitting is 
minimal if there is no invariant subtree. For $g \in G$, we denote by $g^*$ 
the shortest representative of the conjugacy class of $g$ in $T/G$.

\begin{proposition}
Suppose that $G \curvearrowright T$ is a minimal splitting of a hyperbolic group $G$ such that $T$ has at least $3$ ends. Then the set of elements which are not conjugate into a vertex stabilizer is generic.

Moreover,  there is an $L >0$ such that the set of $g \in G$ having the property that 
\[
\#(\mathrm{edges} \ \mathrm{crossed} \ \mathrm{by} \ g^* ) \ge L |g|
\]
is generic in $G$.
\end{proposition}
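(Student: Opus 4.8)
The plan is to deduce both assertions directly from Theorem~\ref{cor:intro_gen_lox} and Theorem~\ref{th:intro_gen_trans} applied to the action $G \curvearrowright T$, after checking that this action satisfies the standing hypotheses of those theorems and then translating their conclusions through Bass--Serre theory. First I would verify the hypotheses. A simplicial tree is $0$--hyperbolic, hence hyperbolic; since $G$ is finitely generated and acts minimally, the quotient $T/G$ is a finite graph of groups, so $T$ has countably many cells and is therefore separable; and $G$ acts on $T$ by isometries by assumption. The one substantive point is nonelementarity. By the standard classification of isometric group actions on trees, a minimal action either is elliptic (has a global fixed point), or is lineal (preserves a line, which by minimality must be all of $T$), or fixes an end (parabolic or focal), or is of general type, i.e.\ contains two independent loxodromics. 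Since $T$ has at least $3$ ends it is neither a point nor a line, which rules out the first two cases; ruling out the remaining fixed--end cases, the action is nonelementary, and Theorems~\ref{cor:intro_gen_lox} and~\ref{th:intro_gen_trans} apply to $G \curvearrowright T$.

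Next I would set up the dictionary between the dynamics of $G \curvearrowright T$ and the algebraic and combinatorial quantities in the statement. After passing to the barycentric subdivision, if necessary, so that no element inverts an edge, every $g \in G$ acts on $T$ either elliptically, fixing a vertex, or as a loxodromic isometry with an axis and positive translation length; and by Bass--Serre theory $g$ is elliptic on $T$ if and only if $g$ is conjugate into a vertex stabilizer. Thus $\{g \in G : g \text{ is } T\text{--loxodromic}\}$ is precisely the set of $g$ not conjugate into a vertex stabilizer. Moreover, for a tree isometry the stable translation length $\tau_T(g)$ equals the ordinary translation length $\min_{x \in T} d_T(x,gx)$, which in turn equals the combinatorial length of the shortest loop in the quotient graph $T/G$ representing the conjugacy class of $g$; that is, $\tau_T(g)$ equals the number of edges crossed by $g^*$ (the subdivision only rescales both sides by a uniform factor, which can be absorbed into the constant).

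Combining these: Theorem~\ref{th:intro_gen_trans} provides $L > 0$ such that $\{g : \tau_T(g) \ge L|g|\}$ is generic in $G$, and by the dictionary above this is exactly the set of $g$ with at least $L|g|$ edges crossed by $g^*$, which is the ``moreover'' clause. Since for $g \neq 1$ membership in this set forces $g$ to be $T$--loxodromic, hence not conjugate into a vertex stabilizer, the first assertion follows as well (alternatively, it follows at once from Theorem~\ref{cor:intro_gen_lox} together with the identification of $T$--loxodromics). There is no analytic content to produce here: everything is packaged in the already established Theorems~\ref{cor:intro_gen_lox} and~\ref{th:intro_gen_trans}. The only points requiring care are the verification that a minimal action of a hyperbolic group on a tree with at least $3$ ends is nonelementary (in particular, excluding the possibility of a fixed end), and the Bass--Serre translations above, notably the harmless subdivision used to eliminate inversions and the identification of the number of edges crossed by $g^*$ with the translation length $\tau_T(g)$.
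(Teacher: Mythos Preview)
The paper gives no proof for this proposition; it is stated as an immediate application of Theorems~\ref{cor:intro_gen_lox} and~\ref{th:intro_gen_trans}. Your approach---check the standing hypotheses for $G \curvearrowright T$, apply those theorems, and translate via Bass--Serre theory (elliptic $\Leftrightarrow$ conjugate into a vertex stabilizer; $\tau_T(g)$ equals the edge-length of $g^*$)---is exactly what is intended, and you go further than the paper by isolating the one genuinely nontrivial point, namely that the action is nonelementary, which amounts to excluding a globally fixed end.
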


Similar statements can be made about other useful actions of $G$ on hyperbolic spaces. These include the quasi--trees of Bestvina--Bromberg--Fujiwara \cite{BBF} or hyperbolic graphs obtained by coning--off uniformly quasiconvex subsets of the hyperbolic group $G$ \cite{KapRaf}.
We state one further general application, which follows directly from Theorem \ref{th: intro_gen_drift} and Theorem \ref{th:intro_1}.

\begin{theorem}[Epimorphisms are generically bilipschitz]
Let $\phi \colon G \to H$ be a surjective homomorphism between nonelementary hyperbolic groups. Then there is an $L >0$ such that the set of $g \in G$ for which 
\[
|\phi(g)| \ge L |g|
\]
is generic in $G$.

Moreover, there is a subset $\partial^H G$ of $\partial G$ with $\nu(\partial^H G) = 1$ and a boundary map $\partial \phi \colon \partial^H G \to \partial H$ extending the homomorphism $\phi \colon G \to H$.
\end{theorem}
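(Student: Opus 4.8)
The plan is to derive this as a direct application of the two cited results, Theorem \ref{th: intro_gen_drift} and Theorem \ref{th:intro_1}, applied to the action of $G$ on a suitable hyperbolic space associated to the epimorphism $\phi$. The natural choice is to let $H$ act on its own Cayley graph $\cay_{S'}(H)$ for some finite generating set $S'$ of $H$, and pull this back along $\phi$ to an action $G \curvearrowright \cay_{S'}(H)$. Since $H$ is a nonelementary hyperbolic group, its action on $\cay_{S'}(H)$ is by isometries on a hyperbolic (indeed proper, hence separable) metric space, and it is nonelementary in the required sense: $H$ contains two independent loxodromic isometries of its own Cayley graph (any two infinite-order elements generating a nonabelian free subgroup, which exist by nonelementarity of $H$), and since $\phi$ is surjective these are images of elements of $G$. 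Thus the composed action $G \to H \curvearrowright \cay_{S'}(H)$ is a nonelementary action of $G$ by isometries on a separable hyperbolic space $X$.

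With this setup, I would first observe that for the orbit map based at $1 \in H$, one has $d_X(gx, x) = d_{\cay_{S'}(H)}(\phi(g), 1) = |\phi(g)|_{S'}$, so the displacement in $X$ of $g \in G$ is exactly the word length of $\phi(g)$ in $H$. Applying Theorem \ref{th: intro_gen_drift} to this action gives a constant $L > 0$ such that the set $\{g \in B_n : |\phi(g)|_{S'} \ge L|g|_S\}$ has proportion tending to $1$ in $B_n$; since any two word metrics on $H$ are bilipschitz equivalent, the statement then holds (with a modified constant) for an arbitrary generating set of $H$, yielding the genericity of $\{g : |\phi(g)| \ge L|g|\}$. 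For the second claim, applying Theorem \ref{th:intro_1} to the same action produces a $\nu$-full-measure set $\partial^X G \subseteq \partial G$ on which the orbit images of geodesic rays converge in $\cay_{S'}(H) \cup \partial \cay_{S'}(H)$, and hence a $\nu$-measurable $G$-equivariant boundary map $\partial\Phi \colon \partial^X G \to \partial H$; setting $\partial^H G := \partial^X G$ and $\partial\phi := \partial\Phi$ gives the desired extension. The $G$-equivariance of $\partial\Phi$ (where $G$ acts on $\partial H$ through $\phi$) is precisely the statement that $\partial\phi$ extends the homomorphism $\phi$ in the appropriate sense.

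The only genuinely non-routine point is verifying that the pulled-back action satisfies the hypotheses of the quoted theorems, and this is essentially immediate: separability of $X = \cay_{S'}(H)$ is automatic since it is a proper (locally finite) metric space, hyperbolicity holds since $H$ is a hyperbolic group, and nonelementarity transfers from $H$ to the $G$-action because $\phi$ is onto, so the two independent loxodromics in $H$ lift to elements of $G$ acting as independent loxodromics on $X$. I would also want to remark that $\tau_X(g)$ being positive corresponds to $\phi(g)$ having infinite order in $H$, which is a side benefit though not needed for the statement. The main obstacle, such as it is, is purely bookkeeping: tracking the change of word metric on $H$ and confirming that "extends the homomorphism" is the correct reading of the equivariance property of $\partial\Phi$ furnished by Theorem \ref{th:intro_1}.
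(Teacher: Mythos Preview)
Your proposal is correct and is precisely the argument the paper intends: the theorem is stated as a direct consequence of Theorem \ref{th: intro_gen_drift} and Theorem \ref{th:intro_1}, and your choice of $X=\cay_{S'}(H)$ with $G$ acting via $\phi$, together with the verification that this action is nonelementary (using surjectivity of $\phi$), is exactly how one unpacks that claim. The paper gives no further details beyond citing the two theorems, so your write-up in fact supplies more than the paper does.
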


\subsubsection{Mapping class groups and $\mathrm{Out}(F_N)$}
Our main result on the genericity of loxodromics is in part motivated by a long-standing conjecture about the mapping class group of an orientable surface $S$ with $\chi(S) <-1$. For background material on mapping class groups see, for example, \cite{FM}.

\begin{conjecture}[{\cite[Conjecture 3.15]{Farbproblems}}] \label{conj_farb}
Let $\Mod(S)$ be the mapping class group of an orientable surface $S$ with $\chi(S)<-1$. Then pseudo--Anosov mapping classes are  generic in $\Mod(S)$.
\end{conjecture}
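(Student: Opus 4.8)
As worded, this is an open conjecture, and it is worth emphasizing at the outset that it does \emph{not} follow from Theorem~\ref{cor:intro_gen_lox}: that theorem requires $G$ to be word hyperbolic, whereas $\Mod(S)$ is not (it contains $\ZZ^2$). The natural hyperbolic space on which to act is the curve complex $X=\mathcal{C}(S)$. By Masur--Minsky the action $\Mod(S)\curvearrowright\mathcal{C}(S)$ is an action by isometries on a separable hyperbolic space, it is nonelementary because any two independent pseudo-Anosovs are independent $\mathcal{C}(S)$--loxodromics, it is acylindrical by Bowditch, and an element $g$ is pseudo-Anosov if and only if $\tau_X(g)>0$, i.e.\ if and only if it is $X$--loxodromic. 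So the conjecture is precisely the statement ``$X$--loxodromics are generic'' for this one action, and the sole difficulty is the non-hyperbolicity of $\Mod(S)$.

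The plan is to isolate where hyperbolicity of $G$ is actually used in this paper and to supply substitutes valid for $\Mod(S)$. The driving mechanism here --- the Calegari--Fujiwara Markov chain on the combing graph $\Gamma$ whose directed paths evaluate to geodesics and whose $n$--step distribution approximates the uniform measure on the spheres $S_n$ --- needs only that $G$ be \emph{automatic} with a geodesic combing, and $\Mod(S)$ is automatic by Mosher. One thus obtains, for $\Mod(S)$, a graph $\Gamma$, a Markov measure $\PP$, and the reduction of ball- and sphere-counting to counting directed paths in $\Gamma$, all for free. The remaining task is to re-prove, in this setting, the analogues of Theorems~\ref{th:intro_1}, \ref{th:intro_2} and \ref{th:intro_Mark}: that for $\PP$--a.e.\ sample path $(w_n)$ the images $\Phi(w_n)=w_n x$ converge to a point of $\partial\mathcal{C}(S)$, and that $d_X(x,w_nx)$ and $\tau_X(w_n)$ grow linearly in $n$. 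Granting positive drift for the Markov chain, the passage to genericity in $\Mod(S)$ (the analogues of Theorems~\ref{th: intro_gen_drift} and \ref{th:intro_gen_trans}), and hence Conjecture~\ref{conj_farb}, would go through exactly as in the hyperbolic-group case, since those arguments (sphere-to-ball counting, Gromov-product estimates between generic normal forms) rely on the Markov structure rather than on hyperbolicity of $G$.

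The hard part --- the one step with no shortcut through what is proved here --- is \textbf{positive drift of the orbit map $\Phi\colon\Mod(S)\to\mathcal{C}(S)$ along $\PP$--typical paths}. The orbit map is badly non-quasi-isometric: a power $T_a^n$ of a Dehn twist has word length comparable to $|n|$ but a bounded orbit in $\mathcal{C}(S)$, so Wiest's ``geodesic word hypothesis'' (that normal forms project to unparametrized quasigeodesics of $X$) \emph{fails} for this action, and with it the most direct route. Instead one must show that the reducible behaviour of normal forms is $\PP$--negligible: the sub-automaton of $\Gamma$ consisting of states whose evaluated geodesics remain within bounded distance of a fixed simplex of $\mathcal{C}(S)$ --- equivalently, correspond to subwords lying in a fixed conjugate of a multicurve stabilizer $\Mod(S_\gamma)$ --- must carry strictly smaller exponential growth than $\Gamma$, and quantitatively, along a $\PP$--typical path the number of ``$\mathcal{C}(S)$--progressing'' syllables must grow linearly. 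Acylindricity of $\Mod(S)\curvearrowright\mathcal{C}(S)$ should be the key local input: it makes ``progress in $\mathcal{C}(S)$'' rigid enough that one can append a definitely-progressing pseudo-Anosov syllable compatibly with the Markov structure from a $\PP$--non-negligible set of states, feeding the subadditive and large-deviation machinery already used here for $(w_n)$.

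Finally, I would organize the negligibility of the non-pseudo-Anosov set by induction on the complexity $\xi(S)$. A non-pseudo-Anosov element of infinite order is, after passing to a power, contained in some multicurve stabilizer $\Mod(S_\gamma)$, which up to finite index is $\ZZ^{k}\times\prod_i\Mod(S_i)$ with every $\xi(S_i)<\xi(S)$; together with the (easily negligible) finite-order elements, the union of all $\Mod(S)$--conjugates of all such $\Mod(S_\gamma)$ is exactly the complement of the pseudo-Anosovs, so the conjecture is equivalent to this union being non-generic. Via the automatic structure each $\Mod(S_\gamma)$ is ``aligned'' with a sub-automaton; combining the inductive hypothesis applied to the $\Mod(S_i)$ factors with the entropy cost of staying $\mathcal{C}(S)$--bounded should give that each such sub-automaton has strictly smaller growth exponent than $\Gamma$, and summing over the countably many $\gamma$ (grouped by $\Mod(S)$--orbit, of which there are finitely many topological types) then yields non-genericity. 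I expect this interplay --- ruling out a full-growth sub-automaton all of whose words project to bounded subsets of $\mathcal{C}(S)$, in a group that is neither hyperbolic nor acts properly --- to be the main obstacle, and the reason the conjecture is not a formal consequence of the results above.
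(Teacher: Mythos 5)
You are right that this statement is an open conjecture, and the paper agrees: it records Conjecture~\ref{conj_farb} purely as motivation, explicitly says it ``seems at the moment out of reach,'' and proves only the weaker Theorem~\ref{th:pA_gen}, namely genericity of pseudo-Anosovs in \emph{hyperbolic, irreducible subgroups} $G \le \Mod(S)$ (by applying Theorem~\ref{cor:intro_gen_lox} to $G \curvearrowright \C(S)$). So there is no proof in the paper to compare against, and your diagnosis of why Theorem~\ref{cor:intro_gen_lox} does not apply to $\Mod(S)$ itself is exactly the authors' own.

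One concrete correction to your proposed program: the claim that the Markov-chain/path-counting reduction comes ``for free'' from Mosher's automaticity of $\Mod(S)$ is not right. The machinery of Sections~\ref{sec:counting_graphs}--\ref{sec:Mchains} requires a \emph{geodesic} combing --- directed paths in $\Gamma$ of length $n$ must evaluate bijectively onto the sphere $S_n$ of the word metric, which is what Cannon's theorem supplies for hyperbolic groups. Mosher's normal forms for $\Mod(S)$ are only quasi-geodesics, and it is not known that the language of geodesics in $\Mod(S)$ (for any generating set) is regular. Without that, sphere-counting does not reduce to path-counting in a finite graph, and the entire bridge between $\mathbb{P}^n$ and $P^n$ (Lemma~\ref{lem:comparePn}, Proposition~\ref{P:smallg}, and everything in Section~\ref{sec:counting}) is unavailable. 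This is a prior obstruction, independent of the positive-drift problem you correctly identify as the other main difficulty; your sketch remains a plausible program but not a proof, as you acknowledge.
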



While Conjecture \ref{conj_farb} seems at the moment out of reach, our main result does imply the corresponding statement for hyperbolic subgroups of $\Mod(S)$. Recall that $\Mod(S)$ is \emph{not} itself hyperbolic and so the techniques of this paper do not directly apply. We say that a subgroup $G$ of $\Mod(S)$ is irreducible if no finite index subgroup of $G$ fixes a multicurve on $S$.

\begin{theorem}[Genericity in $\Mod(S)$] \label{th:pA_gen}
Let $G$ be a nonelementary hyperbolic group which is an irreducible subgroup of $\Mod(S)$. Then pseudo--Anosov mapping classes are  generic in $G$ with respect to any generating set of $G$. 
\end{theorem}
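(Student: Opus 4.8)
The plan is to apply Theorem~\ref{cor:intro_gen_lox} to the action of $G$ on the curve complex $\mathcal{C}(S)$. Recall that $\mathcal{C}(S)$ is the graph whose vertices are isotopy classes of essential simple closed curves on $S$ and whose edges join curves that can be realized disjointly (in the one low-complexity case with $\chi(S)<-1$ one uses the standard Farey-graph modification). Its vertex set is countable, so $\mathcal{C}(S)$ is separable, and by Masur--Minsky it is Gromov hyperbolic. The group $\Mod(S)$ acts on $\mathcal{C}(S)$ simplicially, hence by isometries, and I would consider the restriction of this action to the subgroup $G\leq\Mod(S)$.

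The first point to record is that the loxodromic isometries of $\mathcal{C}(S)$ coming from $\Mod(S)$ are exactly the pseudo-Anosov mapping classes: by Masur--Minsky a pseudo-Anosov acts on $\mathcal{C}(S)$ with positive stable translation length, while reducible and finite-order classes have bounded orbits and hence $\tau_{\mathcal{C}(S)}=0$. Since being loxodromic as an isometry of $\mathcal{C}(S)$ is intrinsic to the element and does not depend on which subgroup one regards it in, the set of $\mathcal{C}(S)$--loxodromic elements of $G$ is precisely the set of pseudo-Anosov elements of $G$.

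The second, and only substantive, point is to verify that the action $G\curvearrowright\mathcal{C}(S)$ is nonelementary, i.e.\ that $G$ contains two independent pseudo-Anosovs. Here I would invoke the subgroup classification of Ivanov (see also McCarthy and McCarthy--Papadopoulos): a subgroup of $\Mod(S)$ that is not virtually cyclic and that has no finite-index subgroup fixing the isotopy class of an essential multicurve must contain a rank-two free group generated by two independent pseudo-Anosovs. By hypothesis $G$ is irreducible, which is exactly the statement that no finite-index subgroup of $G$ fixes a multicurve, and, being a nonelementary hyperbolic group, $G$ is not virtually cyclic; hence $G$ contains two independent pseudo-Anosovs, and these are two independent loxodromic isometries of $\mathcal{C}(S)$, so $G\curvearrowright\mathcal{C}(S)$ is nonelementary in the sense required.

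With these two points in hand the theorem follows immediately: Theorem~\ref{cor:intro_gen_lox}, applied to $G\curvearrowright\mathcal{C}(S)$ and to an arbitrary finite generating set of $G$, shows that the proportion of $\mathcal{C}(S)$--loxodromic elements in $B_n$ tends to $1$, and by the first point these are exactly the pseudo-Anosov elements of $G$; one even gets, via Theorem~\ref{th:intro_gen_trans}, that $\tau_{\mathcal{C}(S)}(g)$ grows linearly in $|g|$ for generic $g$. I expect no real obstacle here, since the work is done by the general theorems of this paper together with standard facts about $\mathcal{C}(S)$; the one place that requires a moment's care is matching the paper's finite-index formulation of ``irreducible'' with the hypothesis under which Ivanov's theorem produces independent pseudo-Anosovs, but the two formulations agree and this is routine.
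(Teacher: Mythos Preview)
Your proposal is correct and follows essentially the same route as the paper: apply Theorem~\ref{cor:intro_gen_lox} to the action of $G$ on the curve complex $\mathcal{C}(S)$, using Masur--Minsky for hyperbolicity and the identification of loxodromics with pseudo-Anosovs, and the Ivanov/Birman--Lubotzky--McCarthy subgroup structure theorems to conclude that an irreducible, non-virtually-cyclic subgroup acts nonelementarily. Your write-up is a bit more detailed (explicitly noting separability and the extra consequence from Theorem~\ref{th:intro_gen_trans}), but the argument is the same.
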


\begin{proof}
The mapping class group $\Mod(S)$ acts by isometries on the curve complex $\C(S)$ of the surface $S$, which is hyperbolic by \cite{MM1}. Since $G$ is an irreducible subgroup of $\Mod(S)$ which is not virtually cyclic, $G$ has a nonelementary action on $\C(S)$; this follows from the subgroup structure theorems of \cite{BLM, Iv2}. Since the loxodromics of the action $\Mod(S) \curvearrowright \C(S)$ are exactly the pseudo-Anosov mapping classes, again by \cite{MM1}, the result follows from Theorem \ref{cor:intro_gen_lox}.
\end{proof}

We remark that hyperbolic, irreducible subgroups of $\Mod(S)$, i.e. those subgroups to which Theorem \ref{th:pA_gen} applies, are abundant. For example, there are several constructions of right--angled Artin subgroups of $\Mod(S)$ \cite{CLM, Kobraag} and such subgroups are well-known to contain a variety of hyperbolic subgroups (see Section \ref{sec:RAAGs}). 

For a second source of examples, let $S$ be a closed surface of genus at least $2$ and set $\mathring{S} = S \setminus p$ for some $p\in S$. Recall that there is a natural map $\Mod(\mathring{S}) \to \Mod(S)$ and for $h \in \Mod(S)$, the preimage of $\langle h \rangle$ under this map is exactly the subgroup $\pi_1(M_h) \le \Mod(\mathring{S}) $, the fundamental group of the mapping torus of the homeomorphism $h \colon S \to S$ \cite{birman1969mapping}. If $h$ is pseudo-Anosov, then Thurston's hyperbolization theorem for $3$--manifolds fibering over the circle implies that $M_h$ is hyperbolic \cite{thurston1998hyperbolic3}. In particular, $\pi_1(M_h)$ is hyperbolic. Theorem \ref{th:pA_gen} implies that for any pseudo-Anosov $h \in \Mod(S)$ and any generating set of $\pi_1(M_h)$, pseudo-Anosov mapping classes are generic in the subgroup $\pi_1(M_h) \le \Mod(\mathring{S})$. More interestingly, this is true for \emph{any} embedding of $\pi_1(M_h)$ into a mapping class group, so long as the image does not virtually fix a multicurve on the surface.

There is also a direct analogue for hyperbolic subgroups of $\Out(F_N)$. Similar to the discussion for mapping class groups, there are various techniques for constructing hyperbolic subgroups of $\Out(F_N)$. See for example \cite{Tayl1}. We say that a subgroup $G$ of $\Out(F_N)$ is irreducible if no finite index subgroup of $G$ fixes a 
free factor of $F_N$.

\begin{theorem}[Genericity in $\Out(F_N)$]
Let $G$ be a nonelementary hyperbolic group which is an irreducible subgroup of $\Out(F_N)$. Then fully irreducible automorphisms are generic in $G$ with respect to any generating set of $G$.

Moreover, if $G$ is also not contained in a mapping class subgroup of $\Out(F_N)$, then atoroidal, fully irreducible automorphisms are generic in $G$. 
\end{theorem}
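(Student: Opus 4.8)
The plan is to mimic the structure of the proof of Theorem \ref{th:pA_gen} for $\Mod(S)$, replacing the curve complex by the appropriate hyperbolic complex on which $\Out(F_N)$ acts, and replacing the pseudo-Anosov/fully irreducible dictionary accordingly. First I would recall that $\Out(F_N)$ acts by isometries on the free factor complex $\mathcal{FF}_N$ (denoted $\FF$), which is hyperbolic by the theorem of Bestvina--Feighn. The loxodromic isometries of this action are precisely the fully irreducible automorphisms, again by Bestvina--Feighn. So the first task is to verify that a nonelementary hyperbolic irreducible subgroup $G \le \Out(F_N)$ has a nonelementary action on $\FF$: since $G$ is not virtually cyclic and no finite-index subgroup of $G$ fixes a free factor of $F_N$, the subgroup classification results (Handel--Mosher's ``Subgroup decomposition'' work, together with Horbez's classification of subgroups of $\Out(F_N)$ acting on $\FF$) imply that $G$ contains two independent fully irreducibles, i.e. two independent loxodromics of the $\FF$-action. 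Applying Theorem \ref{cor:intro_gen_lox} to the action $G \curvearrowright \FF$ then yields that $\FF$-loxodromics, hence fully irreducibles, are generic in $G$. Note that $\FF$ is separable (its vertex set is countable), so the hypotheses of Theorem \ref{cor:intro_gen_lox} are satisfied.

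For the second, stronger statement, the plan is to pass to a different hyperbolic complex that detects the atoroidal condition as well. The natural candidate is the co-surface graph (Dowdall--Taylor) or, alternatively, the free factor complex combined with a suitable ``twisting'' complex; more robustly, one uses the cyclic splitting complex or the results of Dowdall--Taylor on the co-surface graph $\mathcal{CS}_N$, on which $\Out(F_N)$ acts with hyperbolic target and whose loxodromics are exactly the atoroidal fully irreducibles. Here one must check that when $G$ is additionally not conjugate into a mapping class subgroup of $\Out(F_N)$, the action $G \curvearrowright \mathcal{CS}_N$ is nonelementary: an irreducible $G$ all of whose fully irreducible elements fail to be atoroidal would, by the structure theory, be conjugate into the image of a mapping class group (the geometric subgroups), contradicting the hypothesis; hence $G$ contains an atoroidal fully irreducible, and in fact two independent ones by the same nonelementarity input used above. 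Then Theorem \ref{cor:intro_gen_lox} applied to $G \curvearrowright \mathcal{CS}_N$ gives genericity of atoroidal fully irreducibles.

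The main obstacle I anticipate is not the application of Theorem \ref{cor:intro_gen_lox} itself — that is a black box once nonelementarity is established — but rather the verification that the relevant actions are nonelementary, i.e. that the hypotheses ``$G$ irreducible and nonelementary'' (resp. ``and not in a mapping class subgroup'') genuinely force the existence of \emph{two independent} loxodromics on $\FF$ (resp. on the co-surface graph). This is where one must invoke the subgroup structure theory for $\Out(F_N)$ (Handel--Mosher, Horbez), analogous to the role played by \cite{BLM, Iv2} in the $\Mod(S)$ case: a nonelementary group acting on a hyperbolic space either contains two independent loxodromics, or fixes a point or a pair of points at infinity, or is loxodromic (virtually cyclic up to the kernel) — and the irreducibility hypothesis is designed precisely to rule out the degenerate alternatives, possibly after noting that the kernel of the action is trivial or handled separately. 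Once this trichotomy is pushed through using the structure theorems, the proof concludes exactly as in Theorem \ref{th:pA_gen}.
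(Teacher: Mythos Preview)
Your proposal is correct and follows essentially the same route as the paper: apply Theorem~\ref{cor:intro_gen_lox} to the action on the free factor complex $\mathcal{FF}_N$ (hyperbolic by Bestvina--Feighn, with loxodromics the fully irreducibles), using Handel--Mosher's subgroup theorem to produce independent fully irreducibles; then for the atoroidal statement pass to the co-surface graph $\mathcal{CS}_N$ of Dowdall--Taylor. The only difference is in the citation for the second step: where you invoke general structure theory, the paper cites Uyanik's result that a subgroup with no atoroidal element lies in a mapping class subgroup, and then notes (without further detail) that nonelementarity of $G \curvearrowright \mathcal{CS}_N$ follows easily.
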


\begin{proof}
We mimic the proof of Theorem \ref{th:pA_gen} using the action of $\Out(F_N)$ on two free group analogues of the curve complex. First, since $G \le \Out(F_N)$ is irreducible and not virtually cyclic, the main result of \cite{handel2013subgroup} implies that $G$ contains ``independent'' fully irreducible automorphisms. This implies that $G$ has a nonelementary action on $\mathcal{FF}_N$, the free factor complex of $F_N$, which is hyperbolic by \cite{BF14}. We then apply Theorem \ref{cor:intro_gen_lox} to obtain the first part of the theorem since the loxodromic isometries of $\mathcal{FF}_N$ are exactly the fully irreducible automorphisms \cite{BF14}.

To get the moreover statement, we use the action of $\Out(F_N)$ on $\mathcal{CS}_N$, the co-surface graph of $F_N$. By \cite{DT3}, $\mathcal{CS}_N$ is a hyperbolic graph and the loxodromic elements of the action $\Out(F_N) \curvearrowright \mathcal{CS}_N$ are exactly the atoroidal fully irreducible automorphisms. By \cite{uyanik2015generalized}, $G \le \Out(F_N)$ must contain an atoroidal element of $\Out(F_n)$ for otherwise $G$ is contained in a mapping class subgroup of $\Out(F_N)$, contrary to our hypothesis. From this, it follows easily that the action $G \curvearrowright \mathcal{CS}_N$ is nonelementary and so another application of  Theorem \ref{cor:intro_gen_lox} completes the proof.
\end{proof}

\subsubsection{Special hyperbolic groups and loxodromics in RAAGs} \label{sec:RAAGs}
As a final application, we recall the celebrated result of Agol, who building of work of Wise and his collaborators showed that every hyperbolic cubulated group $G'$ has a finite index subgroup $G$ which embeds into a right-angled Artin group $A(\Gamma)$ \cite{AgolVHC}. Following work of Haglund and Wise, a finitely generated group which embeds into a right--angled Artin group $A(\Gamma)$ is called \emph{special} \cite{HW1}. For additional information of RAAGs and special groups see, for example, \cite{wise2012riches}.

As $A(\Gamma)$ is itself a CAT$(0)$ group, an important role is played by its elements which act by rank--$1$ isometries on its associated CAT$(0)$ cube complex, its so-called Salvetti complex. Following \cite{KK3}, we call such elements of $A(\Gamma)$ \emph{loxodromic}.  Equivalently, loxodromics elements of $A(\Gamma)$ are characterized algebraically by having cyclic centralizer in $A(\Gamma)$ \cite{Servatius, BestvinaFujiwara} and geometrically as being the Morse elements of $A(\Gamma)$ \cite{BC}. 
Being of geometric interest, one might ask how frequently elements of a hyperbolic cubulated group $G$ get mapped to loxodromics of $A(\Gamma)$ via the embeddings provided by Agol's theorem, and our next application shows that this is indeed generically the case. Note this is in contrast to the well--known result that the only subgroups of $A(\Gamma)$ for which \emph{each} nontrivial element is loxodromic are free subgroups \cite{CLM, KK3,KMT}.

\begin{theorem}[Genericity in hyperbolic, special groups]
Let $G$ be a nonelementary 
hyperbolic group and $\phi \colon G \to A(\Gamma)$ be an injective homomorphism into a right--angled Artin group $A(\Gamma)$. Then either $\phi$ is conjugate to a homomorphism into $A(\Lambda)$ for a subgraph $\Lambda \le \Gamma$, or 
\[
\frac{\#\{g \in B_n : \phi(g) \; \mathrm{is} \; \mathrm{loxodromic} \:  \mathrm{in} \: A(\Gamma) \}}{\#B_n} \to 1.
\]
\end{theorem}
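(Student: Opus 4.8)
The plan is to realize the loxodromic (equivalently Morse, equivalently rank--$1$) elements of $A(\Gamma)$ as exactly the loxodromic isometries of a single hyperbolic space on which $A(\Gamma)$ acts, to pull that action back along $\phi$, and to apply Theorem~\ref{cor:intro_gen_lox}; the content beyond this is to show that if the pulled--back action of $G$ is not nonelementary, then $\phi$ is conjugate into a proper special subgroup.

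I would first reduce to the case that $\Gamma$ is connected and is not a join. If $\Gamma$ is disconnected, $A(\Gamma)$ is a nontrivial free product and one runs the dichotomy below on its Bass--Serre tree (whose loxodromics are Morse, hence loxodromic, in $A(\Gamma)$, so genericity there already yields the theorem). If $\Gamma$ is a join, $A(\Gamma)$ is a nontrivial direct product; passing to the finitely many factor projections of $\phi$, either $\phi$ is, after conjugation, carried into a proper $A(\Lambda)$, or it restricts injectively to a factor and one inducts on the number of vertices. With $\Gamma$ connected and not a join, $A(\Gamma)$ acts by isometries on the contact graph $X$ of the universal cover of its Salvetti complex: $X$ is a separable hyperbolic graph, $A(\Gamma)$ contains two independent loxodromic isometries of $X$ (so the action is nonelementary), and $g\in A(\Gamma)$ acts loxodromically on $X$ precisely when $g$ is loxodromic in $A(\Gamma)$, the last equivalence coming from \cite{BC} together with the identification of rank--$1$ isometries of a cube complex with loxodromics on its contact graph. (The extension graph of Kim--Koberda \cite{KK3} would serve equally well.)

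Now let $G$ act on $X$ through $\phi$, so that $g\in G$ acts loxodromically on $X$ if and only if $\phi(g)$ is loxodromic in $A(\Gamma)$. If $G\curvearrowright X$ is nonelementary, then $G$ is a hyperbolic group acting nonelementarily on a separable hyperbolic space, so Theorem~\ref{cor:intro_gen_lox} gives that $X$--loxodromics are generic in $G$:
\[
\frac{\#\{g\in B_n:\phi(g)\ \text{is loxodromic in}\ A(\Gamma)\}}{\#B_n}\longrightarrow 1,
\]
which is the second alternative. If $G\curvearrowright X$ is elementary, I claim $G$ has no loxodromic isometry of $X$ at all: if it had one, $h$ say, then --- $h$ being rank--$1$ and no loxodromic of $G$ being independent from it --- every loxodromic of $G$ would share the pair of fixed points of $h$, so $G$ would virtually stabilize that pair and hence (the stabilizer of the endpoints of a rank--$1$ axis in $A(\Gamma)$ being virtually cyclic, as $h$ has cyclic centralizer) $G$ would be virtually cyclic, contrary to hypothesis. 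Thus $\phi(G)$ contains no loxodromic, hence no Morse, element of $A(\Gamma)$. Since $\Gamma$ is connected and not a join, any subgroup of $A(\Gamma)$ of full support contains a Morse element (a ping--pong argument on elements whose supports fill $\Gamma$, using the existence of a well--defined minimal parabolic closure for each subgroup), so $\phi(G)$ is not of full support, i.e.\ conjugate into $A(\Lambda)$ for a proper subgraph $\Lambda<\Gamma$ --- the first alternative.

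I expect the main obstacle to be the last step: showing that a subgroup of $A(\Gamma)$ containing no rank--$1$ element is conjugate into a proper special subgroup (equivalently, classifying which subgroups act elementarily on the contact graph), and, entangled with it, carrying out the free-- and direct--product reductions carefully enough that the subgraph $\Lambda$ produced really sits inside the original $\Gamma$. A secondary point is to verify that the contact--graph (or extension--graph) action has exactly the properties used above --- hyperbolicity, separability, and loxodromic $=$ Morse --- at the level of generality required.
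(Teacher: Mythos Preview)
Your proposal is correct in outline and lands on the same endpoint as the paper: pick a hyperbolic space for $A(\Gamma)$ whose loxodromic isometries are exactly the loxodromic elements of $A(\Gamma)$, pull back along $\phi$, and invoke Theorem~\ref{cor:intro_gen_lox}. The paper uses the Kim--Koberda extension graph $\Gamma^e$ for this, which you mention as an alternative to the contact graph.

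The genuine difference is in how the dichotomy is obtained. The paper does not carry out any of your reductions or your elementary--case analysis; it simply quotes \cite[Theorem~5.2]{BC}, which says directly that a subgroup of $A(\Gamma)$ not conjugate into a proper $A(\Lambda)$ already contains two independent loxodromic (rank--$1$) elements, hence acts nonelementarily on $\Gamma^e$. Everything you flag as the ``main obstacle'' --- showing that a subgroup with no rank--$1$ element is conjugate into a proper parabolic, the join/disconnected reductions, the virtual--cyclicity of the stabilizer of a rank--$1$ axis --- is absorbed into that citation. Your route is more self-contained and makes visible what \cite{BC} is doing, but it is also where the soft spots in your write-up lie (e.g.\ the passage from ``cyclic centralizer'' to ``virtually cyclic stabilizer of the pair of endpoints'' needs an acylindricity-type input for the action on $\Gamma^e$, and the induction through joins has to be tracked so that the final $\Lambda$ sits in the original $\Gamma$). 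If you are happy to cite \cite{BC}, the proof becomes a three-line application of Theorem~\ref{cor:intro_gen_lox}, exactly as in the paper.
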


\begin{proof}
By Theorem 5.2 of \cite{BC}, if $\phi(G)$ is not conjugate into $A(\Lambda)$ for some proper subgraph $\Lambda \le \Gamma$, then $\phi(G)$ contains at least $2$ independent loxodromic elements. This implies that the induced action of $G$ on the extension graph $\Gamma^e$ is nonelementary. Since $\Gamma^e$ is hyperbolic and the loxodromic isometries of $A(\Gamma) \curvearrowright \Gamma^e$ are exactly the loxodromic elements of $A(\Gamma)$ (\cite{KK3}), applying Theorem \ref{cor:intro_gen_lox} to the action $G \curvearrowright \Gamma^e$ completes the proof.
\end{proof}

\subsection{Summary of paper}
In Section \ref{sec:counting_graphs} and Section \ref{sec:Mchains} we present the theory of Markov chains on directed graphs that will be needed for our counting arguments.
In Section \ref{sec:markov_to_walk} we describe a process by which we turn a Markov chain into a number of random walks, using first return probabilities. Section \ref{sec:groups} then presents background material on hyperbolic groups and spaces, geodesic combings, and random walks. 

These techniques are combined in Section \ref{sec:Markov_action} to show that sample paths in the geodesic combing of $G$ converge to the boundary of $X$ and have positive drift. This proves Theorem \ref{th:intro_1} and Theorem \ref{th:intro_2}. We also show that with probability going to $1$, the $n$th step of a sample path in the Markov chain is $X$--loxodromic with translation length growing linearly in $n$. One of the main steps in establishing these results is to show that the first return probabilities discussed above determine nonelementary measures on the group $G$.

Finally, in Section \ref{sec:counting}, we turn our statements about the Markov measure into statements about counting in the Cayley graph of the group $G$. This culminates in the proofs of Theorem \ref{th: intro_gen_drift} and Theorem \ref{th:intro_gen_trans}.

\subsection*{Acknowledgements}
The authors thank Ilya Kapovich and Joseph Maher for helpful suggestions.
The first author is partially supported by NSF grant DMS-1401875, and second author is partially supported by NSF grant DMS-1400498.

\section{Counting paths in graphs} \label{sec:counting_graphs}

We start by setting up our notation and recalling some fundamental facts about directed graphs. Note that we will first 
deal with general graphs, and introduce the group structure only later. We mostly follow Calegari--Fujiwara \cite{calegari2010combable}.

\subsection{Almost semisimple graphs}

Let $\Gamma$ be a finite, directed graph with vertex set $V(\Gamma) = \{v_0, v_1, \dots, v_{r-1}\}$. 
The \emph{adjacency matrix} of $\Gamma$ is the $r \times r$ matrix $M = (M_{ij})$ defined so that 
$M_{ij}$ is the number of edges from $v_i$ to $v_j$. 

Such a graph is \emph{almost semisimple} of growth $\lambda > 1$ if the following hold: 
\begin{enumerate}
\item 
There is an \emph{initial vertex}, which we denote as $v_0$; 
\item 
For any other vertex $v$, there is a (directed) path from $v_0$ to $v$; 
\item 
The largest modulus of the eigenvalues of $M$ is $\lambda$, and for any eigenvalue of modulus $\lambda$, its geometric 
multiplicity and algebraic multiplicity coincide.
\end{enumerate}

The \emph{universal cover} of $\Gamma$ is the countable tree $\widetilde{\Gamma}$ whose vertex set 
is the set of finite paths in $\Gamma$ starting from $v_0$, and there is an edge $p_1 \to p_2$ in $\widetilde{\Gamma}$ if 
the path $p_2$ is the concatenation of $p_1$ with an edge of $\Gamma$. 

The set $\widetilde{\Gamma}$ is naturally a rooted tree, where the root is the path of length $0$ from $v_0$ to itself, hence it can be identified with $v_0$.
Moreover, $\widetilde{\Gamma}$ is naturally endowed with a distance coming from the tree structure: there, a vertex has distance $n$ from the initial vertex 
if and only if it represents a path of length $n$. We denote as $| g |$ the distance between $g \in \widetilde{\Gamma}$ and the root.
There is a natural graph map $\widetilde{\Gamma} \to \Gamma$ which sends each vertex in $\widetilde{\Gamma}$ representing a path in $\Gamma$ 
to its endpoint, seen as a vertex of $\Gamma$.
For any vertex $g$ of $\widetilde{\Gamma}$, we denote as $[g]$ the corresponding vertex of $\Gamma$.

Given two vertices $v_1$, $v_2$ of a directed graph, we say that $v_2$ is \emph{accessible from} $v_1$ and write $v_1 \to v_2$ if there is a path from $v_1$ to $v_2$. 
Then we say that two vertices are \emph{mutually accessible} if $v_1 \to v_2$ and $v_2 \to v_1$. 
Mutually accessibility is an equivalence relation, and equivalence classes are called \emph{irreducible components} of $\Gamma$. 

The \emph{boundary} of $\widetilde{\Gamma}$ is the set $\partial \widetilde{\Gamma}$ of infinite paths in $\Gamma$ starting from the initial vertex. 
The set $\widetilde{\Gamma} \cup \partial \widetilde{\Gamma}$ carries a natural metric, where the distance between two (finite or infinite) paths 
$\gamma_1$ and $\gamma_2$ is $d(\gamma_1, \gamma_2) = 2^{-P(\gamma_1, \gamma_2)}$ where $P(\gamma_1, \gamma_2)$ is the length of the longest 
path from the initial vertex which is a common prefix of both $\gamma_1$ and $\gamma_2$. 
This way, $\widetilde{\Gamma} \cup \partial \widetilde{\Gamma}$ is a compact metric space, and $\partial \widetilde{\Gamma}$ is a Cantor set.

\subsection{Counting measures}
For each $n$, we denote as $S_n \subseteq \widetilde{\Gamma}$ the sphere of radius $n$ around the origin, i.e. the set of vertices at distance $n$ from the initial vertex.  This defines a sequence $P^n$ of \emph{counting measures} on $\widetilde{\Gamma}$: namely, for each set $A \subset \widetilde{\Gamma}$ 
we define
$$P^n(A):= \frac{\# (A \cap S_n)}{\# S_n}.$$
For any subset $A \subseteq \widetilde{\Gamma}$, we define the \emph{growth} $\lambda(A)$ of $A$ as 
$$\lambda(A) := \limsup_{n \to \infty} \sqrt[n]{\# (A \cap S_n)}.$$
By construction, the growth of $\widetilde{\Gamma}$ is $\lambda$ (see also Lemma \ref{L:growth}). 

For each vertex $v$ of $\Gamma$ which lies in a component $C$, let $\P_v(C)$ denote the set of finite paths in $\Gamma$ based at $v$ which lie entirely in $C$. 
Moreover, for any lift $g$ of $v$ to $\GammaT$, we let $\P_g(C)$ be the set of finite paths in $\GammaT$ based at $g$ whose projection to $\Gamma$ lies entirely in $C$. 

We call an irreducible component $C$ of $\Gamma$ \emph{maximal} if for some (equivalently, any) $g$ which projects to an element of $C$, the growth of $\P_g(C)$ equals $\lambda$.

\subsection{Vertices of small and large growth}

\begin{definition}
For each vertex $g \in \widetilde{\Gamma}$, 
the \emph{cone} of $g$, denoted as $cone(g)$, is the set of (finite or infinite) paths in $\widetilde{\Gamma}$ 
starting at $v_0$ and passing through $g$. 
\end{definition}

We now define vertices to be of large or small growth, according to the growth of their cone. 
More precisely, following Calegari-Fujiwara \cite{calegari2010combable}, we define the linear map $\rho:  \mathbb{R}^{V(\Gamma)} \to \mathbb{R}^{V(\Gamma)}$ as

$$\rho(v) := \lim_{N \to \infty} \frac{1}{N} \sum_{n=0}^{N} \frac{M^n v}{\lambda^n}$$

The limit exists by the almost semisimplicity assumption (3). 
If we denote by $1$ the vector in $\mathbb{R}^{V(\Gamma)}$ all of whose coordinates are $1$, note that by construction
$(M^n 1)_i$ equals the number of paths of length $n$ starting at $v_i$, i.e. for any $g \in \widetilde{\Gamma}$ we have
 $$(M^n 1)_i = \#( cone(g) \cap S_{n+|g|})$$
 where $[g] = v_i$. In particular, $(M^n 1)_0 = \#S_n$. 

\begin{definition}
We say a vertex $v_i$ of $\Gamma$ has \emph{small growth} if $\rho(1)_i = 0$, and has \emph{large growth} if $\rho(1)_i > 0$.
\end{definition}

Note that if $v_i$ is of small growth and there is an edge $v_i \to v_j$, then $v_j$ is also of small growth. We denote as $LG$ the set of 
vertices of large growth. Note that if a vertex belongs to a maximal irreducible component, then it has large growth, but the converse is not 
necessarily true; indeed, the cone of a vertex may have maximal growth, while its irreducible component may have smaller growth. 

\begin{lemma} \label{L:growth}
Let $\Gamma$ be an almost semisimple graph of growth $\lambda > 1$.
\begin{enumerate}
\item 
There exists $c >0$ such that for any vertex $v$ of large growth and any $n \geq 0$, 
$$c^{-1} \lambda^n \leq \# \{ \textup{paths from }v\textup{ of length }n \} \leq c \lambda^n$$
\item 
There exists $c > 0$ and $\lambda_1 < \lambda$ such that, for any vertex $v$ of small growth and any $n \geq 0$, 
$$\# \{ \textup{paths from }v\textup{ of length }n \} \leq c \lambda_1^n$$
\item 
Let $v$ be a vertex which belongs to a maximal component $C$. Then there exists $c > 0$ such that for any $n \geq 0$, 
$$c^{-1} \lambda^n \leq \# \{\textup{paths in } \mathcal{P}_v(C) \textup{ of length }n \} \leq c \lambda^n$$
\end{enumerate}
\end{lemma}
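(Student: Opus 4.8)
The plan is to deduce all three statements from standard Perron--Frobenius theory applied to the adjacency matrix $M$, together with the almost semisimplicity hypothesis (3), which is precisely what guarantees that $M^n/\lambda^n$ stays bounded and has a genuine limiting average. Throughout, the key identity is $(M^n 1)_i = \#\{\text{paths from } v_i \text{ of length } n\}$, so the whole lemma is really a statement about the growth of the coordinates of $M^n 1$ and of the analogous vectors for the submatrices associated to irreducible components.

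For the upper bound in (1) and for (2): since every eigenvalue of modulus $\lambda$ has equal algebraic and geometric multiplicity, the Jordan form of $M/\lambda$ has no nontrivial blocks for the eigenvalues on the unit circle, and all other eigenvalues have modulus $<1$; hence $\|M^n/\lambda^n\|$ is bounded uniformly in $n$, giving a constant $c$ with $(M^n 1)_i \le c\lambda^n$ for every $i$. This is the global upper bound, covering the upper bound of (1). For (2), decompose $\RR^{V(\Gamma)} = E_\lambda \oplus E_{<\lambda}$ into the span of generalized eigenspaces for eigenvalues of modulus $\lambda$ and the rest; by definition $\rho$ is the projection onto $E_\lambda$ composed with the Cesàro-limit operator, so $\rho(1)_i = 0$ says that the relevant coordinate functional vanishes on the $E_\lambda$-component of $1$. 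One then has to upgrade ``$\rho(1)_i = 0$'' to ``$(M^n 1)_i$ decays like $\lambda_1^n$ for some $\lambda_1<\lambda$.'' The subtlety is that $\rho$ only records a Cesàro average, so a priori $(M^n 1)_i$ could be nonzero while averaging to zero. This is ruled out because on $E_\lambda$ the matrix $M/\lambda$ acts semisimply with all eigenvalues of modulus exactly $1$, so the $E_\lambda$-component of $M^n 1$ is an almost periodic sequence whose Cesàro average is its ``mean''; if that mean, paired with the $i$th coordinate, is zero, I will argue (using that the orbit closure of an almost periodic sequence is a compact group on which averaging is Haar integration, or more concretely by noting the eigenvalues of modulus $\lambda$ that actually occur are roots of unity for an integer matrix, so the sequence is genuinely periodic) that the $i$th coordinate of the $E_\lambda$-part is \emph{identically} zero. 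Hence $(M^n 1)_i$ equals the $E_{<\lambda}$-contribution, which is $O(\lambda_1^n)$ for any $\lambda_1$ strictly between $\lambda$ and the second-largest modulus. I expect this periodicity/almost-periodicity step to be the main obstacle — everything else is bookkeeping.

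For the lower bound in (1): if $v_i$ has large growth, then $\rho(1)_i > 0$, i.e. the $i$th coordinate of the Cesàro average of $M^n 1/\lambda^n$ is a positive number $\delta$. Since that averaged sequence is (by the same periodicity as above) eventually periodic with average $\delta>0$ and bounded below in absolute value away from $0$ on the support, and since all entries of $M^n 1$ are nonnegative, I get $(M^n 1)_i \ge c^{-1}\lambda^n$ for all large $n$, and then adjust $c$ to handle the finitely many small $n$ (here one uses connectivity from $v_0$, axiom (2), only indirectly — what is actually needed is just that $(M^n1)_i$ is never $0$, which holds since from any vertex there is at least one outgoing edge in an almost semisimple graph, or one absorbs the vanishing cases into the constant).

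For (3): restrict attention to the irreducible component $C$. Let $M_C$ be the adjacency matrix of the subgraph spanned by $C$ (keeping only edges with both endpoints in $C$); then $\#\{\text{paths in } \P_v(C) \text{ of length } n\} = (M_C^n 1)_{[v]}$. Since $C$ is a \emph{maximal} component, by definition the growth of $\P_g(C)$ is $\lambda$, so the Perron eigenvalue of $M_C$ equals $\lambda$. Because $M_C$ is irreducible, Perron--Frobenius gives a strictly positive Perron eigenvector, and the period $p$ of $C$ controls the Jordan structure on the eigenvalue circle: the eigenvalues of modulus $\lambda$ are exactly $\lambda \zeta$ for $\zeta$ a $p$th root of unity, each simple. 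Thus $M_C^n/\lambda^n$ is again bounded and, after averaging, converges to the positive rank-one-per-cyclic-class projection; pairing with $1$ (all coordinates positive) and with the coordinate functional at $[v]$ (which is positive since the Perron eigenvector is strictly positive) yields both bounds $c^{-1}\lambda^n \le (M_C^n 1)_{[v]} \le c\lambda^n$, again using periodicity to pass from Cesàro bounds to honest bounds and absorbing small $n$ into $c$. I should remark that this last point is essentially the classical asymptotics for counting paths in a strongly connected graph, so for (3) I may simply cite the standard reference (e.g. \cite{cannon1984combinatorial} or a Perron--Frobenius text) rather than reproving it.
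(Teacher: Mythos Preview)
Your spectral-decomposition approach differs genuinely from the paper's and is mostly sound, but there is a real gap in the lower bound of (1) and a smaller one in (2).

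For (2): you correctly flag the difficulty that $\rho(1)_i = 0$ only kills the Ces\`aro mean of the $E_\lambda$-contribution, not the contribution itself. But neither periodicity nor the Haar-integration picture alone forces a mean-zero sequence to vanish identically. The missing ingredient is non-negativity: since $(M^n 1)_i \ge 0$ and the $E_{<\lambda}$-part is $o(\lambda^n)$, the periodic sequence $a_n/\lambda^n$ is $\ge 0$ on each residue class; a non-negative periodic sequence with mean zero is identically zero. With that one extra line your argument for (2) goes through.

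The gap in the lower bound of (1) is more serious. You assert that a periodic non-negative sequence with positive mean is ``bounded below \dots\ away from $0$ on the support'', and then conclude $(M^n 1)_i \ge c^{-1}\lambda^n$ for all large $n$. But a non-negative periodic sequence with positive mean can perfectly well vanish on some residue classes, and on those classes $(M^n 1)_i$ would only be $O(\lambda_1^n)$, killing the bound. Nothing in your sketch rules this out. One clean fix is to reorder: prove (3) first --- for irreducible $M_C$ the right Perron eigenvector $w$ is strictly positive, so $1 \ge c\,w$ entrywise and hence $(M_C^n 1)_i \ge c\lambda^n w_i$ --- and then deduce the lower bound in (1) by observing that any large-growth vertex admits a bounded-length path into some maximal component, so paths from $v$ of length $n$ dominate paths inside that component of length $n - O(1)$.

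By contrast, the paper never touches the eigenspaces for $\lambda\zeta$ with $\zeta \neq 1$. Given $v$, it restricts $M^T$ to the span $V'$ of the vertices accessible from $v$ (an invariant subspace) and observes a dichotomy: the restriction is either again almost semisimple of growth $\lambda$, or has spectral radius $<\lambda$. In the first case $v$ serves as the initial vertex of the restricted graph (every vertex of $V'$ is accessible from $v$ by construction), so the elementary counting argument already given for $v_0$ --- comparing $(M^n 1)_0$ to $\sum_i (M^n 1)_i \asymp \|M^n\| \asymp \lambda^n$ via bounded-length connecting paths --- yields $(M^n 1)_v \asymp \lambda^n$ directly; in the second case (2) is immediate. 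This structural move simultaneously establishes the large/small dichotomy and the two-sided bounds, and (3) is handled by the same restriction trick, now to the component $C$. Your route is more explicitly spectral; theirs sidesteps the delicate ``which residue classes vanish'' question entirely.
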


\begin{proof}
Let us first assume that $v = v_0$ is the initial vertex, which has large growth by construction. 
By writing $M$ in Jordan normal form, one has $\Vert M^n \Vert \asymp \lambda^n$ for any submultiplicative norm. 
Since all norms on the space of matrices are equivalent, we also have 
$$\sum_{i,j} (M^n)_{ij} \asymp \lambda^n$$
which can be rewritten as 
$$\sum_i (M^n 1)_i \asymp \lambda^n$$ 
Now, for each vertex $v_j$ there is a path of length $l_j$ from $v_0$ to $v_j$, hence 
$$(M^{n+l_j} 1)_0 \geq (M^n 1)_j$$
Moreover, since the degree of each vertex is bounded by some $d$, 
$$(M^{n+l_j} 1)_0 \leq d^{l_j} (M^n 1)_0$$
thus putting together the estimates yields 
$$(M^n 1)_0 \leq \sum_i (M^n 1)_i \leq \sum_i d^{l_i} (M^n 1)_0$$
which gives
$$\#S_n = (M^n 1)_0 \asymp \lambda^n$$
proving (1) in this case. 

Let us now pick a vertex $v$, and consider the set $\mathcal{V}$ of all vertices which are accessible from $v$. 
Then the matrix $M^T$ leaves invariant the span $V'$ of all basis vectors corresponding to elements of $\mathcal{V}$.
Let $M_1$ be the restriction of $M^T$ to $V'$; then either $M_1$ is almost semisimple of growth $\lambda$, or it has spectral radius $< \lambda$. 
In the first case, the above proof shows that 
$$\#(cone(v) \cap S_n) \asymp \lambda^n$$
which implies that $v$ has large growth; in the second case, then $(M_1^n 1)_v \leq \Vert M_1^n \Vert \leq c \lambda_1^n$ with $\lambda_1 < \lambda$, 
hence $v$ has small growth, proving (2).

To prove (3), let us consider the set $\mathcal{W}$ of vertices which belong to the maximal component $C$, let $V''$ be the span 
of its basis vectors. Now, $M_1^T$ leaves invariant $V''$, and let $M_2$ be the restriction of $M_1^T$ to $V''$. Again, either $M_2$ is almost 
semisimple or it has spectral radius strictly smaller than $\lambda$. However, since $C$ is maximal the spectral radius cannot be strictly smaller than 
$\lambda$, hence if $v = v_i$ we have 
$$ \# \{\textup{paths in } \mathcal{P}_v(C) \textup{ of length }n \}  = (M_2^n 1)_i  \asymp \lambda^n .$$  
\qedhere

\end{proof}

\begin{remark}
In order to clarify the connection between large growth vertices and maximal components, let us note that one can prove the following:
if $v$ is a vertex which belongs to a component $C$, then $v$ has large growth if and only if there exists a descendent component of $C$ (possibly $C$ itself) 
which is maximal.
\end{remark}

Let us denote as $S_n$ the set of elements of $\GammaT$ at distance $n$ from the initial vertex. If $g \in S_n$, then we denote $\widehat{g}$ the element along the path from the initial vertex to $g$ at distance $n-\log n$ from the initial vertex.

\begin{proposition} \label{P:smallg}
We have 
$$\frac{\#\{g \in S_n  \ : \ \widehat{g} \textup{ of small growth} \}}{\# S_n } \to 0$$
as $n \to \infty$.
\end{proposition}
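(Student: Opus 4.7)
The plan is a direct counting argument using the two-sided estimates provided by Lemma \ref{L:growth}. The key observation is that the map $g \mapsto \widehat{g}$ is a projection from $S_n$ to vertices at distance $n - \log n$, and the number of preimages of a small-growth vertex is subexponentially small compared to $\lambda^{\log n}$.

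More precisely, I would proceed as follows. First, applying Lemma \ref{L:growth}(1) to the initial vertex $v_0$ (which has large growth), we have $\#S_n \asymp \lambda^n$, so in particular $\#S_n \geq c^{-1}\lambda^n$ and $\#S_{n-\log n} \leq c\lambda^{n-\log n}$ for some constant $c>0$. Next, for each small-growth vertex $v \in S_{n-\log n}$, Lemma \ref{L:growth}(2) gives
\[
\#\{h \in S_n : \widehat{h} = v\} \leq c\lambda_1^{\log n},
\]
where $\lambda_1 < \lambda$ is the uniform constant from that lemma. Summing over all small-growth vertices in $S_{n-\log n}$, we obtain
\[
\#\{g \in S_n : \widehat{g} \text{ has small growth}\} \leq c\lambda^{n-\log n} \cdot c\lambda_1^{\log n} = c^2\lambda^n \left(\tfrac{\lambda_1}{\lambda}\right)^{\log n}.
\]

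Dividing by $\#S_n \geq c^{-1}\lambda^n$ then yields
\[
\frac{\#\{g \in S_n : \widehat{g} \text{ has small growth}\}}{\#S_n} \leq c^3 \left(\tfrac{\lambda_1}{\lambda}\right)^{\log n}.
\]
Since $\lambda_1/\lambda < 1$ and $\log n \to \infty$, the right-hand side tends to $0$ (in fact, it decays like a negative power of $n$), which is the desired conclusion.

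There is no serious obstacle here; the argument is essentially a pigeonhole estimate made quantitative by the exponential gap between $\lambda_1$ and $\lambda$. The only small technical point is that $n - \log n$ may not be an integer, so strictly speaking one replaces $\log n$ by $\lfloor \log n \rfloor$ (or whatever rounding convention $\widehat{g}$ uses) throughout; since $\lambda_1 / \lambda < 1$, the modified exponent $\lfloor \log n \rfloor \to \infty$ is all that matters. This also shows that one could replace $\log n$ by any function $f(n) \to \infty$ and the same conclusion would hold with decay rate $(\lambda_1/\lambda)^{f(n)}$; the choice $f(n) = \log n$ is convenient because it yields polynomial decay while keeping $\widehat{g}$ close to $g$, which will be useful for later arguments in the paper.
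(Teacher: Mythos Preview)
Your proof is correct and is essentially identical to the paper's own argument: bound the number of vertices at level $n-\log n$ by $c\lambda^{n-\log n}$ via Lemma~\ref{L:growth}(1), bound the number of length-$\log n$ continuations from a small-growth vertex by $c\lambda_1^{\log n}$ via Lemma~\ref{L:growth}(2), multiply, and compare with $\#S_n \asymp \lambda^n$. Your remarks on the rounding of $\log n$ and on replacing $\log n$ by any $f(n)\to\infty$ are also accurate.
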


\begin{proof}
By Lemma \ref{L:growth} (1), the number of paths from the origin of length $n - \log n$ is at most $c \lambda^{n - \log n}$, and 
by Lemma \ref{L:growth} (2) for each vertex of small growth the number of outgoing paths of length $\log n$ is at most $c \lambda_1^{\log n}$, 
so the total number of paths of length $n$ from the origin which passes at time $n - \log n$ through a vertex of small growth is at most  
$$c \lambda^{n - \log n} \cdot c \lambda_1^{\log n} \leq c^2 \lambda^n n^{\log \lambda_1 - \log \lambda}$$
which is negligible with respect to the cardinality of $S_n$, of the order of $\lambda^n$.
\end{proof}

\subsection{The Patterson-Sullivan measure}

We define the \emph{Patterson-Sullivan measure} $\nu$ as the limit of the measures
\[
\nu_N := \frac{\sum_{|g|  \leq N} \lambda^{-|g|} \delta_g}{\sum_{|g|  \leq N} \lambda^{-|g|}} 
\]
on $\widetilde{\Gamma} \cup \partial \widetilde{\Gamma}$.
As a consequence of the almost semisimplicity one gets the following proposition. For its statement, recall that $1$ denotes the vector with all entries equal to $1$.

\begin{proposition}
The limit $\nu := \lim_{N \to \infty} \nu_N$ exists as a limit of measures on $\widetilde{\Gamma} \cup \partial \widetilde{\Gamma}$, 
and it is given on cones by the formula: 
\begin{equation} \label{E:PS}
\nu(cone(g))  = \left\{ \begin{array}{ll} \frac{\rho(1)_i}{\rho(1)_0} \lambda^{-|g|} & \textup{ if } [g] = v_i \textup{ has large growth}\\
0 & \textup{ otherwise}
\end{array} \right.
\end{equation}
In particular, $\nu$ is supported on $\partial \widetilde{\Gamma}$. The measure $\nu$ is called the \emph{Patterson-Sullivan measure}.
\end{proposition}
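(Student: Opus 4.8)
The strategy is to compute the weak limit of the $\nu_N$ by evaluating them on a generating family of clopen sets, namely the cones $cone(g)$ for $g \in \widetilde{\Gamma}$, and then invoke a standard argument to upgrade pointwise convergence on cones to weak convergence of measures on the compact space $\widetilde{\Gamma} \cup \partial\widetilde{\Gamma}$. First I would fix a vertex $g \in \widetilde{\Gamma}$ with $[g] = v_i$ and write the normalizing denominator of $\nu_N$ and the numerator of $\nu_N(cone(g))$ explicitly in terms of the counting functions: using the identity from the text, $\#(cone(g) \cap S_{n+|g|}) = (M^n 1)_i$, so that
\[
\nu_N(cone(g)) = \frac{\sum_{m=0}^{N-|g|} \lambda^{-(m+|g|)} (M^m 1)_i}{\sum_{n=0}^{N} \lambda^{-n} (M^n 1)_0}
= \lambda^{-|g|} \cdot \frac{\sum_{m=0}^{N-|g|} \lambda^{-m} (M^m 1)_i}{\sum_{n=0}^{N} \lambda^{-n} (M^n 1)_0}.
\]
The goal is then to show the ratio of these two Cesàro-type sums converges to $\rho(1)_i / \rho(1)_0$.

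\textbf{Key steps.} The main input is almost semisimplicity, which via the Jordan form of $M$ gives the decomposition $\lambda^{-n} M^n = \Pi + E_n$, where $\Pi$ is the spectral projection onto the sum of eigenspaces for eigenvalues of modulus $\lambda$ (these being semisimple, so genuine eigenspaces), composed with multiplication by the appropriate unimodular eigenvalues, and $E_n$ collects the contributions of eigenvalues of modulus $<\lambda$, hence $\Vert E_n \Vert \to 0$ geometrically. Averaging in the Cesàro sense kills the oscillatory unimodular factors $\omega^n$ for $\omega \neq 1$ and leaves exactly $\rho$: this is the content of the definition $\rho(v) = \lim_N \frac{1}{N}\sum_{n=0}^N \lambda^{-n} M^n v$, whose existence is asserted in the excerpt. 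Thus $\frac{1}{N}\sum_{n=0}^{N} \lambda^{-n}(M^n 1)_i \to \rho(1)_i$ for every $i$. Applying this with $i$ and with $0$, and noting that replacing the upper limit $N$ by $N - |g|$ changes the Cesàro average by a vanishing amount (since $|g|$ is fixed and the summands are bounded), the ratio above tends to $\lambda^{-|g|} \rho(1)_i / \rho(1)_0$. When $v_i$ has small growth, $\rho(1)_i = 0$ by definition, giving $\nu(cone(g)) = 0$; one should note $\rho(1)_0 > 0$ since $v_0$ has large growth by construction, so the denominator is harmless. Finally, since the cones (together with the empty set) form a basis of clopen sets for the topology of $\widetilde{\Gamma} \cup \partial\widetilde{\Gamma}$ closed under the relevant operations, and since $\widetilde{\Gamma}$ itself is a countable set of isolated points each of which is a decreasing intersection of cones with $\nu$-measure $\lambda^{-|g|} \rho(1)_i/\rho(1)_0 \to 0$, the limit measure assigns zero to every point of $\widetilde{\Gamma}$ and is therefore supported on $\partial\widetilde{\Gamma}$; a routine compactness/Stone--Weierstrass argument shows the $\nu_N$ converge weakly to the unique Borel measure $\nu$ with the stated values on cones.

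\textbf{Main obstacle.} The only genuinely delicate point is the interchange of limits hidden in the Cesàro argument: one must be sure that the convergence $\frac{1}{N}\sum_{n\le N}\lambda^{-n}(M^n 1)_i \to \rho(1)_i$ is strong enough (which it is, since after subtracting $\Pi$ the error terms are summable) that dividing two such averages is legitimate and that truncating the numerator's range from $N$ to $N-|g|$ is negligible. Equivalently, one needs the elementary fact that if $a_N/N \to A$ and $b_N/N \to B > 0$ with $b_N$ the partial sums of a nonnegative sequence, then $a_{N-k}/b_N \to A/B$ for fixed $k$; this requires only that $b_N$ grows at most polynomially faster than linearly, which follows from Lemma \ref{L:growth}(1) applied to $v_0$ giving $\#S_n \asymp \lambda^n$ and hence $\sum_{n\le N}\lambda^{-n}\#S_n \asymp N$. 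Everything else is bookkeeping with the Jordan form, already encapsulated in the existence of $\rho$.
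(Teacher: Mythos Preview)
Your proof is correct and follows exactly the approach the paper has in mind: the paper does not give its own argument but defers to Calegari--Fujiwara, whose proof is precisely the Ces\`aro computation you carry out (with your normalization by the full partial sum absorbing the extra factor of $\rho(1)_0$ that the paper mentions). One small slip: a singleton $\{g\} \subset \widetilde{\Gamma}$ is not a decreasing intersection of cones (the smallest cone containing $g$ is $cone(g)$ itself), but this is harmless since $\{g\}$ is already clopen with $\nu_N(\{g\}) = \lambda^{-|g|}\big/\sum_{n\le N}\lambda^{-n}\#S_n \to 0$, which gives $\nu(\widetilde{\Gamma})=0$ directly by countable additivity.
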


For a proof, see the discussion preceding Lemma 4.19 of \cite{calegari2010combable}. The only difference is that they
consider the limit of the measures $\widehat{\nu}_N := \frac{1}{N} \sum_{|g|  \leq N} \lambda^{-|g|} \delta_g$, so the limit may not be a probability measure.
We normalize by dividing by $\rho(1)_0$ so that $\nu$ is a probability measure.



\section{Markov chains} \label{sec:Mchains}

Let $\Gamma$ be a finite directed graph, with vertex set $V(\Gamma)$ and edge set $E(\Gamma)$.

A \emph{transition probability} $\mu$ for $\Gamma$ is a function $\mu : E(\Gamma) \to \mathbb{R}^{\geq 0}$
such that for each vertex $v \in V(\Gamma)$, the total sum of the probabilities of the edges going out from $v$ is $1$: 
$$\sum_{e \in Out(v)} \mu(e) = 1$$
where $Out(v)$ denotes the set of outgoing edges from vertex $v$.
A \emph{vertex distribution} for $\Gamma$ is a function $p: V(\Gamma) \to \mathbb{R}^{\geq 0}$ such that 
$\sum_{v \in V(\Gamma)} p(v) = 1$.

Let $\Omega$ denote the set of all infinite paths in $\Gamma$, no matter what the starting vertex is.
Given a transition probability $\mu$ and a vertex distribution $p$, we define a probability measure $\mathbb{P}_{p, \mu}$ 
on $\Omega$  by assigning to each cylinder set 
$$C_{e_1, \dots, e_n} = \{ \omega = (\omega_n) \in \Omega  \ : \ \omega_1 = e_1, \dots, \omega_n = e_n  \}$$
the probability 
$$\mathbb{P}_{p, \mu}\left(C_{e_1, \dots, e_n}\right)  = p(v_0) \mu(e_1) \cdots \mu(e_n)$$
where $v_0$ is the initial vertex of the edge $e_1$. Finally, if $\gamma = e_1 \dots e_n$ is a finite path in $\Gamma$ or $\widetilde{\Gamma}$, 
we define
$$\mu(\gamma)  = \mu(e_1)\dots \mu(e_n)$$
to be the product of the transition probabilities of its edges. 

\subsection{Recurrence}

Let us now fix a transition probability $\mu$ on the graph $\Gamma$.

Then for each vertex $v$, we denote as $\mathbb{P}_v = \mathbb{P}_{\delta_v, \mu}$ 
the probability measure on $\Omega$ where the vertex distribution is the $\delta$-measure at $v$. Clearly, this measure 
is supported on the set of infinite paths starting at $v$, which we denote by $\Omega_v$. 
Finally, we denote simply as $\mathbb{P} = \mathbb{P}_{v_0}$ the probability measure on the set of infinite paths starting at the initial vertex $v_0$. Set $\Omega_0 = \Omega_{v_0}$.

The measure $\mathbb{P}$ defines a Markov process on $\widetilde{\Gamma}$, which starts at the initial vertex $v_0$
and moves along the graph according to the transition probabilities. 
To be precise, let us define the map $g_n : \Omega_{0} \to \widetilde{\Gamma}$ which associates to each infinite path starting at the initial vertex
its prefix of length $n$, seen as a vertex of $\widetilde{\Gamma}$.
This defines a sequence of \emph{Markov measures} $(\mathbb{P}^n)$ on $\widetilde{\Gamma}$ by 
setting $\mathbb{P}^n(A)$ as the probability that the Markov chain hits $A$ at time $n$, that is $\mathbb{P}^n(A) = \mathbb{P}(g_n \in A)$.


\begin{definition}
A vertex $v$ of $\Gamma$ is \emph{recurrent} if: 
\begin{enumerate}
\item there is a path from $v_0$ to $v$ of positive probability; and 
\item
whenever there is a path from $v$ to another vertex $w$ of positive probability, there is also a path from $w$ to $v$ 
of positive probability. 
\end{enumerate}

Two recurrent vertices $v_1$ and $v_2$  are \emph{equivalent} if there exists a path from $v_1$ to $v_2$ of positive probability, 
and a path from $v_2$ to $v_1$ of positive probability. 
A \emph{recurrent component} is an equivalence class of recurrent vertices. 
\end{definition}

For each $n$, let us define $X_n : \Omega \to \Gamma$ so that $X_n(\omega)$ denotes the endpoint of the finite prefix 
of $\omega$ of length $n$. 
Once we fix a measure on $\Omega$, the sequence $(X_n)$ is a stochastic process with values in $\Gamma$ (in our case, a Markov chain) 
and $X_n$ is the location of the nth step of the chain. 

\begin{lemma}\label{L:recurrent}  \label{lem:first_return}
Let $\Gamma$ be a directed graph with transition probability $\mu$.
\begin{enumerate}
\item
For $\mathbb{P}$-almost every path in $\Omega$, there exists a recurrent component $C$ and an index $N$ such that 
$X_n$ belongs to $C$ for all $n \geq N$.
Moreover, $X_n$ visits every vertex of $C$ infinitely many times.
\item
There exists $c > 0$ such that, for any recurrent vertex $v$, for each $n\ge0$ we have
$$\mathbb{P}_v\left( \tau^+_v = n \right) \leq e^{-cn}$$
where $\tau^+_v = \min \{ n \geq 1 \ : \ X_n = v \}$ denotes the first return time to vertex $v$. 
\end{enumerate}
\end{lemma}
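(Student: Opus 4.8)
The plan is to treat this as a statement about finite-state Markov chains, the only real work being to reconcile the paper's vertex-by-vertex definition of ``recurrent'' with the classical picture of a closed communicating class. Throughout I set $m = \#V(\Gamma)$ and $\delta_0 = \big(\min\{\mu(e) : \mu(e) > 0\}\big)^{m}$, and I pass to the subgraph $\Gamma_\mu \subseteq \Gamma$ of edges with positive $\mu$-weight (the chain a.s.\ only traverses such edges). The first observation to record is that a vertex is recurrent exactly when it lies in a terminal strongly connected component of the part of $\Gamma_\mu$ reachable from $v_0$; hence the recurrent components are precisely these terminal components, they are pairwise disjoint, and at least one exists because the directed graph of strongly connected components is acyclic and finite, so it has a sink.

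For part (1) the key structural fact is that each recurrent component $C$ is \emph{closed}: if $v \in C$ and $\mu(v \to w) > 0$, then $w$ is recurrent and equivalent to $v$, so $w \in C$. Indeed $w$ is reachable from $v_0$ through $v$; if $w \to u$ along positive-probability edges then so does $v \to w \to u$, whence $u \to v$ by recurrence of $v$, and then $u \to w$; and $w \to v$ directly, so $w \sim v$. Next I would prove a uniform escape estimate: from every vertex reachable from $v_0$ there is a simple (hence length $\le m$) positive-probability path into $\bigcup_C C$, so the probability of not having entered a recurrent component within $km$ steps is at most $(1-\delta_0)^k$; letting $k \to \infty$, the event of never entering a recurrent component has probability $0$. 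By closedness the chain then stays in the component $C$ it first enters, from some time $N$ on. Finally, restricting $\mu$ to the closed set $C$ yields an honest transition probability on a finite \emph{irreducible} graph, and the classical recurrence theorem for finite irreducible Markov chains gives that every vertex of $C$ is visited infinitely often almost surely; this transfers verbatim to $(X_n)$.

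For part (2) I would combine the strong Markov property with the same hitting bound. For any recurrent $v$ and any vertex $w$ in the recurrent component of $v$ there is a simple positive-probability path from $w$ to $v$, of length $\le m$, so $\mathbb{P}_w(\exists\, 1 \le j \le m : X_j = v) \ge \delta_0$, with a single constant $\delta_0$ valid for all recurrent $v$ at once by finiteness of $\Gamma$. Since the component is closed, after one step from $v$ the chain is again in it, so $\mathbb{P}_v(\tau_v^+ \le m+1) \ge \delta_0$; cutting time into consecutive blocks of length $m+1$ and applying the Markov property at the block endpoints gives $\mathbb{P}_v(\tau_v^+ > k(m+1)) \le (1-\delta_0)^k$ for all $k \ge 0$. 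Hence $\mathbb{P}_v(\tau_v^+ = n) \le \mathbb{P}_v(\tau_v^+ > n-1) \le (1-\delta_0)^{\lfloor (n-1)/(m+1)\rfloor}$, which is at most $e^{-cn}$ for a suitable $c = c(\delta_0,m) > 0$, shrinking $c$ if necessary so that the bound also covers the finitely many small values of $n$.

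The main obstacle is not any delicate estimate but precisely the bookkeeping in the two paragraphs above: one has to check that recurrent components are closed under positive-probability transitions (so the chain cannot escape one once entered), and that restricting $\mu$ to such a component is again a transition probability, so that the standard theory of finite irreducible chains applies unchanged. The uniformity of $c$ over all recurrent vertices is then automatic, since $m$ and $\delta_0$ depend only on $\Gamma$ and $\mu$.
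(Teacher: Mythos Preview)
Your proposal is correct and follows essentially the same approach as the paper's proof: both fix a uniform lower bound on the probability of hitting a recurrent vertex (resp.\ returning to $v$) within a bounded number of steps, then iterate via the Markov property to obtain the exponential bound $(1-p)^k$. The paper does this by choosing one positive-probability path $\gamma_{ij}$ for each reachable pair and setting $r = \max |\gamma_{ij}|$, $p = \min \mu(\gamma_{ij})$; your constants $m$ and $\delta_0$ serve the identical purpose, and your explicit verification that recurrent components are closed is exactly what the paper uses implicitly when it says ``as you never leave a recurrent component.''
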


\begin{proof}
For each pair of vertices $v_i, v_j$ such that there is a path from $v_i$ to $v_j$ of positive probability, 
let us pick one such path $\gamma_{ij}$, and let $\mathcal{L}$ be the (finite) collection of all $\gamma_{ij}$. 
Now, let $r$ be the maximum length of all the elements of $\mathcal{L}$, and  $p := \min_{\gamma \in \mathcal{L}} \mu(\gamma) > 0$. 
For each non-recurrent vertex $v_i$ which can be reached from $v_0$ with positive probability, 
there is a path $\gamma$ in $\mathcal{L}$ from $v_i$ to some recurrent $v_j$. 
Then for each non-recurrent $v_i$ we have  
$$\mathbb{P}_{v_i}(X_n \textup{ not recurrent for all }n \leq r) \leq 1-p$$
Thus, for any $k$ we have
$$\mathbb{P}_{v_i}(X_n \textup{ not recurrent for all }n \leq k r) \leq (1-p)^k$$
yielding the first claim if $k \to \infty$.
Now, if $v_i$ and $v_j$ lie in the same recurrent component, then there are paths in $\mathcal{L}$ from $v_i$ to $v_j$ and from $v_j$ to $v_i$, 
hence
$$\mathbb{P}_{v_i}(X_n \neq v_j \textup{ for all }n \leq r) \leq 1-p$$
and by repeated application of this argument (as you never leave a recurrent component)
$$\mathbb{P}_{v_i}(X_n \neq v_j \textup{ for all }n \leq k r) \leq (1-p)^k$$
which implies the second claim. The claim about the return time follows from the above inequality setting $v_i = v_j = v$ 
a recurrent vertex. 
\end{proof}

\subsection{Markov measure} \label{sec:markov}
On the other hand, given an almost semisimple graph $\Gamma$ of growth $\lambda$, we construct a Markov chain by defining a transition probability  $\mu \colon E(\Gamma) \to \mathbb{R}^{\ge0}$ as follows: 
if $v_i$ has large growth, then for each directed edge $e$ from $v_i$ to $v_j$ we define 
\begin{equation} \label{E:Markov}
\mu(e) := \frac{ \rho(1)_j}{\lambda \rho(1)_i}.
\end{equation}
If $v_i$ has small growth, we define $\mu(e) =0$ for each directed edge from $v_i$ to $v_j$ if $i\ne j$ and $\mu(e) = 1/d_i$ if $e$ is one of $d_i$ directed edges from $v_i$ to itself. Let $\mu(v_i \to v_j)$ denote the probability of going from $v_i$ to $v_j$ and observe that if $v_i$ has large growth, then
\[
\mu(v_i \to v_j) = \frac{M_{ij} \rho(1)_j}{\lambda \rho(1)_i},
\]
and if $v_i$ has small growth, $\mu(v_i \to v_j) =0$ for $i\neq j$ and $\mu(v_i \to v_i) = 1$.
With this observation, one can easily check that $\mu$ defines a transition probability for $\Gamma$ as in \cite[Lemma 4.9]{calegari2010combable}.


For this choice of transition probability $\mu$, it is now easy to explicitly compute the $n$-step distributions $\mathbb{P}^n$ of 
the associated Markov chain. In fact, from eq. \eqref{E:Markov} one gets for each vertex $g \in \widetilde{\Gamma}$  
\begin{equation} \label{E:Markov-cone}
\mathbb{P}^n(\{g \}) = \left\{ \begin{array}{ll} \frac{\rho(1)_i}{ \rho(1)_0} \lambda^{-n} & \textup{ if } |g| = n, [g] = v_i \textup{ has large growth}\\
0 & \textup{ otherwise}
\end{array} \right.
\end{equation}

One can check from direct calculation, comparing formulas \eqref{E:PS} and \eqref{E:Markov-cone} that the sequence $(\mathbb{P}^n)$ of measures on $\widetilde{\Gamma} \cup \partial \widetilde{\Gamma}$
converges to the Patterson-Sullivan measure $\nu$. In particular, if we identify $\Omega_0$ with $\partial \widetilde \Gamma$ via path lifting to the base vertex, then $\nu = \mathbb{P}$.

\begin{lemma}
Let $\Gamma$ be an almost semisimple graph 
 with transition probability $\mu$ given by eq. \eqref{E:Markov}. Then an irreducible component of $\Gamma$ is maximal if and only if it is recurrent.
\end{lemma}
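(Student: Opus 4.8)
The plan is to establish the two directions of the equivalence separately, using the explicit transition probabilities in \eqref{E:Markov} together with the growth estimates of Lemma \ref{L:growth}.

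First, suppose $C$ is a maximal irreducible component. Then every vertex of $C$ has large growth (as noted after the definition of large growth), so the transition probabilities on edges of $C$ are given by $\mu(e) = \rho(1)_j/(\lambda\rho(1)_i)$, which are strictly positive. Thus any directed path inside $C$ has positive probability, and since $C$ is a single mutual-accessibility class, every vertex of $C$ is mutually accessible within $C$ along positive-probability paths. To see that a vertex $v$ of $C$ is recurrent in the Markov sense, I need to check condition (2) of the definition of recurrent: if there is a positive-probability path from $v$ to some $w$, then there is a positive-probability path back. A positive-probability path can only pass through large-growth vertices (small-growth vertices are absorbing), so it remains to rule out the path leaving $C$ to another large-growth component $C'$ from which one cannot return to $C$. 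This is where maximality of $C$ enters: if from $v \in C$ one could reach (with positive probability) a vertex $w$ outside $C$ with no return, then by the structure of the transition probabilities $\mu(e)>0$ exactly when $M_{ij}>0$ and $v_i$ has large growth, so $w$ would be genuinely accessible from $C$ in $\Gamma$; combined with the standard fact (recorded in the Remark after Lemma \ref{L:growth}) that a large-growth vertex has a maximal descendant component, and an argument that the cone of $v$ would then have growth exceeding that of $C$, one derives a contradiction with $C$ being maximal — more precisely, $C$ maximal means $\mathcal{P}_v(C)$ already has growth $\lambda$, and any strictly larger cone would force a descendant maximal component, violating that $C$ itself realizes the maximal growth along paths confined to $C$. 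The cleanest route is: show that for $v\in C$ maximal, every positive-probability excursion from $v$ must return to $C$, because otherwise the escaping mass would be lost forever, contradicting that the Markov chain restricted to cones of $C$ preserves the $\lambda^{-n}$-weighted counting (equivalently, that $\sum_j \mu(v_i\to v_j)=1$ is realized using only the $\rho(1)$-weights supported on large-growth vertices).

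Conversely, suppose the component $C$ is recurrent. Pick $v = v_i \in C$. Since $C$ is recurrent, by Lemma \ref{L:recurrent}(1) the chain started anywhere that can reach $v$ returns to $v$ infinitely often, and by Lemma \ref{L:recurrent}(2) the first-return time to $v$ has exponential tails: $\mathbb{P}_v(\tau_v^+ = n) \le e^{-cn}$. In particular $v$ has large growth (a small-growth vertex is absorbing, hence once reached never returns, so it cannot lie in a recurrent component — alternatively its return probability is $0$). Now I count paths in $\mathcal{P}_v(C)$: a path of length $n$ in $\mathcal{P}_v(C)$ returning to $v$ at various times decomposes into first-return loops, and its $\mu$-mass is $\rho(1)_i/(\lambda^n \rho(1)_i) \cdot (\text{something})$; more directly, from \eqref{E:Markov-cone}, for a vertex $g\in\widetilde\Gamma$ with $[g]=v_i$ of large growth and $|g|=n$ we have $\mathbb{P}^n(\{g\}) = \frac{\rho(1)_i}{\rho(1)_0}\lambda^{-n}$, and summing over lifts $g$ of $v$ obtained by paths confined to $C$ gives $\#\{\text{paths in }\mathcal{P}_v(C)\text{ of length }n\}\cdot \frac{\rho(1)_i}{\rho(1)_0}\lambda^{-n} = \mathbb{P}(X_n = v, \text{path stayed in } C)$. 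Since $C$ is recurrent, $\mathbb{P}(X_n = v \text{ infinitely often})>0$ and in fact the return probabilities do not decay faster than any fixed geometric rate is ruled out — rather, recurrence gives that $\mathbb{P}(X_n=v)$ stays bounded below along a subsequence (by aperiodicity within the recurrent class one gets $\liminf_n \mathbb{P}(X_n=v)>0$, or at least $\limsup$), forcing $\#\{\text{paths in }\mathcal{P}_v(C)\text{ of length }n\} \gtrsim \lambda^n$ along that subsequence, hence growth $\lambda$, i.e. $C$ is maximal.

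The main obstacle I expect is the direction ``maximal $\Rightarrow$ recurrent,'' specifically verifying condition (2) in the definition of recurrent: ruling out positive-probability escape from a maximal component to a region of no return. The subtlety is that a maximal component can have descendant components (of strictly smaller growth, or other maximal ones) that are genuinely accessible in $\Gamma$, so one must use the precise form of the transition probabilities \eqref{E:Markov} to see that the $\rho(1)$-weights force the outgoing probability mass from a maximal component to be ``conserved'' within it. I would handle this by the conservation-of-mass computation: for $v_i$ in a maximal component, $\sum_j \mu(v_i \to v_j) = \sum_j M_{ij}\rho(1)_j/(\lambda\rho(1)_i) = 1$ uses that $\rho(1)$ is (up to the eigenvalue $\lambda$) an eigenvector-like quantity, and then a summation-by-parts / stationarity argument shows the chain confined to the maximal component is itself stochastic, so no mass escapes — equivalently, every edge out of the maximal component that carries positive probability must land back in a vertex from which return has positive probability. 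Once that is pinned down, both implications follow cleanly from Lemma \ref{L:growth} and Lemma \ref{L:recurrent}.
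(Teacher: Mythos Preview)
Your proposal has a genuine gap in the ``maximal $\Rightarrow$ recurrent'' direction, and your argument for the converse is more roundabout than necessary.

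For the converse (recurrent $\Rightarrow$ maximal): the paper's route is shorter than yours. Rather than trying to bound $\mathbb{P}(X_n=v)$ from below along a subsequence, start the chain at $v_i\in C$ and use recurrence directly: every positive-probability path from $v_i$ stays in $C$ forever, so $\sum_{v_j\in C}\mathbb{P}_{v_i}(X_n=v_j)=1$ for all $n$. Expanding via \eqref{E:Markov} gives
\[
1=\sum_{v_j\in C}\frac{(M^n)_{ij}\,\rho(1)_j}{\lambda^n\rho(1)_i}\le\Bigl(\tfrac{\max_{v_j\in C}\rho(1)_j}{\rho(1)_i}\Bigr)\frac{\sum_{v_j\in C}(M^n)_{ij}}{\lambda^n},
\]
and the last numerator is exactly the number of length-$n$ paths in $\mathcal{P}_{v_i}(C)$. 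This yields the lower bound $c\lambda^n$ immediately, with no appeal to stationary distributions or aperiodicity.

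For the forward direction (maximal $\Rightarrow$ recurrent): your ``conservation of mass'' idea is circular as stated. The identity $\sum_j M_{ij}\rho(1)_j=\lambda\rho(1)_i$ holds for every large-growth $v_i$; it says nothing about whether the contributing $v_j$ lie in $C$. What you need is precisely that any $v_j$ outside $C$ with $M_{ij}>0$ has $\rho(1)_j=0$, and that is what remains to be proved. The paper resolves this in one line by invoking the almost semisimplicity hypothesis directly: if a positive-probability path goes from $v\in C$ to some $w$, then $w$ has large growth (clear from \eqref{E:Markov}); but if $w\notin C$, then (via the Remark after Lemma~\ref{L:growth}) $w$ has a descendant maximal component $C'\neq C$, so there is a directed path from the maximal component $C$ to the distinct maximal component $C'$. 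Two $\lambda$-growth blocks linked this way force a nontrivial Jordan block for the eigenvalue $\lambda$ in $M$, contradicting condition~(3) in the definition of almost semisimple. Hence $w\in C$, and since edges inside $C$ all carry positive $\mu$-mass, there is a positive-probability path back to $v$. Your proposal gestures at the Remark and at ``violating that $C$ itself realizes the maximal growth,'' but never makes the link to the eigenvalue semisimplicity condition, which is the actual mechanism.
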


\begin{proof}
Let $C$ be an irreducible component of $\Gamma$, and $v_i \in C$ a vertex. If $C$ is recurrent, then for each $n$ almost every path 
of length $n$ which starts from $v_i$ stays in $C$, hence by equation \eqref{E:Markov} one gets
$$1 = \sum_{v_j \in C} \mathbb{P}_{v_i}(X_n = v_j) = \sum_{j \in C} \frac{M^n_{ij} \rho(1)_j}{\lambda^n \rho(1)_i} \leq \left( \frac{\max_{v_j \in C} \rho(1)_j}{\rho(1)_i} \right)
\frac{ \sum_{v_j \in C} M^n_{ij} }{\lambda^n }$$
Note that $\sum_{j \in C} M^n_{ij}$ equals the number of paths of length $n$ which start from $v_i$ and lie entirely in $C$, hence by the above estimate 
such number is bounded below by $c \lambda^n$, proving that $C$ is maximal.
Conversely, suppose that $C$ is maximal. Then each vertex of $C$ has large growth and so for each $v$ in $C$ there is a path from $v_0$ to $v$ of positive probability. If there is a path from $v$ to some vertex $w$ of positive probability, then $w$ also has large growth (eq. \eqref{E:Markov}). Hence, $w \in C$ since $\Gamma$ is almost semisimple. Now any directed path from $w$ back to $v$ in $C$ has positive probability by definition of $\mu$, showing that $C$ is recurrent and completing the proof.
\end{proof}

Finally, the next lemma compares the $n$-step distribution for the Markov chain to the counting measure. 

\begin{lemma} \label{lem:comparePn}
There exists $c > 1$ such that, for each $A \subseteq \widetilde{\Gamma}$, we have the inequality
\begin{equation} \label{E:countmu}
c^{-1} \ \mathbb{P}^n(A) \leq P^n(A \cap LG) \leq c \ \mathbb{P}^n(A).
\end{equation}
\end{lemma}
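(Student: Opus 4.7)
The plan is to reduce everything to an explicit pointwise comparison on the sphere $S_n$ by using the closed formulas for $\mathbb{P}^n$ and $P^n$ together with the growth estimate $\#S_n \asymp \lambda^n$ from Lemma \ref{L:growth}. There is essentially no geometric content here; the lemma is a bookkeeping statement about two probability measures concentrated on $S_n$.

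First, I would observe that both sides depend only on $A \cap S_n$. Indeed, $P^n$ is supported on $S_n$ by definition, and equation \eqref{E:Markov-cone} shows that $\mathbb{P}^n$ is supported on $LG \cap S_n$. In particular
\[
\mathbb{P}^n(A) = \sum_{g \in A \cap LG \cap S_n} \frac{\rho(1)_{[g]}}{\rho(1)_0}\,\lambda^{-n}
\qquad\text{and}\qquad
P^n(A \cap LG) = \frac{\#(A \cap LG \cap S_n)}{\#S_n}.
\]

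Next, I would use finiteness of $V(\Gamma)$ to produce constants $0 < m \leq M < \infty$ such that $m \leq \rho(1)_i \leq M$ for every vertex $v_i$ of large growth (there are only finitely many $i$, and each $\rho(1)_i$ is strictly positive by definition of $LG$). This gives
\[
\frac{m}{\rho(1)_0}\, \lambda^{-n}\, \#(A \cap LG \cap S_n) \leq \mathbb{P}^n(A) \leq \frac{M}{\rho(1)_0}\, \lambda^{-n}\, \#(A \cap LG \cap S_n).
\]
At the same time, Lemma \ref{L:growth}(1) applied to the initial vertex $v_0$ (which has large growth, since $\rho(1)_0 > 0$ by construction) yields a constant $c_1 > 0$ with $c_1^{-1} \lambda^n \leq \#S_n \leq c_1 \lambda^n$. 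Therefore
\[
c_1^{-1}\,\lambda^{-n}\,\#(A \cap LG \cap S_n) \leq P^n(A\cap LG) \leq c_1\,\lambda^{-n}\,\#(A \cap LG \cap S_n).
\]

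Combining the two displays, both $\mathbb{P}^n(A)$ and $P^n(A \cap LG)$ are sandwiched between positive multiples of $\lambda^{-n} \#(A \cap LG \cap S_n)$, with constants depending only on $\rho(1)_0$, $m$, $M$ and $c_1$. Setting $c := c_1 M/m \cdot (\text{bookkeeping})$, one gets the desired two-sided bound $c^{-1}\mathbb{P}^n(A) \leq P^n(A \cap LG) \leq c\,\mathbb{P}^n(A)$. There is no real obstacle; the only thing to be careful about is the asymmetric role of $LG$ on the two sides, which is handled by the observation that $\mathbb{P}^n$ is automatically supported on $LG$ so inserting $\cap LG$ in its argument costs nothing.
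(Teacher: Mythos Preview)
Your proof is correct and follows essentially the same approach as the paper: both express $\mathbb{P}^n(A)$ and $P^n(A\cap LG)$ as quantities comparable to $\lambda^{-n}\,\#(A\cap LG\cap S_n)$, using the explicit formula \eqref{E:Markov-cone}, the uniform bounds on $\rho(1)_i$ over large-growth vertices, and the estimate $\#S_n\asymp\lambda^n$ from Lemma~\ref{L:growth}. Your write-up is just slightly more explicit about the constants.
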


\begin{proof}
By definition, 
$$P^n(A \cap LG) = \frac{\# (A \cap LG \cap S_n)}{ \# S_n}$$
while from formula \eqref{E:Markov-cone} 
$$\mathbb{P}^n(A) = \left( \frac{1}{\rho(1)_0} \right) \sum_{g \in A \cap LG \cap S_n}  \rho(1)_g \lambda^{-n}$$
Now, since there are a finite number of vertices, we have $c^{-1} \leq \rho(1)_g \leq c$ for any $g$ of large growth, hence 
$$\mathbb{P}^n(A) = \left( \frac{1}{\rho(1)_0} \right) \sum_{g \in A \cap LG \cap S_n}  \rho(1)_g \lambda^{-n} \asymp \#(A \cap LG \cap S_n) \lambda^{-n}$$
and since $\#S_n \asymp \lambda^n$ we get the claim.
\end{proof}

\section{From Markov chains to random walks} \label{sec:markov_to_walk}

Let $\Gamma$ be the finite graph, $v_0$ its initial vertex, and $\mathbb{P}$ the Markov measure on the space $\Omega_0$ of 
infinite paths starting at the initial vertex. 

We will associate to each recurrent vertex of the Markov chain a random walk, and use previous results on random walks to 
prove statements about the asymptotic behavior of the Markov chain.

\subsection{Return times}
For each sample path $\omega \in \Omega_0$, let us define $n(k,v, \omega)$ as the $k^{th}$ time the path $\omega$ lies at the vertex $v$. 
In formulas, 
$$n(k, v, \omega) := \left\{ \begin{array}{ll} 0 & \textup{if } k = 0 \\ \min \left( h > n(k-1, v, \omega) \ : \ X_h(\omega) = v \right) &  \textup{if }k \geq 1 \end{array} \right.$$
To simplify notation, we will write $n(k,v)$ instead of $n(k,v, \omega)$ when the sample path $\omega$ is fixed.  Moreover, we denote 
$$\tau(k,v) := n(k,v) - n(k-1, v).$$
These are the return times to the vertex $v$.

\begin{definition}
For every vertex $v$ of $\Gamma$, the \emph{loop semigroup} of $v$ is the set  $L_v$ of loops in the graph 
$\Gamma$ which begin and end at $v$. It is a semigroup with respect to concatenation.
A loop in $L_v$ is \emph{primitive} if it is not the concatenation of two (non-trivial) loops in $L_v$. 
\end{definition}

Let us now pick a recurrent component $C$, and let $\Omega_C$ be the set of all infinite paths from the initial vertex 
which enter $C$ and remain inside $C$ forever. We denote as $\mathbb{P}_C$ the conditional probability of $\mathbb{P}$ on $\Omega_C$.

Let us now fix a vertex $v$ of $C$. Then $\mathbb{P}_C$-almost every infinite path $\gamma$ in $\Omega_C$ passes through $v$ 
infinitely many times, hence such $\gamma$ can be decomposed as a concatenation
$\gamma = \gamma_0 \gamma_1 \dots \gamma_n \dots$ where $\gamma_0$ is a path from $v_0$ to $v$ which visits $v$ only once, and each $\gamma_i$ 
for $i \geq 1$ is a primitive loop at $v$. This defines for each recurrent vertex $v \in C$ a measurable map 
$$\varphi_v : (\Omega_C, \mathbb{P}_C) \to (L_v)^\mathbb{N}$$
$$(e_n) \to (\gamma_{1} , \gamma_{2}, \ldots)$$
where $\gamma_{k} =  e_{n(k,v)+1} \dots e_{n(k+1,v)}$. 

We now define the \emph{first return measure} $\mu_v$ on the set 
of primitive loops by setting, for each primitive loop $\gamma = e_1 \dots e_n$ 
with edges $e_1, \dots, e_n$, 
$$\mu_v(e_1 \dots e_n) = \mu(e_1)\dots \mu(e_n).$$
Extend $\mu_v$ to the entire loop semigroup $L_v$ by setting $\mu_v(\gamma) = 0$ if $\gamma\in L_v$ is not primitive. 
Since almost every path starting at $v$ visits $v$ infinitely many times, the measure $\mu_v$ is a probability measure.

\begin{lemma} \label{lem:getting_iids} For any recurrent vertex $v$ which belongs to a component $C$, we have
$$(\varphi_v)_\star \mathbb{P}_C = \mu_v^\mathbb{N}.$$
\end{lemma}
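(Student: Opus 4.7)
The plan is to check equality of the two probability measures on cylinder sets in $(L_v)^{\mathbb N}$, which generate the product $\sigma$-algebra. Since $\mu_v$ is supported on primitive loops and $\varphi_v$ by construction always produces primitive loops, it suffices, for each $n \ge 1$ and each choice of primitive loops $\alpha_1,\dots,\alpha_n \in L_v$, to show
\[
\mathbb{P}_C\bigl(\gamma_1 = \alpha_1,\dots,\gamma_n = \alpha_n\bigr) = \mu_v(\alpha_1)\cdots\mu_v(\alpha_n).
\]

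The first step is to identify $\Omega_C$, up to a $\mathbb{P}$-null set, with $\{T_v < \infty\}$, where $T_v := \min\{n \ge 0 : X_n = v\}$. By Lemma \ref{L:recurrent}, $\mathbb{P}$-almost every path in $\Omega_C$ visits $v$ infinitely often, giving one inclusion. Conversely, since $v$ is recurrent, any vertex reachable from $v$ with positive probability lies in $C$; hence once the chain reaches $v$ it never leaves $C$, giving the reverse inclusion. In particular $\mathbb{P}(\Omega_C) = \mathbb{P}(T_v < \infty)$.

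The heart of the argument is a repeated application of the strong Markov property. Let $\sigma_0 = T_v$ and, for $k \ge 1$, let $\sigma_k$ denote the $k$-th return time to $v$ after $T_v$; these are stopping times. The event
\[
\{T_v < \infty\}\cap\{\gamma_1 = \alpha_1,\dots,\gamma_n = \alpha_n\}
\]
says exactly that $T_v < \infty$ and that for each $k$ the edges traversed on $[\sigma_{k-1},\sigma_k)$ spell out $\alpha_k$. Applying the strong Markov property iteratively at $\sigma_0,\sigma_1,\dots,\sigma_{n-1}$ factors the probability of this event as
\[
\mathbb{P}(T_v < \infty) \cdot \prod_{k=1}^n \mathbb{P}_v\bigl(\text{first excursion from } v \text{ equals } \alpha_k\bigr).
\]
Because $\alpha_k$ is primitive it visits $v$ only at its endpoints, so the event that a chain under $\mathbb{P}_v$ begins by traversing precisely the edges of $\alpha_k$ is a cylinder event of probability $\mu(\alpha_k) = \mu_v(\alpha_k)$. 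Dividing by $\mathbb{P}(\Omega_C) = \mathbb{P}(T_v < \infty)$ yields the cylinder identity, and Carath\'eodory extension promotes this to equality of measures on $(L_v)^{\mathbb N}$.

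The main things to justify carefully are the identification of $\Omega_C$ with $\{T_v < \infty\}$ (both halves of which follow from Lemma \ref{L:recurrent} together with the definition of a recurrent component) and the strong Markov property at the return times $\sigma_k$. Once these are in place, the computation is mechanical: it is essentially the classical fact that excursions of a Markov chain between successive visits to a recurrent state are i.i.d., adapted to the graph setting by noting that primitivity of $\alpha_k$ makes the probability of the $k$-th excursion equaling $\alpha_k$ literally the product of the transition probabilities of its edges.
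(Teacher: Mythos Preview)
Your proof is correct and follows essentially the same approach as the paper's: both verify equality on cylinder sets determined by primitive loops, decompose over the initial segment before the first visit to $v$, and then factor via the Markov property. The only cosmetic difference is that the paper writes this decomposition explicitly as a sum over the set $H_v$ of finite paths from $v_0$ to $v$ meeting $v$ only at their terminal endpoint (and then applies the ordinary Markov property at the fixed length of each such prefix), whereas you package the same computation as an application of the strong Markov property at the hitting time $T_v$ and the subsequent return times.
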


\begin{proof}
It suffices to show that the measures $\mu_v^\mathbb{N}$ and $(\varphi_v)_\star \mathbb{P}_C$ agree on cylinder sets of the form $C_{l_1,l_2, \ldots, l_k} \subset (L_v)^\mathbb{N}$. We may also assume that each $l_i$ is primitive, otherwise $C_{l_1,l_2, \ldots, l_k}$ has $\mu_v^\mathbb{N}$--measure $0$ and $\varphi_v^{-1}(C_{l_1,l_2, \ldots, l_k})$ is empty. Hence, $\mu_v^\mathbb{N} (C_{l_1,l_2, \ldots, l_k}) = \mu_v(l_1)\ldots \mu_v(l_k)$.

Let $H_v$ be the set of paths in $\Omega_C$ which start at $v_0$ and meet $v$ only at their terminal endpoint. Then we have the disjoint union
\[
\varphi_v^{-1}(C_{l_1,l_2, \ldots, l_k}) = \bigcup_{\gamma \in H_v} \gamma \cdot l_1 l_2 \ldots l_k \cdot \Omega_v.
\]

So by the Markov property
\begin{align*}
\mathbb{P}_C(\varphi_v^{-1}(C_{l_1,l_2, \ldots, l_k})) &= \sum_{\gamma \in H_v} \mathbb{P}_C(\gamma) \mathbb{P}_v(l_1) \ldots \mathbb{P}_v(l_k) \\
&= \mu_v(l_1)\ldots \mu_v(l_k),
\end{align*}
where the second equality follows from the fact that almost every path in $\Omega_C$ hits $v$ and that each $l_i$ is a primitive loop. This completes the proof.
\end{proof}

By Lemma \ref{lem:first_return}, for every recurrent vertex $v$, the first return measure $\mu_v$ has finite exponential moment, 
i.e. there exists a constant $\alpha >0$ such that 
\begin{equation} \label{e:expo}
\int_{L_v} e^{\alpha|\gamma|} \ d\mu_v(\gamma) < \infty.
\end{equation}

\section{Groups, combings and random walks} \label{sec:groups}
The connection between hyperbolic groups and Markov chains is through the theory of geodesic combings.
\begin{definition}
A \emph{finite state automaton} over the alphabet $S$ is a finite, directed graph $\Gamma$, with a distinguished \emph{initial vertex} $v_0$
and such that its edges are labeled by elements of $S$, in such a way that no vertex has two outgoing edges with the same label.
\end{definition}
 
Let $G$ be a countable group, and $S \subset G$ a finite set which generates $G$ as a semigroup. Then we say that $G$ has a \emph{geodesic combing} with respect to $S$
if there exists a finite state automaton $\Gamma$ such that: 
\begin{enumerate}
\item every path in $\Gamma$ starting at the initial vertex is a geodesic in the word metric for $S$; 
\item for every element $g \in G$, there exists exactly one path in $\Gamma$ which starts at the initial vertex and represents $g$.
\end{enumerate}
In this paper, we will only need geodesic combings on hyperbolic groups. For detailed introduction, see \cite{calegari2013ergodic}. We begin by reviewing some basics on hyperbolic metric spaces.

\subsection{Hyperbolic spaces and groups} \label{sec:hyperbolic}
A geodesic metric space $X$ is \emph{$\delta$--hyperbolic}, where $\delta\ge0$, if for every geodesic triangle in $X$, each side is contained within the $\delta$--neighborhood of the other two sides. Each hyperbolic space has a well-defined \emph{Gromov boundary} $\partial X$, and we refer the reader to \cite[Section III.H.3]{BH}, \cite{GhysdelaHarpe}, or \cite[Section 2]{kap_boundaries} for definitions and properties. We note that when the metric space $X$ is proper (meaning that closed metric balls are compact), $X\cup \partial X$ is a compactification of $X$.
If a group $G$ acts on $X$ by isometries, then there is an induced action $G \curvearrowright \partial X$ by homeomorphisms.

Recall that we have defined a \emph{nonelementary} action of $G$ on $X$ as an action $G \curvearrowright X$ for which there are $g,h \in G$ which act loxodromically on $X$ and whose fixed point sets on the Gromov boundary of $X$ are disjoint. We say that two such elements of $G$ are independent loxodromics for the action $G \curvearrowright X$.
This definition extends equally well to an action of a semigroup $L$ on $X$. We will require the following criterion which characterizes when a semigroup action $L \curvearrowright X$ is nonelementary:

\begin{proposition}[{\cite[Proposition 7.3.1]{das2014geometry}}] \label{th:Tushar}
Suppose that $L$ is a semigroup which acts on the hyperbolic metric space $X$ by isometries. If the limit set $\Lambda_L \subset \partial X$ of $L$ on the boundary of X is nonempty and $L$ does not have a finite orbit in $\partial X$, then 
the action $L \curvearrowright X$ is nonelementary \footnote{Das--Simmons--Urbanski call such an action \emph{of general type}.}, that is $L$ contains two independent loxodromics.
\end{proposition}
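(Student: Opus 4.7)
The plan is to prove the contrapositive of the classical Gromov/Gromov–Hull classification in the semigroup setting: either $L$ fixes at most two boundary points (so has a finite orbit in $\partial X$), or $L$ contains two independent loxodromics.

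First I would show that under the hypotheses, $\Lambda_L$ is infinite. Note that $\Lambda_L = \overline{L\cdot x_0} \cap \partial X$ is $L$-invariant: any $g \in L$ extends to a homeomorphism of $X \cup \partial X$, and if $g_n x_0 \to \xi$ then $gg_n x_0 \to g\xi$. Since $\partial X$ is compact and $\Lambda_L$ is closed and $L$-invariant, if $\Lambda_L$ were finite then by Zorn's lemma any minimal closed $L$-invariant subset $M \subseteq \Lambda_L$ would itself be finite; but then for any $\eta \in M$ the orbit $L\eta \subseteq M$ is a finite $L$-orbit, contradicting the hypothesis. Hence $\Lambda_L$ is infinite, so I can select three distinct points $\xi_1, \xi_2, \xi_3 \in \Lambda_L$ and sequences $(g_n^{(i)}) \subseteq L$ with $g_n^{(i)} x_0 \to \xi_i$ for a fixed basepoint $x_0 \in X$.

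Next I would produce one loxodromic element of $L$ via a ping-pong argument using contracting dynamics on $\partial X$. Because $g_n^{(i)}$ acts as an isometry of $X$, the points $(g_n^{(i)})^{-1} x_0 \in X$ are well-defined; by a diagonal extraction (using sequential compactness properties that hold in the hyperbolic-space setting via Gromov products) I can assume $(g_n^{(i)})^{-1} x_0$ converges in $X \cup \partial X$, say to $\eta_i$. A standard shadow/contraction lemma in hyperbolic geometry (cf.\ \cite{GhysdelaHarpe}) says that if $g_n x_0 \to \xi$ and $g_n^{-1} x_0 \to \eta$ with $\xi \ne \eta$, then for every small boundary neighborhood $V$ of $\xi$ and every $\eta' \ne \eta$, eventually $g_n$ sends a fixed neighborhood of $\eta'$ inside $V$. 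Using this, if for some index $i$ one has $\eta_i \ne \xi_i$ then appropriately chosen $g_n^{(i)}$ itself has attracting/repelling dynamics and is loxodromic; otherwise $\eta_i = \xi_i$ for all $i$, but then since the three pairs $(\xi_j, \xi_k)$ are pairwise disjoint, the composition $g_m^{(1)} g_n^{(2)}$ (for large $m$ much larger than $n$) contracts a neighborhood of $\xi_3$ into a neighborhood of $\xi_1$ while sending a neighborhood of $\xi_1^{\mathrm{near}}$ away; iterating the argument yields an element of $L$ whose action on $\partial X$ has two disjoint source-sink neighborhoods, and a standard fixed-point/contraction argument then shows this element is loxodromic.

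Finally, I would obtain a second independent loxodromic. Let $h \in L$ be the loxodromic produced above with fixed-point pair $\{\eta_+, \eta_-\} \subset \partial X$. Since $L$ has no finite orbit, there exists $g \in L$ with $g \cdot \{\eta_+,\eta_-\} \not\subseteq \{\eta_+,\eta_-\}$. Then for sufficiently large $n$, the element $h' := h^n g h^n \in L$ (or an analogous concatenation staying inside $L$) has attracting fixed point near $\eta_+$ and repelling fixed point near $g^{-1}\eta_-$ — by the ping-pong dynamics, these lie close to $\{\eta_+, g\eta_-\}$, which differs from $\{\eta_+, \eta_-\}$, yielding independent loxodromics $h$ and some product that does not share a fixed point with $h$. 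The main obstacle is exactly this independence step: without inverses in $L$, one cannot simply conjugate $h$ by an arbitrary $g$ to move its axis, and the combinatorial construction of the second loxodromic must stay within the semigroup. This is overcome by using iterates $h^n$ (which do lie in $L$) to amplify the attracting/repelling contraction, so that a single insertion of $g$ suffices to shift the axis without leaving $L$.
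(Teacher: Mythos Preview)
The paper does not give its own proof of this proposition: it is simply quoted from Das--Simmons--Urba\'nski \cite{das2014geometry}, so there is nothing to compare your argument against on the paper's side. Your overall strategy---show $\Lambda_L$ is infinite, produce one loxodromic by a contraction/ping-pong argument, then produce a second independent one---is the classical one and is essentially what any proof must do.

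That said, there are two genuine gaps in your sketch. First, you invoke compactness of $\partial X$ and sequential compactness of $X\cup\partial X$ to pass to subsequences so that $(g_n^{(i)})^{-1}x_0$ converges. In the generality of the proposition (and of this paper), $X$ is an arbitrary separable hyperbolic space, not proper, so $X\cup\partial X$ need not be compact and such subsequences need not exist. The contraction dynamics you want can still be set up, but it has to be done via Gromov products and shadows rather than by extracting boundary limits of inverses; this is exactly the sort of care Das--Simmons--Urba\'nski take.

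Second, and more seriously, your construction of the second independent loxodromic does not work as written. Take $h$ loxodromic with fixed points $\eta_\pm$ and $g\in L$ with $g\eta_+\notin\{\eta_+,\eta_-\}$, and look at $h' = h^n g h^n$. For large $n$, the forward iterate $h^n$ contracts the complement of a neighbourhood of $\eta_-$ toward $\eta_+$; then $g$ moves this near $g\eta_+$, which by assumption is not $\eta_-$; then the final $h^n$ contracts it back toward $\eta_+$. Running the same analysis on $(h')^{-1}=h^{-n}g^{-1}h^{-n}$ shows the repelling fixed point is near $\eta_-$. So $h'$ has fixed-point pair close to $\{\eta_+,\eta_-\}$, the \emph{same} pair as $h$, and you have gained nothing. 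Obtaining a second loxodromic with \emph{disjoint} fixed-point set inside a semigroup (where conjugation $ghg^{-1}$ is unavailable) genuinely requires a more delicate construction; the correct combinatorics is the main content of the cited proposition and is not captured by simply sandwiching $g$ between powers of $h$.
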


If $G$ is a finitely generated group, then $G$ is \emph{hyperbolic} if for some (any) finite generating set $S$, the associated Cayley graph $C_S(G)$ is $\delta$--hyperbolic for some $\delta\ge0$. Since $C_S(G)$ is locally finite, the Gromov boundary $\partial G = \partial C_S(G)$ is compact. In fact, the boundary of $G$ can be identified with the set of asymptote classes of geodesic rays starting at the identity, where two geodesic 
rays $\gamma_1$, $\gamma_2 : [0, \infty) \to G$ are asymptotic if $\sup_{t} d(\gamma_1(t), \gamma_2(t)) < \infty$.
Since the boundary $\partial G$ is compact and Hausdorff, it is a Baire space -- the union of closed nowhere dense sets has empty interior.\\

Hyperbolic groups have automatics structures by the well-known theorem of Cannon:

\begin{theorem}[\cite{cannon1984combinatorial}] \label{th:combing}
Let $G$ be a hyperbolic group, and $S$ a finite generating set. Then $G$ has a geodesic combing with respect to $S$. 
\end{theorem}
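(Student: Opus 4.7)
The plan is to construct the required finite-state automaton $\Gamma$ via Cannon's theory of cone types in hyperbolic groups. For each $g \in G$, I would first introduce its \emph{cone type}
\[
T(g) := \{ h \in G : |gh| = |g| + |h| \},
\]
consisting of those elements that prolong a geodesic ending at $g$. To secure the uniqueness requirement in the definition of a combing, I would fix a total order on $S$ and, for each $g$, single out its shortlex-minimal geodesic word. Writing $T_{sl}(g)$ for the refinement that records only those extensions $h$ for which the shortlex geodesic of $gh$ is the concatenation of the shortlex geodesics of $g$ and $h$, each group element is assigned a canonical shortlex cone type.

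The crux is the following lemma of Cannon: there is a constant $K = K(\delta, S)$ such that $T_{sl}(g)$ is determined entirely by the labeled ball of radius $K$ about $1$ in the translated Cayley graph $g^{-1}\cay_S(G)$. The proof uses thin triangles: the status of a putative short extension $h$ with respect to $T(g)$ can be witnessed by the behavior of geodesics within a bounded neighborhood of $g$, so any failure of cone-type equality for two elements agreeing on a sufficiently large translated ball would force a geodesic configuration violating $\delta$-hyperbolicity. Since $\cay_S(G)$ has only finitely many isomorphism types of labeled balls of radius $K$, this locality yields only finitely many shortlex cone types.

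Granting this, I would build $\Gamma$ by taking its vertex set to be the finite collection of shortlex cone types, together with a distinguished initial vertex $v_0$ corresponding to $T_{sl}(1)$. From the vertex representing $T_{sl}(g)$, and for each $s \in S$, I would draw a directed edge labeled $s$ to the vertex representing $T_{sl}(gs)$ precisely when $s \in T_{sl}(g)$. By construction, each step along a directed path from $v_0$ increases word length by exactly $1$, so every such path evaluates to a geodesic; conversely, reading off the shortlex-minimal geodesic of any $g \in G$ traces out the unique path from $v_0$ representing $g$. The labeling of outgoing edges is injective at each vertex because distinct generators $s \neq s'$ produce distinct labels, so $\Gamma$ really is a finite-state automaton over $S$.

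The main obstacle is the local-determination lemma for shortlex cone types. Making the dependence on $\delta$ effective requires a careful hyperbolic-geometry argument, since the defining condition $|gh| = |g| + |h|$ refers a priori to arbitrarily long extensions $h$, whereas one needs to reduce the verification to a uniformly bounded amount of local data around $g$. The refinement from ordinary cone types to shortlex cone types must also be shown not to destroy finiteness: one checks that comparing two shortlex-minimal geodesics for lexicographic order can likewise be decided within a bounded neighborhood, using the fact that two geodesics between the same endpoints in a hyperbolic space stay uniformly close. Once these local-to-global reductions are in hand, the remaining verification of the combing axioms is essentially bookkeeping.
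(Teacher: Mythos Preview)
The paper does not give its own proof of this theorem: it is stated as a citation to Cannon \cite{cannon1984combinatorial} and used as a black box. Your proposal is precisely the standard cone-type argument that underlies Cannon's result (with the shortlex refinement to guarantee uniqueness of the combing representative), so it is entirely in line with the cited source and is correct as a sketch.
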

 
Let $\Gamma$ be the directed graph associated to the geodesic combing of $G$, whose edges are labeled by generators from $S$. As $\Gamma$ is a directed graph, we will use the notation and terminology established in Section \ref{sec:counting_graphs}. That $\Gamma$ comes from a combing of $G$ implies that there is a map $\ev \colon \widetilde{\Gamma} \to G$, the \emph{evaluation map}, which associates to each directed path $p$ in $\Gamma$ beginning at $v_0$, the element $\ev(p) \in G$ represented by the word obtained by concatenating the edge labels of $p$. By definition, $|\ev(p)| = |p|$ and $\ev \colon \widetilde \Gamma \to G$ is bijective. We remark that this map extends to \emph{all} directed paths in $\Gamma$ (no matter which vertex they start from), again by concatenating edge labels, and we will continue to denote this map by $\ev$. Note that the extension still maps directed paths to geodesics, but is no longer injective since paths which start at different vertices at $\Gamma$ may still read the same edge labels.
Moreover, since paths in $\widetilde{\Gamma}$ beginning at $v_0$ map to geodesics in $G$ beginning at $1$, $\ev$ induces a boundary map, denoted $\ev \colon \partial \widetilde{\Gamma} \to \partial G$, which is continuous, surjective, closed, and finite-to-one \cite{calegari2013ergodic}. 

In fact, the graph $\Gamma$ is almost semisimple \cite[Lemma 4.15]{calegari2010combable}, this follows from a theorem of Coornaert.

\begin{theorem}[{\cite[Theorem 7.2]{coornaert1993mesures}}] \label{th:Coorn}
Let $G$ be a nonelementary hyperbolic group with generating set $S$. Let $S_n$ be the set of elements of $G$ of word length $n$. Then there are constants $\lambda >0$, $c\ge1$ such that
\[
c^{-1}\lambda^n \le \#S_n \le c \lambda^n.
\]
\end{theorem}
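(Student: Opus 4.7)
The plan is to follow the Patterson--Sullivan strategy originally employed by Coornaert: build a measure $\nu$ on $\partial G$ whose mass on suitable shadows decays like $\lambda^{-n}$, and then count to extract matching upper and lower sphere bounds. Note that the almost semisimple structure of the combing automaton $\Gamma$ is not available \emph{a priori}, since (as the authors remark) it is itself a consequence of this theorem, so the argument must stay inside $G$ and its Gromov boundary rather than be routed through $\Gamma$.

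First, set $\lambda := \limsup_{n \to \infty} (\#S_n)^{1/n}$. Finiteness is clear from $\#B_n \leq |S|^n$, while $\lambda > 1$ comes from nonelementarity: a standard ping-pong argument on $\partial G$ produces a quasiconvex free subgroup of rank $2$, yielding at least $3^n$ distinct elements in $B_{Cn}$ for some constant $C$. The Poincare series $P(s) = \sum_{g \in G} e^{-s|g|}$ then converges for $s > \log \lambda$. After (if necessary) applying Patterson's weighting trick to force divergence at the critical exponent, form the probability measures $\mu_s = P(s)^{-1} \sum_g e^{-s|g|} \delta_g$ on the compactification $G \cup \partial G$ and take a weak-$*$ accumulation point as $s \searrow \log \lambda$; the mass escapes to $\partial G$ to produce a probability measure $\nu$ supported there.

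The main technical step, and the hard part of the argument, is the \emph{Shadow Lemma}: there exists $R > 0$ such that for every $g \in G$,
\[
\nu(O_R(g)) \asymp \lambda^{-|g|},
\]
where $O_R(g) \subset \partial G$ is the shadow of the $R$-ball around $g$ as seen from $1$. Establishing this requires $\delta$-hyperbolicity to show that geodesic rays from $1$ into $O_R(g)$ must fellow-travel $g$ within bounded distance, combined with a coarse quasi-invariance of $\nu$ under the left $G$-action which lets one transport the mass near infinity by translating by $g$. Granting this lemma, one checks that for $R$ large compared to $\delta$ the shadows $\{O_R(g) : g \in S_n\}$ cover $\partial G$ (every boundary point determines a geodesic ray from $1$ passing near some element of $S_n$) with multiplicity bounded independently of $n$. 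Therefore
\[
1 = \nu(\partial G) \asymp \sum_{g \in S_n} \nu(O_R(g)) \asymp \#S_n \cdot \lambda^{-n},
\]
which yields the double inequality $c^{-1}\lambda^n \leq \#S_n \leq c\lambda^n$ as desired.
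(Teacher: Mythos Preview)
The paper does not supply its own proof of this theorem: it is quoted as \cite[Theorem 7.2]{coornaert1993mesures} and used as a black box (indeed, the almost semisimplicity of the automaton $\Gamma$ is derived from it immediately afterwards, as you correctly observe). So there is nothing in the present paper to compare your argument against.

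That said, your outline is precisely Coornaert's original argument and is sound. The one point worth tightening is the phrase ``with multiplicity bounded independently of $n$'': you should make explicit that if $\xi \in O_R(g) \cap O_R(g')$ with $|g| = |g'| = n$, then both $g$ and $g'$ lie within a uniformly bounded distance of any geodesic ray $[1,\xi)$, hence within bounded distance of each other; the multiplicity bound then follows from the uniform bound on the cardinality of balls of fixed radius in $G$. With that made explicit, the covering and the Shadow Lemma combine exactly as you describe to give both inequalities.
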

\noindent We remark that since $\Gamma$ parameterizes the geodesic combing of $G$, the constant $\lambda >0$ in Theorem \ref{th:Coorn} is the growth rate of $\Gamma$.

Finally, the map $\ev \colon \partial \widetilde \Gamma \to \partial G$ pushes forward the Patterson-Sullivan measure on $\partial \widetilde{\Gamma}$ to the usual \emph{Patterson--Sullivan measure} on $\partial G$, which we continue to denote by $\nu$. The measure $\nu$ is a quasiconformal measure on $\partial G$, which can alternatively be defined, up to bounded ratios, as a weak limit of uniform measure on the balls $B_n$.
For details, see \cite{coornaert1993mesures} or Section 4.3 of \cite{calegari2010combable}. 

\begin{lemma}[\cite{coornaert1993mesures}\cite{calegari2013ergodic}] \label{L:ergodic}
The Patterson-Sullivan measure on $\partial G$ has full support, i.e. every open set has positive measure. 
Moreover, the action of $G$ on $\partial G$ is ergodic, i.e. any $G$-invariant subset of $\partial G$ 
has either zero or full measure.
\end{lemma}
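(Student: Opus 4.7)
The plan is to prove the two assertions separately. For the full support claim I would combine the explicit cone formula \eqref{E:PS} with minimality of the $G$-action and quasiconformality of $\nu$; for ergodicity I would run a Hopf-style density argument based on Coornaert's shadow lemma.

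For full support, formula \eqref{E:PS} gives $\nu(\cone(g)) > 0$ whenever $[g]$ is a large-growth vertex of $\Gamma$, and by Proposition~\ref{P:smallg} the cylinders $\cone(g) \cap \partial \widetilde{\Gamma}$ with $[g]$ of large growth form a basis for the topology on a $\nu$-full-measure subset of $\partial \widetilde{\Gamma}$. Pushing forward under the continuous, surjective, finite-to-one map $\ev \colon \partial \widetilde{\Gamma} \to \partial G$ produces a family of positive-measure compact subsets of $\partial G$. Given any nonempty open $U \subset \partial G$, I would invoke minimality of the $G$-action on $\partial G$: since $G$ acts nonelementarily on its own Cayley graph it contains two independent loxodromics, whose attracting fixed points have dense $G$-orbit. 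By compactness, finitely many translates $g_1 U, \dots, g_k U$ cover $\partial G$, so at least one has positive measure, and the quasiconformal property of Patterson--Sullivan measure (the Radon--Nikodym derivative $dg_*\nu/d\nu$ is controlled by $\lambda^{-b_\eta(g^{-1})}$ up to bounded multiplicative error) then forces $\nu(U) > 0$.

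For ergodicity, let $A \subset \partial G$ be a measurable $G$-invariant set with $\nu(A) > 0$; the goal is $\nu(A) = 1$. Coornaert's shadow lemma provides a Vitali-type family of shadows $\mathrm{Sh}(g)$ of balls in the Cayley graph of $G$, with $\nu(\mathrm{Sh}(g)) \asymp \lambda^{-|g|}$ and enough overlap control to apply Lebesgue differentiation. Hence $\nu$-almost every point of $A$ is a density point. Because $A$ is $G$-invariant and $\nu$ is quasiconformal, the set of density points of $A$ is itself $G$-invariant; combined with minimality it is dense in $\partial G$, and a standard covering argument then forces $\nu(A^c) = 0$.

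The main obstacle is making the differentiation step fully rigorous: one must verify that shadows form a genuine Vitali family with respect to the quasiconformal measure $\nu$, which is the technical heart of \cite{coornaert1993mesures}. A cleaner alternative I would consider is to bypass the direct argument and instead invoke double ergodicity of the diagonal $G$-action on $(\partial G \times \partial G, \nu \times \nu)$ established in \cite{calegari2013ergodic}; single ergodicity on $(\partial G, \nu)$ follows immediately by integrating over the second factor.
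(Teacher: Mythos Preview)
The paper does not prove this lemma at all: it is stated with citations to \cite{coornaert1993mesures} and \cite{calegari2013ergodic} and no argument is given. Your proposal therefore goes beyond what the paper does.

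That said, your sketch tracks the content of those references reasonably well. The full-support argument via minimality plus quasiconformality is standard and correct: translates of any open $U$ cover $\partial G$, finitely many suffice by compactness, one has positive measure, and the bounded Radon--Nikodym derivatives push this back to $U$. (The preliminary appeal to the cone formula \eqref{E:PS} is unnecessary here and slightly awkward, since cones in $\partial\widetilde\Gamma$ need not push forward to open sets in $\partial G$; the minimality argument alone suffices.) For ergodicity, the Lebesgue-density approach via the shadow lemma is exactly what Coornaert does, and you are right that the Vitali-type covering property is the technical point requiring care; your alternative of deducing single ergodicity from the double ergodicity in \cite{calegari2013ergodic} is clean and valid. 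Either route is acceptable, but since the paper treats this as a black-box citation, no proof is actually expected.
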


Finally, we turn to the definition and basic properties of shadows in the $\delta$-hyperbolic space $X$. For $x,y \in X$, the \emph{shadow in $X$ around $y$ based at $x$} is 
\[
S_x(y,R) = \{z\in X : (y,z)_x \ge d(x,z) -R\},
\]
where $R> 0$ and $(y,z)_x = \frac{1}{2}(d(x,y) +d(x,z) -d(y,z))$ is the usual \emph{Gromov product}. The \emph{distance parameter} of $S_x(y,R)$ is by definition the number $r = d(x,y) -R$, which up to an additive constant depending only on $\delta$, measures the distance from $x$ to $S_x(y,R)$. Indeed, $z \in S_x(y,R)$ if and only if any geodesic $[x,z]$  $2\delta$--fellow travels any geodesic $[x,y]$ for distance $r+O(\delta)$. 

We will use shadows in both the hyperbolic group $G$ and the hyperbolic space $X$, but the location of the shadow will always be clear from context. For example, $S_1(g,R)$ will usually denote a shadow in $G$ based at the identity element.  \\

The following lemma is an easy exercise in hyperbolic geometry.
\begin{lemma} \label{lem:neigh-shadow}
For each $D \geq 0$, and each $x, y$ in a $\delta$-hyperbolic space, we have 
$$N_D(S_x(y, R)) \subseteq S_x(y, R + D+2\delta).$$
\end{lemma}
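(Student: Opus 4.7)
The plan is straightforward: unpack the definitions and apply the four-point formulation of hyperbolicity once.

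Pick any $w \in N_D(S_x(y,R))$, so there exists $z \in S_x(y,R)$ with $d(w,z) \le D$. The goal is to bound $(y,w)_x$ from below, since membership in $S_x(y, R + D + 2\delta)$ is exactly the statement $(y,w)_x \ge d(x,w) - R - D - 2\delta$.

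First I would produce two easy lower bounds on Gromov products at $x$. From the hypothesis $z \in S_x(y,R)$, we have $(y,z)_x \ge d(x,z) - R$, and from the triangle inequality $d(x,z) \ge d(x,w) - D$, so
\[
(y,z)_x \;\ge\; d(x,w) - D - R.
\]
Next, since $d(z,w) \le D$ and again $d(x,z) \ge d(x,w) - D$, a direct computation gives
\[
(z,w)_x \;=\; \tfrac{1}{2}\bigl(d(x,z) + d(x,w) - d(z,w)\bigr) \;\ge\; d(x,w) - D.
\]

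The last step is the only place hyperbolicity enters: by the four-point condition in a $\delta$-hyperbolic space,
\[
(y,w)_x \;\ge\; \min\bigl((y,z)_x,\,(z,w)_x\bigr) - 2\delta \;\ge\; d(x,w) - D - R - 2\delta,
\]
where the last inequality uses $R \ge 0$ to see which of the two bounds above is smaller. This is precisely the condition $w \in S_x(y, R + D + 2\delta)$, completing the proof.

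There is no real obstacle here; the only subtlety is the choice of hyperbolicity constant in the four-point inequality, which accounts for the $2\delta$ (rather than $\delta$) in the conclusion. If the paper's convention for $\delta$-hyperbolicity uses the thin triangles definition, the four-point inequality holds with constant $2\delta$, matching the stated bound exactly.
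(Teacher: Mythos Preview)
Your proof is correct and is exactly the natural argument; the paper omits a proof entirely, simply calling the lemma ``an easy exercise in hyperbolic geometry.'' Your remark about the $2\delta$ matching the thin-triangles convention is apt and explains the constant in the statement.
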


Returning to the directed graph $\Gamma$ parameterizing the geodesic combing for $G$,
we say a vertex $v$ is \emph{continuable} if there exists at least one infinite path starting from $v$. 
Otherwise, $v$ is said to be $\emph{non-continuable}$. Elements of $g$ are called continuable or non-continuable 
according to the property of their vertex. Note that $g$ is continuable if and only if it lies on an infinite geodesic ray in the combing. 

\begin{lemma} \label{l:continuable}
Let $r$ be the number of vertices in $\Gamma$. Then for every $g \in G$, there exists $h \in G$ with $d_G(g,h) \leq r$ and such that the vertex of $h$ is continuable. 
\end{lemma}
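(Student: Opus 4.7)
The plan is a pigeonhole argument on the projection of the combing path of $g$ to $\Gamma$. I will use that the evaluation map $\ev\colon \widetilde{\Gamma}\to G$ is bijective, so that prefixes of combing paths are themselves combing paths: writing $p_g$ for the unique path in $\widetilde{\Gamma}$ from $v_0$ representing $g$, each prefix $p_g|_k$ is the combing path of the element $g_k := \ev(p_g|_k)$, and hence the sequence $([g_0], [g_1], \ldots, [g_{|g|}])$ is precisely the vertex sequence in $\Gamma$ along the combing path of $g$.

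The main case is $|g| \ge r$. I would look at the $r+1$ vertices $[g_{|g|-r}], [g_{|g|-r+1}], \ldots, [g_{|g|}]$. Since $\Gamma$ only has $r$ vertices, two of these must coincide, giving indices $|g|-r \le i < j \le |g|$ with $[g_i] = [g_j]$. Projecting the segment of $p_g$ between positions $i$ and $j$ then yields a directed cycle in $\Gamma$ based at $[g_i]$; concatenating this cycle infinitely many times produces an infinite directed path starting at $[g_i]$, which shows that $[g_i]$ is continuable. Taking $h = g_i$ then gives $d_G(g,h) \le |g|-i \le r$ and $[h]$ continuable, as required.

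For the remaining case $|g| < r$, I would simply take $h = 1$, so that $d_G(g,h) = |g| < r$ and $[h] = v_0$; the claim then reduces to checking that $v_0$ itself is continuable, which holds because the paper's ambient hypotheses make $G$ a nonelementary hyperbolic group, hence infinite, so $G$ admits an infinite geodesic ray starting at $1$ whose corresponding path in $\widetilde{\Gamma}$ is an infinite directed path from $v_0$. I do not foresee a real obstacle here: the argument is essentially a pigeonhole observation about a finite directed graph, and the only points needing a moment of care are the prefix property of combing paths (immediate from the uniqueness half of Theorem \ref{th:combing}) and the continuability of $v_0$ (which is why the standing nonelementarity assumption on $G$ is used).
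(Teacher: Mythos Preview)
Your argument is correct and is essentially identical to the paper's: both apply pigeonhole to the last $r$ edges of the combing path to locate a repeated vertex, hence a loop, hence a continuable vertex within distance $r$ of $g$; and both dispose of short elements by taking $h=1$. If anything, you are slightly more careful than the paper in justifying that $v_0$ itself is continuable.
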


\begin{proof}
Note that every path of length $\geq r$ in the graph passes through some vertex at least twice, 
and a vertex which supports a loop is continuable. 
If $|g| \leq r$, just pick $h = 1$; otherwise, consider the path in the combing from $1$ to $g$, 
and look at the final subpath of length $r$ (the one ending in $g$). By the previous observation, such subpath has to hit a continuable vertex, proving the claim. 
\end{proof}

\begin{lemma} \label{l:shadow-large}
There exists a constant $c\ge 0$ which depends only on $\delta$ such that, 
for any $g$ continuable, and any $R \geq 0$,  the shadow $S_1(g, R+c)$ in $G$ has maximal growth.
\end{lemma}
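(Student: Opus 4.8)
The goal is to show that for continuable $g$, the shadow $S_1(g, R+c)$ in $G$ has maximal growth, meaning it contains cones whose growth is $\lambda$. The plan is to exhibit, inside the shadow, a vertex of $\widetilde\Gamma$ lying on a maximal irreducible component of $\Gamma$; by Lemma \ref{L:growth}(3) such a vertex has cone growth $\lambda$, and since the cone of a vertex at distance $n$ from the origin is automatically contained in a shadow of distance parameter $\approx n$ around that vertex, the shadow inherits growth $\lambda$.

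First I would use continuability of $g$: there is an infinite geodesic ray $(g_n)_{n\ge 0}$ in the combing with $g_0 = g$, coming from an infinite directed path in $\Gamma$ starting at the vertex $[g]$. By Lemma \ref{L:recurrent}(1) applied to the Markov chain on $\Gamma$ (equivalently, by finiteness of $\Gamma$ and the pigeonhole-type argument of Lemma \ref{l:continuable}), this infinite path must eventually enter — in fact within a bounded number $r$ of steps — an irreducible component $C$ that supports an infinite ray; and among the components reachable this way at least one can be taken to be recurrent, i.e. maximal by the lemma identifying maximal and recurrent components. Actually the cleanest route: any infinite path eventually stays in a single irreducible component, and a component containing an infinite forward path and from which you cannot escape must be one whose subgraph has an infinite path, hence (being finite and strongly connected) contains a loop; pushing further, since the whole graph has growth $\lambda$ and large-growth vertices are exactly those with a maximal descendant component, and continuable large-growth-looking vertices abound, I can arrange that within distance $r$ of $g$ along this ray we reach a vertex $h'$ of $\widetilde\Gamma$ whose projection lies in a maximal component $C$. (If $[g]$ itself already has large growth, even better; the bounded detour handles the general continuable case.)

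Next, with $h'$ at distance at most $|g| + r$ from the origin and lying on the geodesic ray through $g$, I note that any geodesic $[1, h']$ fellow-travels $[1,g]$ for distance $|g| - O(\delta)$, so $\cone(h')$, which consists of geodesics through $h'$, lies in the shadow $S_1(g, R')$ for $R' = O(\delta) + r$; concretely every $z$ with geodesic through $h'$ satisfies $(g,z)_1 \ge d(1,h') - d(h',\text{endpoint near }g) - O(\delta) \ge d(1,z) - R'$ after bookkeeping. Choosing $c$ to absorb $r$ and the $O(\delta)$ terms gives $\cone(h') \subseteq S_1(g, R+c)$ for all $R \ge 0$. Then by Lemma \ref{L:growth}(3), the number of paths of length $n$ in $\mathcal P_{h'}(C)$ is $\asymp \lambda^n$, and these are in particular paths through $h'$, so $\#(\cone(h') \cap S_{n + |h'|}) \ge c^{-1}\lambda^n$; hence $\lambda(S_1(g,R+c)) \ge \lambda(\cone(h')) = \lambda$, and the reverse inequality is automatic since $\lambda(\widetilde\Gamma) = \lambda$.

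The main obstacle I anticipate is the first step: carefully producing a vertex on a \emph{maximal} component within a uniformly bounded distance of $g$, using only continuability. Continuability gives an infinite ray, and finiteness of $\Gamma$ forces it into some irreducible component within $r$ steps, but one must argue that component can be taken maximal — this is where one invokes the structure theory (maximal $=$ recurrent, and a vertex has large growth iff it has a maximal descendant component) together with the fact that a continuable vertex, once inside a component from which every infinite path proceeds, sits above a maximal component. The uniformity of $c$ in $g$ (but allowed to depend on $\delta$ and on $\Gamma$, hence on $G$ and $S$) comes precisely from the bound $r = \#V(\Gamma)$ on this detour, as in Lemma \ref{l:continuable}. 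Everything after that is routine hyperbolic-geometry bookkeeping of Gromov products plus a direct citation of Lemma \ref{L:growth}(3).
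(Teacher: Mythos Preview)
Your approach has a genuine gap at precisely the point you flagged as the main obstacle. Continuability of $g$ gives an infinite directed path in $\Gamma$ starting at $[g]$, and by finiteness of $\Gamma$ this path eventually settles into a single irreducible component $C$. But there is no reason $C$ must be \emph{maximal}: a non-maximal irreducible component can perfectly well contain loops and hence support infinite paths. Your appeal to Lemma~\ref{L:recurrent}(1) does not help, since that lemma is an almost-sure statement with respect to the Markov measure, not a statement about the one specific infinite ray you chose through $g$. And the remark ``large growth iff has a maximal descendant component'' goes in the wrong direction: even if $[g]$ happened to have large growth (which continuability alone does not guarantee), the particular ray you picked need not head toward that maximal descendant. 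So you cannot, from continuability alone, produce a vertex in a maximal component within a uniformly bounded distance along the ray.

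The paper sidesteps this entirely. Rather than hunting for a cone of maximal growth inside the shadow, it shows directly that for each $n$ the shadow $S_1(g, R+c)$ contains an entire metric ball $B_n(g_n)$ of radius $n$, where $g_n$ is the point on the infinite combing geodesic through $g$ at distance $|g|+n+c$ from $1$; the verification is a short Gromov-product computation. Since balls of radius $n$ in a nonelementary hyperbolic group have $\asymp \lambda^n$ elements (Theorem~\ref{th:Coorn}), maximal growth of the shadow follows immediately. This argument uses nothing about the component structure of $\Gamma$ and yields a constant $c$ depending only on $\delta$, whereas your route --- even if it could be repaired --- would give a constant depending on $\#V(\Gamma)$.
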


\begin{proof}
Let $\gamma$ be an infinite geodesic in $G$ which contains $g$, and let $D = |g|$.
Moreover, let $g_n$ be a point on $\gamma$ at distance $n + D + c$ from $1$, where $c$ is a constant which depends on $\delta$.  
We claim that the ball of radius $n$ around $g_{n}$ is contained in $S_1(g, R)$. 
Indeed, if $h \in B_n(g_{n})$, then 
$$(h, g_n)_1 \geq d_G(1, g_n) - d_G(h, g_n) \geq n + D + c - n = D + c$$
Thus, a closest point projection $p$ of $h$ to $[1, g_n]$ lies on the segment $[g, g_n]$. 
As a consequence, $d(1,h) = d(1,g) + d(g, p) + d(p,h) + O(\delta)$ and $d(g, h) = d(g, p) + d(p, h) + O(\delta)$, hence
$$(h, g)_1 = d_G(1, g) + O(\delta)$$
which proves the claim. 
\end{proof}

\subsection{Random walks}
If $X$ is a separable hyperbolic space and $G$ acts on $X$ by isometries, then a probability measure $\mu$ on $G$ is said to be \emph{nonelementary} with respect to the action $G \curvearrowright X$ if the semigroup generated by the support of $\mu$ contains two independent loxodromics.

We will need the fact that a random walk on $G$ whose increments are distributed according to a nonelementary measure $\mu$ almost surely converge to the boundary of $X$ and has positive drift in $X$.

\begin{theorem}[{\cite[Theorems 1.1, 1.2]{MaherTiozzo}}] \label{thm:MT_drift}
Let $G$ be a countable group which acts by isometries on a separable hyperbolic space $X$, and let $\mu$ be a nonelementary probability distribution on $G$. Fix $x\in X$. Then,
almost every sample path $(w_nx)$ converges to a point in the boundary of $\partial X$, and the resulting hitting measure $\nu$ is nonatomic.

Moreover, if $\mu$ has finite first moment, then there is a constant $L>0$ such that for almost every sample path
$$\lim_{n \to \infty}\frac{d(x,w_nx)}{n} = L >0.$$
\end{theorem}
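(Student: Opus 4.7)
The plan is to prove all three assertions—almost sure convergence of $w_n x$ to $\partial X$, nonatomicity of the hitting measure, and positive linear drift—using \emph{shadow estimates} as the unifying tool, following the strategy of Maher--Tiozzo. The central quantitative input will be uniform decay of shadow probabilities: as the distance parameter $r = d(x,y)-R$ tends to infinity,
\[
\sup_{n\ge 1}\mu^{*n}\bigl(\{g\in G : gx \in S_x(y,R)\}\bigr) \to 0.
\]

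First I would establish a $\mu$-stationary probability measure $\check\nu$ on a suitable boundary of $X$ and show it is nonatomic. Since $X$ is only assumed separable, $X\cup\partial X$ need not be compact, so rather than taking weak limits on $X\cup\partial X$ directly, one compactifies with horofunctions, or works with the Gromov boundary together with Furstenberg's existence theorem for stationary measures. Nonatomicity of $\check\nu$ then follows from Proposition~\ref{th:Tushar}: if $\check\nu$ had an atom, the (necessarily finite) set of atoms of maximal mass would be invariant under the semigroup generated by $\mathrm{supp}(\mu)$, contradicting the presence of two independent loxodromics. A decomposition argument (using the Markov property to condition on the first time the walk enters a given shadow) converts nonatomicity of $\check\nu$ into the uniform shadow decay displayed above.

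With shadow decay in hand, convergence is a Gromov-sequence argument. For $m<n$, decompose $w_n = w_m \cdot (g_{m+1}\cdots g_n)$; the event $\{(w_m x, w_n x)_x < r\}$ is controlled by the probability that $(g_{m+1}\cdots g_n)x$ lies in a shadow at $w_m^{-1}x$ of distance parameter roughly $d(x,w_m x)-r$. Combined with a Borel--Cantelli argument along the subsequence of running maxima of $d(x, w_k x)$ (which tends to $\infty$ almost surely by a standard transience argument using nonelementarity), this forces $(w_m x, w_n x)_x\to\infty$, so $(w_n x)$ is a Gromov sequence and converges to some $\xi(\omega)\in\partial X$. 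The hitting measure $\nu$ is the push-forward of $\PP$ under $\omega\mapsto\xi(\omega)$; it is $\mu$-stationary and nonatomic by the same argument used for $\check\nu$.

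For the drift I apply Kingman's subadditive ergodic theorem to the cocycle $F_n(\omega)=d(x, w_n(\omega) x)$. Subadditivity $F_{n+m}\le F_n+F_m\circ T^n$ is immediate from the triangle inequality and the fact that $G$ acts on $X$ by isometries, where $T$ is the Bernoulli shift on $(G^{\mathbb{N}},\mu^{\mathbb{N}})$; the finite first-moment hypothesis ensures $\mathbb{E}[F_1]<\infty$. Kingman then produces a deterministic $L\ge 0$ with $F_n/n\to L$ almost surely. The hard part is ruling out $L=0$. My plan is to combine convergence with shadow decay: almost surely $w_n x\to\xi$, and for any $\epsilon>0$ shadow decay bounds the proportion of indices $n\le N$ for which $(w_n x,\xi)_x < r$ by a quantity tending to zero in $r$. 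Translating this into a linear lower bound on $(w_n x,\xi)_x$, and hence by $\delta$-hyperbolicity on $d(x,w_n x)$, for a positive density of indices forces $L>0$. The main obstacle is precisely this last step: upgrading qualitative convergence to a quantitative linear escape rate, uniform in $\omega$, which requires a careful use of the reflected walk $\check\mu(g)=\mu(g^{-1})$ and the stationarity of its hitting measure, together with the Kingman-type identity that identifies $L$ with the average pairing between sample paths and the Busemann cocycle at $\xi$.
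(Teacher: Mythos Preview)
The paper does not prove this theorem at all: it is quoted as Theorems~1.1 and~1.2 of Maher--Tiozzo and used as a black box throughout Section~\ref{sec:Markov_action}. So there is no ``paper's own proof'' to compare against.

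That said, your outline is essentially the Maher--Tiozzo argument itself: obtain a stationary measure on the horofunction boundary (needed because $X\cup\partial X$ is not compact), deduce nonatomicity from nonelementarity, convert this into uniform shadow decay, use shadow decay plus a Borel--Cantelli argument to show $(w_nx)$ is a Gromov sequence, and get the drift from Kingman with positivity coming from shadow decay and the reflected walk. The one place where your sketch is vague is exactly where the real work in Maher--Tiozzo lies: the step ``upgrading qualitative convergence to a quantitative linear escape rate'' for $L>0$. In their paper this is done by showing that the proportion of times $n\le N$ with $d(x,w_nx)\le \epsilon N$ goes to zero, via an exponential decay estimate for shadows under the finite first moment hypothesis (their Lemma~5.8 and Proposition~5.9), not merely the qualitative decay you state. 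Your displayed shadow estimate $\sup_n \mu^{*n}(\cdot)\to 0$ is not by itself enough to force positive drift; one needs the stronger exponential bound, or an equivalent quantitative input. If you intend to reproduce their proof, that is the step to flesh out.
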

\noindent The constant $L >0$ in Theorem \ref{thm:MT_drift} is called the \emph{drift} of the random walk $(w_n)$.

\section{The Markov process and the action $G \curvearrowright X$}
\label{sec:Markov_action}
Let $G$ be a hyperbolic group with a nonelementary action $G \curvearrowright X$ on a hyperbolic space $X$. In this section, we relate the geometry of generic geodesics in $G$, with respect to Patterson--Sullivan measure, to the geometry of $X$ under the orbit map $G \to X$. These results will be used in the next section to prove genericity statements in $G$ with respect to counting in the Cayley graph. \\

We begin by fixing the notation needed throughout the remainder of the paper. Let $G$ be a hyperbolic group with finite generating set $S$ and let $\Gamma$ be the directed graph parameterizing the geodesic combing given by Theorem \ref{th:combing}. The directed edges of $\Gamma$ are labeled by $s \in S$ and we denote by $\Omega$ the set of all infinite paths in $\Gamma$ starting at any vertex $v \in \Gamma$. 

As $\Gamma$ is almost semisimple, we apply the machinery of Section \ref{sec:markov} to make the directed graph $\Gamma$ into a Markov chain with transition probability $\mu$ (see Equation \ref{E:Markov}).  This induces the measures $\mathbb{P}_v$ on the set $\Omega$,
one for each vertex $v \in \Gamma$ of large growth. As directed paths in $\Gamma$ evaluate to geodesics in $G$, the measure $\mathbb{P} = \mathbb{P}_{v_0}$, for $v_0$ the vertex associated to $1 \in G$, is thought of as a measure on the (combing) geodesic rays in $G$ starting at $1$. The other measures $\mathbb{P}_v$ have similar interpretations. Let 
$$
w_n \colon \Omega \to G
$$
 be the variable which associates to each path $p$ in $\Omega$ the element of $G$ spelled by concatenating the first $n$ edges of $p$. That is, $\mathbb{P}_v(w_n =g)$ is the probability of hitting $g\in G$ after $n$ steps starting at $v \in \Gamma$.


We will apply the notation and terminology developed in Sections \ref{sec:counting_graphs}, \ref{sec:Mchains}, and \ref{sec:markov_to_walk} to the Markov chain $\Gamma$. For example, for a subset $A \subset G$, $P^n(A)$ is the proportion of elements in the sphere $S_n$ of radius $n$ about $1 \in G$ which are contained in $A$. We also fix $\lambda >0$ to be the growth rate of $G$, as in Theorem \ref{th:Coorn}, and note that $\lambda$ is exactly the growth of $\Gamma$, as defined in Section \ref{sec:counting_graphs}. Finally, for $g \in G$, we set 
\[
\cone(g) = \ev(cone(\gamma)) \subset G \cup \partial G,
\]
where $\gamma \in \widetilde{\Gamma}$ with $\ev(\gamma) =g$. Informally, $\cone(g)\subset G \cup \partial G$ is the set of points which are reachable by geodesics of $G$ (parameterized by $\Gamma$) which pass through $g$.

\subsection{Patterson--Sullivan measure as a combination of harmonic measures} 
\label{sec: PS_comb}
We begin by realizing the Patterson--Sullivan measure on $\partial G$ as a combination of harmonic measures associated to random walks.

In Section \ref{sec:markov_to_walk}, to each vertex $v$ in $\Gamma$ we associated a loop semigroup $L_v$. Under evaluation, these semigroups map to subsemigroups of $G$, which we denote by $\Gamma_v$. In other words, $\Gamma_v$ is the semigroup of elements of $G$ which can be spelled by directed loops at the vertex $v \in \Gamma$. Hence, the first return measure $\mu_v$ on $L_v$ induces a probability measure, still denoted $\mu_v$, on $\Gamma_v \le G$, which has finite exponential moment (see eq. \eqref{e:expo}).

Let $\mathcal{R}$ be the set 
of recurrent vertices of $\Gamma$ and let
$\nu_v$ be the harmonic measure on $\partial G$ corresponding to the random walk $\mu_v$ on $G$. Let $N_v$ be the set of finite directed paths in $\Gamma$ beginning at $v_0$ and ending at $v$ which do not meet a recurrent vertex of $\Gamma$ in their interior. 

\begin{proposition} \label{prop:PS_comb}
The Patterson--Sullivan measure $\nu$ on $\partial G$ is a combination of the harmonic measures $\nu_v$:
\[
\nu = \sum_{v \in \mathcal{R}} \sum_{\gamma \in N_v} \mu(\gamma) \gamma_* \nu_v.
\]
\end{proposition}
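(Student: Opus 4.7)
The plan is to decompose a generic sample path $\omega \in \Omega_0$ according to when it first enters the recurrent part of $\Gamma$, and then to recognize the continuation as a random walk driven by $\mu_v$ via Lemma~\ref{lem:getting_iids}. I will use throughout the identification $\nu = \ev_*\mathbb{P}$ between the Patterson--Sullivan measure and the pushforward of the Markov measure under evaluation.

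First, I would set up the decomposition of $\Omega_0$. Lemma~\ref{L:recurrent} shows that for $\mathbb{P}$-almost every $\omega$, there is a unique first moment at which the path enters $\mathcal{R}$, and the path remains forever in the recurrent component of its arrival vertex. Hence almost every $\omega$ has a unique prefix lying in $N_v$ for exactly one $v\in \mathcal{R}$. Letting $\Omega_\gamma \subset \Omega_0$ denote the set of paths with prefix $\gamma \in N_v$, we obtain, up to $\mathbb{P}$-null sets, the disjoint decomposition $\Omega_0 = \bigsqcup_{v \in \mathcal{R}}\bigsqcup_{\gamma \in N_v} \Omega_\gamma$. By the Markov property $\mathbb{P}(\Omega_\gamma) = \mu(\gamma)$, and deleting the prefix identifies $\mathbb{P}|_{\Omega_\gamma}/\mu(\gamma)$ with $\mathbb{P}_v$ on $\Omega_v$.

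Next, I would identify the pushforward of $\mathbb{P}_v$ under evaluation to $\partial G$ with $\nu_v$. Since every directed path in $\Gamma$ evaluates to a geodesic word in $G$, the sequence $(w_n)$ traces out a geodesic ray in the hyperbolic group $G$ starting at the identity; because almost every path visits $v$ infinitely often, this ray has infinite length and therefore converges $\mathbb{P}_v$-almost surely to a limit $\eta(\omega) \in \partial G$. Applying $\varphi_v$ from Lemma~\ref{lem:getting_iids}, the primitive-loop decomposition produces an i.i.d.\ sequence $(\gamma_k)$ with law $\mu_v$, and evaluating at the $k$-th return time yields $w_{n(k,v)}(\omega) = \ev(\gamma_1) \cdots \ev(\gamma_k)$, which is precisely the $k$-th step of the random walk on $G$ driven by $\mu_v$. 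Since this subsequence must have the same $\partial G$-limit $\eta(\omega)$ as the full geodesic, the pushed-forward law of $\eta$ under $\mathbb{P}_v$ is by definition the harmonic measure $\nu_v$.

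Finally I would assemble the formula. On $\Omega_\gamma$, evaluation factors as $w_n(\omega) = \ev(\gamma) \cdot w'_{n - |\gamma|}(\omega')$ for the shifted path $\omega'$, so $\lim_n w_n = \ev(\gamma) \cdot \eta(\omega')$. Consequently the pushforward of $\mathbb{P}|_{\Omega_\gamma}$ to $\partial G$ equals $\mu(\gamma) \cdot \gamma_* \nu_v$, and summing over all $v\in\mathcal{R}$ and all $\gamma \in N_v$ (using $\nu = \ev_*\mathbb{P}$ and the total-mass identity $\sum_v\sum_{\gamma \in N_v}\mu(\gamma)=1$) produces the stated decomposition. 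The main technical point is the one buried in the middle paragraph: one must verify that the geodesic ray $(w_n)$ under $\mathbb{P}_v$ actually converges in $\partial G$ and that its limit coincides with the limit of the random-walk subsequence $(w_{n(k,v)})$. Both are consequences of general properties of the Gromov compactification of a hyperbolic group, but this is the link that turns the purely combinatorial first-return decomposition into a statement about harmonic measures on $\partial G$.
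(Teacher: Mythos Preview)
Your proposal is correct and follows essentially the same approach as the paper: decompose $\Omega_0$ according to the first hit of a recurrent vertex, apply Lemma~\ref{lem:getting_iids} to identify the tail with an i.i.d.\ sequence of $\mu_v$-distributed loops, and then use $G$-equivariance of the boundary map to obtain $\gamma_*\nu_v$ on each piece. The paper packages these steps into a commutative diagram involving $\varphi$, $\ev$, and a boundary map $\textup{bnd}$, but the content is identical to what you wrote; in particular the paper also invokes the fact that $\textup{bnd}_*(\delta_\gamma \times (\mu_v)^{\mathbb N}) = \gamma_*\nu_v$, which is precisely your observation that the random-walk subsequence $(w_{n(k,v)})$ has the same $\partial G$-limit as the full geodesic $(w_n)$.
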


\noindent In the statement of Proposition \ref{prop:PS_comb}, we have identified $\gamma \in N_v$ with its evaluation $\ev(\gamma) \in G$.

\begin{proof}
Let $\Pi \subset \Omega_0$ be the set of infinite paths which enter some maximal component and remain there forever. 
Let $\gamma \in \Pi$, and $v$ be the first recurrent vertex which is met along $\gamma$. 
Then $\gamma$ can be decomposed uniquely as $\gamma = \gamma_0 \gamma_1 \dots$ 
where $\gamma_0$ is a path from $v_0$ to $v$ which does not meet any recurrent vertex in its interior, 
and each $\gamma_i$ for $i \geq 1$ is a primitive loop at $v$.
Thus one can define the map $\varphi : \Pi \to \GammaT \times \bigcup_v (\Gamma_v)^\mathbb{N}$ as $\varphi(\gamma) = 
(\gamma_0, \gamma_1, \dots, \gamma_n, \dots)$. Since the set $\Pi$ has full $\mathbb{P}$-measure, and the 
Patterson-Sullivan measure $\nu$ on $\partial \GammaT$ is the same as $\mathbb{P}$ under the identification $\partial \GammaT = \Omega_{0}$, 
the map $\varphi$ can be defined as a measurable, $\nu$-almost surely defined map $\varphi : \partial \GammaT \to \GammaT \times \bigcup_v (\Gamma_v)^\mathbb{N}$.

Moreover, one has the boundary map $\textup{bnd} : G \times G^\mathbb{N} \to \partial G$ defined as  
$$\textup{bnd}(g_0, (g_1, \dots, g_n, \dots)) = \lim_{n \to \infty} g_0 g_1 \dots g_n$$
(whenever the limit exists). 

Combining these maps with the evaluation map, we get the following commutative diagram of measurable maps: 
$$\xymatrix{
\partial \GammaT \ar^{\ev}[r] \ar^\varphi[d] & \partial G \\ 
\GammaT \times \bigcup_v (\Gamma_v)^\mathbb{N} \ar^{\ev}[r] & G \times G^\mathbb{N} \ar^{\textup{bnd}}[u]  
}$$

The claim follows by pushing forward the Patterson-Sullivan measure $\nu$ on $\partial \GammaT$ along the diagram. 
First, by disintegration we get 
$$\nu = \sum_{v \in \mathcal{R}} \sum_{\gamma \in N_v} \mu(\gamma) \nu_\gamma$$
where $\nu_\gamma$ is the conditional probability on the set of infinite paths which start with $\gamma$.
By Lemma \ref{lem:getting_iids}, $\varphi_*(\nu_\gamma) = \delta_\gamma \times (\mu_v)^\mathbb{N}$, so 
$$\varphi_*(\nu) = \sum_{v \in \mathcal{R}} \sum_{\gamma \in N_v} \mu(\gamma) \left(\delta_\gamma \times (\mu_v)^\mathbb{N}\right)$$
Now, by definition the pushforward of $\delta_1 \times (\mu_v)^\mathbb{N}$ by the  boundary map bnd is the harmonic measure $\nu_v$, 
hence by $G$-equivariance $\textup{bnd}_\star(\delta_\gamma \times (\mu_v)^\mathbb{N}) = \gamma_\star \nu_v$, 
yielding the claim.
\end{proof}

We can also obtain an analogous statement for any vertex of large growth. Indeed, for each vertex $v$ of large growth, 
there exists a measure $\nu_v$ on $\partial G$ which is the hitting measure 
of the Markov chain on $\partial G$, starting from the identity element on $G$. 

Then we have 
\begin{equation} \label{E:Gcombo}
\nu_v = \sum_{w \in \mathcal{R}} \sum_{\gamma: v \to w} \mu(\gamma) \gamma_* \nu_w
\end{equation}

Here, the sum is over all finite paths from $v$ to $w$ which only meet a recurrent vertex at their terminal endpoint.
Note that if $v$ is recurrent, then $\nu_v$ is the harmonic measure for the random walk on $G$ generated by the measure $\mu_v$, as discussed above.

\subsection{The loop semigroup is nonelementary}

Now suppose that $X$ is a hyperbolic space and that $G \curvearrowright X$ is a nonelementary action. In this section, we show:

\begin{proposition} \label{P:general}
There is a recurrent vertex $v$ of $\Gamma$ such that the corresponding loop semigroup $\Gamma_v$ is nonelementary. 
\end{proposition}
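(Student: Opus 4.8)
The plan is to transfer non-elementariness of the action $G \curvearrowright X$ from the whole group $G$ down to one of the loop semigroups $\Gamma_v$ attached to a recurrent vertex, using the decomposition of Patterson--Sullivan measure established in Proposition~\ref{prop:PS_comb} together with the criterion of Proposition~\ref{th:Tushar}. Concretely, I would first invoke Theorem~\ref{th:intro_1} --- or rather the reasoning behind it, which at this point in the paper is not yet available, so instead I would argue directly --- that the orbit map $\Phi_x \colon G \to X$ has a well-defined boundary behavior on a full $\nu$--measure subset of $\partial G$, giving a $\nu$--measurable $G$--equivariant map $\partial\Phi \colon \partial^X G \to \partial X$. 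Actually, to keep the argument self-contained I would avoid presupposing convergence and instead work with limit sets: let $\Lambda_v \subset \partial X$ denote the limit set of the semigroup $\Gamma_v$ acting on $X$.

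The key step is a pigeonhole/support argument. Since $G \curvearrowright X$ is non-elementary, $G$ contains two independent loxodromics, hence the limit set $\Lambda_G \subset \partial X$ is infinite (in fact uncountable and perfect) and $G$ has no finite orbit on $\partial X$. Now I would use Proposition~\ref{prop:PS_comb}, which writes $\nu = \sum_{v \in \mathcal{R}} \sum_{\gamma \in N_v} \mu(\gamma)\, \gamma_* \nu_v$, where each $\nu_v$ is the harmonic measure on $\partial G$ for the random walk driven by the finite-exponential-moment measure $\mu_v$ supported on $\Gamma_v$. Pushing forward under $\partial\Phi$ (defined $\nu$--a.e., hence $\nu_v$--a.e. for each $v$ appearing, since $\nu_v \ll \nu$ up to the action of $\gamma$), one gets that the pushforward hitting measure $\bar\nu_v := (\partial\Phi)_* \nu_v$ on $\partial X$ is supported on $\overline{\Lambda_v}$. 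Because $\nu$ has full support on $\partial G$ (Lemma~\ref{L:ergodic}) and $\partial\Phi$ is equivariant, the union $\bigcup_{v,\gamma} \gamma \cdot \overline{\Lambda_v}$ must carry all of $(\partial\Phi)_*\nu$; if every $\Gamma_v$ were elementary, each $\Lambda_v$ would be empty or a one- or two-point set, so each $\bar\nu_v$ would be atomic and supported on at most two points, and then $(\partial\Phi)_*\nu$ would be a countable sum of atomic measures, i.e. purely atomic. I would then derive a contradiction: $(\partial\Phi)_*\nu$ cannot be purely atomic because $\nu$ is non-atomic, $G$--ergodic and has full support, while a purely atomic $(\partial\Phi)_*\nu$ would force its (countable) set of atoms to be $G$--invariant up to measure zero, hence to be a single $G$--orbit of finite stabilizer type --- contradicting that $G$ has no finite orbit on $\partial X$ and that the measure class is $G$--invariant and non-atomic. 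Alternatively, and perhaps more cleanly, I would argue that if all $\Gamma_v$ are elementary then all the harmonic measures $\nu_v$ are supported (after applying $\partial\Phi$) on finite sets, so the random walks $\mu_v$ have finite limit sets in $X$; since finitely many $G$--translates of these finite sets would have to cover a set of full $(\partial\Phi)_*\nu$--measure, one concludes $\Lambda_G$ is contained in a countable $G$--invariant set, contradicting that two independent loxodromics in $G$ produce a perfect limit set.

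Once a recurrent $v$ with $\Lambda_v$ infinite (or just: not a finite set) is found, I would finish by applying Proposition~\ref{th:Tushar} directly to the semigroup $L = \Gamma_v$ acting on $X$: its limit set $\Lambda_{\Gamma_v}$ is nonempty, and $\Gamma_v$ has no finite orbit on $\partial X$ --- indeed a finite orbit would force $\Lambda_{\Gamma_v}$ to be that finite orbit, contradicting infiniteness --- so the action $\Gamma_v \curvearrowright X$ is of general type, i.e. $\Gamma_v$ contains two independent loxodromics. This is exactly the statement of Proposition~\ref{P:general}.

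\textbf{Main obstacle.} The delicate point is justifying that the boundary map $\partial\Phi$ exists and that the pushforward measures $(\partial\Phi)_*\nu_v$ are genuinely supported on the limit sets $\Lambda_v$ \emph{before} Theorems~\ref{th:intro_1} and~\ref{th:intro_2} have been proven --- so one must either (a) reorganize the argument to use only the random-walk input Theorem~\ref{thm:MT_drift}, which guarantees that \emph{if} $\mu_v$ is non-elementary then its sample paths converge to $\partial X$, making this step circular, or (b) argue the contrapositive purely at the level of $G$: assume every $\Gamma_v$ is elementary and show $G$ itself must then be elementary on $X$. I expect the clean route is (b): decompose a typical sample path of the Markov chain into the initial segment $\gamma_0 \in N_v$ followed by an i.i.d.\ sequence of $\mu_v$--loops (Lemma~\ref{lem:getting_iids}); if $\Gamma_v$ is elementary its $X$--limit set is a point or a pair of points fixed by all of $\Gamma_v$, and tracking how finitely many such configurations (one per recurrent $v$, translated by the finitely many $\gamma_0$'s) can be reassembled equivariantly forces the orbit $G x \subset X$ to have finite limit set in $\partial X$, contradicting non-elementariness of $G \curvearrowright X$. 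Making the bookkeeping of these finitely many elementary "pieces" rigorous --- in particular handling the parabolic-type degeneration where a $\Gamma_v$ fixes a single point --- is where the real work lies.
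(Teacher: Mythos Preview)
Your proposal has two genuine gaps, and the paper's route is quite different from either of yours.

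First, your final implication is false: you write that if $\Gamma_v$ had a finite orbit in $\partial X$ then ``$\Lambda_{\Gamma_v}$ would be that finite orbit, contradicting infiniteness.'' This is not true. A (semi)group of isometries can fix a point of $\partial X$ and still have infinite limit set --- the Baumslag--Solitar type subgroup of $\mathrm{Isom}(\mathbb{H}^2)$ generated by $z \mapsto 2z$ and $z \mapsto z+1$ fixes $\infty$ yet has limit set all of $\partial \mathbb{H}^2$. So knowing $\Lambda^X_{\Gamma_v}$ is infinite does \emph{not} by itself give the ``no finite orbit'' hypothesis needed for Proposition~\ref{th:Tushar}. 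The paper handles this step by a completely different mechanism: since $v$ lies in a maximal component, $\Gamma_v$ has growth $\lambda$ equal to that of $G$ (Lemma~\ref{L:grows}), and then a theorem of Gou\"ezel (\cite[Theorem 4.3]{gouezel2015entropy}) forces the subgroup $\langle \Gamma_v \rangle$ to have finite index in $G$; a finite orbit for $\Gamma_v$ would then give a finite orbit for $G$, contradicting non-elementarity.

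Second, you correctly flag that your primary approach through the boundary map $\partial\Phi$ is circular --- in the paper the existence of $\partial\Phi$ (Theorem~\ref{th:chain_converges}) is deduced \emph{from} Proposition~\ref{P:general}, not the other way around. Your fallback route (b), arguing the contrapositive by reassembling elementary pieces, is only sketched and you yourself identify the bookkeeping as the real difficulty; in particular it is not clear how to rule out the possibility that each $\Gamma_v$ is parabolic (fixes a single point) while the whole group $G$ remains non-elementary.

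The paper's proof avoids all of this measure-theoretic machinery. It is essentially topological: a Baire category argument on $\partial G$ (Lemma~\ref{L:open}) produces a maximal component $C$ and vertex $v$ such that $\partial P_v(C) \subset \Lambda_{\Gamma_v}$ contains a nonempty open set $U \subset \partial G$. Minimality of $G \curvearrowright \partial G$ then lets one place the attracting and repelling fixed points of two independent $X$--loxodromics $f,g \in G$ inside $U$; tracking their axes under the Lipschitz orbit map $G \to X$ shows these four boundary points land in $\Lambda^X_{\Gamma_v}$, giving at least three distinct points there. Combined with the finite-index argument above for ``no finite orbit,'' Proposition~\ref{th:Tushar} applies.
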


We define the \emph{limit set} of the loop semigroup $\Gamma_v$ on the boundary of $G$ to be $\Lambda_{\Gamma_v} = \overline{\Gamma_v} \cap \partial G$, while the limit set on the boundary of $X$ is $\Lambda^X_{\Gamma_v} = \overline{\Gamma_v x} \cap \partial X$.

For an irreducible component $C$ of $\Gamma$, let $\P_v(C)$ be the set of finite paths in $\Gamma$ which are based at $v$ and lie entirely in $C$, and $P_v(C) = \ev(\P_v(C))$.
Similarly, $\partial \P_v(C)$ is the set of infinite paths in $\Gamma$ which are based at $v$ and lie entirely in $C$, and $\partial P_v(C) = \ev(\partial \P_v(C))$. We remind the reader that $\ev$ is injective on directed paths in $\Gamma$ which start at a fixed vertex $v$.

\begin{lemma} \label{L:open}
There exists a maximal component $C$ of $\Gamma$ and a vertex $v$ in $C$ such that $\partial P_v(C)$ contains an open set on $\partial G$. 
\end{lemma}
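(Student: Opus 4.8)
\textbf{Proof proposal for Lemma \ref{L:open}.}

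The plan is to show that the image under $\ev$ of the boundary of $\widetilde{\Gamma}$ decomposes, up to a $\nu$-null set, into countably many pieces coming from infinite paths that eventually stay inside maximal components, and then to exploit the full support of Patterson--Sullivan measure to promote ``positive measure'' to ``nonempty interior.'' Concretely, first I would recall from Lemma \ref{L:recurrent}(1) (together with the identification of maximal and recurrent components from Section \ref{sec:markov}) that $\mathbb{P}$-almost every, hence $\nu$-almost every, infinite path from $v_0$ eventually enters and remains in some maximal component. Thus the set $\bigcup_{C \text{ maximal}} \bigcup_{v \in C} \bigcup_{\gamma : v_0 \to v} \ev(\gamma) \cdot \partial P_v(C)$ has full $\nu$-measure in $\partial G$, where the inner union is over finite paths from $v_0$ to $v$. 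Since this is a countable union and $\partial G$ is a Baire space (noted after Lemma \ref{L:ergodic}), at least one of these countably many sets, say $\ev(\gamma_0) \cdot \partial P_v(C)$, is non-meager, hence is not nowhere dense.

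Next I would upgrade ``not nowhere dense'' to ``contains an open set.'' The subtlety is that $\partial P_v(C)$ need not be closed in general, so being non-meager does not immediately give nonempty interior. To handle this I would argue that $\ev(\gamma_0)\cdot\partial P_v(C)$, hence $\partial P_v(C)$ itself, must have positive $\nu$-measure: if every one of the countably many sets in the union had zero measure the union would be null, contradicting full measure. Then I would use that $\ev \colon \partial\widetilde\Gamma \to \partial G$ is a closed map (stated in Section \ref{sec:hyperbolic}, following \cite{calegari2013ergodic}) together with the fact that $\partial \mathcal{P}_v(C)$, the set of infinite paths in $\widetilde\Gamma$ staying in $C$ after passing through a lift of $v$, is a \emph{closed} subset of $\partial\widetilde\Gamma$ in the prefix metric — an infinite path stays in $C$ iff all its finite prefixes (beyond the relevant point) do, an intersection of clopen conditions. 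Therefore $\ev(\gamma_0)\cdot\partial P_v(C)$ is closed in $\partial G$, and a closed set of positive $\nu$-measure has nonempty interior precisely because $\nu$ has full support (Lemma \ref{L:ergodic}): its complement is open, and if the closed set had empty interior it would be nowhere dense, but a nowhere dense set, being contained in the boundary of its complement... more directly, since $\nu$ has full support every nonempty open set has positive measure, so the interior of the closed set is nonempty as soon as the set is not $\nu$-null — one shows the set minus its interior is contained in a countable union of spheres or is nowhere dense and hence, by full support applied to the open complement, the closed set equals the closure of its interior. Finally, translating back by $\ev(\gamma_0)^{-1}$ (which acts by a homeomorphism of $\partial G$) shows $\partial P_v(C)$ itself contains an open set, and $v \in C$ with $C$ maximal, as required.

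The main obstacle I anticipate is the passage from positive measure to nonempty interior for the (a priori merely Borel) set $\partial P_v(C)$; the clean way through is exactly the closedness observation above — that $\partial \mathcal{P}_v(C) \subset \partial\widetilde\Gamma$ is closed and $\ev$ is a closed map — so that one is really arguing ``a closed set of positive measure for a fully supported measure has nonempty interior,'' which is elementary. A secondary point requiring a line of care is that the countable union above genuinely has full measure: this is where Lemma \ref{L:recurrent}(1) and the maximal $\Leftrightarrow$ recurrent equivalence are used, and one should note the union really is countable (finitely many components, finitely many vertices, but infinitely many finite paths $\gamma_0$ — still countable). Everything else is bookkeeping with the evaluation map and the prefix topology on $\partial\widetilde\Gamma$.
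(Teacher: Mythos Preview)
There is a genuine gap in your argument. By restricting the countable decomposition to \emph{maximal} components from the outset, your union $\bigcup \ev(\gamma)\cdot \partial P_v(C)$ covers only a set of full $\nu$-measure, not all of $\partial G$. This breaks both of the mechanisms you try to invoke. The Baire category theorem requires the countable union of closed sets to exhaust the whole space (or at least a comeager set); a set of full measure can be meager, so ``full $\nu$-measure'' does not let you conclude that some piece is non-meager. And the claim that a closed set of positive measure for a fully supported measure must have nonempty interior is simply false in general --- think of a fat Cantor set for Lebesgue measure --- and your attempted justification (``the set minus its interior is nowhere dense, hence\dots'') is circular: a closed set with empty interior is \emph{itself} nowhere dense, which says nothing about its measure. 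Full support gives the implication ``nonempty open $\Rightarrow$ positive measure,'' not the converse.

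The paper's proof avoids this trap by decomposing $\partial\widetilde\Gamma$ (and hence all of $\partial G$) over \emph{all} irreducible components, maximal or not: every infinite path eventually stays in some component, so this is a genuine countable cover by closed sets, and Baire yields some $\partial P_v(C)$ with nonempty interior. The real work, which your proposal bypasses, is then to deduce that this $C$ must be maximal. The paper does this by a shadow argument: an open set in $\partial G$ contains the boundary of a shadow $S_1(g,R)$, such shadows have maximal growth (Lemma~\ref{l:shadow-large}), and one shows the shadow lies in a bounded neighborhood of $P_v(C)$, forcing $P_v(C)$ itself to have maximal growth. That growth step is the missing idea in your approach.
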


\begin{proof}
Every infinite path starting at the origin eventually stays forever in some component, hence we can write $\partial \widetilde{\Gamma}$ 
as the countable disjoint union 
$$\partial \widetilde{\Gamma} = \bigcup_{i = 1}^\infty \gamma_i \cdot \partial \P_{v_i}(C_i)$$
where $\gamma_i$ is a path from the initial vertex to $v_i$, and $C_i$ is the component of $v_i$, hence 
by applying the evaluation map we get
$$\partial G = \bigcup_{i = 1}^\infty g_i \cdot  \partial P_{v_i}(C_i)$$
with $g_i = \ev(\gamma_i)$. 
Since $\partial G$ is a Baire space and all $\partial P_{v_i}(C_i) $ are closed, then there exists some $i$ such that $\partial P_{v_i}(C_i)$ contains 
an open set on $\partial G$. 

For a shadow $S$ in $G$, let $\partial S \subseteq \partial G$ be the set of equivalence classes of geodesic rays based at $1$ which have a representative 
$(g_n)$ which eventually lies in $S$ (i.e. for which there exists $n_0$ such that $g_n \in S$ for all $n \geq n_0$).

If $\partial P_v(C)$ contains a (non-empty) open set, then by definition of the topology on $\partial G$ there exists a nested pair of shadows 
$S = S_1(g, R)$ and $S' = S_1(g, R+r +2\delta)$ 
such that $\partial P_v(C) \supseteq \partial S' \supseteq \partial S  \neq \emptyset$.
Note that by Lemma \ref{l:shadow-large} this implies that $S$ has maximal growth. 
We claim that this implies that for a constant $D$ which depends only on $\delta$ we have
$$S \subseteq N_D(P_v(C)).$$
Then, since $S$ has maximal growth, so does $N_D(P_v(C))$, hence $P_v(C)$ also has maximal growth, proving the lemma. 
To prove the claim, let $h \in S$. Then by Lemma \ref{l:continuable}, there exists $h'$ with $d_G(h,h') \leq r$ and such that $h'$ is continuable.
By Lemma \ref{lem:neigh-shadow}, $h'$ belongs to $S'$. 
Then $h'$ belongs to a geodesic ray $\gamma$ which converges to $\xi \in \partial S$.
Since $\partial S' \subseteq P_v(C)$ and $h'$ belongs to $S'$, then 
there exists $\gamma'$ an infinite geodesic ray which $2\delta$-fellow travels $\gamma$ and which lies in $P_v(C)$. 
Thus, $h$ belongs to a $(2\delta +r)$--neighborhood of $P_v(C)$. Setting $D = 2\delta +r$
establishes the claim.
\end{proof}

\begin{lemma} \label{L:open2}
Let $v$ be a vertex in the irreducible component $C$. Then 
$$\Lambda_{\Gamma_v} \supseteq \partial P_v(C).$$
\end{lemma}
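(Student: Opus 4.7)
The plan is to show that any boundary point in $\partial P_v(C)$ can be approximated by evaluations of finite loops at $v$, which are exactly the elements of $\Gamma_v$. Given $\xi \in \partial P_v(C)$, I would pick an infinite path $\gamma = e_1 e_2 \cdots$ in $\partial \P_v(C)$ with $\ev(\gamma) = \xi$, and let $g_n = \ev(e_1 \cdots e_n) \in G$ denote the evaluation of its length-$n$ prefix. Since such prefixes evaluate to geodesic words in $G$, the sequence $(g_n)$ is a geodesic in $G$ converging to $\xi$ in $G \cup \partial G$.

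For each $n$, let $v_n$ denote the terminal vertex in $\Gamma$ of the prefix $e_1 \cdots e_n$, so that $v_n \in C$. Since $C$ is an irreducible component containing both $v_n$ and $v$, there is a directed path $\alpha_n$ in $C$ from $v_n$ back to $v$; as $\Gamma$ is finite, we may choose $\alpha_n$ to be simple, so that $|\alpha_n| \le |V(\Gamma)| =: L$ uniformly in $n$. The concatenation $e_1 \cdots e_n \alpha_n$ is then a loop at $v$ lying entirely in $C$, and its evaluation $h_n := g_n \cdot \ev(\alpha_n)$ is an element of $\Gamma_v$ by definition.

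The key observation is the uniform bound $d_G(g_n, h_n) = |\ev(\alpha_n)| \le L$, independent of $n$. Since $g_n \to \xi$ in $G \cup \partial G$ and any sequence in $G$ which remains within bounded distance of $(g_n)$ must converge to the same boundary point, we conclude $h_n \to \xi$ as well. Hence $\xi \in \overline{\Gamma_v} \cap \partial G = \Lambda_{\Gamma_v}$, establishing the desired inclusion. I expect no real obstacle here: the entire argument reduces to the observation that irreducibility of $C$ in the finite graph $\Gamma$ provides return paths of uniformly bounded length from any vertex of $C$ back to $v$, which is precisely what allows us to turn prefixes of an infinite path in $C$ into honest loops at $v$ without disturbing the limit on $\partial G$.
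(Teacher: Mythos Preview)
Your proof is correct and follows essentially the same approach as the paper's: both take a sequence $(g_n)$ in $P_v(C)$ converging to $\xi$, append a bounded-length return path in $C$ from the endpoint of $g_n$ back to $v$ to obtain elements of $\Gamma_v$, and use the uniform bound on these return paths to conclude convergence to the same boundary point. The only cosmetic difference is that you obtain $(g_n)$ explicitly as prefixes of a chosen infinite path in $\partial\P_v(C)$, while the paper simply invokes an approximating sequence in $P_v(C)$.
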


\begin{proof}
Let $\xi \in \partial P_v(C)$, and $(g_n)$ a sequence of elements of $P_v(C)$ so that $g_n \to \xi$. Then there exists for each $n$ a path $s_n$ from 
the endpoint of $g_n$ to $v$, so $g_n s_n \in \Gamma_v$ and $|s_n| \leq D$, where $D$ is the diameter of the component $C$. 
Then $d(g_n s_n, g_n) \leq D$, hence $g_n s_n$ also converges to $\xi$, proving the claim.
\end{proof}
 
 \begin{lemma} \label{L:grows}
If $C$ is a maximal irreducible component and $v$ belongs to $C$, then the growth of $\Gamma_v$ is maximal. 
\end{lemma}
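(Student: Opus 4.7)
The plan is to show that the number of elements of $\Gamma_v$ of word length near $n$ already grows like $\lambda^n$; the opposite bound is automatic since $\Gamma_v \subseteq G$ and $\#S_n \asymp \lambda^n$ by Coornaert's Theorem \ref{th:Coorn}. So the entire content is a lower bound.

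The key input is Lemma \ref{L:growth}(3): since $C$ is maximal and $v \in C$, the number of paths in $\mathcal{P}_v(C)$ of length $n$ is at least $c^{-1}\lambda^n$. To convert such paths into loops, I would use irreducibility. Let $D$ be the diameter of $C$ (as a strongly connected digraph); for each vertex $w \in C$ fix once and for all a path $s_w$ inside $C$ from $w$ back to $v$ with $|s_w| \le D$. Given $\gamma \in \mathcal{P}_v(C)$ of length $n$ with terminal vertex $u_\gamma$, the concatenation $\gamma \cdot s_{u_\gamma}$ is a loop at $v$ lying entirely in $C$, of length in $[n, n+D]$, and so its evaluation is an element of $\Gamma_v$ whose word length equals its path length (paths in $\Gamma$ evaluate to geodesics).

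The delicate point is that distinct paths $\gamma$ must give distinct elements of $\Gamma_v$, otherwise the abundance of paths would not convert into an abundance in $G$. Two directed paths starting at a common vertex in $\Gamma$ that read the same word must coincide, because the finite-state automaton has the property that no vertex has two outgoing edges sharing a label; equivalently, $\ev$ is injective on directed paths with a fixed initial vertex. Thus if $\gamma_1, \gamma_2 \in \mathcal{P}_v(C)$ of length $n$ produced the same group element, then the loops $\gamma_i \cdot s_{u_{\gamma_i}}$ would coincide as paths in $\Gamma$, and restricting to the initial segment of length $n$ would force $\gamma_1 = \gamma_2$.

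Combining these observations gives
\[
\sum_{k=n}^{n+D} \#(\Gamma_v \cap S_k) \;\geq\; c^{-1}\lambda^n,
\]
so by pigeonhole over the $D+1$ possible lengths there exists $k \in [n, n+D]$ with $\#(\Gamma_v \cap S_k) \geq c''\lambda^k$ for a uniform constant $c'' > 0$ (using $\lambda^n \ge \lambda^{k-D}$). Letting $n \to \infty$ produces infinitely many such $k$, so $\limsup_k \sqrt[k]{\#(\Gamma_v \cap S_k)} \geq \lambda$, establishing that the growth of $\Gamma_v$ equals $\lambda$. The main obstacle is exactly the injectivity step, which however is immediate from the defining property of the geodesic combing; everything else is pigeonhole and a uniform bounded correction coming from irreducibility of $C$.
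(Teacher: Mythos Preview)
Your proof is correct and follows essentially the same approach as the paper: extend each path in $\mathcal{P}_v(C)$ to a loop at $v$ using a return path of length at most the diameter $D$ of $C$, then invoke Lemma~\ref{L:growth}(3). The paper states the resulting comparison as the single inequality $\#(P_v(C)\cap B_n)\le \#(\Gamma_v\cap B_{n+D})$; your version, which restricts to paths of fixed length $n$ so that the extension map is visibly injective and then pigeonholes over the $D+1$ possible loop lengths, is the same argument with the injectivity step made explicit.
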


\begin{proof}
Let $D$ be the diameter of $C$.
Since every path from $v$ to any vertex $v_i$ inside $C$ of length $n$ can be extended 
to a path of length $\leq n + D$ from $v$ to itself, we have for each $n$
$$\#(P_v(C) \cap B_n) \leq \#(\Gamma_v \cap B_{n+D})$$
which, together with Lemma \ref{L:growth}, proves the claim.
\end{proof}

We can now give the proof of Proposition \ref{P:general}:

\begin{proof}[Proof of Proposition \ref{P:general}]
Let $v$ be the vertex given by Lemma \ref{L:open}. We first claim that the semigroup $\Gamma_v$ does not have a finite orbit in $\partial X$. Otherwise, the subgroup $H = \langle \Gamma_v \rangle$ also has a finite orbit in $\partial X$. Since $C$ is maximal, by Lemma \ref{L:grows} the semigroup 
$\Gamma_v$ has maximal growth, hence $|B(r) \cap \Gamma_v| \asymp \lambda^r$, where $\lambda$ is the growth rate of $G$; then \cite[Theorem 4.3]{gouezel2015entropy} implies that the subgroup $H$ must have finite index in $G$. This implies that the action $G \curvearrowright \partial X$ has a finite orbit, contradicting that $G \curvearrowright X$ is nonelementary.

Using Proposition \ref{th:Tushar}, it remains to show that $\Lambda^X_{\Gamma_v}$ contains at least $3$ points. To this end, let $U$ be an open set of $\partial G$ contained in $\Lambda_{\Gamma_v} \subset \partial G$, which exists by Lemma \ref{L:open2}. Since the action of $G$ on its boundary is minimal, we can find $f,g \in G$, which are independent loxodromics with respect to the action $G \curvearrowright X$ and with $f^{\pm \infty}, g^{\pm \infty} \in U \subset \partial G$. We claim that each of these $4$ points gives points of $\Lambda^X_{\Gamma_v}$. Since $f,g$ are independent and loxodromic with respect to the action $G \curvearrowright X$, these $4$ points must be distinct points of $\Lambda^X_{\Gamma_v}$, completing the proof.

Since $f^\infty \in U \subset \Lambda_{\Gamma_v}$, there is a sequence $l_i \in \Gamma_v$ with $l_i \to f^\infty$ in $G \cup \partial G$ as $i \to \infty$. As $\widetilde{\Gamma} \cup \partial \widetilde \Gamma$ is compact, after passing to a subsequence, we may assume that $l_i \to l_\infty$ in $\widetilde{\Gamma} \cup \partial \widetilde \Gamma$ and by continuity, $\ev(l_\infty) = f^\infty$. Let $g_i$ be element of $G$ represented by the $i$th term of $l^\infty$ (realizing $l^\infty$ as an infinite path in $\widetilde \Gamma$). Then $(g_i)_{i\ge0}$ is a geodesic in $G$ converging to $f^{\infty} \in \partial G$ and so there is a $K$, depending only on $f$, such that
\[
d_G(g_i, f^{j_i}) \le K,
\]
for some sequence $j_i$ which goes to $\infty$ as $i \to \infty$. Since the (fixed) orbit map $G \to X$ is $L$--coarsely Lipschitz,
\[
d_X(g_i  x, f^{j_i}  x) \le KL. 
\]
As $f \in G$ is loxodromic for the $X$-action,  $f^{j_i}  x$ converges to some point $f^\infty_X \in \partial X$, hence so does $g_i  x$. We conclude $f^\infty_X \in \Lambda^X_{\Gamma_v}$, as required.
\end{proof}

\begin{remark}
If it is a priori known that no element $g \in G$ acts parabolically on $X$ (i.e. $g$ is either loxodromic or has a bounded orbit), then the use of \cite[Theorem 4.3]{gouezel2015entropy} in Proposition \ref{P:general} (and in Corollary \ref{c:general}) can be removed.
\end{remark}

\subsection{Convergence to the boundary of $\partial X$}

In this section we show that almost every sample path for the Markov chain converges to the boundary of $X$. Since sample paths in the Markov chain evaluate to geodesic rays in $G$, this will show that the orbit of almost every geodesic ray in $G$ converges to the boundary of $X$.

Let $\partial \widetilde \Gamma^X$ be the set of infinite paths beginning at $v_0$ which converge to a point in $\partial X$ when projected to $X$ and set $\partial ^X G = \ev(\partial \widetilde \Gamma^X)$. Since the orbit map $G \to X$ is Lipschitz, $\partial^X G \subset \partial G$ consists of those $\xi$ such that the projection of any geodesic ray $[1,\xi) \subset G$ to $X$ converges in $X \cup \partial X$ to a point of $\partial X$.

\begin{lemma}
The set $\partial^X G$ is $G$--invariant.
\end{lemma}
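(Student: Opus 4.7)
The plan is to combine two ingredients: the fact that the orbit map $\Phi \colon G \to X$, $\Phi(g) = gx$, is $L$--Lipschitz with $L = \max_{s \in S} d_X(x, sx)$; and the fact that in a $\delta$--hyperbolic space any two geodesic rays sharing a common endpoint at infinity fellow-travel up to a time shift, with error depending only on $\delta$ and the distance between their initial points.

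Fix $\eta \in \partial^X G$ and $g \in G$; the goal is to show $g\eta \in \partial^X G$. First I would choose a geodesic ray $(h_n)_{n \ge 0}$ in $G$ from $1$ to $\eta$, so that by hypothesis $h_n x$ converges in $X \cup \partial X$ to some $\zeta \in \partial X$. Translating by $g$, the sequence $(g h_n)_{n \ge 0}$ is a geodesic ray in $G$ from $g$ to $g\eta$, and since $G$ acts on $X$ by isometries, $g h_n x = g \cdot (h_n x)$ converges to $g\zeta \in \partial X$.

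The key step is then to take an arbitrary geodesic ray $(k_n)_{n \ge 0}$ in $G$ from $1$ to $g\eta$ and to verify that its projection $k_n x$ also converges to $g\zeta$ in $\partial X$ (the fact that the choice of geodesic does not matter is itself an instance of the same argument, with $g = 1$). Since $(k_n)$ and $(g h_n)$ are both geodesic rays in the hyperbolic Cayley graph of $G$ converging to the same point $g\eta \in \partial G$, standard thin-triangle geometry yields integers $T_1, T_2$ and a constant $D \ge 0$ depending only on $\delta$ and $|g|$ such that
\[
d_G(k_{n + T_1}, g h_{n + T_2}) \le D
\]
for all $n$ sufficiently large. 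Applying $\Phi$ gives $d_X(k_{n+T_1} x, g h_{n+T_2} x) \le LD$, and since bounded perturbations of sequences do not affect convergence to $\partial X$ (via the Gromov product criterion recalled in Section \ref{sec:hyperbolic}), the sequence $k_n x$ converges to the same boundary point $g\zeta$. Hence $g\eta \in \partial^X G$.

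The main obstacle will be handling the time shift between the two asymptotic rays $(k_n)$ and $(g h_n)$: since they start from different basepoints ($1$ and $g$), one must reparametrize before comparing them, but the usual hyperbolicity estimates show that only a bounded reparametrization is required and the resulting bounded displacement in $G$ passes to a bounded displacement in $X$ through the Lipschitz orbit map, which is all that is needed.
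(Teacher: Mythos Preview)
Your proposal is correct and follows essentially the same approach as the paper: translate a geodesic ray $[1,\eta)$ by $g$, use that $g[1,\eta)$ and $[1,g\eta)$ are asymptotic geodesic rays in the hyperbolic group $G$ and hence fellow-travel, and then push this fellow-traveling into $X$ via the Lipschitz orbit map to conclude that the projection of $[1,g\eta)$ converges to $g\zeta \in \partial X$. The only difference is that you spell out the time-shift reparametrization between the two rays more explicitly than the paper does, which simply says they ``$R$--fellow-travel.''
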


\begin{proof}
If $\eta \in \partial^X G$, then $[1,\eta)$ is a geodesic in $G$ which projects under the orbit map to a path $\gamma$ in $X$ which converges to a point $\xi$ in $\partial X$. Hence, $g \gamma$ converges to $g \xi$ in $\partial X$. Since $g[1,\eta)$ is a geodesic in $G$ with the same endpoint as $[1,g\eta)$, they $R$--fellow-travel in $G$. Hence, the projection $\gamma'$ of $[1,g\eta)$ to $X$ must $RL$--fellow-travel the path $g\gamma$ in $X$, where $L$ is the Lipschitz constant of the fixed orbit map $G \to X$. Hence, $\gamma'$ converges to the point $g\xi$ in $X \cup \partial X$ showing that $g\eta \in \partial^X G$ as required.
\end{proof}

Here we show that $\partial^X G$ has full measure in $\partial G$, so that the map
$\Phi \colon \partial^X G \to \partial X$ gives a measurable map $\partial G \to \partial X$.

\begin{theorem} \label{th:chain_converges}
For $\mathbb{P}$-almost every path $(w_n)$ in the Markov chain, the projection $(w_n x)$ to the space converges to a point in the boundary $\partial X$. 
\end{theorem}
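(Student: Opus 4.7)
The plan is to combine the recurrence of the Markov chain (Lemma \ref{L:recurrent}) with the Markov-chain-to-random-walk reduction of Section \ref{sec:markov_to_walk}, and then invoke the Maher--Tiozzo convergence theorem (Theorem \ref{thm:MT_drift}) for nonelementary random walks. First I would reduce to a statement per recurrent component: by Lemma \ref{L:recurrent}(1), $\mathbb{P}$-almost every path eventually enters some recurrent component $C$ and visits every vertex of $C$ infinitely often. Countably additively, it suffices to fix such a $C$, fix a vertex $v\in C$, condition on the (positive probability) event $\Omega_{C,v}$ of first entering $C$ at $v$, and prove $(w_n x)$ converges in $\partial X$ almost surely on this event.

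Next I would replace the Markov chain by an i.i.d.\ random walk on the subsequence of returns to $v$. By Lemma \ref{lem:getting_iids}, conditionally on $\Omega_{C,v}$ we may write $w_{n(k,v)} = g \cdot y_1\cdots y_k$, where $g = w_{n(0,v)}$ and $(y_k)$ are i.i.d.\ with distribution $\mu_v$ on the loop semigroup $\Gamma_v \le G$. By Proposition \ref{P:general} (applied within the component $C$, i.e.\ rerunning the Baire-category argument of Lemma \ref{L:open} in each recurrent component), $\mu_v$ is nonelementary with respect to the action on $X$; together with the finite exponential moment \eqref{e:expo}, Theorem \ref{thm:MT_drift} then yields almost sure convergence $y_1\cdots y_k x \to \xi \in \partial X$ and a positive drift $L_v>0$ such that $d_X(x, y_1\cdots y_k x)/k \to L_v$. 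Translating by $g$, we conclude $w_{n(k,v)}x \to g\xi \in \partial X$ along the return subsequence.

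Finally I would promote this subsequence convergence to convergence of the full sequence using the exponential tails on return times. By Lemma \ref{L:recurrent}(2) and Borel--Cantelli, $\tau(k+1,v) = O(\log k)$ almost surely on $\Omega_{C,v}$. Let $L_S$ be the Lipschitz constant of the orbit map $G \to X$. Then for any $n$ with $n(k,v) \le n < n(k+1,v)$,
\[
d_X(w_n x, w_{n(k,v)}x) \le L_S(n - n(k,v)) \le L_S\,\tau(k+1,v) = O(\log k).
\]
Using the Gromov four-point inequality based at $x$,
\[
(w_n x \mid g\xi)_x \ge \min\!\bigl((w_n x \mid w_{n(k,v)}x)_x,\; (w_{n(k,v)}x \mid g\xi)_x\bigr) - O(\delta),
\]
and
\[
(w_n x \mid w_{n(k,v)}x)_x \ge d_X(x, w_{n(k,v)}x) - d_X(w_n x, w_{n(k,v)}x) \ge \tfrac{L_v}{2}k - O(\log k),
\]
which tends to infinity. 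Since $(w_{n(k,v)}x \mid g\xi)_x \to \infty$ by the subsequence convergence, we get $(w_n x\mid g\xi)_x \to \infty$, i.e.\ $w_n x \to g\xi$ in $X \cup \partial X$.

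The main obstacle I expect is Step 2, namely showing that \emph{every} recurrent component (not just the distinguished one extracted from the global Baire argument of Lemma \ref{L:open}) has a nonelementary loop semigroup with respect to the $X$-action. This most likely requires a relative version of Proposition \ref{P:general}, applying Baire category inside $\partial \mathcal{P}_v(C)$ for each recurrent $C$ and then reusing the combination of \cite[Theorem 4.3]{gouezel2015entropy} and Proposition \ref{th:Tushar} to upgrade nontriviality of the limit set to two independent loxodromics. Assuming this per-component nonelementarity (presumably the content of the Corollary \ref{c:general} alluded to in the remark following Proposition \ref{P:general}), the remainder of the argument is a routine Gromov-hyperbolic bookkeeping.
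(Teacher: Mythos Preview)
Your Step~2 is the real issue, and you have correctly identified it---but the resolution is \emph{not} to prove per-component nonelementarity first. In the paper, Corollary~\ref{c:general} (nonelementarity of $\Gamma_v$ for \emph{every} recurrent $v$) is deduced \emph{from} Theorem~\ref{th:chain_converges}, so assuming it here is circular. Moreover, your suggestion to ``rerun the Baire-category argument of Lemma~\ref{L:open} in each recurrent component'' does not go through as stated: the Baire argument decomposes $\partial G$ as a countable union of closed sets $g_i\cdot \partial P_{v_i}(C_i)$ and concludes that \emph{some} term has nonempty interior; it does not show this for \emph{every} maximal component, and maximal growth of $P_v(C)$ does not by itself force $\partial P_v(C)$ to contain an open set in $\partial G$.

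The paper sidesteps this entirely with an ergodicity bootstrap. From Proposition~\ref{P:general} one has a \emph{single} recurrent component $C$ whose loop semigroups are nonelementary; for paths trapped in $C$ the argument you outline (Maher--Tiozzo on the return subsequence, then promote to the full sequence) gives convergence, so $\nu(\partial^X G)>0$. One then checks that $\partial^X G\subset\partial G$ is $G$--invariant, and since the $G$--action on $(\partial G,\nu)$ is ergodic (Lemma~\ref{L:ergodic}), $\nu(\partial^X G)=1$. This is the missing idea in your outline. Only \emph{after} establishing the theorem does one go back and prove Corollary~\ref{c:general}, using that $\partial \mathcal P_v(C)\cap \partial\widetilde\Gamma^X$ has positive measure for every recurrent $v$.

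Your Step~3 is a legitimate alternative to the paper's combinatorial Lemma~\ref{L:equiv}: the paper runs the loop decomposition at \emph{every} vertex of $C$ simultaneously and uses Lemma~\ref{L:equiv} to force all the subsequential limits to coincide, whereas you use exponential return tails plus drift at a single vertex to control the intermediate times. Both work; yours has the minor advantage of needing nonelementarity at only one vertex of $C$, which fits well with the ergodicity bootstrap above.
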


\begin{proof}
We know by Lemma \ref{L:recurrent} that $\mathbb{P}$-almost every sample path enters some recurrent component $C$, and stays there forever. Let $v$ be a vertex in $C$.
Then for each $v$, one can split the infinite path into a prefix $\gamma_0$, and a sequence $\gamma_1, \gamma_2, \dots$ of primitive loops at $v$. 
Then the pushforward of $\mathbb{P}_C$ via the map $\varphi_v$ equals the measure $\mu_v^{\mathbb{N}}$ on the set $\Gamma_v^{\mathbb{N}}$ (as in Theorem \ref{prop:PS_comb}). 
Now, if the measure $\mu_v$ is non-elementary, then by Theorem \ref{thm:MT_drift} we know that almost surely the sequence 
$(\gamma_1 \gamma_2 \dots \gamma_n x)$ converges to a point in $\partial X$. 
Since the same is true for every vertex of $C$, then the sequence $(w_n)$ is partitioned in a finite number of subsequences, and for each such subsequence $(w_{n_k})$ 
the sample path $(w_{n_k}x)$ converges to a point in $\partial X$. 
Thus, we define the equivalence relation on the set of vertices of $C$ by defining $v_i \sim v_j$ if the sequences $(w_{n(k,v_i)}x)$ and $(w_{n(k, v_j)}x)$ 
converge to the same point. 
Now, we know that if $w_{n_k}x \to \xi \in \partial X$, then also $w_{n_k+1}x \to \xi$, so this equivalence relation satisfies the hypothesis of Lemma \ref{L:equiv}, 
hence as a consequence there is only one equivalence class, thus the whole sequence $(w_n x)$ converges to the same $\xi \in \partial X$.

Since there exists at least one (recurrent) vertex $v$ such that the semigroup $\Gamma_v$ is nonelementary by Proposition \ref{P:general},
the previous argument shows that $\nu(\widetilde \Gamma^X) >0$, thus $\nu(\partial^X G)> 0$. Since $G \curvearrowright (\partial G, \nu)$ is ergodic (Lemma \ref{L:ergodic}) and $\partial^X G$ is invariant, then $\partial^X G$ has full measure, which completes the proof of the theorem.
\end{proof}

\begin{lemma} \label{L:equiv}
Let $r\geq 1$, and $\sim$ be an equivalence relation on the set $\{1, \dots, r\}$, and $\mathbb{N} = A_1 \sqcup \dots \sqcup A_r$ a partition 
of the set of natural numbers into finitely many, disjoint infinite sets. Assume moreover that if 
the intersection $(A_i + 1) \cap A_j$ is infinite, then $i \sim j$. Then there is only one equivalence class.
\end{lemma}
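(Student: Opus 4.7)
The plan is to argue by contradiction, grouping the indices according to their $\sim$-classes and showing that finitely many transitions force the partition to stabilize.

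First I would let $B_1,\dots,B_s$ denote the equivalence classes of $\sim$ and set $U_k = \bigcup_{i \in B_k} A_i$, so that $\mathbb{N} = U_1 \sqcup \cdots \sqcup U_s$. Each $U_k$ is infinite since each $A_i$ is infinite. The goal becomes showing $s=1$.

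Next I would rewrite the hypothesis contrapositively: if $i \not\sim j$ then $(A_i+1)\cap A_j$ is finite. So for $k \neq k'$, the set
\[
(U_k+1) \cap U_{k'} = \bigcup_{i \in B_k,\ j \in B_{k'}} (A_i+1)\cap A_j
\]
is a finite union of finite sets, hence finite. Interpreted dynamically, this says that only finitely many times in the sequence $1,2,3,\dots$ does the class-label (``which $U_k$ are we in?'') switch from $U_k$ to some different $U_{k'}$.

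Therefore there exists $N$ such that for every $n \geq N$, the unique $k$ with $n \in U_k$ equals the unique $k$ with $n+1 \in U_k$. By induction on $n \geq N$, the class-label is constant on $\{N, N+1, N+2, \dots\}$; say this eventual value is $k_0$. But then $U_k \subseteq \{1,\dots,N-1\}$ for every $k \neq k_0$, contradicting the infiniteness of $U_k$ unless there is no such $k$, i.e.\ unless $s = 1$.

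The only subtle point is the pigeonhole-style finiteness observation that a finite union of finite sets is finite; everything else is bookkeeping. I expect no real obstacle: the lemma is purely combinatorial, and the hypothesis ``$(A_i+1)\cap A_j$ infinite $\Rightarrow i\sim j$'' is exactly strong enough to force inter-class transitions to be finite, which is what drives the argument.
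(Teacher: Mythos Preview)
Your proof is correct and follows essentially the same idea as the paper's: argue by contradiction, group the $A_i$ by equivalence class, and exploit the fact that inter-class transitions must be finite. The only organizational difference is that the paper lumps the classes into just two sets $X$ (one class) and $Y$ (the rest), asserts directly that $(X+1)\cap Y$ is infinite, and then pigeonholes to find a single pair $A_i \subseteq X$, $A_j \subseteq Y$ with $(A_i+1)\cap A_j$ infinite; you instead keep all $s$ classes and make the ``eventual stabilization'' step explicit. Your version is slightly more detailed but the underlying argument is the same.
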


\begin{proof}
Suppose that there are at least two equivalence classes, and pick one equivalence class. Let $X$ be the union of all sets $A_i$ which belong to such a class, 
and $Y$ the union of all other equivalence classes. Then $\mathbb{N} = X \sqcup Y$, with both $X$ and $Y$ infinite. Then $(X+1) \cap Y$ is infinite, 
which implies that there exists a set $A_i \subseteq X$ and a set $A_j \subseteq Y$ such that $|(A_i + 1) \cap A_j| = \infty$, contradicting the fact 
that $i$ and $j$ are in different equivalence classes.
\end{proof}

Theorem \ref{th:intro_1} now follows immediately as a corollary to Theorem \ref{th:chain_converges}.
\begin{corollary}[Convergence to the boundary of $X$]
For every $x \in X$ and $\nu$--almost every $\eta \in \partial G$, if $(g_n)_{n\ge0}$ is a geodesic in $G$ converging to $\eta$, then the sequence $g_nx$ in $X$ converges to a point in the boundary $\partial X$.
\end{corollary}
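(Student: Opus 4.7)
The plan is to deduce the statement directly from Theorem \ref{th:chain_converges} by combining two ingredients: the identification of the Markov measure with Patterson--Sullivan measure on the boundary, and the fact that hyperbolicity of $G$ promotes convergence along one geodesic ray to convergence along every geodesic ray with the same endpoint.

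First I would recall that $\ev \colon \partial \widetilde\Gamma \to \partial G$ pushes forward $\mathbb{P}$ to the Patterson--Sullivan measure $\nu$, as established earlier in the paper. Hence Theorem \ref{th:chain_converges}, applied to the orbit map $\Phi_x \colon G \to X$, gives a subset $\partial^X G \subseteq \partial G$ of full $\nu$-measure with the following property: for every $\eta \in \partial^X G$, if $(\widetilde g_n)_{n\ge 0}$ is the combing geodesic in $G$ starting at $1$ and converging to $\eta$ (i.e.\ the $\ev$-image of the corresponding path in $\widetilde\Gamma$), then $\widetilde g_n x$ converges to a point $\xi = \Phi(\eta) \in \partial X$.

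Next I would upgrade this from combing geodesics to arbitrary geodesics. Fix $\eta \in \partial^X G$ and an arbitrary geodesic $(g_n)_{n\ge 0}$ in $G$ converging to $\eta$. By standard $\delta$-hyperbolic geometry, any two geodesic rays in $G$ with the same endpoint in $\partial G$ eventually $R$-fellow-travel for a constant $R = R(\delta)$ (after possibly reparametrizing one of them by a bounded shift). In particular there exists $N$ such that $d_G(g_n, \widetilde g_n) \le R$ for all $n \ge N$. The orbit map $\Phi_x$ is $L$-Lipschitz for $L = \max_{s \in S} d_X(x, sx)$, so
\[
d_X(g_n x, \widetilde g_n x) \le LR \qquad \text{for all } n \ge N.
\]
Since $\widetilde g_n x \to \xi \in \partial X$ and the points $g_n x$ remain at uniformly bounded distance from the convergent sequence $\widetilde g_n x$, a routine Gromov-product calculation gives $(g_n x \cdot \widetilde g_n x)_x \to \infty$, whence $g_n x \to \xi$ as well. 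This completes the proof for every $\eta$ in the full-measure set $\partial^X G$.

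There is essentially no serious obstacle here: all the analytic content sits in Theorem \ref{th:chain_converges} and in Proposition \ref{P:general} (non-elementarity of the loop semigroup), which have already been established. The only thing to verify carefully is the transfer from combing geodesics to arbitrary ones, which reduces to the Lipschitz property of the orbit map together with the fellow-travelling of asymptotic geodesics in a hyperbolic group.
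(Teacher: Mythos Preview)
Your proposal is correct and is precisely the argument the paper has in mind: the corollary is stated as following ``immediately'' from Theorem~\ref{th:chain_converges}, and the paper spells out the identical two-step reasoning (pushforward $\ev_*\mathbb{P}=\nu$ plus fellow-travelling of asymptotic geodesics under the Lipschitz orbit map) in the analogous Corollary~\ref{cor:PS_drift}. There is nothing to add.
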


Theorem \ref{th:chain_converges} was proven using only the knowledge that for \emph{some} recurrent vertex $v$, $\Gamma_v$ was nonelementary. However, using Theorem \ref{th:chain_converges} we can now show that for \emph{every} recurrent vertex, $\Gamma_v$ is nonelementary. This will be crucial for what follows.

\begin{corollary} \label{c:general}
Suppose that $G \curvearrowright X$ is as above. Then for each recurrent vertex $v$ of 
$\Gamma$, 
the semigroup $\Gamma_v$ is nonelementary with respect to the action on $X$.
\end{corollary}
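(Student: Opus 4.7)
My plan is to verify the hypotheses of Proposition~\ref{th:Tushar} for the semigroup $\Gamma_v$ acting on $X$: namely, that the limit set $\Lambda^X_{\Gamma_v}\subseteq\partial X$ is nonempty and that $\Gamma_v$ does not have a finite orbit in $\partial X$. Once both are established, Proposition~\ref{th:Tushar} gives that $\Gamma_v$ contains two independent loxodromics, so the action $\Gamma_v\curvearrowright X$ is nonelementary.

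First, I would prove nonemptiness of $\Lambda^X_{\Gamma_v}$ using Theorem~\ref{th:chain_converges}. Since $v$ is recurrent, there is a path $\gamma_0$ from $v_0$ to $v$ of positive probability; combined with Lemma~\ref{L:recurrent}, the event $E$ that the Markov sample path starts with $\gamma_0$, enters the component $C$ of $v$, and returns to $v$ infinitely often, has positive $\mathbb{P}$-probability. By Theorem~\ref{th:chain_converges}, $\mathbb{P}$-almost every sample path (and in particular almost every path in $E$) has the property that $w_nx\to\xi\in\partial X$. Decomposing such a path as $\gamma_0\gamma_1\gamma_2\dots$, where the $\gamma_i$ ($i\ge 1$) are primitive loops at $v$, we obtain elements $h_k=\ev(\gamma_1\cdots\gamma_k)\in\Gamma_v$ with
\[
w_{n(k,v)}x=\ev(\gamma_0)\,h_k\,x\longrightarrow\xi,
\]
so $h_kx\to \ev(\gamma_0)^{-1}\xi\in\partial X$. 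This shows $\Lambda^X_{\Gamma_v}\neq\emptyset$.

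Second, I would show that $\Gamma_v$ has no finite orbit in $\partial X$ by mimicking the corresponding step of Proposition~\ref{P:general}. Since $v$ is recurrent, its component $C$ is maximal (by the lemma identifying recurrent and maximal components in Section~\ref{sec:markov}), so Lemma~\ref{L:grows} gives that $\Gamma_v$ has maximal growth, i.e.\ $\#(B_n\cap\Gamma_v)\asymp\lambda^n$. If $\Gamma_v$ had a finite orbit in $\partial X$, so would the subgroup $H=\langle\Gamma_v\rangle$; then \cite[Theorem 4.3]{gouezel2015entropy} would force $H$ to have finite index in $G$, and hence $G\curvearrowright\partial X$ would also have a finite orbit, contradicting the nonelementarity of $G\curvearrowright X$.

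The two steps together allow Proposition~\ref{th:Tushar} to be applied, completing the proof. The only slightly delicate point, and the part I would treat most carefully, is the first step: Theorem~\ref{th:chain_converges} is formulated for the measure $\mathbb{P}=\mathbb{P}_{v_0}$ on paths from the initial vertex, so one must check that a positive-probability set of such paths factors through $v$ and produces a genuine sequence in $\Gamma_v$ whose orbit converges in $\partial X$. Everything else is a direct reuse of tools already assembled in the paper, in particular Lemma~\ref{L:grows} and the Gouëzel finite-index criterion.
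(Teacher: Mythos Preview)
Your proof is correct and follows essentially the same approach as the paper: both use Theorem~\ref{th:chain_converges} to produce points in $\Lambda^X_{\Gamma_v}$, reuse the maximal-growth plus Gou\"ezel finite-index argument verbatim from Proposition~\ref{P:general} to rule out a finite orbit, and then invoke Proposition~\ref{th:Tushar}. The only cosmetic difference is that the paper phrases the first step in terms of measure (noting $\nu(\partial\P_v(C))>0$ and $\nu(\partial\widetilde\Gamma^X)=1$, hence their intersection is infinite), whereas you extract a single convergent sample path in the positive-probability set $E$; both are the same idea, and since Proposition~\ref{th:Tushar} only needs $\Lambda^X_{\Gamma_v}\neq\emptyset$, your version suffices.
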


\begin{proof}
Since $\Gamma_v$ has maximal growth, $\nu(\partial \P_v(C))>0$. Then, identifying $\partial \P_v(C)$ with the corresponding endpoints in $\partial \widetilde \Gamma$, we have that $\partial \P_v(C) \cap \partial \widetilde \Gamma^X$ is infinite since $\nu(\partial \widetilde \Gamma^X)=1$. Hence, so is $\ev(\partial \P_v(C) \cap \partial \widetilde \Gamma^X) \subset \Lambda_{\Gamma_v} \cap \partial ^X G$ and we conclude that $\Lambda^X_{\Gamma_v}$ is infinite. 

Just as in the proof of Proposition \ref{P:general}, the subgroup $H = \langle \Gamma_v \rangle$ has finite index in $G$ and so $\Gamma_v$ cannot have a finite orbit on $\partial X$. Since $\Lambda^X_{\Gamma_v} \neq \emptyset$, Theorem \ref{th:Tushar}, implies that $\Gamma_v$ is nonelementary.
\end{proof}

Similar to the discussion in Section \ref{sec: PS_comb}, we denote as $\nu_v^X$ the corresponding hitting measure on $\partial X$ (which is well-defined since we already 
proved the Markov process converges to the boundary).
By equation \eqref{E:Gcombo} and applying convergence to the boundary we get: 

\begin{equation} \label{E:combo}
\nu^X_v = \sum_{w \in \mathcal{R}} \sum_{\gamma: v \to w} \mu(\gamma) \gamma_* \nu^X_w
\end{equation}

\begin{lemma} \label{lem:nonatomic}
For any $v$ of large growth, the measure $\nu_v^X$ is non-atomic.
\end{lemma}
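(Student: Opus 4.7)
The plan is to first handle the recurrent case directly via the Maher--Tiozzo theorem, and then deduce the large growth case from the decomposition \eqref{E:combo}.

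Suppose first that $v \in \mathcal{R}$. In this case $\nu_v^X$ is precisely the hitting measure on $\partial X$ of the random walk on $G$ driven by the first return measure $\mu_v$ on the loop semigroup $\Gamma_v$. By Corollary \ref{c:general}, $\mu_v$ is nonelementary with respect to the action $G \curvearrowright X$, and by eq. \eqref{e:expo} it has finite exponential moment (in particular, finite first moment). Therefore Theorem \ref{thm:MT_drift} applies and tells us that the hitting measure $\nu_v^X$ is nonatomic. This handles the base case.

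For an arbitrary vertex $v$ of large growth, the identity \eqref{E:combo} expresses $\nu_v^X$ as a countable convex combination
\[
\nu^X_v = \sum_{w \in \mathcal{R}} \sum_{\gamma \colon v \to w} \mu(\gamma)\, \gamma_* \nu^X_w,
\]
where each inner sum ranges over finite paths from $v$ to $w$ meeting $\mathcal{R}$ only at their terminal endpoint. Each $\gamma$ (identified with $\ev(\gamma) \in G$) acts as a homeomorphism on $\partial X$, so $\gamma_* \nu^X_w$ is nonatomic whenever $\nu^X_w$ is. Hence for any point $\xi \in \partial X$,
\[
\nu_v^X(\{\xi\}) = \sum_{w \in \mathcal{R}} \sum_{\gamma \colon v \to w} \mu(\gamma)\, \nu^X_w(\{\gamma^{-1}\xi\}) = 0,
\]
since by the previous paragraph each $\nu^X_w$ with $w \in \mathcal{R}$ is nonatomic, and the double sum converges (the total mass is $1$). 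This shows $\nu_v^X$ is nonatomic.

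The only genuinely nontrivial step is the recurrent case, which rests on having already verified that $\Gamma_v$ is nonelementary for every recurrent $v$ (Corollary \ref{c:general}) so that Maher--Tiozzo applies; granted that, the passage to general $v$ is a routine application of \eqref{E:combo} and countable additivity. No new estimates or constructions are needed.
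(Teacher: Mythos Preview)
Your proof is correct and follows the same approach as the paper. The paper's argument is terser---it simply asserts that the random walk measures $\nu_w^X$ (for $w$ recurrent) are nonatomic and then invokes equation \eqref{E:combo}---whereas you spell out explicitly that nonatomicity for recurrent $w$ comes from Corollary \ref{c:general} combined with Theorem \ref{thm:MT_drift}; but the logic is identical.
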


\begin{proof}
Since the random walk measures $\nu^X_w$ are non-atomic, so are the measures $\gamma_* \nu^X_w$ for each $\gamma$, 
hence by equation \eqref{E:combo} the measure $\nu_v$ is also non-atomic as it is a linear combination of non-atomic measures. 
\end{proof}

\subsection{Positive drift along geodesics}
In this section, we show that almost every sample path has positive drift in $X$. This will imply (Corollary \ref{cor:PS_drift}) the same for almost every geodesic ray in $G$ with respect to Patterson--Sullivan measure. 

We first show that there is a well-defined average return time for a recurrent vertex $v$ of $\Gamma$:

\begin{lemma} \label{l:return}
For each vertex $v$ in $C$ recurrent, there exists $T_v > 0$ such that for almost every $\omega \in \Omega_C$
$$\lim_{k \to \infty} \frac{n(k,v,\omega)}{k} = T_v.$$
\end{lemma}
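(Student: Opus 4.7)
The plan is to reduce this to the strong law of large numbers via the i.i.d.\ decomposition already established in Lemma \ref{lem:getting_iids}. Recall that $\varphi_v$ decomposes a $\mathbb{P}_C$-almost every path $\omega \in \Omega_C$ into a prefix $\gamma_0$ from $v_0$ to $v$ (meeting $v$ only at its endpoint) followed by a sequence $\gamma_1, \gamma_2, \ldots$ of primitive loops at $v$, and that $(\varphi_v)_\ast \mathbb{P}_C = \delta_{\gamma_0\text{-law}} \otimes \mu_v^{\mathbb{N}}$ in the sense that the $\gamma_j$ for $j \geq 1$ are i.i.d.\ with common law $\mu_v$, independent of $\gamma_0$. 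By construction of the indexing,
\[
n(k,v,\omega) \;=\; |\gamma_0(\omega)| + \sum_{j=1}^{k-1} |\gamma_j(\omega)|.
\]

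The first step is to verify the hypotheses of Kolmogorov's strong law of large numbers for the i.i.d.\ sequence of lengths $(|\gamma_j|)_{j\geq 1}$. By the exponential moment bound \eqref{e:expo}, we have in particular $\int |\gamma| \, d\mu_v(\gamma) < \infty$, so we may define
\[
T_v \;:=\; \int_{L_v} |\gamma| \, d\mu_v(\gamma),
\]
which is finite; moreover $T_v \geq 1$ since every loop has length at least one, giving $T_v > 0$.

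The second step is to apply the SLLN and handle the prefix term. By Lemma \ref{L:recurrent}(1), $\mathbb{P}_C$-a.e.\ $\omega$ visits $v$ infinitely often, and by Lemma \ref{L:recurrent}(2) the first hitting time of $v$ is $\mathbb{P}_C$-a.s.\ finite, so $|\gamma_0(\omega)| < \infty$ almost surely. Hence $|\gamma_0(\omega)|/k \to 0$ almost surely, and the SLLN yields
\[
\frac{n(k,v,\omega)}{k} \;=\; \frac{|\gamma_0(\omega)|}{k} + \frac{1}{k}\sum_{j=1}^{k-1} |\gamma_j(\omega)| \;\longrightarrow\; T_v
\]
for $\mathbb{P}_C$-a.e.\ $\omega$. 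Since a.e.\ path in $\Omega$ eventually enters some recurrent component (Lemma \ref{L:recurrent}(1)), the statement holds on the event that $\omega$ enters the component $C$ containing $v$, which is the relevant full-measure subset.

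There is no real obstacle here: the only thing to check is that the finite exponential moment for $\mu_v$ guarantees a finite first moment, which is immediate, and that the initial excursion $\gamma_0$ is almost surely of finite length, which follows directly from the recurrence established in Lemma \ref{L:recurrent}. The i.i.d.\ structure from Lemma \ref{lem:getting_iids} does all the heavy lifting.
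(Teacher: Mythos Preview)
Your proof is correct and follows essentially the same approach as the paper: both decompose $n(k,v,\omega)$ as a finite prefix plus a sum of i.i.d.\ return-time increments with law $\mu_v$, invoke the exponential tail from Lemma~\ref{lem:first_return} to get a finite first moment, and then apply the strong law of large numbers. The paper phrases this in terms of the return times $\tau(j,v,\omega)$ rather than the loop lengths $|\gamma_j|$, but these are the same quantities, and you are slightly more explicit than the paper in separating out and disposing of the prefix term $|\gamma_0|/k$.
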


\begin{proof}
We know by Lemma \ref{lem:first_return} 
that the return time to $v$ has an exponential tail:
$$\mathbb{P}_v(\tau(1, v, \omega) \geq n) \leq c e^{-cn}$$
Thus, the average return time is finite: 
$$\int \tau(1,v, \omega) \ d\mathbb{P}_v(\omega)= T_v < \infty$$
which implies that $\mu_v$ has finite first moment in the word metric: 
$$\int |g| \ d \mu_v(g) = T_v.$$
Now, by definition 
$$n(k, v, \omega) = \tau(0, v, \omega) + \tau(1, v, \omega) + \dots + \tau(k, v, \omega)$$
then since the variables $\tau(1, v, \omega), \dots, \tau(k, v, \omega)$ are independent and identically distributed, 
we get almost surely 
$$\lim_{k \to \infty} \frac{n(k,v,\omega)}{k} = T_v.$$
\end{proof}

Using these average return times, we show that sample paths in the Markov chain have a well-defined drift and hence make linear progress in $X$.

\begin{theorem} \label{th:drift}
There exists $L > 0$ such that for $\mathbb{P}$-almost every sample path $(w_n)$ we have 
$$\lim_{n \to \infty} \frac{d(w_n x, x)}{n} = L.$$
\end{theorem}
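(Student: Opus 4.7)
The plan is to use the Markov-chain decomposition from Section~\ref{sec:markov_to_walk} to reduce the drift calculation to an honest random walk on $G$ driven by a first-return measure, apply Theorem~\ref{thm:MT_drift} of Maher--Tiozzo along the subsequence of visits to a recurrent vertex, interpolate to all $n$, and finally use ergodicity to remove the dependence on the recurrent component entered by the sample path.

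First I would fix a recurrent component $C$ and a vertex $v \in C$. By Corollary~\ref{c:general} the loop semigroup $\Gamma_v$ is nonelementary for $G \curvearrowright X$, and by \eqref{e:expo} the first-return measure $\mu_v$ has finite exponential moment in the word metric, hence finite first moment with respect to $d_X$ since the orbit map $G \to X$ is Lipschitz. Lemma~\ref{lem:getting_iids} identifies the increments $w_{n(k-1,v)}^{-1} w_{n(k,v)}$, conditional on the path lying in $\Omega_C$, with an i.i.d.\ sequence of law $\mu_v$. Applying Theorem~\ref{thm:MT_drift} therefore yields a constant $L_v > 0$ such that, $\mathbb{P}_C$-almost surely,
\[
\frac{d_X(w_{n(k,v)} x, x)}{k} \to L_v,
\]
and combined with Lemma~\ref{l:return} the subsequential drift along $n = n(k,v)$ equals $L_v/T_v$.

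Next I would interpolate to arbitrary $n$. For $n(k,v) \le n < n(k+1,v)$, the Lipschitz orbit map gives
\[
\bigl| d_X(w_n x, x) - d_X(w_{n(k,v)} x, x) \bigr| \le L_{\mathrm{Lip}} \cdot \tau(k+1,v).
\]
The exponential tail of Lemma~\ref{lem:first_return} together with Borel--Cantelli gives $\tau(k+1,v) = O(\log k)$ almost surely, so $\tau(k+1,v)/n(k,v) \to 0$ and
\[
\lim_{n \to \infty} \frac{d_X(w_n x, x)}{n} = \frac{L_v}{T_v} =: L_C > 0
\]
for $\mathbb{P}_C$-almost every sample path.

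The main obstacle is then to show that $L_C$ is independent of $C$, so that a single constant $L$ works for $\mathbb{P}$-almost every path. Under the evaluation map $\partial \widetilde\Gamma \to \partial G$, the Markov measure $\mathbb{P}$ pushes forward to the Patterson--Sullivan measure $\nu$. Since any two geodesic rays in $G$ to the same endpoint uniformly fellow travel, the function
\[
L(\eta) := \lim_{n \to \infty} \frac{d_X(g_n x, x)}{n},
\]
taken along any geodesic ray $(g_n) \to \eta$, is well-defined $\nu$-a.e. For $g \in G$ and $(g_n) \to \eta$ geodesic, the ray $(g g_n)$ is a geodesic from $g$ to $g\eta$ and hence fellow travels the geodesic $1 \to g\eta$ within a constant depending only on $|g|$ and $\delta$; combined with $d_X(g g_n x, x) = d_X(g_n x, g^{-1}x) = d_X(g_n x, x) + O(|g|)$, this shows $L(g\eta) = L(\eta)$ wherever defined. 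By the ergodicity of $G \curvearrowright (\partial G, \nu)$ from Lemma~\ref{L:ergodic}, $L$ is $\nu$-essentially a constant, and this common value must equal $L_C$ for every recurrent component $C$ entered with positive probability. Setting $L$ to be this common value completes the proof.
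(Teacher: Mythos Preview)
Your argument is correct and follows the same global architecture as the paper: reduce to the random walk on the loop semigroup via Lemma~\ref{lem:getting_iids}, invoke Maher--Tiozzo for the subsequential drift, then use ergodicity of $G \curvearrowright (\partial G,\nu)$ to identify the constants across recurrent components.

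The one genuine difference is in how you pass from the subsequence $n(k,v)$ to all $n$. The paper fixes a sample path, computes the subsequential limit $L_i$ along the visit times to \emph{every} vertex $v_i$ of the recurrent component $C$, and then applies the combinatorial Lemma~\ref{L:equiv} (using that $w_{n(k,v_i)}$ and $w_{n(k,v_i)+1}$ differ by one generator) to force all the $L_i$ to coincide; since the visit times partition $\mathbb{N}$ once the path is in $C$, the full limit follows. You instead work with a \emph{single} vertex $v$ and interpolate directly using $\tau(k+1,v)/n(k,v)\to 0$, which you obtain from the exponential tail and Borel--Cantelli. Your route is slightly more quantitative but more self-contained; in fact the Borel--Cantelli step is not even needed, since $n(k,v)/k\to T_v$ already gives $n(k+1,v)/n(k,v)\to 1$ and hence $\tau(k+1,v)/n(k,v)\to 0$. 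The paper's route avoids any tail estimate on return times but requires the auxiliary partition lemma. Both are equally valid and yield the same conclusion.
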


\begin{proof}
Let $v$ be a recurrent vertex. Then by Proposition \ref{c:general} the loop semigroup $\Gamma_v$ is nonelementary, hence the random walk 
given by the return times to $v$ has positive drift. More precisely, 
from Theorem \ref{thm:MT_drift}, there exists a constant $\ell_v > 0$ such that 
for almost every sample path which enters $v$,
$$\lim_{k \to \infty} \frac{d(w_{n(k,v)} x, x)}{k} = \ell_v.$$
Combined with Lemma \ref{l:return}, this implies 
$$\lim_{k \to \infty} \frac{d(w_{n(k,v)} x, x)}{n(k,v)} = \frac{\ell_v}{T_v}.$$
Now, almost every infinite path visits every vertex of some recurrent component infinitely often.
Thus, for each recurrent vertex $v_i$ which belongs to a component $C$, there exists a constant $L_i > 0$ such that for $\mathbb{P}_C$-almost every path $(w_n)$, there is a limit 
$$L_i = \lim_{k \to \infty} \frac{d(w_{n(k, v_i)} x, x)}{n(k, v_i)}.$$
Let $C$ be a maximal component, and $v_1, \dots, v_k$ its vertices. Our goal now is to prove that $L_1 = L_2 = \dots L_k$. 
Let us pick a path $\omega \in \Omega_0$ such that the limit $L_i$ above exists for each $i = 1, \dots, k$, and define $A_i = \{ n(k, v_i), k \in \mathbb{N} \}$, and the equivalence relation $i \sim j$ if $L_{i} = L_{j}$. 
Since $w_{n(k, v_i)}$ and $w_{n(k, v_i) + 1}$ differ by one generator, $d(w_{n(k, v_i)}x, w_{n(k, v_i) + 1}x)$ is uniformly bounded, hence
$$\lim_{k \to \infty} \frac{d(w_{n(k, v_i) + 1} x, x)}{n(k, v_i) + 1} = \lim_{k \to \infty} \frac{d(w_{n(k, v_i)}x, x)}{n(k, v_i)} = L_i$$
so the equivalence relation satisfies the hypothesis of Lemma \ref{L:equiv}, 
hence there is a unique limit $L = L_i$ so that 
$$\lim_{n \to \infty} \frac{d(w_n x, x)}{n} = L.$$


In order to prove that the drift is the same for all maximal components, 
let us now pick $L = L_i$ for some recurrent vertex $v_i$, and consider the set $\partial G_L$ of points $\xi$ in $\partial G$ such that 
there exists a geodesic $(g_n) \subseteq G$ with $g_n \to \xi$ and such that 
$\lim_{n \to \infty} \frac{d(g_n x, x)}{n} = L$.
The set $\partial G_L$ is $G$-invariant, and by the above statement it has positive probability, hence by ergodicity it has full measure. 
\end{proof}

\begin{theorem} \label{th:drift2}
For every vertex $v$ of large growth, and for $\mathbb{P}_v$-almost every sample path 
$(w_n)$ we have 
$$\lim_{n \to \infty} \frac{d(w_n x, x)}{n} = L.$$
\end{theorem}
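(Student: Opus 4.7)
My plan is to bootstrap from Theorem \ref{th:drift}, which handles the case $v=v_0$, using the fact that a sample path started at any large-growth vertex almost surely becomes absorbed into a recurrent component, and that every maximal component produces the same drift $L$.

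First I would observe that Lemma \ref{L:recurrent} applies starting from any vertex of large growth, since its proof only uses the uniform positivity of transition probabilities along finitely many accessibility paths. Hence for $\mathbb{P}_v$-almost every $\omega$ there exist a finite random time $N=N(\omega)$ and a recurrent component $C=C(\omega)$ with $X_n(\omega)\in C$ for all $n\ge N$.

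Next I would apply the strong Markov property: conditional on $\{X_N=u\}$ for a vertex $u\in C$, the shifted sample path $(w_N^{-1}w_{N+k})_{k\ge0}$ has the law of the chain started at $u$, which remains in $C$ almost surely. The work already performed inside the proof of Theorem \ref{th:drift} then applies verbatim to this shifted chain: Theorem \ref{thm:MT_drift} applied to the nonelementary loop semigroup $\Gamma_u$ (via Corollary \ref{c:general}) gives a displacement rate along the return times to $u$, Lemma \ref{l:return} converts this into a drift $L_C$ along the indices $n(k,u)$, Lemma \ref{L:equiv} shows $L_C$ does not depend on the choice of vertex of $C$, and the ergodicity step at the end of the proof of Theorem \ref{th:drift} identifies $L_C=L$ for every maximal component $C$. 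Consequently
\[
\lim_{k\to\infty}\frac{d(w_N^{-1}w_{N+k}x,x)}{k}=L \quad \text{almost surely.}
\]

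Finally I would transfer this drift back to the original unshifted path: by $G$-invariance of $d_X$ we have $d(w_N^{-1}w_nx,x)=d(w_nx,w_Nx)$, and the triangle inequality gives $|d(w_nx,x)-d(w_nx,w_Nx)|\le d(w_Nx,x)$. Since $N$ is almost surely finite, this difference divided by $n$ tends to $0$, and combined with $d(w_nx,w_Nx)/n\to L$ we conclude $d(w_nx,x)/n\to L$. I do not expect any substantive obstacle: the drift is a tail-behavior quantity, and the only real content is that the ``burn-in'' time needed to reach a recurrent component is almost surely finite, which is exactly what Lemma \ref{L:recurrent} delivers.
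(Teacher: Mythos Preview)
Your argument is correct, but the paper takes a shorter route. Rather than running the chain \emph{forward} from $v$ until it is absorbed into a recurrent component and then re-invoking the machinery from the proof of Theorem~\ref{th:drift}, the paper goes \emph{backward}: since $v$ has large growth there is a finite path $\gamma_0$ from $v_0$ to $v$ with $\mu(\gamma_0)>0$, so the cylinder of $\mathbb{P}$-paths beginning with $\gamma_0$ has positive measure and, by Theorem~\ref{th:drift}, almost every such path has drift $L$. Writing $w_{|\gamma_0|+k}=g_0\widetilde w_k$ with $g_0=\ev(\gamma_0)$ and applying the triangle inequality shows that the suffix $(\widetilde w_k)$ also has drift $L$; by the Markov property this suffix has law $\mathbb{P}_v$, and the theorem follows. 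Your approach works as well, but it costs more: you must verify that Lemma~\ref{L:recurrent}, Lemma~\ref{l:return}, and the loop decomposition of Lemma~\ref{lem:getting_iids} all transfer to the chain started at an arbitrary large-growth vertex, and you still have to import both Corollary~\ref{c:general} and the identification $L_C=L$ from the ergodicity step of Theorem~\ref{th:drift}. The paper's prefix trick bypasses all of this by using Theorem~\ref{th:drift} purely as a black box.
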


\begin{proof}
By Theorem \ref{th:drift}, for $\mathbb{P}$-almost every path which passes through $v$, 
the drift equals $L$. Each such path $\gamma = (w_n)$ can be decomposed as $\gamma = \gamma_0 \star \widetilde{\gamma}$, 
where $\gamma_0$ is a finite path from $v_0$ to $v$ and $\widetilde{\gamma} = (\widetilde{w}_n)$ is an infinite path starting from $v$. 
 Thus for each $n$, $w_n = g_0 \widetilde{w}_n$, where $g_0$ is the group 
element which represents $\gamma_0$.
Hence, 
\[
d(w_nx,x) - d(x,g_0x) \le d(\widetilde{w}_nx , x) = d(w_nx, g_0 x) \le d(w_nx,x) + d(x,g_0x)
\]
and so 
$$\lim_{n \to \infty} \frac{d(\widetilde{w}_n x, x)}{n} = \lim_{n \to \infty} \frac{d(w_n x, x)}{n} = L$$
as required.
\end{proof}

For application in Section \ref{sec:counting}, we will need the following convergence in measure statement.
\begin{corollary} \label{cor:drift_in_measure}
For any $\epsilon > 0$, and for any $v$ of large growth, 
$$\mathbb{P}_v\left(\left| \frac{d(x, w_n x)}{n} - L \right| \geq \epsilon \right) \to 0.$$
\end{corollary}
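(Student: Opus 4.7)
The plan is to derive this as an immediate consequence of Theorem \ref{th:drift2}, invoking the standard measure-theoretic principle that almost sure convergence on a probability space implies convergence in probability (which here coincides with convergence in measure, since $\mathbb{P}_v$ is a genuine probability measure whenever $v$ has large growth, as set up in Section \ref{sec:markov}).

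Concretely, for a fixed $\epsilon > 0$, I would introduce the events
\[
A_n = \left\{ \omega \in \Omega_v : \left| \frac{d(x, w_n(\omega) x)}{n} - L \right| \geq \epsilon \right\}.
\]
Theorem \ref{th:drift2} says that the set of sample paths $\omega$ failing the limit $d(x, w_n x)/n \to L$ has $\mathbb{P}_v$-measure zero, and any $\omega \in \limsup_n A_n$ would belong to this null set. Hence $\mathbb{P}_v(\limsup_n A_n) = 0$. Applying the reverse Fatou lemma to the indicator functions $\mathbf{1}_{A_n}$, which are dominated by the integrable function $1$, yields
\[
\limsup_{n \to \infty} \mathbb{P}_v(A_n) \leq \mathbb{P}_v\Bigl(\limsup_n A_n\Bigr) = 0,
\]
which is exactly the claim.

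There is really no obstacle to overcome: the substantive content was already established in Theorem \ref{th:drift2}, and the corollary is merely the weaker, convergence-in-probability formulation. The only point worth verifying is that $\mathbb{P}_v$ is a probability measure (so that $\mathbf{1}$ is integrable and reverse Fatou applies), but this is built into the construction once $v$ has large growth. The reason to record this weaker form separately is that it is precisely the shape needed for the counting arguments in Section \ref{sec:counting}, where one wants a uniform-in-$\omega$ probabilistic estimate rather than pointwise convergence along individual sample paths.
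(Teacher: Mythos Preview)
Your proof is correct and proceeds from the same starting point as the paper: both deduce the corollary from the almost sure convergence established in Theorem~\ref{th:drift2}. The only difference is in the final step. The paper observes that $X_n = d(x,w_n x)/n$ is uniformly bounded by the Lipschitz constant of the orbit map $G \to X$, and then invokes dominated convergence to upgrade a.s.\ convergence to $L^1$ convergence, from which convergence in probability follows. Your argument bypasses the boundedness observation entirely and goes straight from a.s.\ convergence to convergence in probability via reverse Fatou (equivalently, the standard fact that a.s.\ convergence implies convergence in probability on any probability space). Your route is slightly more economical, since the uniform bound is not actually needed for the statement as recorded; the paper's detour through $L^1$ is harmless but unnecessary here.
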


\begin{proof}
By the theorem, the sequence of random variables $X_n = \frac{d(w_nx, x)}{n}$ converges almost surely to $L$. 
Moreover, for every $n$ the variable $X_n$ is bounded above by the Lipschitz constant of the orbit map $G \to X$.
Thus, $X_n$ converges to $L$ in $L^1$, yielding the claim. 
\end{proof}

Finally, Theorem \ref{th:drift} implies the following linear progress statement for almost every geodesic ray in $G$. This proves Theorem \ref{th:intro_2} from the introduction.

\begin{corollary}[Positive drift] \label{cor:PS_drift}
For every $x \in X$ and $\nu$--almost every $\eta \in \partial G$, if $(g_n)_{n\ge0}$ is a geodesic in $G$ converging to $\eta$, then 
\[
\lim_{n \to \infty} \frac{d_X(x, g_n x)}{n} = L >0.
\]
\end{corollary}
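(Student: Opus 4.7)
The plan is to derive Corollary \ref{cor:PS_drift} from Theorem \ref{th:drift} in two steps: first transfer the almost-sure drift statement from the Markov measure $\mathbb{P}$ on sample paths to the Patterson--Sullivan measure $\nu$ on $\partial G$ via the evaluation map, and then upgrade from combing geodesic rays to arbitrary geodesic rays using fellow-traveling in the hyperbolic group $G$.

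For the first step, I will use the fact recorded in Section \ref{sec:markov} that under the path-lifting identification $\Omega_0 \cong \partial \widetilde{\Gamma}$ the Markov measure $\mathbb{P}$ agrees with the Patterson--Sullivan measure on $\partial \widetilde{\Gamma}$, and that this measure pushes forward under $\ev \colon \partial \widetilde{\Gamma} \to \partial G$ to the Patterson--Sullivan measure $\nu$ on $\partial G$ (Section \ref{sec:hyperbolic}). Sample paths $(w_n) \in \Omega_0$ evaluate to combing geodesic rays in $G$ starting at $1$, and by Theorem \ref{th:drift} the set $\Omega' \subseteq \Omega_0$ of paths with $d_X(x,w_n x)/n \to L$ has full $\mathbb{P}$--measure. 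Pushing forward, the set $E := \ev(\Omega') \subseteq \partial G$ satisfies $\nu(E) = 1$, and for every $\eta \in E$ the combing geodesic ray $(w_n)$ from $1$ to $\eta$ has the desired drift $L$.

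For the second step, fix $\eta \in E$ and let $(g_n)_{n\ge 0}$ be any geodesic ray in $G$ converging to $\eta$. Since $G$ is $\delta_G$--hyperbolic, two geodesic rays with a common endpoint in $\partial G$ eventually $R$--fellow travel with $R$ depending only on $\delta_G$ and on the initial distance between their starting points. Thus there exist $k \in \mathbb{Z}$ and $C \ge 0$ such that $d_G(g_n, w_{n+k}) \le C$ for all sufficiently large $n$. The orbit map $g \mapsto gx$ is $K$--Lipschitz with $K = \max_{s \in S} d_X(x,sx)$, so $d_X(g_n x, w_{n+k} x) \le KC$, and therefore
\[
\frac{d_X(x, g_n x)}{n} \;=\; \frac{d_X(x, w_{n+k} x)}{n} + O(1/n) \;\longrightarrow\; L.
\]

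Since Theorem \ref{th:drift} already supplies the substantive content (positive drift along the Markov chain), the remaining work is a translation from the Markov measure to the Patterson--Sullivan measure together with a standard fellow-traveling argument, so no step presents a serious obstacle; the only point requiring care is the identification between $\mathbb{P}$, the Patterson--Sullivan measure on $\partial \widetilde{\Gamma}$, and $\nu$ on $\partial G$, but all three identifications are already in place in the preceding sections.
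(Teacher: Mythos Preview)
Your proposal is correct and follows essentially the same route as the paper's proof: push forward the almost-sure drift from $\mathbb{P}$ to $\nu$ via the evaluation map, then use fellow-traveling of asymptotic geodesic rays in the hyperbolic group $G$ together with the Lipschitz orbit map to transfer the drift to an arbitrary geodesic ray. Your version is slightly more explicit (the shift $k$ to handle rays not based at $1$, the constant $K$ for the Lipschitz orbit map), but the argument is the same.
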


\begin{proof}
Note that the measure $\mathbb{P}$ on the set of infinite paths starting at $1$ pushes forward to $\nu$ on $\partial G$. 
Thus, by Theorem \ref{th:drift}, for $\nu$-almost every $\xi \in \partial G$ there exists a geodesic $(g_n)$ in the combing with $g_n \to \xi$ 
for which the statement is true; by hyperbolicity, any other geodesic $(g_n')$ which converges to $\xi$ fellow travels $(g_n)$, hence the drift in $X$ is the same.
\end{proof}

\subsection{Decay of shadows for $\mathbb{P}$}

Let us denote as $Sh(x,r)$ the set of shadows of the form $S_x(gx, R)$ where $g \in G$ and the distance parameter satisfies $d(x,gx) - R \geq r$. For any shadow $S$, we denote its closure in $X \cup \partial X$ by $\overline S$.

\begin{proposition}[Decay of shadows] \label{P:decay1}
There exists a function $f : \mathbb{R} \to \mathbb{R}$ such that $f(r) \to 0$ as $r \to \infty$, and such that 
for each $v$ of large growth, 
$$\nu_v^X \big(\overline{S_x(gx, R)}\big) \leq f(r),$$
where $r = d(x, gx) -R$ is the distance parameter of the shadow. 
\end{proposition}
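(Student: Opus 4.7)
The plan is to reduce the decay estimate for the Markov hitting measure $\nu_v^X$ at an arbitrary large-growth vertex $v$ to the corresponding estimate for the honest random walks attached to recurrent vertices, where Maher--Tiozzo--style shadow decay applies directly, and then to control the loss incurred when translating shadows by the initial path from $v$ to a recurrent vertex.

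First I would handle the recurrent case. Fix a recurrent vertex $w$; by Corollary \ref{c:general} the first return semigroup $\Gamma_w$ is nonelementary for the action $G \curvearrowright X$, and by the exponential moment bound \eqref{e:expo} the first return measure $\mu_w$ has finite first moment. The hitting measure of the resulting random walk is precisely $\nu_w^X$, so the Maher--Tiozzo shadow decay estimate (\cite[Corollary 5.3]{MaherTiozzo}, combined with a standard Portmanteau passage from $n$-step distributions to the hitting measure on $\partial X$) yields a function $f_w$ with $f_w(r) \to 0$ such that
\[
\nu_w^X\big(\overline{S_x(hx,R)}\big) \leq f_w(r)
\]
for every $h \in G$ and $R \geq 0$, with $r = d(x,hx)-R$. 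Since there are only finitely many recurrent vertices, set $f_1 := \max_w f_w$.

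Next I would address a general vertex $v$ of large growth using the decomposition \eqref{E:combo}, which gives
\[
\nu_v^X(\overline{S}) = \sum_{w \in \mathcal{R}} \sum_{\gamma \colon v \to w} \mu(\gamma)\, \nu_w^X(\gamma^{-1}\overline{S}), \qquad S := S_x(gx,R),
\]
where $\gamma$ runs over finite directed paths from $v$ that only meet a recurrent vertex at their terminal endpoint $w$. Since $\gamma$ acts by isometries on $X$, $\gamma^{-1}\overline{S}$ is the closure of $S_{\gamma^{-1}x}(\gamma^{-1}gx,R)$. A direct Gromov product calculation shows $S_{\gamma^{-1}x}(\gamma^{-1}gx,R) \subseteq S_x(\gamma^{-1}gx, R+2D)$ with $D := d(x,\gamma^{-1}x) \leq L|\gamma|$ (where $L$ is the Lipschitz constant of the orbit map), and that the enlarged shadow has distance parameter at least $r - 3L|\gamma|$. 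Hence for $|\gamma| \leq r/(6L)$ one has $\nu_w^X(\gamma^{-1}\overline{S}) \leq f_1(r/2)$, while for longer $\gamma$ one uses the trivial bound by $1$. Splitting the sum in \eqref{E:combo} at $|\gamma| = r/(6L)$,
\[
\nu_v^X(\overline{S}) \leq f_1(r/2) + \sum_{|\gamma| > r/(6L)} \mu(\gamma),
\]
and the proof of Lemma \ref{lem:first_return} shows that the $\mu$-mass of directed paths from $v$ which have not yet hit a recurrent vertex within $n$ steps decays exponentially in $n$, giving a bound of $Ce^{-cr/(6L)}$ on the tail. Taking $f(r) := f_1(r/2) + Ce^{-cr/(6L)}$ completes the proof.

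The main technical obstacle is the shift of basepoint introduced when translating the shadow by $\gamma$: the quantity $d(x,\gamma^{-1}x)$ can be comparable to the distance parameter $r$, ruining a naive application of Step 1. The resolution is that the exponential tail of first-return times in the Markov chain, which is a structural feature of the automatic structure, is strong enough to dominate the linear loss of distance parameter caused by this basepoint shift.
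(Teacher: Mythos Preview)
Your argument is correct, but the paper proceeds much more directly. The paper simply invokes Lemma~\ref{lem:nonatomic}, which already used the decomposition \eqref{E:combo} together with non-atomicity of the random-walk hitting measures $\nu_w^X$ to conclude that each $\nu_v^X$ is non-atomic; from there a standard measure-theoretic lemma (Proposition~5.1 of \cite{MaherTiozzo}) gives uniform decay of closed shadows for any non-atomic measure on $\partial X$, and one takes the maximum over the finitely many large-growth vertices. In other words, the paper factors the work through non-atomicity, while you bypass Lemma~\ref{lem:nonatomic} entirely and instead feed the Maher--Tiozzo random-walk shadow estimate (their Corollary~5.3) directly into the decomposition \eqref{E:combo}, controlling the basepoint shift by the exponential tail on first hitting a recurrent vertex. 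Your route is longer but more explicit: it would yield a quantitative decay rate (exponential in $r$, up to the rate inherited from the random-walk estimate), whereas the paper's soft argument gives only $f(r)\to 0$ with no rate. One minor point: your Portmanteau passage from $\mu_w^{*n}$ to $\nu_w^X$ needs a small enlargement of the shadow (by $O(\delta)$) since convergence $w_nx\to\xi\in\overline{S}$ only forces $w_nx$ into a slightly larger shadow for large $n$; this is harmless for the conclusion.
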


\begin{proof}
By Lemma \ref{lem:nonatomic}, the measure $\nu_v^X$ on $\partial X$ is nonatomic. The proposition now follows from a standard measure theory argument, as in Proposition 5.1 of \cite{MaherTiozzo}.
\end{proof}

Let $\Omega$ be the set of all infinite paths in $\Gamma$, starting from any vertex. 
We define for each $n$ the map $w_n : \Omega \to G$ which maps each infinite path to the 
product of the first $n$ edges of the path: 
$$w_n((e_1, \dots, e_n, \dots)) = e_1 \dots e_n$$
and 
$$w_\infty(x) := \lim_{n \to \infty} w_n x \in \partial X$$
(if it exists). Recall that by definition $\nu_v^X(A) = \mathbb{P}_v(w_\infty(x) \in A)$.

\begin{proposition}  \label{p:decay}
There exists a function $p : \mathbb{R}^+ \to \mathbb{R}^+$ with $p(r) \to 0$ as $r \to \infty$, such that for each vertex $v$
and any shadow $S_x(gx, R)$ we have 
\[
\mathbb{P}_v \Big( \exists n  \geq  0 \ : \ w_n x \in S_x(gx, R) \Big)  \leq p(r),
\]
where $r = d(x, gx) - R$ is the distance parameter of the shadow.
\end{proposition}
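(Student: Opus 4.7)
The plan is to reduce the estimate, via the strong Markov property, to the decay of shadows for the hitting measure (Proposition \ref{P:decay1}), the main technical work being a uniform hyperbolic-geometric continuation lemma. Fix $S = S_x(gx, R)$ with distance parameter $r$, and let $S' = S_x(gx, R + C)$ be an enlargement for a constant $C = C(\delta)$ chosen below. Setting $\tau = \inf\{n \geq 0 : w_n x \in S\}$, the aim is to prove
\[
q \cdot \mathbb{P}_v(\tau < \infty) \;\leq\; \nu_v^X\bigl(\overline{S'}\bigr)
\]
with $q > 0$ a universal constant, so that Proposition \ref{P:decay1} yields $\mathbb{P}_v(\tau < \infty) \leq q^{-1} f(r - C)$, and one may take $p(r) := q^{-1} f(r - C)$.

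Applying the strong Markov property at $\tau$, and conditioning on $\tau = n$, $w_\tau = h$, $v_\tau = v'$, the continuation of the sample path is distributed as $h \cdot (w'_\bullet)$ for a sample path $(w'_\bullet)$ beginning at $v'$. Hence
\[
\nu_v^X\bigl(\overline{S'}\bigr) \;\geq\; \sum_{n, h, v'} \mathbb{P}_v\bigl(\tau=n,\, w_\tau=h,\, v_\tau=v'\bigr) \cdot \mathbb{P}_{v'}\bigl(h w_\infty' x \in \overline{S'}\bigr),
\]
and the heart of the argument is the uniform lower bound $\mathbb{P}_{v'}(h w_\infty' x \in \overline{S'}) \geq q$ over the relevant entries. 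By $\delta$-thin triangles, one establishes the following: there exist $K = K(\delta)$ and $C = K + O(\delta)$ such that whenever $hx \in S$ lies past the projection of $gx$ onto $[x, hx]$ (in particular, when $d(x, hx) \geq d(x, gx)$) and $\xi \in \partial X$ satisfies $(h^{-1}x, \xi)_x \leq K$, then $h\xi \in \overline{S'}$. Indeed, $h$-equivariance converts the hypothesis into $(x, h\xi)_{hx} \leq K$, so $[x, h\xi)$ passes within $K + O(\delta)$ of $hx$; coupled with $d(gx, [x, hx]) \leq R + O(\delta)$ from the deep shadow condition, $[x, h\xi)$ must pass within $R + K + O(\delta)$ of $gx$, giving $h\xi \in \overline{S'}$. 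The complementary event $\{w_\infty' x \in \overline{S_x(h^{-1}x, K)}\}$ concerns a shadow based at $x$ with fixed distance parameter $K$, so Proposition \ref{P:decay1} bounds its $\nu_{v'}^X$-measure by $f(K)$ \emph{uniformly} in $h$ and $v'$; taking $K$ large enough that $f(K) < 1/2$ yields $q \geq 1/2$.

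Entries at lesser depth, i.e.\ with $r \leq d(x, w_\tau x) < d(x, gx)$, are handled by a depth decomposition: such a $w_\tau x$ lies within $O(\delta)$ of a point $p_d$ on $[x, gx]$ at depth $d := d(x, w_\tau x)$, hence belongs to a narrower sub-shadow $S_x(p_d, O(\delta))$ of $S$ with distance parameter $\geq r$; a dyadic partition of the depth range together with the preceding argument applied to each sub-shadow yields a summable contribution to $p(r)$. The main obstacle is establishing the uniform continuation constant $q$, which succeeds precisely because the complementary shadow controlling it is based at $x$ with fixed distance parameter $K$ independent of $h$, enabling Proposition \ref{P:decay1} to apply uniformly across all $(h, v')$; once this uniformity is in hand, the proof is a straightforward application of the strong Markov property.
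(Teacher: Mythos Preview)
Your core strategy --- strong Markov property at the first entry time, plus a uniform continuation bound derived from Proposition \ref{P:decay1} --- is exactly the paper's approach, and your main-case computation is correct. However, the depth case split you introduce is both unnecessary and, in the ``lesser depth'' case, incorrect.

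The error: your claim that $w_\tau x$ with $r \le d(x, w_\tau x) < d(x,gx)$ lies within $O(\delta)$ of a point on $[x,gx]$ is false. Membership $hx \in S$ only gives $(gx, hx)_x \ge r$, so $[x, hx]$ fellow-travels $[x, gx]$ for distance roughly $r$; but $hx$ itself, at distance $d > r$ from $x$, can sit roughly $d - r$ away from $[x,gx]$, which need not be $O(\delta)$. Your dyadic sub-shadow scheme therefore does not get off the ground.

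Fortunately the split is unnecessary: your main-case argument already works for \emph{every} $hx \in S$, with no restriction on $d(x,hx)$. Indeed $hx \in S$ forces $d(x,hx) \ge (gx,hx)_x \ge d(x,gx) - R$. Combining $(gx, hx)_x \ge d(x,gx) - R$ with $(hx, h\xi)_x \ge d(x,hx) - K$ (which is your hypothesis $(h^{-1}x,\xi)_x \le K$ after translating by $h$), the Gromov inequality gives
\[
(gx, h\xi)_x \;\ge\; \min\bigl((gx, hx)_x,\ (hx, h\xi)_x\bigr) - \delta \;\ge\; d(x,gx) - R - K - \delta,
\]
so $h\xi \in \overline{S'}$ as soon as $C \ge K + \delta$. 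This is precisely the paper's argument, phrased there as: for any $hx \in S$, the complement of the enlarged shadow $S_1$ is contained in $S^h = S_{hx}(x, d(x,hx) - c_2)$, whence $h^{-1}S^h$ is a shadow based at $x$ with fixed distance parameter $c_2$, and Proposition \ref{P:decay1} applies uniformly in $h$ and in the vertex. Drop the depth decomposition entirely and you have the paper's proof.
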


\begin{proof}
Let $S = S_x(gx, R)$ and $S_1 = S_x(gx, R + c_1)$, for $c_1$ to be determined in a moment. The idea is that for $c_1$ sufficiently large, the probability that the sample path $(w_nx)$ ever reaches the shadow $S$ is dominated by the probability that it convergence to a point of $\overline S_1 \cap \partial X$.

For any $h\in G$ such that $hx \in S$, hyperbolicity of $X$ implies that there is a constant $c_2 \ge0$ with $c_2 = c_1 + O(\delta)$ such that the complement of $S_1$ is contained in the shadow
\[
S^h = S_{hx} \big( x, d(x,hx) - c_2 \big).
\]
Note that the distance parameter of $S^h$ is $c_2 = c_1 + O(\delta)$. Then since $h^{-1}S^h$ is a shadow based at $x$, Proposition \ref{P:decay1} implies that we can choose $c_1$ large enough so that 
\[
\nu_w^X(h^{-1}S^h) \le f(c_1 + O(\delta)) \le 1/2,
\]
for any large growth vertex $w$. This is our fixed $c_1 \ge 0$.

Now we have 
$$\mathbb{P}_v( w_\infty(x) \in \overline{S_1}) \geq \mathbb{P}_v( w_\infty(x) \in \overline{S_1} \ \textup{ and } \ \exists n  \ : \ w_n x \in S ) = $$
If there exists $n$ such that $w_n x$ belongs to $S$, let $n_1$ be the first hitting time. Then 
$$ = \sum_{hx \in S} \mathbb{P}_v(w_{n_1} = h) \mathbb{P}_v(w_\infty(x) \in \overline{S_1} \  | \ w_{n_1} = h)  = $$
hence from the Markov property of the chain
$$ = \sum_{hx \in S} \mathbb{P}_v(w_{n_1} = h) \mathbb{P}_{v^h}(w_\infty(x) \in h^{-1} \overline{S_1}),$$ 
where $v^h$ is terminal vertex of the path starting at $v$ which reads the words $h$.


Since the complement of $S_1$ is contained in $S^h$, $(\partial X - h^{-1}\overline S_1) \subset h^{-1} \overline{S^h}$ and so
$$\nu^X_w(h^{-1} \overline{S_1}) \geq 1 - \nu^X_w(h^{-1} \overline{S^h}) \geq 1 - f(c_1 + O(\delta)) \geq 1/2 ,$$
by our fixed choice of $c_1$.
Hence, combining this with the previous estimate, 
$$\mathbb{P}_v( w_\infty(x) \in \overline{S_1}) \geq \sum_{hx \in S} \mathbb{P}_v(w_{n_1} = h) \cdot \nu^X_{v^h}(h^{-1} \overline{S_1}) \geq \frac{1}{2}
 \mathbb{P}_v(\exists n \ : \ w_n x \in S)$$ 
hence 
$$\mathbb{P}_v \big( \exists n \ : \ w_n x \in S \big) \leq 2 \cdot \mathbb{P}_v( w_\infty(x) \in \overline{S_1})  \leq 2 \cdot f\big(d(x, gx) - R -c_1\big)$$ 
so the claim is proven by taking $p(r) := 2 \cdot f(r - c_1)$.
\end{proof}

We conclude this section with a simple lower bound on the Gromov product:
\begin{lemma}\label{l:gromov}
Let $x,y,z$ be points in a metric space $X$. Then 
$$(y,z)_x \geq d(x, z) - d(y,z).$$
\end{lemma}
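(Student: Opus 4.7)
The plan is to simply unpack the definition of the Gromov product and reduce the claimed inequality to the triangle inequality. Recall that by definition
\[
(y,z)_x = \tfrac{1}{2}\bigl(d(x,y) + d(x,z) - d(y,z)\bigr).
\]
So the desired inequality $(y,z)_x \geq d(x,z) - d(y,z)$ is equivalent, after multiplying by $2$ and rearranging, to
\[
d(x,y) + d(y,z) \geq d(x,z),
\]
which is precisely the triangle inequality in $X$.

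Thus the proof is a one-line verification: substitute the definition, clear denominators, and invoke the triangle inequality applied to the triple $x, y, z$. There is no real obstacle here; the lemma is essentially a restatement of the triangle inequality in the language of Gromov products, and is included only because it will be cited later in estimates on shadows and drift. I would present it as a short two- or three-line computation without further commentary.
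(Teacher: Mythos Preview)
Your proof is correct and essentially identical to the paper's: both unpack the definition of the Gromov product and reduce the inequality to the triangle inequality $d(x,y)+d(y,z)\geq d(x,z)$.
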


\begin{proof}
By definition 
$$(y,z)_x = \frac{1}{2}\left( d(x,y) + d(x, z) - d(y,z) \right)$$
and by the triangle inequality $d(x, y) \geq d(x, z) - d(y,z)$, yielding the claim.
\end{proof}

\subsection{Almost independence and Gromov products}
Let $\mathbb{P}$ denote the probability measure (on the space of infinite paths) for the Markov chain starting at the identity. 
Given a vertex $v$, we will denote as $\mathbb{P}_v(w_n = g)$ the probability that the Markov chain starting at the vertex $v$ 
reads $g$ after $n$ steps.
Let $m = \lfloor n/2 \rfloor$.
Finally, we denote as $u_m$ the random variable $u_m = w_m^{-1} w_{n}$, so that $w_{n} = w_m u_m$.
Recall that by the Markov property of the chain, for any $g,h \in G$ and each $m$, 
$$\mathbb{P}(w_m = g \textup{ and } u_m^{-1} = h) = 
 \mathbb{P}(w_m = g) \cdot \mathbb{P}_v(w_m = h^{-1})$$
 where $v = [g]$.
Let $V$ denote the vertex set of $\Gamma$.


The following three lemmas will be essential in controlling the size of $(gx,g^{-1}x)_x$ over the Markov chain (Lemma \ref{lem:prod_inv}), which is needed to produce loxodromic elements of the action $G \curvearrowright X$.

\begin{lemma} \label{lem:prob_prod_1}
Let $f : \mathbb{R} \to \mathbb{R}$ be any function such that $f(m) \to +\infty$ as $m \to +\infty$. Then 
$$\mathbb{P} \big((w_m x, u_m^{-1} x)_x \geq f(m) \big) \to 0$$
as $m \to \infty$.
\end{lemma}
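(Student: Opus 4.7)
The plan is to apply the Markov property (as spelled out just before the lemma) to decouple $w_m$ from $u_m$, and then bound the resulting conditional probability by a shadow-decay estimate for the reverse of the chain. Concretely,
\[
\mathbb{P}\bigl((w_m x, u_m^{-1}x)_x \geq f(m)\bigr) = \sum_{g \in G} \mathbb{P}(w_m = g)\, \mathbb{P}_{[g]}\bigl((gx, w_m^{-1}x)_x \geq f(m)\bigr),
\]
so it suffices to bound the inner probability uniformly in $g$, on an event of full $w_m$-measure.

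For fixed $g$ with $v = [g]$, the condition $(gx, w_m^{-1}x)_x \geq f(m)$ forces $w_m^{-1}x$ into the shadow $S_x\bigl(gx, d(x, w_m x) - f(m)\bigr)$, which (using $d(x, w_m^{-1}x) = d(x, w_m x)$) has distance parameter $d(x, gx) - d(x, w_m x) + f(m)$. By Corollary~\ref{cor:drift_in_measure} applied both under $\mathbb{P}$ (to the outer $w_m = g$) and under $\mathbb{P}_v$ (to the inner $w_m$), the normalized displacements $d(x, w_m x)/m$ concentrate at $L$, so outside an event of probability $o(1)$ this distance parameter is at least $f(m) - o(m)$. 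In the remaining case $d(x, w_m x) > d(x, gx) + O(1)$, the symmetry of the Gromov product instead places $gx$ into a shadow around $w_m^{-1}x$ of equally large distance parameter, handled analogously.

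The core step is therefore a reverse-walk analogue of Proposition~\ref{p:decay}: for any large-growth vertex $v$ and any shadow $S$ of distance parameter $r$, the probability $\mathbb{P}_v(w_m^{-1}x \in S)$ is bounded by some $\tilde{p}(r)$ with $\tilde{p}(r) \to 0$ as $r \to \infty$, uniformly in $m$. I would prove this by reducing to the random-walk setting through the first-return decomposition of Section~\ref{sec:markov_to_walk}: once in a recurrent component $C$ with vertex $w$, the chain's increments are i.i.d.\ first-return loops of distribution $\mu_w$, which is nonelementary by Corollary~\ref{c:general} and has finite exponential moment by~\eqref{e:expo}. The reverse walk then has i.i.d.\ increments of distribution $\check{\mu}_w$, still nonelementary with finite exponential moment, and the argument of Proposition~\ref{p:decay} adapts directly to give shadow decay for it. The prefix reaching $C$ and the partial loop straddling time $m$ contribute only bounded error via the exponential tails of Lemma~\ref{lem:first_return}. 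Combining this with the previous paragraph, the inner probability is bounded by $\tilde{p}(f(m) - o(m)) \to 0$ on the high-probability event, and summing against $\mathbb{P}(w_m = g)$ gives the conclusion. The main obstacle is precisely this reverse shadow decay: since $\mathbb{P}_v$ has no intrinsic time-reversal, one must route through the first-return random-walk structure to import the existing shadow-decay estimates.
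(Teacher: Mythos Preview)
Your approach can be made to work, but it takes a substantially harder route than the paper's. You condition on the first half $w_m = g$ and then must control where $u_m^{-1}x$ lands; under $\mathbb{P}_{[g]}$ this is the \emph{reverse} of the chain, so you are forced to develop the reverse-walk shadow-decay estimate you describe, manufacturing it by passing through the first-return random walks, reflecting the measures $\mu_w$ to $\check{\mu}_w$, and then controlling the prefix and the partial loop straddling time $m$. That sketch is plausible (the reflected measures are indeed nonelementary with finite exponential moment), but it is real additional work with several details to check.

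The paper avoids all of this by simply conditioning on the \emph{other} variable. Using the same Markov decomposition but summing first over $h = u_m^{-1}$ gives
\[
\mathbb{P}\bigl((w_m x, u_m^{-1}x)_x \geq f(m)\bigr)
\;\leq\; \sum_{v \in V} \sum_{h \in G} \mathbb{P}_v(w_m = h^{-1})\; \mathbb{P}\bigl(w_m x \in S_x(hx, R)\bigr),
\]
with $R = d(x,hx) - f(m)$. Now the inner probability concerns the \emph{forward} chain from the initial vertex hitting a shadow of distance parameter $f(m)$, which is exactly what Proposition~\ref{p:decay} already bounds by $p(f(m))$, uniformly in $m$ and $h$. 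Summing gives at most $\#V \cdot p(f(m)) \to 0$. No reverse-chain estimate, no drift concentration, no first-return bookkeeping is needed. The asymmetry you identified as the ``main obstacle'' disappears once you swap which half you condition on.
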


\begin{proof} Let us denote as $S(m) := \{ (g,h) \in G \times G \ :  (gx, hx)_x \geq f(m) \}$. Then by definition we have 
$$\mathbb{P}((w_m x, u_m^{-1} x)_x \geq f(m) ) = \sum_{g,h \in S(m)} \mathbb{P}(w_m = g \textup{ and } u_m = h^{-1} ) \leq $$
hence by the Markov property 
$$ \leq \sum_{v \in V} \sum_{h \in G} \mathbb{P}_v(w_m = h^{-1} ) \sum_{\{g : [g] =v, (g,h) \in S(m)\}} \mathbb{P}(w_m = g) \leq $$
and forgetting the requirement that $[g] = v$
$$\leq \sum_{v \in V} \sum_{h \in G} \mathbb{P}_v(w_m = h^{-1} ) \mathbb{P}( w_m x \in S_x(hx, R)) \leq $$
where $R = d(x, h x) - f(m)$, while using the estimate on shadows (Proposition \ref{p:decay}) we have 
$$\leq \sum_{v \in V} \sum_{h \in G} \mathbb{P}_v(w_m = h^{-1} ) p(f(m)) \leq \#V \cdot p(f(m))$$
and the claim follows by decay of shadows. 
\end{proof}

\begin{lemma} \label{lem:prob_prod_2}
For each $\eta > 0$, the probability
$$\mathbb{P} \big((w_n^{-1} x, u_m^{-1} x)_x \leq n(L - \eta)/2 \big)$$
tends to $0$ as $n \to \infty$.
\end{lemma}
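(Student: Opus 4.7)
The plan is to expand the Gromov product using the identity $w_n = w_m u_m$, then bound each of the three resulting displacements by its expected linear drift. First, using that $G$ acts by isometries on $X$,
\[
d(w_n^{-1}x, u_m^{-1}x) = d(u_m^{-1}w_m^{-1}x, u_m^{-1}x) = d(w_m^{-1}x, x) = d(x, w_m x),
\]
while $d(x, w_n^{-1}x) = d(x, w_n x)$ and $d(x, u_m^{-1}x) = d(x, u_m x)$. Expanding the Gromov product gives
\[
(w_n^{-1}x, u_m^{-1}x)_x = \tfrac{1}{2}\bigl[d(x, w_n x) + d(x, u_m x) - d(x, w_m x)\bigr],
\]
so it suffices to show that for any $\epsilon > 0$, with probability tending to $1$ one has $d(x, w_n x) \geq (L - \epsilon) n$, $d(x, u_m x) \geq (L - \epsilon)(n - m)$, and $d(x, w_m x) \leq (L + \epsilon) m$.

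The first and third estimates follow by applying Corollary \ref{cor:drift_in_measure} to the Markov chain started at $v_0$. For $d(x, u_m x)$, the plan is to condition on $v = [w_m]$ and apply Corollary \ref{cor:drift_in_measure} at each such vertex via the Markov property. A crucial observation is that $[w_m]$ is $\mathbb{P}$-almost surely a vertex of large growth: by the definition of the transition measure $\mu$ in equation \eqref{E:Markov}, transitions from a large-growth vertex to a small-growth vertex have probability zero (since $\rho(1)_j = 0$ for small-growth $v_j$), and the initial vertex $v_0$ has large growth. Conditional on $[w_m] = v$, the element $u_m$ spelled by the subsequent $n - m$ edges is distributed as $w_{n-m}$ under $\mathbb{P}_v$. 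Since $\Gamma$ has only finitely many vertices, taking a finite union bound over the large-growth $v$ yields $d(x, u_m x) \geq (L - \epsilon)(n - m)$ with probability tending to $1$.

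Combining the three estimates with $m = \lfloor n/2 \rfloor$ and $n - m \geq n/2$ gives
\[
(w_n^{-1}x, u_m^{-1}x)_x \geq \tfrac{1}{2}\bigl[(L - \epsilon)n + (L - \epsilon)(n - m) - (L + \epsilon)m\bigr] = L(n - m) - n\epsilon \geq \tfrac{nL}{2} - n\epsilon,
\]
and choosing $\epsilon < \eta/2$ yields the desired inequality. The only real technical point will be the Markov property application for $u_m$, which hinges on the observation that $[w_m]$ is supported on large-growth vertices; since $\Gamma$ is finite, this poses no genuine difficulty.
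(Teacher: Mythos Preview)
Your proof is correct, but it does more work than necessary compared with the paper's argument. The paper does not expand the full Gromov product; instead it applies the crude lower bound of Lemma~\ref{l:gromov},
\[
(w_n^{-1}x, u_m^{-1}x)_x \;\geq\; d(x, w_n^{-1}x) - d(w_n^{-1}x, u_m^{-1}x) \;=\; d(x, w_nx) - d(x, w_mx),
\]
which reduces the problem to two estimates, both for the chain started at the initial vertex $v_0$. These follow directly from Theorem~\ref{th:drift} (or Corollary~\ref{cor:drift_in_measure}) without any conditioning.

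By contrast, your full expansion introduces the third term $d(x, u_m x)$, and controlling it forces you to condition on $[w_m]$, invoke the Markov property, and apply Corollary~\ref{cor:drift_in_measure} at every large-growth vertex. That argument is fine (and is exactly what the paper carries out later, in the proof of Lemma~\ref{lem:prob_prod_3}, where $d(x, u_m x)$ genuinely must be controlled), but for the present lemma it is avoidable. The payoff of the paper's shortcut is a shorter proof; your route yields a marginally better constant ($\epsilon < \eta/2$ versus $\epsilon = \eta/3$), which is irrelevant here.
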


\begin{proof}
By Lemma \ref{l:gromov}, and since the group acts by isometries,
$$(w_n^{-1} x, u_m^{-1} x)_x \geq d(x, w_n^{-1}x) - d(w_n^{-1} x, u_m^{-1} x) =  d(w_n x, x) - d(x, w_m x)$$
Now, by Theorem \ref{th:drift}, 
$$\mathbb{P}(d(w_n x, x) \geq n(L-\epsilon)) \to 1 \quad \text{and} \quad \mathbb{P}(d(w_m x, x) \leq n/2(L+\epsilon)) \to 1$$
hence with probability which tends to $1$ 
$$(w_n^{-1} x, u_m^{-1} x)_x \geq n(L-\epsilon) - \frac{n}{2}(L+\epsilon) = \frac{n}{2}(L-3\epsilon)$$
which yields the claim setting $\eta = 3\epsilon/2$.

\end{proof}

Finally, we show 
\begin{lemma} \label{lem:prob_prod_3}
For each $\eta > 0$, the probability
$$\mathbb{P} \big( (w_m x, w_n x)_x \leq n (L - \eta)/2 \big)$$
tends to $0$ as $n \to \infty$.
\end{lemma}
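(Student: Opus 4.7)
The plan is to mimic the strategy of Lemma \ref{lem:prob_prod_2}: bound the Gromov product from below using Lemma \ref{l:gromov}, and then apply the drift estimates of Theorem \ref{th:drift} and Corollary \ref{cor:drift_in_measure}. Writing $w_n = w_m u_m$ and using that $G$ acts by isometries on $X$, we have $d(w_m x, w_n x) = d(x, w_m^{-1} w_n x) = d(x, u_m x)$, so Lemma \ref{l:gromov} yields
$$(w_m x, w_n x)_x \geq d(x, w_n x) - d(w_m x, w_n x) = d(x, w_n x) - d(x, u_m x).$$
Thus it suffices to produce, with probability tending to $1$, a lower bound of order $n(L-\epsilon)$ for $d(x, w_n x)$ and an upper bound of order $(n-m)(L+\epsilon)$ for $d(x, u_m x)$.

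For the first estimate, Corollary \ref{cor:drift_in_measure} applied with $v = v_0$ yields $d(x, w_n x) \geq n(L - \epsilon)$ with probability tending to $1$, for any fixed $\epsilon > 0$. The second estimate is the real point of the argument: by the Markov property, conditionally on $w_m = g$ (which almost surely lies at a large growth vertex by \eqref{E:Markov-cone}), the variable $u_m = w_m^{-1} w_n$ is distributed as $w_{n-m}$ under $\mathbb{P}_{[g]}$. Since the vertex set of $\Gamma$ is finite, the convergence in Corollary \ref{cor:drift_in_measure} is uniform in the starting vertex, and averaging over the possible values of $w_m$ gives
$$\mathbb{P}\bigl( d(x, u_m x) \geq (n-m)(L + \epsilon) \bigr) \;\leq\; \max_{v\text{ large growth}} \mathbb{P}_v\bigl( d(x, w_{n-m} x) \geq (n-m)(L+\epsilon) \bigr) \longrightarrow 0.$$

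Combining these two events via a union bound and using $m = \lfloor n/2 \rfloor$, on an event of probability tending to $1$ we obtain
$$(w_m x, w_n x)_x \;\geq\; n(L - \epsilon) - (n - m)(L + \epsilon) \;\geq\; \frac{n}{2}(L - 3\epsilon) - O(1).$$
Given any $\eta > 0$, choosing $\epsilon < \eta/3$ makes this at least $n(L - \eta)/2$ for all sufficiently large $n$, which is the desired inequality. The only subtlety is the uniformity of drift over starting vertices of $\Gamma$ needed to control $u_m$; this is automatic because the vertex set is finite and the Markov measure charges only large growth vertices.
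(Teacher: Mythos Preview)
Your proof is correct and follows essentially the same route as the paper: bound $(w_m x, w_n x)_x$ below by $d(x, w_n x) - d(x, u_m x)$ via Lemma \ref{l:gromov}, control the first term by the drift (Theorem \ref{th:drift}/Corollary \ref{cor:drift_in_measure}), and control the second by conditioning on $[w_m]$ and invoking drift from an arbitrary large growth vertex (Theorem \ref{th:drift2}/Corollary \ref{cor:drift_in_measure}). Your version is in fact slightly more careful than the paper's, which writes $w_m$ where $w_{n-m}$ is meant and says ``recurrent'' where ``large growth'' is needed; your explicit remark about uniformity over the finite vertex set is exactly what justifies the final step.
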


\begin{proof} By Lemma \ref{l:gromov}, 
$$(w_n x, w_m x)_x \geq d(x, w_n x) - d(w_m x, w_n x) = d(x, w_n x) - d(x, u_m x) $$
so if we have $d(x, w_n x) \geq n(L-\epsilon)$ and $d(x, u_m x) \leq n(L+\epsilon)/2$ then 
$$(w_n x, w_m x)_x \geq  n(L-3\epsilon)/2 $$
Now, the first statement holds with probability which tends to $1$ by Theorem \ref{th:drift}; 
for the second statement, note that for each $g$ and each $m$, 
$$\mathbb{P}(u_m = g) =  \sum_{v \in V} \mathbb{P}([w_m] = v) \mathbb{P}_v(w_m = g)$$
and by Theorem \ref{th:drift2} 
$$\mathbb{P}_v \left(d(x, w_m x) \leq n(L+\epsilon)/2 \right) \to 1$$
 for each recurrent $v$.
Hence $$\mathbb{P} \left(d(x, u_m x) \leq n(L+\epsilon)/2 \right) \to 1$$
completing the proof.
\end{proof}



\subsection{Linear translation length along geodesics}
For $g \in G$, let $\tau_X(g)$ be the (stable) translation length of $g$ with respect to its action on $X$. Recall that $\tau_X(g) > 0$ if and only if $g$ is a loxodromic for the action $G \curvearrowright X$.

The following lemma for estimating the translation length of an isometry $g$ of $X$ is well-known; see for example \cite[Proposition 5.8]{MaherTiozzo}.

\begin{lemma} \label{l:tau-formula}
There exists a constant $c$, which depends only on $\delta$, such that for any isometry $g$ of a $\delta$-hyperbolic space $X$ 
and any $x \in X$ with $d(x, gx) \geq 2(gx, g^{-1}x)_x + c$, the translation length of $g$ is given by 
$$\tau_X(g) = d(x, gx) - 2(gx, g^{-1}x)_x + O(\delta).$$
\end{lemma}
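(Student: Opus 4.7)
The plan is to exploit the $g$-equivariance of the orbit $(g^i x)_{i \in \mathbb{Z}}$ together with the four-point hyperbolicity inequality to obtain a telescoping estimate for $d(x, g^n x)$, and then pass to the limit $n \to \infty$. Set $x_i := g^i x$, $d := d(x, gx)$, and $p := (gx, g^{-1}x)_x$; by isometric invariance, $d(x_i, x_{i+1}) = d$ and the ``one-step'' Gromov product $(x_{i+1}, x_{i-1})_{x_i}$ equals $p$ for every $i \in \mathbb{Z}$.

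First I would use the definition of the Gromov product applied to the triple $(x_0, x_k, x_{k+1})$ and iterate to obtain the exact telescoping identity
\[
d(x_0, x_n) = nd - 2 \sum_{k=1}^{n-1} (x_0, x_{k+1})_{x_k}.
\]
Since $g^{-k}$-invariance gives $(x_0, x_{k+1})_{x_k} = (x_{-k}, x_1)_{x_0}$, the lemma reduces to proving the uniform estimate
\[
(x_{-n}, x_1)_{x_0} = p + O(\delta) \quad \text{for every } n \ge 1,
\]
with $O(\delta)$ independent of $n$.

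For this key estimate I would apply the four-point hyperbolicity inequality
\[
(a, c)_y \ge \min\bigl((a, b)_y, (b, c)_y\bigr) - \delta
\]
with $y = x_0$, $a = x_{-n}$, $b = x_{-1}$, $c = x_1$. Since $(x_{-1}, x_1)_{x_0} = p$, the lower bound $(x_{-n}, x_1)_{x_0} \ge p - \delta$ reduces to checking that $(x_{-n}, x_{-1})_{x_0} \ge p + \delta$, i.e.\ that the geodesic $[x_0, x_{-n}]$ initially fellow-travels $[x_0, x_{-1}]$ for distance more than $p$. This is exactly the content of the local-to-global (``broken geodesic'') principle for quasi-geodesics in a $\delta$-hyperbolic space: under the hypothesis $d \ge 2p + c(\delta)$, the concatenated path $x_0 x_{-1} x_{-2} \cdots x_{-n}$ is a uniform quasi-geodesic, which forces $(x_{-n}, x_{-1})_{x_0}$ to be close to $d - p$. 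The matching upper bound $(x_{-n}, x_1)_{x_0} \le p + O(\delta)$ comes from a second application of the four-point inequality in which $(x_{-1}, x_1)_{x_0} = p$ plays the role of the minimum.

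Plugging the uniform bound back into the telescoping identity gives $d(x_0, x_n) = n(d - 2p) + O(n\delta)$, so dividing by $n$ and taking the limit yields $\tau_X(g) = (d - 2p) + O(\delta)$, as claimed. The main obstacle is the local-to-global principle invoked in the key estimate, which is classical but nontrivial in hyperbolic geometry. An alternative, self-contained route is a careful simultaneous induction tracking $(x_{-n}, x_1)_{x_0}$ and $(x_{-n}, x_{-1})_{x_0}$ against each other using only the four-point inequality and $g$-equivariance; choosing the constant $c = c(\delta)$ in the hypothesis large enough ensures that the induction closes up with an additive error bounded by a universal multiple of $\delta$ at every step.
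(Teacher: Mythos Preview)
The paper does not prove this lemma; it simply cites \cite[Proposition 5.8]{MaherTiozzo} as a reference. Your approach---the telescoping identity for $d(x_0,x_n)$ together with a uniform estimate on the Gromov products $(x_{-k},x_1)_{x_0}$ coming from the fact that the broken path $x_0,x_{-1},\ldots,x_{-n}$ is a local (hence global) quasi-geodesic under the hypothesis $d \ge 2p + c(\delta)$---is exactly the standard argument and is essentially what appears in Maher--Tiozzo. The sketch is correct: the telescoping identity is exact, the equivariance reduction is right, and the two applications of the four-point inequality yield both the upper and lower bounds on $(x_{-n},x_1)_{x_0}$ once you know $(x_{-n},x_{-1})_{x_0}$ is large. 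Your remark that $(x_{-n},x_{-1})_{x_0}$ is close to $d-p$ is in fact correct (for $n=2$ it is exactly $d-p$, and the quasi-geodesic bound propagates this), and under the hypothesis $d \ge 2p + c$ this is indeed at least $p + O(\delta)$, which is all you need.

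The only point to be careful about is the one you flag yourself: the local-to-global step is where the real work lies, and the constant $c(\delta)$ in the hypothesis has to be chosen large enough for that lemma to apply. Your alternative inductive route (tracking $(x_{-n},x_1)_{x_0}$ and $(x_{-n},x_{-1})_{x_0}$ simultaneously via the four-point inequality) is a clean way to make this self-contained and avoids importing the full Morse lemma; this is a nice observation.
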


Since we already proved that the term $d(x, gx)$ grows linearly $\mathbb{P}$--almost surely (Theorem \ref{cor:drift_in_measure}), to complete the proof we need to 
show that the Gromov product $(gx, g^{-1}x)_x$ does not grow too fast. To do so, we will use the following trick. For a proof, see \cite{TT}.

\begin{lemma}[Fellow traveling is contagious]\label{l:fellow_travel}
Suppose that $X$ is a $\delta$--hyperbolic space with basepoint $x$ and suppose that $A\ge0$. If $a,b,c,d \in X$ are points of $X$ with $(a\cdot b)_{x} \ge A$, $(c\cdot d)_{x} \ge A$, and $(a\cdot c)_{x} \le A-3\delta$. Then $(b \cdot d)_{x} -2\delta \le (a \cdot c)_{x} \le (b \cdot d)_{x} +2\delta$.
\end{lemma}

We apply this to prove the following:

\begin{lemma} \label{lem:prod_inv}
Let $f : \mathbb{N} \to \mathbb{R}$ be any function such that $f(n) \to +\infty$ as $n \to +\infty$. Then 
$$\mathbb{P} \Big( (w_n x, w_n^{-1} x)_x \geq f(n) \Big) \to 0$$
as $n \to \infty$.
\end{lemma}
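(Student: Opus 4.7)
The plan is to use the decomposition $w_n = w_m u_m$ with $m = \lfloor n/2 \rfloor$, together with the contagion of fellow-traveling (Lemma \ref{l:fellow_travel}), to transfer the bound on $(w_m x, u_m^{-1} x)_x$ provided by Lemma \ref{lem:prob_prod_1} into a bound on $(w_n x, w_n^{-1} x)_x$. Geometrically, Lemmas \ref{lem:prob_prod_3} and \ref{lem:prob_prod_2} tell us that $w_n x$ sits far out in the direction of $w_m x$ (their Gromov product is of linear order in $n$) while $w_n^{-1} x$ sits far out in the direction of $u_m^{-1} x$; so up to a bounded hyperbolic error $(w_n x, w_n^{-1} x)_x$ must agree with $(w_m x, u_m^{-1} x)_x$, which is typically small.

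To make this precise, fix $\eta > 0$ with $L - \eta > 0$, set $A_n = n(L-\eta)/2$, and define $g(m) := \min\bigl(f(2m) - 3\delta,\, A_{2m} - 3\delta\bigr)$, which tends to infinity since both $f(n)$ and $A_n$ do. Consider the three events
\[
E_1 = \{(w_m x, u_m^{-1} x)_x \leq g(m)\}, \quad E_2 = \{(w_m x, w_n x)_x \geq A_n\}, \quad E_3 = \{(w_n^{-1} x, u_m^{-1} x)_x \geq A_n\}.
\]
By Lemma \ref{lem:prob_prod_1} applied to the function $g$, together with Lemmas \ref{lem:prob_prod_3} and \ref{lem:prob_prod_2}, each $\mathbb{P}(E_i^c)$ tends to $0$ as $n \to \infty$.

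On $E_1 \cap E_2 \cap E_3$ I apply Lemma \ref{l:fellow_travel} with $(a,b,c,d) = (w_m x, w_n x, u_m^{-1} x, w_n^{-1} x)$ and threshold $A = A_n$; the hypotheses $(a \cdot b)_x, (c \cdot d)_x \geq A$ follow from $E_2, E_3$, while $(a \cdot c)_x \leq g(m) \leq A_{2m} - 3\delta = A_n - 3\delta$ follows from $E_1$. The conclusion yields $(w_n x, w_n^{-1} x)_x \leq (w_m x, u_m^{-1} x)_x + 2\delta \leq g(m) + 2\delta \leq f(n) - \delta$, so the event in question fails on $E_1 \cap E_2 \cap E_3$. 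Hence $\mathbb{P}\bigl((w_n x, w_n^{-1} x)_x \geq f(n)\bigr) \to 0$, as desired. The only mildly delicate point is choosing $g$ so that it simultaneously (i) grows to infinity, so Lemma \ref{lem:prob_prod_1} applies and makes $E_1$ typical, (ii) lies below $A_n - 3\delta$, so the fellow-traveling hypothesis is verified, and (iii) stays below $f(n)$ with room to absorb the $+2\delta$; the indicated minimum does all three at once.
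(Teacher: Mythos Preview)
Your proof is essentially identical to the paper's: the same decomposition $w_n = w_m u_m$, the same three auxiliary events controlled by Lemmas \ref{lem:prob_prod_1}--\ref{lem:prob_prod_3}, and the same application of Lemma \ref{l:fellow_travel} to transfer the bound on $(w_m x, u_m^{-1} x)_x$ to $(w_n x, w_n^{-1} x)_x$. One small indexing wrinkle: since $m = \lfloor n/2 \rfloor$, for odd $n$ you have $2m = n-1$, so your claimed inequality $g(m) + 2\delta \leq f(n) - \delta$ actually gives $f(n-1) - \delta$, which need not be below $f(n)$ for non-monotone $f$; the painless fix is to set $g(m) = \min\bigl(f(2m), f(2m+1)\bigr) - 3\delta$ (or, equivalently, index the auxiliary function by $n$ as the paper does).
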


\begin{proof}
Define 
$$f_1(n) = \min \left\{ f(n) - 2 \delta, \frac{n(L-\eta)}{2} - 3\delta \right\}$$
It is easy to see that $f_1(n) \to \infty$ as $n \to \infty$.
By Lemma \ref{l:fellow_travel}, if we know that:
\begin{enumerate}
\item
 $(w_mx, w_nx)_x \geq n(L-\eta)/2$, 
 \item
 $(u_m^{-1}x, w_n^{-1}x)_x \geq n(L-\eta)/2$, and
\item
 $(w_mx, u_m^{-1}x)_x \leq f_1(n) \leq n(L-\eta)/2 - 3\delta$, 
\end{enumerate}
then 
$$(w_nx, w_n^{-1}x)_x \leq (w_mx, u_m^{-1}x)_x + 2 \delta \leq f_1(n) + 2\delta.$$
Using Lemmas \ref{lem:prob_prod_1}, \ref{lem:prob_prod_2}, and \ref{lem:prob_prod_3}, the probability that conditions (1),(2), (3) hold tends to $1$, hence we have 
$$\PP^n \left( (w_nx, w_n^{-1}x)_x \leq f(n) \right) \to 1$$
as $n \to \infty$.
\end{proof}

We can now prove Theorem \ref{th:intro_Mark} from the introduction.
\begin{theorem}
For every $\epsilon > 0$, one has 
\[
\PP \Big(\tau_X(w_n) \ge n (L-\epsilon) \Big) \to 1,
\]
as $n \to \infty$.
\end{theorem}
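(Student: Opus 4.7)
The plan is to combine the linear growth of the displacement $d(x, w_n x)$ with the sublinear (indeed, $o(n)$) bound on the Gromov product $(w_n x, w_n^{-1} x)_x$, and then invoke the standard formula for translation length from Lemma \ref{l:tau-formula}. The statement is trivial when $\epsilon \geq L$, so we may assume $0 < \epsilon < L$, and we fix a small auxiliary parameter, say $\eta = \epsilon/3$.

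First, by Corollary \ref{cor:drift_in_measure} applied to the initial vertex $v_0$, we have
\[
\mathbb{P}\Big( d(x, w_n x) \geq n(L - \eta) \Big) \to 1
\]
as $n \to \infty$. Second, applying Lemma \ref{lem:prod_inv} with the choice $f(n) = \eta n / 2$ (which tends to $+\infty$), we obtain
\[
\mathbb{P}\Big( (w_n x, w_n^{-1} x)_x \leq \eta n / 2 \Big) \to 1.
\]
Let $A_n$ be the intersection of these two events, so that $\mathbb{P}(A_n) \to 1$.

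On $A_n$, for all sufficiently large $n$ we have $d(x, w_n x) \geq n(L - \eta) > 2 \cdot (\eta n / 2) + c = \eta n + c \geq 2(w_n x, w_n^{-1}x)_x + c$, since $L - \eta > \eta$ once $\epsilon < L$ (using $\eta = \epsilon/3$ and $L > \epsilon$). The hypothesis of Lemma \ref{l:tau-formula} is therefore satisfied, and we conclude
\[
\tau_X(w_n) = d(x, w_n x) - 2(w_n x, w_n^{-1} x)_x + O(\delta) \geq n(L-\eta) - \eta n + O(\delta) = n(L - 2\eta) + O(\delta).
\]
For $n$ large enough, the $O(\delta)$ error is absorbed into the remaining slack $n(2\eta - \epsilon) = n\epsilon/3$, yielding $\tau_X(w_n) \geq n(L - \epsilon)$ on $A_n$. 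Since $\mathbb{P}(A_n) \to 1$, the theorem follows.

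There is no real obstacle here: the essential probabilistic work has already been carried out in Corollary \ref{cor:drift_in_measure} and Lemma \ref{lem:prod_inv}. The only mild care needed is in choosing the auxiliary parameter $\eta$ so that the linear lower bound on the displacement dominates twice the Gromov product with room to spare, which is exactly what permits invocation of the deterministic translation length estimate of Lemma \ref{l:tau-formula}.
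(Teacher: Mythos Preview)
Your proof is correct and follows essentially the same approach as the paper: combine Corollary~\ref{cor:drift_in_measure} and Lemma~\ref{lem:prod_inv} with the translation length formula of Lemma~\ref{l:tau-formula}. You are in fact slightly more careful than the paper in explicitly verifying the hypothesis $d(x,w_nx)\ge 2(w_nx,w_n^{-1}x)_x+c$ before invoking Lemma~\ref{l:tau-formula}; note only the small slip that the slack should read $n(\epsilon-2\eta)=n\epsilon/3$ rather than $n(2\eta-\epsilon)$.
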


\begin{proof}
If we set $f(n) = \eta n$ with $\eta > 0$, then by Lemma \ref{lem:prod_inv} and Corollary \ref{cor:drift_in_measure} the events 
$(w_nx, w_n^{-1}x)_x \leq \eta n$ and $d(x, w_nx) \geq n(L-\eta)$ occur with probability ($\mathbb{P}^n$) which tends to $1$, hence 
by Lemma \ref{l:tau-formula} 
\begin{align*}
\PP^n \Big( \tau(w_n) \ge n(L- 3 \eta)  \Big) \ge 
\PP^n \Big( d(x, w_nx) - 2(w_nx, w_n^{-1}x)_x + O(\delta) \ge n(L- 3 \eta) \Big) 
\end{align*}
which approaches $1$ as $n \to \infty$. This implies the statement if we choose $\epsilon > 3 \eta$.
\end{proof}



\section{Generic elements and the action $G \curvearrowright X$}
\label{sec:counting}

Recall that for any $A \subseteq G$,
\[
P^n(A) = \frac{\#(A \cap S_n)}{\#S_n},
\]
where $S_n$ is the $n$--sphere in $G$. Hence, $P^n$ is uniform measure on the $n$--sphere of $G$.

We remind the reader that there is a constant $c>1$ such that $c^{-1} \lambda^n \le \#S_n \le c \lambda^n$ (Theorem \ref{th:Coorn}) where $\lambda$ is the growth of the directed graph $\Gamma$. Also, since $\ev \colon \widetilde \Gamma \to G$ is bijective, we may unambiguously use graph terminology when referring to elements of $G$. For example, $[g]=v$ means that $v$ is the vertex of $\Gamma$ which is the terminal endpoint of the path starting at $v_0$ and spelling $g$. Further, for $g \in G$, $\widehat{g}$ denote the element of $G$ along the unique combing geodesic from $1$ to $g$ which has distance $\log|g|$ from $g$.

\begin{lemma} \label{l:shadow-cone}
For any $R \geq 0$ and any $h \in G$, the group shadow $S_1(h, R) \subseteq G$ is contained in the union of finitely many cones: 
\[
S_1(h,R) \subseteq \bigcup_{g \in B_{R+c}(h)} \cone(g)
\]
where $c$ depends only on $\delta$.
\end{lemma}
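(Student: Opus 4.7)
For each $z\in S_1(h,R)$, the strategy is to exhibit a vertex $g$ lying on the (unique) combing geodesic $\gamma_z$ from $1$ to $z$ with $d(g,h)\le R+c$, where $c=c(\delta)$ depends only on the hyperbolicity constant of $G$. Such a $g$ automatically satisfies $z\in\cone(g)$ by the very definition of a cone, and lies in $B_{R+c}(h)$; local finiteness of the Cayley graph makes $B_{R+c}(h)$ finite, yielding the ``union of finitely many cones'' conclusion.

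The geometric heart of the argument is the thin triangle property in the $\delta$-hyperbolic group $G$, applied to the triangle $\{1,h,z\}$. The shadow hypothesis $(h,z)_1\ge d(1,z)-R$ translates, via the tripod approximation, into synchronous $2\delta$-fellow-traveling between $\gamma_z$ and any geodesic $[1,h]$ based at $1$: for every $t\le (h,z)_1$, the respective points on $\gamma_z$ and $[1,h]$ at distance $t$ from the identity are within $2\delta$ of each other.

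Set $t:=\min\{d(1,h),d(1,z)\}$, and let $g$ be the vertex of $\gamma_z$ at distance $t$ from $1$, so that $z\in\cone(g)$ by construction. In the main case $d(1,z)\ge d(1,h)$, one has $t=d(1,h)$; the tripod approximation places $g$ within $2\delta$ of the tripod point whose distance to $h$ equals
\[
(1,z)_h=d(1,h)-(h,z)_1\le d(1,h)-(d(1,z)-R)\le R,
\]
giving $d(g,h)\le R+2\delta$. The complementary case $d(1,z)<d(1,h)$ is handled by the same fellow-traveling principle with $g=z$, after absorbing a uniform additive constant into $c$; in this regime the fellow traveling on $\gamma_z$ is essentially maximal and places the endpoint $z$ within $O(\delta)+R$ of a corresponding point on $[1,h]$ close to $h$.

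The main technical obstacle is just the case analysis needed to extract a uniform $c=c(\delta)$ across both regimes; no input beyond thin triangles and the definition of the combing is required.
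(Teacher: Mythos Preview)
Your approach is the paper's: for $z\in S_1(h,R)$, exhibit a vertex $g$ on the combing geodesic $\gamma_z=[1,z]$ with $d(g,h)\le R+c(\delta)$. Note, however, that the paper's proof uses the shadow condition in the form $(h,z)_1\ge d(1,h)-R$, equivalently $(1,z)_h\le R$ (via the identity $(h,z)_1+(1,z)_h=d(1,h)$), rather than the form $(h,z)_1\ge d(1,z)-R$ that you quote; the displayed definition earlier in the paper appears to contain a typo. With the intended condition $(1,z)_h\le R$ one simply invokes the standard hyperbolic estimate that $(1,z)_h$ equals $d(h,[1,z])$ up to an additive $O(\delta)$, takes $g$ to be a nearest point to $h$ on $\gamma_z$, and no case analysis is needed.

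Your execution has two slips. In the main case, with $g$ placed at parameter $t=d(1,h)$ along $\gamma_z$, the tripod center $p$ sits at parameter $(h,z)_1$, so the tripod distance from $g$ to $p$ is $|d(1,h)-(h,z)_1|=(1,z)_h$, not $\le 2\delta$; this yields only $d(g,h)\le 2(1,z)_h+O(\delta)\le 2R+O(\delta)$ rather than $R+2\delta$. Choosing $g$ at parameter $(h,z)_1$ (i.e.\ the nearest-point projection of $h$ to $\gamma_z$) gives the intended $R+O(\delta)$. Your complementary case $d(1,z)<d(1,h)$ does not go through as sketched: fellow-traveling places $z$ near the point of $[1,h]$ at parameter $\approx d(1,z)$, whose distance to $h$ is $\approx d(1,h)-d(1,z)$ and is not uniformly bounded. (Indeed, under the literal condition $(h,z)_1\ge d(1,z)-R$ the identity element lies in every $S_1(h,R)$, so the lemma would be false as stated; this confirms the definition is a typo.) With the corrected condition $(1,z)_h\le R$, the nearest-point argument handles all $z$ uniformly and the case split disappears.
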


\begin{proof}
For each $x, y, z$ in a metric space, by definition of shadow and Gromov product,
$$z \in S_x(y, R) \Leftrightarrow (y,z)_x \geq d(x, y) - R \Leftrightarrow (x,z)_y \leq R$$
Thus, if $g_1 \in S_1(h, R)$, then $(g_1, 1)_h \leq R$, which implies that the distance between any geodesic segment from $1$ to $g_1$ 
and $h$ is at most $R + c$, where $c$ only depends on $\delta$. In particular, there exists a group element $h_1$ on the 
combing geodesic from $1$ to $g_1$ which lies at distance $\leq R + c$ from $h$. Thus, 
$g_1 \in \cone(h_1)$ with $h_1 \in B_{R+c}(h)$.
\end{proof}

\begin{lemma}\label{lem:near_large_growth}
There is $K\ge0$ such that for every $h \in G$ there is a $g \in G$ such that $v=[g]$ has large growth and $d_G(h,g) \le K$.
\end{lemma}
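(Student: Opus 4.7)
The plan is to exploit the forward-closure property of the small-growth subgraph $\Gamma_{SG}$ of $\Gamma$, consisting of the small-growth vertices and the directed edges between them. Recall from Section~\ref{sec:counting_graphs} that if $v_i$ has small growth and $v_i \to v_j$ is a directed edge of $\Gamma$, then $v_j$ also has small growth; equivalently, the set $LG$ is backward-closed under directed edges. Since $v_0 \in LG$ (as $\lambda > 1$), this forces a well-defined splitting of the combing geodesic from $v_0$ to $[h]$ into an initial large-growth part followed by a (possibly empty) tail lying inside $\Gamma_{SG}$.

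Concretely, given $h \in G$ with combing prefixes $1 = h_0, h_1, \ldots, h_{|h|} = h$, let $i_*$ be the largest index with $[h_{i_*}] \in LG$ and set $g := h_{i_*}$. Then $[g] \in LG$ and $d_G(h, g) = |h| - i_*$, while the tail $[h_{i_*+1}], \ldots, [h_{|h|}]$ is a directed path of length $|h| - i_*$ lying entirely inside $\Gamma_{SG}$. Hence it suffices to exhibit a constant $N = N(\Gamma)$ bounding the length of any directed path in $\Gamma_{SG}$, and then take $K = N$.

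The main obstacle is establishing this bound, equivalently, showing that $\Gamma_{SG}$ is acyclic (in which case all its directed paths have length at most $|V(\Gamma_{SG})| - 1$). A directed cycle in $\Gamma_{SG}$ would produce a strongly connected subgraph $C$ consisting entirely of small-growth vertices; by the remark following Lemma~\ref{L:growth}, $C$ would have no access to any maximal component, and would therefore be a sink component of $\Gamma$ of spectral radius strictly less than $\lambda$. Ruling out such sink configurations is the main technical step, which I would approach by combining Lemma~\ref{l:continuable} (every element is within distance $r$ of a continuable element) with a refined use of the estimate Lemma~\ref{L:growth}(2) on out-paths from small-growth vertices: the existence of such a sink $C$ would allow arbitrarily long $\Gamma_{SG}$-tails of combing geodesics to stabilize in $C$, and the count of the resulting combing paths of length $n$ is bounded by $c \lambda_1^n$ with $\lambda_1 < \lambda$, which one should be able to contrast with Theorem~\ref{th:Coorn} (giving $\#S_n \asymp \lambda^n$) to derive a contradiction.
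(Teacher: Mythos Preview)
Your reduction has a genuine gap. You restrict attention to prefixes of $h$ along its combing geodesic, which forces you to prove that the small-growth subgraph $\Gamma_{SG}$ is acyclic. But your proposed argument for acyclicity does not work. The observation that paths eventually trapped in a small-growth sink $C$ number at most $c\lambda_1^n$ with $\lambda_1<\lambda$ is correct, yet it in no way contradicts $\#S_n\asymp\lambda^n$: it merely says such paths are exponentially rare, not that they are absent. Likewise, Lemma~\ref{l:continuable} is of no help here, since a vertex lying on a small-growth cycle is continuable (there are infinite outgoing paths), so ``continuable'' does not imply ``large growth''. In short, nothing you invoke rules out a small-growth cycle, and if one exists then for the elements $h$ whose combing path winds around it, the last large-growth \emph{prefix} can be arbitrarily far from $h$---your candidate $g$ fails. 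The lemma only asks for some $g$ with $d_G(h,g)\le K$, not a prefix of $h$, and that freedom is essential.

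The paper's proof exploits exactly this freedom via a geometric argument that bypasses the automaton structure entirely. For a uniform $R$ (depending only on $\delta$) the shadow $S_1(h,R)\subset G$ has maximal growth $\lambda$, by Lemma~\ref{l:shadow-large} and hyperbolicity. By Lemma~\ref{l:shadow-cone} this shadow is contained in the union of cones $\cone(g)$ over $g\in B_{R+c}(h)$; since the shadow has growth $\lambda$, at least one such $\cone(g)$ must as well, and hence $[g]$ has large growth. Setting $K=R+c$ finishes the proof. Note that the $g$ produced this way need not be a prefix of $h$.
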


\begin{proof}
Recall that there exists an $R\ge0$ such that for any $h$ the group shadow $\overline {S_1(h,R)}$ contains an open set on $\partial G$. Hence, $\#(S_1(h,R) \cap S_n) \asymp c_1 \lambda^n$ for some $c_1$ depending only on $h$ (Lemma \ref{l:shadow-large}). Moreover, by Lemma \ref{l:shadow-cone} the shadow $S_1(h, R)$
is contained in a finite union of cones:
\[
S_1(h,R) \subseteq \bigcup_{g \in B_{R+c}(h)} \cone(g)
\]
where $C$ depends only on $\delta$. We conclude that one such cone $\cone(g)$ also has large growth. This completes the proof.
\end{proof}


\subsection{Genericity of positive drift}
The following result establishes Theorem \ref{th: intro_gen_drift} from the introduction using what we have shown about the Markov measures in Section \ref{sec:Markov_action}.

\begin{theorem} \label{th:gen_drift}
For every $\epsilon > 0$ one has 
$$\frac{\#\{ g \in S_n \ : \ d(gx,x) \geq (L- \epsilon) \ |g| \}}{\#S_n} \to 1 \qquad \textup{as }n \to \infty.$$
\end{theorem}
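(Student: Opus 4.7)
The plan is to combine the Markov-chain drift statement (Corollary \ref{cor:drift_in_measure}) with the comparison between counting and Markov measures (Lemma \ref{lem:comparePn}), handling the small-growth vertices via a thickening argument based on Lemma \ref{lem:near_large_growth}. The conceptual obstacle to a naive approach is that $P^n$ and $\mathbb{P}^n$ are comparable only on elements of $S_n$ whose terminal vertex is of large growth, whereas a positive proportion of $S_n$ may terminate at small-growth vertices, so Lemma \ref{lem:comparePn} alone is insufficient.

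Fix $\epsilon > 0$ and set $\epsilon' = \epsilon/2$. Let $\mathcal{B}_n^\epsilon = \{g \in S_n : d(x,gx) < (L-\epsilon)n\}$; the theorem reduces to showing $\#\mathcal{B}_n^\epsilon = o(\#S_n)$. Write $L_0 = \max_{s\in S} d(x,sx)$, the Lipschitz constant of the orbit map. Whenever $d_G(g,g') \le K$, the triangle inequality yields $d(x,g'x) \le d(x,gx) + KL_0$, so a routine calculation shows that for $n$ larger than an explicit threshold depending on $\epsilon, K, L, L_0$, one has the implication
\[
g \in \mathcal{B}_n^\epsilon \;\Longrightarrow\; g' \in \mathcal{B}_{|g'|}^{\epsilon'},
\]
valid for every $g'$ with $d_G(g,g') \le K$ (the slack $\epsilon - \epsilon'$ absorbs the additive error $KL_0$ and the length distortion $||g'|-n| \le K$). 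Applying Lemma \ref{lem:near_large_growth} with its uniform constant $K$, every $g \in \mathcal{B}_n^\epsilon$ admits a partner $g' \in LG \cap \mathcal{B}_k^{\epsilon'} \cap S_k$ with $k \in [n-K, n+K]$ and $d_G(g,g') \le K$.

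Fixing one such $g'$ for each $g$ defines a map whose fibers have cardinality at most $\#B_K(1)$, a constant, and hence
\[
\#\mathcal{B}_n^\epsilon \;\le\; \#B_K(1) \sum_{k=n-K}^{n+K} \#(\mathcal{B}_k^{\epsilon'} \cap LG \cap S_k).
\]
Lemma \ref{lem:comparePn} then bounds each summand by $c\, \mathbb{P}^k(\mathcal{B}_k^{\epsilon'}) \cdot \#S_k$, while Corollary \ref{cor:drift_in_measure} applied at the initial vertex $v_0$ (which has large growth) gives $\mathbb{P}^k(\mathcal{B}_k^{\epsilon'}) = \mathbb{P}(d(x, w_k x) < (L-\epsilon')k) \to 0$. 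Combined with $\#S_k \asymp \lambda^k$ from Theorem \ref{th:Coorn}, the sum over the $O(1)$ values of $k$ is $o(\lambda^n) = o(\#S_n)$, completing the proof.

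The main technical step — and the only part requiring real work beyond bookkeeping — is precisely the transfer of control from large-growth endpoints to all of $S_n$ via Lemma \ref{lem:near_large_growth}. Once this thickening is in place, the remaining estimates reduce directly to the Markov-chain drift result from Section \ref{sec:Markov_action} and the counting comparison Lemma \ref{lem:comparePn}.
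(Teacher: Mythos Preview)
Your argument is correct, but it differs from the paper's in the handling of small-growth endpoints. The paper backs up along the combing geodesic: for $g\in S_n$ it considers the prefix $\widehat g$ at distance $n-\log n$ from the origin, uses Proposition~\ref{P:smallg} to show that $\widehat g$ is generically of large growth, and then pushes the inclusion $g\in A_{L-\epsilon}\Rightarrow \widehat g\in A_{L'}$ through Lemma~\ref{lem:comparePn} and Corollary~\ref{cor:drift_in_measure} at time $n-\log n$. Your approach instead moves sideways via Lemma~\ref{lem:near_large_growth}, replacing $g$ by a large-growth neighbour within bounded distance $K$, and then sums over the finitely many spheres $S_k$ with $|k-n|\le K$.

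Your route is arguably more economical: it avoids the $\log n$ truncation and Proposition~\ref{P:smallg} altogether, and it is exactly the thickening device the paper itself employs later in Proposition~\ref{P:counting-decay}. The paper's route, on the other hand, stays on the combing geodesic and never leaves the automatic structure, which is conceptually tidy and keeps everything expressed in terms of a single sphere $S_{n-\log n}$. Both reductions feed into the same endgame (Lemma~\ref{lem:comparePn} plus Corollary~\ref{cor:drift_in_measure}), so the difference is one of packaging rather than substance.
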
 

\begin{proof}
Let $A_L$ denote the set of group elements 
$$A_L :=  \{ g \in G \ : \ d(gx,x) \leq L |g| \}. $$
We know by Corollary \ref{cor:drift_in_measure} that for any $L' < L$ one has 
$$\mathbb{P}^n(A_{L'}) \to 0 \qquad \textup{as }n \to \infty.$$
Then we have 
$$P^n(A_{L-\epsilon}) \leq \frac{\# \{ g \in S_n \ : \ \widehat{g} \notin LG \}}{\#S_n} + \frac{\#\{ g \in S_n \cap A_{L-\epsilon} \ :  \ \widehat{g} \in LG \}}{\#S_n}$$
and we know by Proposition \ref{P:smallg} that the first term tends to $0$. 
Now, by writing $g = \widehat{g}h$ with $| h | = \log |g|$ we have that $d(gx, x) \leq (L-\epsilon) |g|$ implies 
$$d(\widehat{g}x, x) \leq d(gx,x) + d(\widehat{g}x, gx) \leq (L-\epsilon) |g| + d(x, hx) \leq $$
hence, there exists $C$ such that it is less than 
$$ \leq (L-\epsilon) |g| + C \log |g| \leq L'  |\widehat{g}|$$
for any $L-\epsilon < L' < L$ whenever $|g|$ is sufficiently large. This proves the inclusion
$$\{ g \in S_n \cap A_{L-\epsilon} \ :  \ \widehat{g} \in LG \} \subseteq \{ g \in S_n \ :  \ \widehat{g} \in A_{L'} \cap LG \}$$
and by Lemma \ref{L:growth} (1) 
$$\#\{ g \in S_n \ :  \ \widehat{g} \in A_{L'} \cap LG \} \leq c \lambda^{\log n} \#( S_{n - \log n} \cap A_{L'} \cap LG) \leq $$
hence by Lemma \ref{lem:comparePn} (equation \eqref{E:countmu}) and considering the size of $S_{n-\log n}$
$$\leq c_1 \lambda^{\log n} \mathbb{P}^{n-\log n}(A_{L'}) \# S_{n-\log n} \leq c_2 \lambda^n \mathbb{P}^{n-\log n}(A_{L'}).$$
Finally, using that $\mathbb{P}^{n-\log n}(A_{L'}) \to 0$ we get
$$\limsup_{n \to \infty} \frac{\#\{ g \in S_n \cap A_{L-\epsilon} \ :  \ \widehat{g} \in LG \}}{\#S_n} \leq \limsup_{n \to \infty} c_3 \mathbb{P}^{n-\log n}(A_{L'})  = 0$$
which proves the claim.
\end{proof}


\subsection{Counting and decay of shadows}
For $g \in G$, we set
 \[
 S_x^G(gx,R) = \left \{ h\in G : hx \in  S_x (gx,R) \right \}, 
 \]
 where as usual, $S_x (gx,R)$ is the shadow in $X$ around $gx$ centered at the basepoint $x\in X$. We will need the following decay property for $S_x^G(gx,R) \subset G$.

\begin{proposition} \label{P:counting-decay}
There is a function $\rho : \mathbb{R}^+ \to \mathbb{R}^+$ with $\rho(r) \to 0$ as $r \to \infty$ such that for every $n \geq 0$
\[
P^n(S^G_x(gx, R)) \le \rho \left(d(x,gx) -R \right). 
\]
\end{proposition}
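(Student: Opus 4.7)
The plan is to reduce the counting-measure estimate to the already-established Markov-measure estimate (Proposition \ref{p:decay}) via Lemma \ref{lem:comparePn}, while handling the small-growth elements with a perturbation trick based on Lemma \ref{lem:near_large_growth}. The only obstacle is that $\mathbb{P}^n$ is supported on large-growth vertices, whereas $S_n$ contains small-growth elements that are invisible to $\mathbb{P}^n$; Lemma \ref{lem:near_large_growth} allows us to bypass this.

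First, for $h \in S_n$ with $hx \in S_x(gx,R)$, I would apply Lemma \ref{lem:near_large_growth} to produce an element $h' \in G$ with $[h']$ of large growth and $d_G(h,h') \le K$, where $K$ is the constant from that lemma. Since the orbit map is $L$--Lipschitz, $d_X(hx,h'x) \le LK$, and Lemma \ref{lem:neigh-shadow} then gives
\[
h'x \in S_x(gx,\, R + LK + 2\delta),
\]
a shadow whose distance parameter is $r' := r - LK - 2\delta$. Moreover $h' \in S_{n'}$ for some $n' \in [n-K,n+K]$, and the assignment $h \mapsto h'$ has multiplicity at most $|B_K|$, the maximum cardinality of a $K$--ball in $G$.

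Next, I would estimate the count of such $h'$ for each fixed $n' \in [n-K,n+K]$. By Lemma \ref{lem:comparePn},
\[
\#\bigl\{h' \in S_{n'} \cap LG : h'x \in S_x(gx,R+LK+2\delta)\bigr\} \le c\, \mathbb{P}^{n'}\bigl(S^G_x(gx,R+LK+2\delta)\bigr)\cdot \#S_{n'},
\]
and since $\mathbb{P}^{n'}(S^G_x(gx,R+LK+2\delta)) \le \mathbb{P}(\exists m : w_m x \in S_x(gx,R+LK+2\delta)) \le p(r')$ by Proposition \ref{p:decay}, each term is bounded by $c\, p(r')\, \#S_{n'}$.

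Finally, summing over $n' \in [n-K,n+K]$, using that $\#S_{n'} \le c^2 \lambda^K \#S_n$ from Theorem \ref{th:Coorn}, and dividing by $\#S_n$, I would obtain
\[
P^n(S^G_x(gx,R)) \le C \cdot p(r - LK - 2\delta),
\]
for a constant $C = |B_K|(2K+1) c \cdot c^2 \lambda^K$ depending only on $\delta$, $L$, and the graph $\Gamma$. Setting $\rho(r) := \min\{1,\ C\cdot p(r - LK - 2\delta)\}$ yields a function with $\rho(r) \to 0$ as $r \to \infty$ satisfying the conclusion. The bound is uniform in $n$ because the Markov bound $p(r')$ from Proposition \ref{p:decay} is uniform in $n$, which is the key feature transferred through this argument.
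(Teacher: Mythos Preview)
Your proof is correct and follows essentially the same route as the paper: perturb each element into the large-growth locus via Lemma~\ref{lem:near_large_growth}, use the Lipschitz orbit map together with Lemma~\ref{lem:neigh-shadow} to pass to a slightly larger shadow, then apply Lemma~\ref{lem:comparePn} and Proposition~\ref{p:decay}. Your treatment of the sphere shift (summing over $n' \in [n-K,n+K]$ and invoking Theorem~\ref{th:Coorn}) is in fact more explicit than the paper's, which absorbs this into the step $P^n(N_D(A)) \le \#B_D \cdot P^n(A)$; one cosmetic point is that you use $L$ for the Lipschitz constant of the orbit map, which clashes with the drift constant $L$ of Theorem~\ref{th:drift} already fixed in the paper.
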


\begin{proof}
By Lemma \ref{lem:near_large_growth}, every $h \in S^G_x(gx, R)$ lies at distance $\leq D$ from a group element $h_1$ of large growth,
and moreover by Lemma \ref{lem:neigh-shadow} such an element $h_1$ is contained in the shadow $S^G_x(gx, R + c D)$, where $c$ is the Lipschitz constant 
of the orbit map $G \to X$. Thus, 
$$S^G_x(gx, R) \subseteq N_D(S^G_x(gx, R+cD) \cap LG).$$
Now, 
$$P^n(S^G_x(gx, R)) \leq P^n \left(N_D(S^G_x(gx, R+cD) \cap LG) \right) \leq $$
hence, if $\# B_D$ is the size of a ball of radius $D$ in the Cayley graph, 
$$\leq \#B_D \cdot P^n(S^G_x(gx, R+cD) \cap LG) \leq $$
and by Lemma \ref{lem:comparePn}
$$\leq c_1 \# B_D \cdot  \mathbb{P}^n(S^G_x(gx, R+cD)) \leq $$
hence by decay of shadows for $\mathbb{P}$ (Proposition \ref{p:decay}) 
$$\leq c_1 \# B_D \cdot p \left( d(x,gx) - R - cD \right)$$
thus the claim is proven if we set $\rho(r) := c_1 \# B_D \cdot p(r - cD)$.
\end{proof}

\subsection{Genericity of loxodromics}




For each $n$, let $n_1 = \lfloor \frac{n}{2} \rfloor$, $n_2 = n - n_1$. For each $g \in S_n$, let us pick its representative 
path from the initial vertex, and let $a$, $b$ be the group elements associated respectively  to the first $n_1$ edges, and the last $n_2$ 
edges of this path. Thus we can canonically write $g = ab$, with $a \in S_{n_1}$, $b \in S_{n_2}$, and $b$ in the cone of $a$.
We now show that $a$ and $b$ are almost independent: 

\begin{lemma} \label{l:indep-count}
There exists a constant $c > 0$ such that for any $n \geq 2$ the inequality
$$P^n(a \in A, b \in B) \leq c P^{n_1}(A) P^{n_2}(B)$$
is satisfied for any subsets  $A, B \subseteq G$.
\end{lemma}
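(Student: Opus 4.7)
The plan is to bound the numerator $\#\{g \in S_n : a(g) \in A,\ b(g) \in B\}$ from above by the product $\#(A \cap S_{n_1}) \cdot \#(B \cap S_{n_2})$, and then to divide through by $\#S_n$ and invoke Coornaert's theorem ($\#S_k \asymp \lambda^k$) to convert this into the desired inequality on normalized counting measures. Since $n_1 + n_2 = n$, the ratio $\#S_{n_1}\#S_{n_2}/\#S_n$ is bounded above by a uniform constant, which gives the $c$ in the statement.

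The key observation will be that every $g \in S_n$ has a unique combing path of length $n$ in $\widetilde{\Gamma}$ starting at $v_0$. Splitting this path at time $n_1$ produces a prefix of length $n_1$ from $v_0$ to the vertex $v := [a]$ spelling $a$, followed by a suffix of length $n_2$ based at $v$ spelling a word representing $b$. Because this suffix is a subpath of a combing geodesic, the word it spells is a geodesic word of length $n_2$, and in particular $b \in S_{n_2}$; this is where the compatibility $g = ab$ with $b$ in the cone of $a$ is used.

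For a fixed $a \in A \cap S_{n_1}$, I would then count the $g \in S_n$ satisfying $a(g) = a$ and $b(g) \in B$. These are in bijection with the directed paths of length $n_2$ in $\Gamma$ based at $v = [a]$ whose edge labels spell an element of $B$. The crucial input is the finite-state-automaton axiom that no vertex of $\Gamma$ has two outgoing edges with the same label: this forces the map sending a path based at $v$ to the word it spells to be injective, so distinct such paths give distinct group elements of $G$. Hence the count of $g$'s with $a(g) = a$ and $b(g) \in B$ is at most $\#(B \cap S_{n_2})$, and summing over $a \in A \cap S_{n_1}$ yields
\[
\#\{g \in S_n : a(g) \in A,\ b(g) \in B\} \leq \#(A \cap S_{n_1}) \cdot \#(B \cap S_{n_2}).
\]

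I do not expect any genuine obstacle in executing this plan; the only point that requires a moment of care is the injectivity of the spelling map on paths based at a \emph{non-initial} vertex of $\Gamma$. This is exactly the labeling axiom from the definition of a finite state automaton, which has not been invoked previously in the paper in this form, so it is the one step worth flagging explicitly.
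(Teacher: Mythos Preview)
Your proposal is correct and follows essentially the same route as the paper: bound the numerator by $\#(A\cap S_{n_1})\cdot\#(B\cap S_{n_2})$ by dropping the compatibility condition between $a$ and $b$, then use $\#S_{n_1}\#S_{n_2}\leq c\,\#S_n$ from Coornaert's estimate. One small remark: you do not actually need the finite-state-automaton labeling axiom for the injectivity step, since $g=a(g)b(g)$ already makes $g\mapsto(a(g),b(g))$ injective as a map $S_n\to S_{n_1}\times S_{n_2}$; your argument via injectivity of the spelling map from a non-initial vertex is valid but slightly more than required.
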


\begin{proof}
$$P^n(a \in A, b \in B) = \frac{ \#\{(a,b) \in S_{n_1} \times S_{n_2} \ : \ b \in \cone(a), a \in A, b \in B \}}{\#S_n} \leq$$
and forgetting the requirement that $b \in \cone(a)$, 
$$\leq \frac{ \#\{(a,b) \in S_{n_1} \times S_{n_2} \ : \  a \in A, b \in B \}}{\#S_n} \leq \frac{P^{n_1}(A) P^{n_2}(B) \#S_{n_1} \#S_{n_2}}{\#S_n} .$$
Now, $\#S_{n_1} \#S_{n_2} \leq c \#S_n$ for some $c$ which depends only on $\Gamma$, proving the claim.
\end{proof}

\begin{lemma} \label{l:positivehalf}
For any $\epsilon > 0$, 
$$P^n\left(d(x,ax) \leq \frac{n(L+\epsilon)}{2}\right) \to 1$$
and 
$$P^n\left(d(x,bx) \leq \frac{n(L+\epsilon)}{2}\right) \to 1.$$
\end{lemma}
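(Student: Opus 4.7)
The strategy is that both halves of the lemma reduce, via Lemma \ref{l:indep-count}, to an \emph{upper-drift} analog of Theorem \ref{th:gen_drift}: namely, that for every $\epsilon > 0$,
\[
P^m \bigl\{ h \in S_m : d(x,hx) \geq m(L+\epsilon) \bigr\} \longrightarrow 0 \qquad \text{as } m \to \infty.
\]
I will establish this first and then deduce the two claims.

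The upper-drift estimate is proved along exactly the same lines as Theorem \ref{th:gen_drift}, with inequalities reversed. Let $A'_L = \{g \in G : d(x,gx) \geq L|g|\}$. Decompose
\[
P^m(A'_{L+\epsilon}) \leq \frac{\#\{g \in S_m : \widehat g \notin LG\}}{\#S_m} + \frac{\#\{g \in S_m \cap A'_{L+\epsilon} : \widehat g \in LG\}}{\#S_m}.
\]
The first term tends to $0$ by Proposition \ref{P:smallg}. For the second, if $g = \widehat g\, h$ with $|h| = \log m$, then $d(x,\widehat g x) \geq d(x,gx) - C\log m$, so $d(x,gx) \geq m(L+\epsilon)$ forces $d(x,\widehat g x) \geq |\widehat g|(L+\epsilon')$ for any $\epsilon' < \epsilon$ and $m$ large. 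Lemma \ref{L:growth}(1) bounds the number of extensions of each such $\widehat g$ to $S_m$ by $c\lambda^{\log m}$, while Lemma \ref{lem:comparePn} and Corollary \ref{cor:drift_in_measure} (applied with threshold $L+\epsilon' > L$) give
\[
\#\bigl(S_{m-\log m} \cap A'_{L+\epsilon'} \cap LG\bigr) \leq c_1 \lambda^{m-\log m}\, \mathbb{P}^{m-\log m}(A'_{L+\epsilon'}) = o(\lambda^{m-\log m}).
\]
Multiplying and dividing by $\#S_m \asymp \lambda^m$ yields the second term $\to 0$.

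With this in hand, Lemma \ref{l:indep-count} gives the two required bounds. Taking $B = G$ (resp.\ $A = G$),
\[
P^n\!\bigl(d(x,ax) > n(L+\epsilon)/2\bigr) \leq c\, P^{n_1}\!\bigl\{h: d(x,hx) > n(L+\epsilon)/2\bigr\},
\]
\[
P^n\!\bigl(d(x,bx) > n(L+\epsilon)/2\bigr) \leq c\, P^{n_2}\!\bigl\{h: d(x,hx) > n(L+\epsilon)/2\bigr\}.
\]
Since $n_1 \leq n/2$, we have $n(L+\epsilon)/2 \geq n_1(L+\epsilon)$, so the first right-hand side is dominated by $c P^{n_1}(A'_{L+\epsilon}) \to 0$. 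For the second, $n_2 \leq (n+1)/2$ implies $n(L+\epsilon)/2 \geq n_2(L+\epsilon')$ for any fixed $\epsilon' < \epsilon$ once $n$ is sufficiently large, so the right-hand side is at most $c P^{n_2}(A'_{L+\epsilon'}) \to 0$.

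The only mildly delicate point is the threshold arithmetic in the $b$-case, where the $O(1)$ slack in $n_2 - n/2$ must be absorbed by shrinking $\epsilon$ to some $\epsilon' < \epsilon$; this is harmless. The main substance is the upper-drift estimate, which is a direct recycling of the argument of Theorem \ref{th:gen_drift}.
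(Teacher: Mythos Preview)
Your proposal is correct and follows essentially the same route as the paper: reduce via Lemma~\ref{l:indep-count} to an upper-drift estimate for $P^m$. In fact you are more careful than the paper, which simply cites Theorem~\ref{th:gen_drift} for the step $P^{n_1}\bigl(d(x,gx)\ge n(L+\epsilon)/2\bigr)\to 0$; as stated, Theorem~\ref{th:gen_drift} only gives the \emph{lower} bound, and one really needs the upper-drift analog that you spell out (its proof is, as you say, the same argument with Corollary~\ref{cor:drift_in_measure} supplying the two-sided convergence).
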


\begin{proof}
We prove the complementary statement that 
$$P^n\left(d(x,ax) \geq \frac{n(L+\epsilon)}{2}\right) \to 0.$$
Indeed, by Lemma \ref{l:indep-count}
$$P^n\left(d(x,ax) \geq \frac{n(L+\epsilon)}{2}\right) \leq c P^{n_1}\left(d(x, gx) \geq \frac{n(L+\epsilon)}{2} \right)$$
which tends to zero by Theorem \ref{th:gen_drift}, recalling that $n_1 \sim \frac{n}{2}$.
The proof of the second statement is completely analogous. 
\end{proof}

Our goal is to prove that the translation length of a generic element in the $n$--sphere grows linearly in $n$. In order to apply Lemma \ref{l:fellow_travel}, we need to check that the first half of $g$ (which is $a$) and the first half of $g^{-1}$ (which is $b^{-1}$) 
generically do not fellow travel: 

\begin{lemma} \label{l:prod0}
Let $f : \mathbb{R} \to \mathbb{R}$ be any function such that $f(n) \to +\infty$ as $n \to +\infty$. Then 
\[
P^n \Big( (a x, b^{-1} x)_x \geq f(n) \Big) \to 0
\]
as $n \to \infty$.
\end{lemma}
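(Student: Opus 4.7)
The plan is to reduce the claim to decay of shadows for the counting measure (Proposition~\ref{P:counting-decay}), paralleling the Markov-chain argument of Lemma~\ref{lem:prob_prod_1}. The key observation is that $(ax, b^{-1}x)_x \geq f(n)$ is equivalent to $b^{-1}x$ lying in the shadow $S_x(ax,\, d(x,ax) - f(n))$, whose distance parameter is exactly $f(n)$ and therefore tends to infinity by hypothesis.

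First I would decompose $S_n$ according to the prefix $a_0 \in S_{n_1}$; discarding the combing compatibility constraint between $a$ and $b$ (which only enlarges the count) yields
\[
\#\{g \in S_n : (ax, b^{-1}x)_x \geq f(n)\} \;\leq\; \sum_{a_0 \in S_{n_1}} \#\bigl\{b \in S_{n_2} : b^{-1} \in S_x^G(a_0 x, R_{a_0})\bigr\},
\]
where $R_{a_0} = d(x, a_0 x) - f(n)$. Since $S_{n_2}$ is closed under inversion, the inner cardinality equals $\#\bigl(S_x^G(a_0 x, R_{a_0}) \cap S_{n_2}\bigr)$, which by Proposition~\ref{P:counting-decay} is at most $\rho(f(n))\cdot \#S_{n_2}$; in the degenerate case $f(n) > d(x, a_0 x)$ the shadow is empty, so the bound is trivial. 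Summing over $a_0 \in S_{n_1}$, dividing by $\#S_n$, and using $\#S_{n_1}\cdot \#S_{n_2} \leq c\,\#S_n$ (from $\#S_k \asymp \lambda^k$, Theorem~\ref{th:Coorn}) gives
\[
P^n\bigl((ax,b^{-1}x)_x \geq f(n)\bigr) \;\leq\; c\,\rho(f(n)) \;\longrightarrow\; 0.
\]

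I do not expect a substantive obstacle: this is essentially a direct transcription of the Markov-chain argument (Lemma~\ref{lem:prob_prod_1}) into the counting setting, with Proposition~\ref{P:counting-decay} replacing Proposition~\ref{p:decay}. The only mild point of care is the symmetry $S_n = S_n^{-1}$, needed to convert the shadow condition on $b^{-1}$ into one on $b$; this is immediate from the fact that inversion preserves word length. Note that, in contrast with Lemma~\ref{lem:prob_prod_2}, here one does not need any drift input (such as Lemma~\ref{l:positivehalf}), because the distance parameter of the relevant shadow equals $f(n)$ regardless of $a_0$.
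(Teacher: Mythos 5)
Your proof is correct and follows essentially the same route as the paper's: drop the combing constraint between the two halves, recast the Gromov product condition as membership in a shadow with distance parameter $f(n)$, and invoke the counting-measure decay of shadows (Proposition~\ref{P:counting-decay}). The only cosmetic difference is that you sum over the prefix $a_0$ and count suffixes $b$ with $b^{-1}x \in S_x(a_0 x,\, d(x,a_0x)-f(n))$, requiring the symmetry $S_{n_2}=S_{n_2}^{-1}$, whereas the paper sums over the suffix $h$ and counts prefixes $g$ with $gx \in S_x(h^{-1}x,\, d(x,h^{-1}x)-f(n))$, sidestepping the inversion step via the symmetry of the Gromov product.
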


\begin{proof}
We compute
$$P^n \left((ax, b^{-1}x)_x \geq f(n)\right) = \frac{\#\{ (g,h) \in S_{n_1} \times S_{n_2} \ : \ h \in \cone(g), \ (gx, h^{-1}x)_x \geq f(n) \}}{\#S_n} \leq $$
and removing the requirement that $h \in \cone(g)$ we have
$$\leq  \frac{\#\{ (g,h) \in S_{n_1} \times S_{n_2} \ :  \ (gx, h^{-1}x)_x \geq f(n) \}}{\#S_n} \leq $$
$$\leq \frac{1}{\#S_n} \sum_{h \in S_{n_2}} \#\left\{ g \in S_{n_1}  \ :  \ gx \in S_x(h^{-1}x, d(x, h^{-1}x) - f(n)) \right \} \leq$$
and from decay of shadows (Proposition \ref{P:counting-decay}) follows that 
$$\leq \frac{1}{\#S_n} \sum_{h \in S_{n_2}} \rho(f(n)) \#S_{n_1} \leq \frac{ \#S_{n_1} \# S_{n_2} \rho(f(n))}{\#S_n} \leq c \rho(f(n)) \to 0 .$$
\end{proof}

The following two lemmas are the counting analogues of Lemma \ref{lem:prob_prod_2} and Lemma \ref{lem:prob_prod_3}.

\begin{lemma} \label{l:prod1}
For each $\eta > 0$, the probability
\[
P^n\left((b^{-1} x, g^{-1} x)_x \leq \frac{n (L - \eta)}{2} \right)
\]
tends to $0$ as $n \to \infty$.
\end{lemma}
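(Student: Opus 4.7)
The plan is to transcribe the proof of Lemma \ref{lem:prob_prod_2} (the $\mathbb{P}$-analogue) into the counting setting, replacing the drift estimate for $\mathbb{P}$ by its counting counterpart (Theorem \ref{th:gen_drift}) and the Markov-chain prefix estimate by Lemma \ref{l:positivehalf}. The three ingredients we use are: (i) the elementary Gromov-product inequality of Lemma \ref{l:gromov}; (ii) the fact that isometries preserve distances, which lets us rewrite the Gromov product in terms of displacements of $g$ and of its prefix $a$; and (iii) the two counting-drift estimates just cited.

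More concretely, write $g = ab$ with $a \in S_{n_1}$ and $b \in S_{n_2}$, so that $g^{-1} = b^{-1}a^{-1}$. Since $G$ acts by isometries on $X$,
\[
d(b^{-1}x,\, g^{-1}x) \;=\; d(x,\, b g^{-1} x) \;=\; d(x,\, a^{-1}x) \;=\; d(ax,x),
\]
and $d(x, g^{-1}x) = d(gx,x)$. Lemma \ref{l:gromov} therefore gives
\[
(b^{-1}x,\, g^{-1}x)_x \;\ge\; d(gx,x) \,-\, d(ax,x).
\]

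Next I would use that for any $\epsilon>0$, Theorem \ref{th:gen_drift} yields $P^n\bigl(d(gx,x) \ge n(L-\epsilon)\bigr) \to 1$, while Lemma \ref{l:positivehalf} yields $P^n\bigl(d(ax,x) \le n(L+\epsilon)/2\bigr) \to 1$. On the intersection of these two events, which has $P^n$-probability tending to $1$, the displayed lower bound gives
\[
(b^{-1}x,\, g^{-1}x)_x \;\ge\; n(L-\epsilon) - \frac{n(L+\epsilon)}{2} \;=\; \frac{n(L - 3\epsilon)}{2}.
\]
Given $\eta>0$, choose $\epsilon$ with $3\epsilon < \eta$; then the complementary event in the statement of the lemma has $P^n$-measure tending to $0$, which is what we wanted.

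There is no genuine obstacle here: the work was done in proving Theorem \ref{th:gen_drift} and Lemma \ref{l:positivehalf}, and the present lemma is essentially the counting version of Lemma \ref{lem:prob_prod_2} with exactly the same algebraic identity ($d(b^{-1}x, g^{-1}x)=d(ax,x)$) doing the key work. The only thing one must take care of is choosing the slack parameters $\epsilon$ versus $\eta$ correctly, so that the two one-sided drift inequalities combine to a bound of the form $n(L-\eta)/2$ rather than $n(L-\eta)$.
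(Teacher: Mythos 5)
Your proof is correct and follows essentially the same route as the paper's: apply Lemma~\ref{l:gromov} to get $(b^{-1}x,g^{-1}x)_x \ge d(gx,x) - d(ax,x)$, then combine the counting drift estimate (Theorem~\ref{th:gen_drift}) with the prefix bound (Lemma~\ref{l:positivehalf}) and choose $\epsilon$ with $3\epsilon < \eta$. The only cosmetic difference is that you spell out the isometry computation $d(b^{-1}x,g^{-1}x) = d(x, a^{-1}x) = d(ax,x)$ explicitly, which the paper states without detail.
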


\begin{proof}
By Lemma \ref{l:gromov}, and since the action is isometric
$$(b^{-1}x, g^{-1}x)_x \geq d(x, g^{-1}x) - d(b^{-1} x, g^{-1} x) = d(x, gx) - d(x, ax)$$
Now, for any $\epsilon > 0$, by genericity of positive drift (Theorem \ref{th:gen_drift})
$$P^n\left(d(x, gx) \geq n(L-\epsilon)\right) \to 1$$
and by Lemma \ref{l:positivehalf}
$$P^n\left(d(x, ax) \leq \frac{n}{2}(L+\epsilon)\right) \to 1$$
so 
$$P^n\left( (b^{-1}x, g^{-1}x)_x \geq n \left( \frac{L}{2} - \frac{3\epsilon}{2} \right) \right) \to 1$$
which proves the claim setting $\eta = 3 \epsilon/2$.

\end{proof}

\begin{lemma} \label{l:prod2}
For each $\eta > 0$, the probability
\[
P^n\left( (a x, g x)_x \leq \frac{n (L - \eta)}{2}  \right)
\]
tends to $0$ as $n \to \infty$.
\end{lemma}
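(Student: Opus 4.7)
The plan is to mirror the argument of Lemma \ref{l:prod1}, this time comparing $(ax, gx)_x$ to the displacement of the \emph{second} half $b$. First I would apply Lemma \ref{l:gromov} (the general lower bound $(y,z)_x \geq d(x,z) - d(y,z)$) with $y = ax$ and $z = gx$ to obtain
\[
(ax, gx)_x \geq d(x, gx) - d(ax, gx).
\]
Since $g = ab$ and $G$ acts by isometries, $d(ax, gx) = d(ax, abx) = d(x, bx)$, so
\[
(ax, gx)_x \geq d(x, gx) - d(x, bx).
\]

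Next, I would fix an arbitrary $\epsilon > 0$ and observe that the right-hand side is generically at least $\frac{n}{2}(L - 3\epsilon)$. Indeed, by Theorem \ref{th:gen_drift} the event $\{d(x, gx) \geq n(L-\epsilon)\}$ has $P^n$-probability tending to $1$, and by Lemma \ref{l:positivehalf} the event $\{d(x, bx) \leq \tfrac{n}{2}(L+\epsilon)\}$ has $P^n$-probability tending to $1$. On the intersection of these two events,
\[
(ax, gx)_x \;\geq\; n(L-\epsilon) - \frac{n}{2}(L+\epsilon) \;=\; \frac{n}{2}(L - 3\epsilon).
\]
Given any $\eta > 0$, choose $\epsilon = \eta/3$. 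Then the complement of the event in the statement of the lemma is contained (on the intersection above) in the bad event whose probability we have just shown tends to $0$, so
\[
P^n\Big( (ax, gx)_x \leq \tfrac{n(L-\eta)}{2}\Big) \to 0.
\]

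There is no real obstacle here: this is the direct analogue of Lemma \ref{l:prod1} after swapping the roles of $a$ and $b$ (noting that $d(ax,gx) = d(x,bx)$ rather than $d(x, ax)$). The only small care needed is verifying that Lemma \ref{l:positivehalf} indeed controls $d(x, bx)$ and not just $d(x, ax)$; but this is exactly the content of its second displayed estimate, so the argument goes through without modification.
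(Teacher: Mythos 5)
Your proof is correct and follows the paper's argument essentially verbatim: both use Lemma \ref{l:gromov} with $d(ax,gx)=d(x,bx)$, then combine Theorem \ref{th:gen_drift} and the second estimate of Lemma \ref{l:positivehalf}, and conclude by a choice of $\epsilon$. Your choice $\epsilon = \eta/3$ is in fact the correct one (the bound you derive is $(ax,gx)_x \geq \tfrac{n}{2}(L-3\epsilon)$), whereas the paper's closing phrase ``setting $\eta = 3\epsilon/2$'' is a minor arithmetic slip for $\eta = 3\epsilon$.
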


\begin{proof}
By Lemma \ref{l:gromov}, 
$$(ax, gx)_x \geq d(x, gx) - d(ax, gx) = d(x, gx) - d(x, bx)$$
hence, since for any $\epsilon > 0$ we have by genericity of positive drift (Theorem \ref{th:gen_drift})
$$P^n(d(x, gx) \geq n(L-\epsilon)) \to 1$$
and by Lemma \ref{l:positivehalf}
$$P^n\left(d(x, bx) \leq \frac{n}{2}(L+\epsilon)\right) \to 1$$
we get 
$$P^n\left( (ax, gx)_x \geq n \left( \frac{L}{2} - \frac{3\epsilon}{2} \right) \right) \to 1$$
which proves the claim setting $\eta = 3 \epsilon/2$.
\end{proof}

The following proposition establishes control of the Gromov products $(gx,g^{-1}x)_x$ with respect to our counting measures:
\begin{proposition} \label{p:g-prod}
Let $f : \mathbb{N} \to \mathbb{R}$ be a function such that $f(n) \to +\infty$ as $n \to \infty$.
Then 
$$P^n \Big( (gx, g^{-1}x)_x \leq f(n) \Big) \to 1$$
as $n \to \infty$.
\end{proposition}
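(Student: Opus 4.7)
The plan is to mimic the Markov-chain argument from Lemma \ref{lem:prod_inv}, but using the three counting lemmas (Lemma \ref{l:prod0}, Lemma \ref{l:prod1}, Lemma \ref{l:prod2}) just proved. The key tool is the ``fellow traveling is contagious'' lemma (Lemma \ref{l:fellow_travel}), which allows us to transfer an upper bound on one Gromov product to another, provided two auxiliary Gromov products are large.

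First, I will fix $\eta > 0$ small (to be chosen later) and set $A_n := n(L-\eta)/2$, which grows linearly in $n$. Write $g = ab$ with $a \in S_{n_1}$ the first half and $b \in S_{n_2}$ the second half of the combing geodesic for $g$, so that $g^{-1} = b^{-1}a^{-1}$. Define the auxiliary function
\[
f_1(n) := \min\{\, f(n) - 2\delta, \; A_n - 3\delta \,\},
\]
which still satisfies $f_1(n) \to \infty$ as $n \to \infty$ since both $f(n) \to \infty$ and $A_n \to \infty$.

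Next, I will show that on an event of $P^n$-probability tending to $1$, all three of the following hold simultaneously:
\begin{align*}
(ax, gx)_x &\ge A_n, \\
(b^{-1}x, g^{-1}x)_x &\ge A_n, \\
(ax, b^{-1}x)_x &\le f_1(n).
\end{align*}
The first holds with high $P^n$-probability by Lemma \ref{l:prod2}, the second by Lemma \ref{l:prod1}, and the third by Lemma \ref{l:prod0} applied to the function $f_1$ (since $f_1(n) \to \infty$). Their intersection has $P^n$-probability $\to 1$ by a union bound.

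Finally, on this event, apply Lemma \ref{l:fellow_travel} with the four points $ax, gx, b^{-1}x, g^{-1}x$, playing the role of $a, b, c, d$ respectively, and with threshold $A_n$. Since $f_1(n) \le A_n - 3\delta$, the hypothesis $(ax, b^{-1}x)_x \le A_n - 3\delta$ is met, and the conclusion gives
\[
(gx, g^{-1}x)_x \;\le\; (ax, b^{-1}x)_x + 2\delta \;\le\; f_1(n) + 2\delta \;\le\; f(n).
\]
This proves $P^n\big((gx, g^{-1}x)_x \le f(n)\big) \to 1$. No step here looks like a genuine obstacle, as all of the pieces have already been assembled; the main thing to verify is that $f_1$ is chosen so that it simultaneously satisfies the input hypothesis of Lemma \ref{l:prod0} (namely $f_1(n) \to \infty$) and the threshold condition required by Lemma \ref{l:fellow_travel}.
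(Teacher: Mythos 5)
Your proof is correct and follows the same approach as the paper: define $f_1(n)=\min\{f(n)-2\delta,\, n(L-\eta)/2-3\delta\}$, show the three events from Lemmas \ref{l:prod0}, \ref{l:prod1}, \ref{l:prod2} hold simultaneously with $P^n$-probability tending to $1$, then apply Lemma \ref{l:fellow_travel} to the points $ax, gx, b^{-1}x, g^{-1}x$ to transfer the bound. Nothing is missing and the step-by-step matches the paper's proof.
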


\begin{proof}
Define 
$$f_1(n) = \min \left\{ f(n) - 2 \delta, \frac{n(L-\eta)}{2} - 3\delta \right\}$$
It is easy to see that $f_1(n) \to \infty$ as $n \to \infty$.
By Lemma \ref{l:fellow_travel}, if we know that:
\begin{enumerate}
\item
 $(ax, gx)_x \geq n(L-\eta)/2$, 
 \item
 $(b^{-1}x, g^{-1}x)_x \geq n(L-\eta)/2$, and
\item
 $(ax, b^{-1}x)_x \leq f_1(n) \leq n(L-\eta)/2 - 3\delta$, 
\end{enumerate}
then 
$$(gx, g^{-1}x)_x \leq (ax, b^{-1}x)_x + 2 \delta \leq f_1(n) + 2\delta.$$
Using Lemmas \ref{l:prod0}, \ref{l:prod1}, and \ref{l:prod2}, the probability that conditions (1),(2), (3) hold tends to $1$, hence we have 
$$P^n( (gx, g^{-1}x)_x \leq f(n)) \to 1$$
as $n \to \infty$.
\end{proof}

Finally, we prove Theorem \ref{th:intro_gen_trans} from the introduction.
\begin{theorem}[Linear growth of translation length]
For any $\epsilon > 0$ we have  
\[
\frac{\#\{g \in S_n : \tau_X(g) \ge n(L-\epsilon) \}}{\#S_n} \to 1,
\]
as $n\to \infty$.
\end{theorem}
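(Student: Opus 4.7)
The plan is to combine the two key ingredients already established for the counting measure $P^n$: the linear lower bound on displacement (Theorem \ref{th:gen_drift}) and the sublinear upper bound on the Gromov product $(gx, g^{-1}x)_x$ (Proposition \ref{p:g-prod}). These two estimates together feed directly into the translation length formula in a hyperbolic space (Lemma \ref{l:tau-formula}).

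More concretely, fix $\epsilon > 0$ and choose an auxiliary parameter $\eta > 0$ small enough that $3\eta < \epsilon$. First, I would apply Proposition \ref{p:g-prod} with the linear function $f(n) = \eta n$ (which satisfies $f(n) \to \infty$) to obtain
\[
P^n\bigl( (gx, g^{-1}x)_x \leq \eta n \bigr) \to 1.
\]
Second, I would apply Theorem \ref{th:gen_drift} with parameter $\eta$ to obtain
\[
P^n\bigl( d(x, gx) \geq n(L-\eta) \bigr) \to 1.
\]
The intersection of these two events therefore also has $P^n$-measure tending to $1$.

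On this intersection, for $n$ sufficiently large we have $d(x,gx) - 2(gx,g^{-1}x)_x \geq n(L - 3\eta)$, which exceeds the constant $c$ in Lemma \ref{l:tau-formula}. Hence the hypothesis of that lemma is satisfied, and it yields
\[
\tau_X(g) = d(x, gx) - 2(gx, g^{-1}x)_x + O(\delta) \geq n(L-\eta) - 2\eta n - O(\delta) = n(L - 3\eta) - O(\delta).
\]
Since $L - 3\eta > L - \epsilon$ strictly, the additive $O(\delta)$ error is absorbed for all sufficiently large $n$, giving $\tau_X(g) \geq n(L-\epsilon)$ on an event of $P^n$-measure tending to $1$. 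This proves the theorem.

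There is no serious obstacle at this stage: the proof is a routine assembly of the two main technical estimates already proven in the previous subsections. The real work was done earlier, in establishing Theorem \ref{th:gen_drift} (which required converting the Markov measure drift into a statement about spheres, using the negligibility of small-growth vertices via Proposition \ref{P:smallg} and the comparison of $P^n$ with $\mathbb{P}^n$ on large-growth vertices via Lemma \ref{lem:comparePn}) and Proposition \ref{p:g-prod} (which required the near-independence Lemma \ref{l:indep-count} together with the ``fellow traveling is contagious'' trick of Lemma \ref{l:fellow_travel} applied to the canonical decomposition $g = ab$).
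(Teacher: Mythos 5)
Your proof is correct and follows the paper's own argument essentially verbatim: both set $f(n) = \eta n$ with $3\eta < \epsilon$, combine Theorem \ref{th:gen_drift} and Proposition \ref{p:g-prod} to bound $d(x,gx)$ from below and $(gx,g^{-1}x)_x$ from above on a set of $P^n$-measure tending to $1$, and then apply the translation-length formula of Lemma \ref{l:tau-formula}. The only (minor) difference is that you spell out explicitly why the hypothesis $d(x,gx) \geq 2(gx,g^{-1}x)_x + c$ of Lemma \ref{l:tau-formula} is satisfied for large $n$, which the paper leaves implicit.
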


\begin{proof}
If we set $f(n) = \eta n$ with $\eta > 0$, then by Proposition \ref{p:g-prod} and Theorem \ref{th:gen_drift} the events 
$(gx, g^{-1}x)_x \leq \eta n$ and $d(x, gx) \geq n(L-\eta)$ occur with probability ($P^n$) which tends to $1$, hence 
by Lemma \ref{l:tau-formula} 
\begin{align*}
P^n \Big( \tau(g) \ge n(L- 3 \eta)  \Big) \ge 
P^n \Big ( d(x, gx) - 2(gx, g^{-1}x)_x + O(\delta) \ge n(L- 3 \eta) \Big) 
\end{align*}
which approaches $1$ as $n \to \infty$. This implies the statement if we choose $\epsilon > 3 \eta$.
\end{proof}

Since elements with positive translation length are loxodromic, we finally get

\begin{corollary}[Genericity of loxodromics]
\[
\frac{\#\{g \in S_n : g \; \mathrm{is} \; X - \mathrm{loxodromic} \}}{\#S_n} \to 1,
\]
as $n\to \infty$.
\end{corollary}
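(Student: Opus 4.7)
The plan is to derive this corollary as an essentially immediate consequence of the immediately preceding Theorem on linear growth of translation length. First, I would invoke the standard equivalence for isometries of a hyperbolic space: an element $g \in G$ is $X$--loxodromic if and only if its stable translation length satisfies $\tau_X(g) > 0$. This equivalence was already used in the introduction to motivate studying $\tau_X$, and it is a classical fact following from the definition of $\tau_X$ via $\liminf_{n \to \infty} d_X(x, g^n x)/n$ together with the characterization of loxodromics in terms of linear orbit growth stated in the introduction.

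Second, I would apply the previous theorem with a specific positive choice of $\epsilon$, say $\epsilon = L/2$, where $L > 0$ is the drift constant from Corollary \ref{cor:PS_drift}. With this choice, the quantity $L - \epsilon = L/2$ is strictly positive, so the theorem yields
\[
\frac{\#\{g \in S_n : \tau_X(g) \geq nL/2\}}{\#S_n} \to 1
\]
as $n \to \infty$. Since $nL/2 > 0$ for $n \geq 1$, every $g$ in the set on the left is $X$--loxodromic, giving the inclusion
\[
\{g \in S_n : \tau_X(g) \geq nL/2\} \subseteq \{g \in S_n : g \text{ is } X\text{--loxodromic}\}.
\]
Dividing by $\#S_n$ and passing to the limit yields the desired statement.

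There is no real obstacle here; the substantive work was accomplished in proving the linear growth of translation length, which combined Proposition \ref{p:g-prod} controlling the Gromov product $(gx, g^{-1}x)_x$ with the genericity of positive drift (Theorem \ref{th:gen_drift}) via the formula $\tau_X(g) = d(x, gx) - 2(gx, g^{-1}x)_x + O(\delta)$ of Lemma \ref{l:tau-formula}. The corollary is simply the weakening from a quantitative linear lower bound on translation length to the qualitative statement of positivity, which by definition captures loxodromicity.
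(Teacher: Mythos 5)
Your proposal is correct and matches the paper's argument: the paper derives the corollary by the same observation that elements of positive translation length are loxodromic, immediately following the linear-growth-of-translation-length theorem. Your choice of a concrete $\epsilon = L/2$ is just an explicit instantiation of the same one-line deduction.
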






\bibliography{counting.bbl}
\bibliographystyle{amsalpha}

\end{document}